\definecolor{lblue}{rgb}{52,219,252}
\newtheorem{theorem}{Theorem}[section]
\newtheorem{question}{Question}
\newtheorem{problem}{Problem}
\newtheorem{corollary}[theorem]{Corollary}
\newtheorem{lemma}[theorem]{Lemma}
\newtheorem{proposition}[theorem]{Proposition}
\newtheorem{claim}{Claim}
\newtheorem*{claim*}{Claim}
\theoremstyle{definition}
\newtheorem{definition}[theorem]{Definition}
\newtheorem{construction}[theorem]{Construction}
\newtheorem{example}[theorem]{Example}
\newtheorem{remark}[theorem]{Remark}
\numberwithin{equation}{section}
\newcommand{\Z}{\mathbb{Z}}
\newcommand{\Q}{\mathbb{Q}}
\newcommand{\R}{\mathbb{R}}
\newcommand{\C}{\mathbb{C}}
\newcommand{\F}{\mathbb{F}}
\newcommand{\bsm}{\left(\begin{smallmatrix}}
\newcommand{\esm}{\end{smallmatrix}\right)}
\newcommand{\id}{\operatorname{Id}}
\newcommand{\Bl}{\operatorname{Bl}}
\newcommand{\coker}{\operatorname{coker}}
\newcommand{\Iso}{\operatorname{Iso}}
\newcommand{\Homeo}{\operatorname{Homeo}}
\newcommand{\Surf}{\operatorname{Surf}}
\newcommand{\Aut}{\operatorname{Aut}}
\newcommand{\im}{\operatorname{im}}
\newcommand{\fr}{\operatorname{fr}}
\newcommand{\Hom}{\operatorname{Hom}}
\newcommand{\TB}{\operatorname{TB}}
\newcommand{\pd}{\operatorname{pd}}
\newcommand{\sH}{\mathcal{H}}
\newcommand{\Hi}{\mathcal{H}_i}
\newcommand{\ks}{\operatorname{ks}}
\newcommand{\sm}{\setminus}
\newcommand{\ol}{\overline}
\newcommand{\wt}{\widetilde}
\newcommand{\lk}{\ell k}
\newcommand{\tmfrac}[2]{\mbox{\large$\frac{#1}{#2}$}}
\newcommand{\unaryminus}{\scalebox{0.75}[1.0]{\( - \)}}
\DeclareSymbolFont{EulerScript}{U}{eus}{m}{n}
\DeclareSymbolFontAlphabet\mathscr{EulerScript}
\begin{document}

\title{$4$-manifolds with boundary and fundamental group $\Z$}

\begin{abstract}
We classify topological $4$-manifolds with boundary and fundamental group $\Z$,  under some assumptions on the boundary. We apply this to classify surfaces in simply-connected $4$-manifolds with $S^3$ boundary, where the fundamental group of the surface complement is $\Z$. We then compare these homeomorphism classifications with the smooth setting.
For manifolds, we show that every Hermitian form over $\Z[t^{\pm 1}]$ arises as the equivariant intersection form of a pair of exotic smooth 4-manifolds with boundary and fundamental group $\Z$.
For surfaces we have a similar result,  and in particular we show that every $2$-handlebody with $S^3$ boundary contains a pair of exotic discs.
\end{abstract}
\author[A.~Conway]{Anthony Conway}
\address{The University of Texas at Austin, Austin TX 78712}
\email{anthony.conway@austin.utexas.edu}
\author[L.~Piccirillo]{Lisa Piccirillo}
\address{The University of Texas at Austin, Austin TX 78712}
\email{lisa.piccirillo@austin.utexas.edu}
\author[M.~Powell]{Mark Powell}
\address{School of  Mathematics and Statistics, University of Glasgow, United Kingdom}
\email{mark.powell@glasgow.ac.uk}

\maketitle

In what follows a~$4$-manifold is understood to mean a compact, connected, oriented, topological~$4$-manifold.
Freedman classified closed~
$4$-manifolds with trivial fundamental group up to orientation-preserving homeomorphism.
Other groups~$\pi$ for which classifications of closed~$4$-manifolds with fundamental group~$\pi$ are known include~$\pi \cong \Z$,~\cite{FreedmanQuinn,WangThesis,StongWang}, ~$\pi$ a finite cyclic group~\cite{HambletonKreck},  and~$\pi$ a solvable Baumslag-Solitar group~\cite{HambletonKreckTeichner}.
Complete classification results for manifolds with boundary essentially only include the simply-connected case~\cite{BoyerUniqueness, BoyerRealization}; see also~\cite{StongRealization}.

This paper classifies~$4$-manifolds with boundary and fundamental group~$\Z$,  under some extra assumptions on the boundary.
We give an informal statement now.
Fix a
closed 3-manifold~$Y$,
an epimorphism
$\varphi \colon \pi_1(Y)\twoheadrightarrow\Z$,
a nondegenerate Hermitian form $\lambda$ over $\Z[t^{\pm1}]$, and an additional piece of data specifying how the Alexander module of $Y$ interacts with $\lambda$.
Then up to homeomorphism fixing $Y,$ there exists a unique $4$-manifold $M$ filling $Y$ inducing the specified data.
%
%
%
Uniqueness is a consequence of~\cite[Theorem 1.10]{ConwayPowell}. Existence is the main contribution of this paper, Theorem~\ref{thm:MainTechnicalIntro}.
We give a similar non-relative classification of such $M$ in Theorem~\ref{thm:Classification}.

A feature of  our classification, which we shall demonstrate in Section~\ref{sec:NonTrivialbAut}, is the existence of arbitrarily large sets of homeomorphism classes of such 4-manifolds, all of which have the same boundary $Y$ and the same form $\lambda$.
Recently, this was extended~\cite{CCP,ConwayDaiMiller}, using the results of this paper, to produce infinite sets of homeomorphism classes with this property.
Thus this paper leads to the first classification of infinite families of orientable $4$-manifolds, all with the same, nontrivial, equivariant intersection form. This can be compared with~\cite{Jahren-Kwasik,BDK-07} and \cite[Theorem~1.2]{Kwasik-Schultz}, which produced infinite families of manifolds homotopy equivalent to $\R P^4 \# \R P^4$ and $L(p,q) \times S^1$ respectively; note that in both cases $\pi_2=0$ and so there is no intersection form.


We apply our results to study compact, oriented, locally flat, embedded surfaces in simply-connected~$4$-manifolds where the fundamental group of the exterior is infinite cyclic; we call these \emph{$\Z$-surfaces}.
The classification of closed surfaces in~$4$-manifolds whose exterior is simply-connected was carried out by Boyer~\cite{BoyerRealization}; see also~\cite{Sunukjian}.
Literature on the classification of discs in $D^4$ where the complement has fixed fundamental group includes~\cite{FriedlTeichner,ConwayPowellDiscs,Conway}.
For surfaces in more general $4$-manifolds,~\cite{ConwayPowell} gave necessary and sufficient conditions for a pair of~$\Z$-surfaces to be equivalent.
In this work,  for a~$4$-manifold~$N$ with boundary~$S^3$ and a knot~$K \subset S^3$, we classify~$\Z$-surfaces in~$N$ with boundary~$K$ in terms of the equivariant intersection form of the surface exterior; see Theorem \ref{thm:SurfacesRelBoundaryIntro}.
An application to $H$-sliceness can be found in Corollary~\ref{cor:HSliceIntro}, while Theorem~\ref{thm:SurfacesClosedIntro} classifies closed~$\Z$-surfaces.

Finally, we compare these homeomorphism classifications with the smooth setting.
We demonstrate that for every Hermitian form~$\lambda$ over $\Z[t^{\pm 1}]$ there are pairs of smooth 4-manifolds with boundary, ~$\pi_1 \cong \Z$,  and equivariant intersection form~$\lambda$ which are homeomorphic rel.\ boundary but not diffeomorphic; see Theorem \ref{thm:exoticmanifolds}.
We also show in  Theorem~\ref{thm:exoticdiscs} that for every Hermitian form~$\lambda$ satisfying conditions which are conjecturally necessary, there is a smooth 4-manifold~$N$ with~$S^3$ boundary containing a pair of smoothly embedded~$\Z$-surfaces whose exteriors have equivariant intersection form $\lambda$ and which are topologically but not smoothly isotopic rel.\ boundary.

\section{Statement of results}
\label{sec:StatementIntro}


Before stating our main result, we introduce some terminology.
Our 3-manifolds~$Y$ will always be oriented and will generally come equipped with an epimorphism~$\varphi\colon \pi_1(Y) \twoheadrightarrow \Z$.

\begin{definition}
An oriented~$4$-manifold~$M$ together with an identification $\pi_1(M) \cong \Z$ is said to be a  \emph{$\Z$-manifold} if the inclusion induced map~$\pi_1(\partial M) \to \pi_1(M)$ is surjective.
\end{definition}
When we say that a $\Z$-manifold~$M$
has boundary~$(Y,\varphi)$,  we mean that $M$ comes equipped with a homeomorphism $\partial M \xrightarrow{\cong} Y$ such that the composition~$\pi_1(Y) \twoheadrightarrow \pi_1(M) \xrightarrow{\cong}~\Z$ agrees with~$\varphi$.
We will always assume that the Alexander module~$H_1(Y;\Z[t^{\pm 1}])$ is~$\Z[t^{\pm 1}]$-torsion; recall that the Alexander module is the first homology group of the infinite cyclic cover~$Y^\infty \to Y$ corresponding to $\ker(\varphi)$.
The action of the deck transformation group $\Z = \langle t \rangle$ makes the first homology into a~$\Z[t^{\pm 1}]$-module.

\subsection{The classification result}\label{sub:MainThm}

Our goal is to classify~$\Z$-manifolds~$M$
whose boundary~$\partial M \cong Y$ has $H_1(Y;\Z[t^{\pm 1}])$ torsion, up to orientation-preserving homeomorphism.
The isometry class of the \textit{equivariant intersection form}~$\lambda_M$ on~$H_2(M;\Z[t^{\pm 1}])$ is an invariant of such~$M$
(this definition is recalled in Subsection~\ref{sub:HomologyIntersections}) and so, to classify such~$M$, it is natural to first
fix a nondegenerate Hermitian form $\lambda$ over~$\Z[t^{\pm 1}]$,
and then to classify~$\Z$-manifolds~$M$ with
boundary~$\partial M \cong Y$, and
equivariant intersection form~$\lambda$.
The fact that $\lambda$ is nondegenerate implies that the Alexander module $H_1(Y;\Z[t^{\pm 1}])$ is torsion.

For such a~$4$-manifold~$M,$
 the equivariant intersection form~$\lambda_M$ on~$H_2(M;\Z[t^{\pm 1}])$ \emph{presents} the \emph{Blanchfield form} on $H_1(Y;\Z[t^{\pm 1}])$ (see Subsection~\ref{sub:EquivariantLinking})
$$\Bl_Y \colon H_1(Y;\Z[t^{\pm 1}]) \times H_1(Y;\Z[t^{\pm 1}]) \to \Q(t)/\Z[t^{\pm 1}],$$
We make this algebraic notion precise next.
%
%
%
%
If~$\lambda \colon H \times H \to \Z[t^{\pm 1}]$ is a nondegenerate Hermitian form on a finitely generated free $\Z[t^{\pm 1}]$-module (for short, a \emph{form}), then we write~$\widehat{\lambda} \colon H \to H^*$ for the linear map~$z \mapsto \lambda(-,z)$, and
there is a short exact sequence
$$ 0 \to H \xrightarrow{\widehat{\lambda}} H^* \xrightarrow{} \coker(\widehat{\lambda}) \to 0.$$
Such a presentation induces a \emph{boundary linking form}~$\partial \lambda$ on~$\coker(\widehat{\lambda})$ in the following manner.
For~$[x] \in \coker(\widehat{\lambda})$ with~$x \in H^*$, since $\coker(\widehat{\lambda})$ is $\Z[t^{\pm 1}]$-torsion there exist elements~$z\in H$ and~$p\in\Z[t^{\pm 1}] \sm \{0\}$ such that~$\lambda(-,z)=px\in H^*$.
Then for~$[x],[y]\in \coker(\widehat{\lambda})$ with~$x,y\in H^*$, we define
$$\partial\lambda([x],[y]):=\frac{y(z)}{p}\in\Q(t)/\Z[t^{\pm 1}].$$
One can check that~$\partial \lambda$ is independent of the choices of~$p$ and $z$.

\begin{definition}
\label{def:presentation}
For~$T$ a torsion~$\Z[t^{\pm 1}]$-module with a linking form~$\ell \colon T \times T \to \Q(t)/\Z[t^{\pm 1}]$,  a nondegenerate Hermitian form~$(H,\lambda)$ \textit{presents}~$(T,\ell)$ if there is an isomorphism~$h\colon\coker(\widehat{\lambda})\to T$ such that~$\ell(h(x),h(y))=\partial\lambda(x,y)$.
Such an isomorphism~$h$ is called an \emph{isometry} of the forms, the set of isometries is denoted~$\Iso(\partial\lambda,\ell)$.
If~$(H,\lambda)$ presents~$(H_1(Y;\Z[t^{\pm 1}]), \unaryminus \Bl_Y)$ then we say~$(H,\lambda)$ \emph{presents }$Y$.
\end{definition}

This notion of a presentation is well known (see e.g.~\cite{RanickiExact,CrowleySixt}), and appeared in the classification of simply-connected $4$-manifolds with boundary in~\cite{BoyerUniqueness,BoyerRealization} and in~\cite{ConwayPowell} for $4$-manifolds with $\pi_1 \cong \Z$. See also~\cite{BorodzikFriedlClassical1,FellerLewarkBalanced}.
Presentations capture the geometric relationship between the linking form of a 3-manifold and the intersection form of a 4-manifold filling.
To see why the form~$(H_2(M;\Z[t^{\pm 1}]),\lambda_M)$ presents~$\partial M$, one first observes that
the long exact sequence of the pair~$(M, \partial M)$ with coefficients in~$\Z[t^{\pm 1}]$  reduces to the short exact sequence
\[0 \to H_2(M;\Z[t^{\pm 1}]) \to H_2(M,\partial M;\Z[t^{\pm 1}]) \to H_1(\partial M;\Z[t^{\pm 1}]) \to 0,\]
where $H_2(M;\Z[t^{\pm 1}])$ and $H_2(M,\partial M;\Z[t^{\pm 1}])$ are finitely generated free $\Z[t^{\pm 1}]$-modules~\cite[Lemma 3.2]{ConwayPowell}.
The left term of the short exact sequence supports the equivariant intersection form~$\lambda_M$  and the right supports~$\Bl_{\partial M}$.
As explained in detail in~\cite[Remark 3.3]{ConwayPowell}, some algebraic topology gives the following commutative diagram of short exact sequences, where the isomorphism~$D_M$ is defined so that the right-most square commutes:
\begin{equation}
\label{eq:SES}
\xymatrix@R0.5cm{
0 \ar[r]& H_2(M;\Z[t^{\pm 1}]) \ar[r]^{\widehat{\lambda}_M}\ar[d]^-{\id}_=& H_2(M;\Z[t^{\pm 1}])^* \ar[r]^-{}\ar[d]_\cong^-{\operatorname{ev}^{-1} \circ \operatorname{PD}}& \coker(\widehat{\lambda}_M) \ar[d]^{\operatorname{D_M}}_\cong\ar[r]& 0 \\
0 \ar[r]& H_2(M;\Z[t^{\pm 1}]) \ar[r]& H_2(M,\partial M;\Z[t^{\pm 1}]) \ar[r]^-{}& H_1(\partial M;\Z[t^{\pm 1}]) \ar[r]& 0.
}
\end{equation}
It then follows that~$(H_2(M;\Z[t^{\pm 1}]),\lambda_M)$ presents~$\partial M$, where the isometry~$\partial\lambda_M\cong \unaryminus \Bl_{\partial M}$ is given by~$D_M$. For details see~\cite[Proposition 3.5]{ConwayPowell}.

Thus to classify the~$\Z$-manifolds~$M$ with
boundary ~$\partial M \cong Y$, it suffices to consider forms~$(H,\lambda)$ which present~$Y$.
In Section~\ref{sec:MainTechnicalIntro} we use $D_M$ to define an additional \emph{automorphism invariant}
\[b_M \in \Iso(\partial\lambda,\unaryminus\Bl_Y)/\Aut(\lambda).\]
Here, as we define precisely in Equation~\eqref{eq:autaction} below,  an isometry $F \in \Aut(\lambda)$ induces an isometry $\partial F$ of $\partial \lambda$,  and the action on $h \in \Iso(\partial\lambda,\unaryminus\Bl_Y)$ is then by $F \cdot h=h \circ \partial F^{-1}.$
Additionally,  recall that a Hermitian form $(H,\lambda)$ is \emph{even} if $\lambda(x,x)=q(x)+\overline{q(x)}$ for some $\Z[t^{\pm 1}]$-module homomorphism $q \colon H \to \Z[t^{\pm 1}]$ and is \emph{odd} otherwise.
 Our first classification now reads as follows.

\begin{theorem}
\label{thm:ClassificationRelBoundary}
Fix the following data:
\begin{enumerate}
\item a closed
 3-manifold $Y$,
\item
an epimorphism $\varphi \colon \pi_1(Y)\twoheadrightarrow\Z$ with respect to which the Alexander module of $Y$ is torsion,
\item a
nondegenerate Hermitian form $\lambda \colon H\times H\to\Z[t^{\pm 1}]$ which presents $Y$,
\item if $\lambda$ is odd, $k \in \Z_2,$
\item
a class $b  \in \Iso(\partial \lambda,\unaryminus\Bl_Y)/\Aut(\lambda)$.
\end{enumerate}
Up to homeomorphism rel.\  boundary, there exists a unique
$\Z$-manifold $M$ with boundary~$(Y,\varphi)$, equivariant intersection form $\lambda$, automorphism invariant $b$ and, in the odd case,  Kirby-Siebenmann invariant $k$.

%
\end{theorem}


Here two 4-manifolds $M_0$ and $M_1$ with boundary $Y$ are \emph{homeomorphic rel.\  boundary} if there exists a homeomorphism $M_0 \xrightarrow{\cong} M_1$ such that the restriction composed with the given parametrisations of the boundary, $Y \cong \partial M_0 \xrightarrow{\cong} \partial M_1 \cong Y$ is the identity on $Y$.
The uniqueness part of the theorem (which follows from~\cite{ConwayPowell}) can be thought of as answering whether or not a given pair of parametrisations $Y \cong \partial M_i$ extend to a homeomorphism $M_0 \cong M_1$.
We refer to Remark~\ref{rem:UserGuide} for a guide to applying the uniqueness statement of Theorem~\ref{thm:ClassificationRelBoundary}.
We give the proof of Theorem \ref{thm:ClassificationRelBoundary} (modulo our main technical theorem) in Section~\ref{sec:MainTechnicalIntro}.

\begin{remark}
\label{rem:MainTheorem}
We collect a couple of further remarks about this result.
%
%
\begin{itemize}
\item The \emph{automorphism invariant} that distinguishes $\Z$-manifolds with the same equivariant form is nontrivial to calculate in practice, as its definition typically involves choosing identifications of the boundary $3$-manifolds; see Section~\ref{sec:MainTechnicalIntro}.
\item
Theorem~\ref{thm:ClassificationRelBoundary} should be thought of as an extension of the work of Boyer~\cite{BoyerUniqueness,BoyerRealization} that classifies simply-connected $4$-manifolds with boundary and fixed intersection form and an extension
of the classification of closed  $4$-manifolds with $\pi_1=\Z$~\cite{FreedmanQuinn,StongWang}.
Boyer's main  statements are formulated using presentations instead of isometries of linking forms, but both approaches can be shown to agree when the $3$-manifold is a rational homology sphere~\cite[Corollary E]{BoyerRealization}.
By way of analogy,  rational homology 3-spheres are to 1-connected 4-manifolds with boundary as pairs~$(Y,\varphi)$ with torsion Alexander module are to $\Z$-manifolds.
    \item For $(Y,\varphi)$ as above, it is implicit in Theorem~\ref{thm:ClassificationRelBoundary} and in~\cite{ConwayPowell} that if $M_0$ and $M_1$ are spin~$4$-manifolds with $\pi_1(M_i) \cong \Z$,  boundary homeomorphic to~$(Y,\varphi)$,  isometric equivariant intersection form, and the same automorphism invariant, then their Kirby-Siebenmann invariants agree.
    The argument is given in Remark~\ref{rem:KSProof} below, whereas Section~\ref{sub:Example} shows that the assumption on the automorphism invariants cannot be dropped.
    We refer to~\cite[Proposition~4.1~(vi)]{BoyerUniqueness} for the analogous fact in the simply-connected setting.
\end{itemize}
\end{remark}

\begin{example}
\label{ex:LargeStableClassIntro}
We will show in Proposition~\ref{prop:LargeStableClass} that there are examples of pairs~$(Y,\varphi)$ for which the set of 4-manifolds with fixed boundary $Y$ and fixed (even) equivariant intersection form, up to homeomorphism rel.\ boundary, can have arbitrarily large cardinality (in the recent~\cite{CCP,ConwayDaiMiller} examples with infinite cardinality were obtained).
Details are given in Section~\ref{sec:NonTrivialbAut}, but we note that the underlying algebra is similar to that which was used in~\cite{CCPS-short} and \cite{CCPS-long} to construct closed manifolds of dimension~$4k \geq 8$ with nontrivial homotopy stable classes.
This arbitrarily large phenomenon also exists for simply-connected 4-manifolds bounding rational homology spheres, which can be deduced from Boyer's work \cite{BoyerUniqueness,BoyerRealization} with a similar proof. On the other hand in the simply-connected setting there can only ever be finite such families.
\end{example}

In Theorem~\ref{thm:ClassificationRelBoundary}, we fixed a parametrisation of the boundary. By changing the parametrisation by a homeomorphism of $Y$ that intertwines $\varphi$, we can change the invariant $b  \in \Iso(\partial \lambda,\unaryminus\Bl_Y)/\Aut(\lambda)$ by post-composition with the induced automorphism of $-\Bl_Y$. This leads to an absolute (i.e.\ non-rel.\ boundary) classification analogous to Theorem~\ref{thm:ClassificationRelBoundary}, which we will formalise in Theorem~\ref{thm:Classification}. For now we highlight the following example, which contrasts with Example~\ref{ex:LargeStableClassIntro}.

\begin{example}\label{example:bdy-surface-x-S1}
  If $Y \cong \Sigma_g \times S^1$ and $\varphi \colon \pi_1(\Sigma_g \times S^1) \to \pi_1(S^1) \to \Z$ is induced by projection onto the second factor, then for a fixed non-degenerate Hermitian form~$\lambda$ that presents $Y$,  if~$\lambda$ is even there is a unique homeomorphism class of 4-manifolds with~$\pi_1\cong \Z$, boundary $Y$, and equivariant intersection form~$\lambda$, and if $\lambda$ is odd there are exactly two such homeomorphism classes.  Here we allow homeomorphisms to act nontrivially on the boundary.
The key input is that every automorphism of $\Bl_Y$ can be realised by a homeomorphism of $Y$ that intertwines $\varphi$~\cite[Proposition~5.6]{ConwayPowell}.
Therefore, given two 4-manifolds for which the rest of the data coincide, by re-parameterising $Y$ we can arrange for the automorphism invariants to agree.
\end{example}

In Section~\ref{sec:MainTechnicalIntro}
we describe the automorphism invariant $b$ from Theorem \ref{thm:ClassificationRelBoundary},
give the statement of our main technical theorem on realisation of the invariants by $\Z$-manifolds,
and explain how Theorem \ref{thm:ClassificationRelBoundary} implies a non rel.\ boundary version of the result.
 But first, in Subsections~\ref{sub:SurfaceIntro} and~\ref{sub:exoticaintro}, we discuss some applications.

\subsection{Classification of~$\Z$-surfaces in simply-connected~$4$-manifolds with~$S^3$ boundary}
\label{sub:SurfaceIntro}
For a fixed simply-connected 4-manifold~$N$ with boundary $S^3$ and a fixed knot~$K \subset \partial  N=S^3$, we call two locally flat embedded compact surfaces~$\Sigma,\Sigma' \subset N$ with boundary~$K \subset S^3$ \emph{equivalent rel.\ boundary} if there is an orientation-preserving homeomorphism~$(N,\Sigma) \cong (N,\Sigma')$ that is pointwise the identity on~$S^3 \cong \partial N$.
We are interested in classifying the~$\Z$-surfaces in~$N$ with boundary~$K$ up to equivalence rel.\ boundary.

As for manifolds, first we inventory some invariants of $\Z$-surfaces.
The genus of $\Sigma$ and the equivariant intersection form~$\lambda_{N_\Sigma}$ on~$H_2(N_\Sigma;\Z[t^{\pm 1}])$ are invariants of such a surface~$\Sigma$, where~$N_\Sigma$ denotes  the  exterior~$N\smallsetminus \nu (\Sigma)$.
Write~$E_K:=S^3  \setminus \nu(K)$ for the exterior of~$K$ and recall that the boundary of~$N_\Sigma$ has a natural identification
$$\partial N_\Sigma\cong E_K \cup_\partial(\Sigma_{g,1} \times S^1)=:M_{K,g}.$$
 As discussed in Subsection \ref{sub:MainThm}, there is a relationship between the equivariant intersection form~$\lambda_{N_\Sigma}$ on~$H_2(N_\Sigma;\Z[t^{\pm 1}])$ and the Blanchfield form~$\Bl_{M_{K,g}}$ on~$H_1(M_{K,g};\Z[t^{\pm 1}])$: the Hermitian form $(H_2(N_\Sigma;\Z[t^{\pm 1}]), \lambda_{N_\Sigma})$ presents~$M_{K,g}$.

There is one additional necessary condition for a given form~$(H,\lambda)$ to be isometric to the intersection pairing~$(H_2(N_\Sigma;\Z[t^{\pm 1}]), \lambda_{N_\Sigma})$ for some surface~$\Sigma$. Observe that we can reglue the neighborhood of~$\Sigma$ to~$N_\Sigma$ to recover~$N$.
This is reflected in the intersection form, as follows.
We write~$\lambda(1):=\lambda \otimes_{\Z[t^{\pm 1}]} \Z_\varepsilon$, where $\Z_\varepsilon$ denotes~$\Z$ with the trivial $\Z[t^{\pm 1}]$-module structure.
If~$W$ is a~$\Z$-manifold, then~$\lambda_W(1) \cong Q_W$,  where~$Q_W$ denotes the standard intersection form of~$W$; see e.g.~\cite[Lemma 5.10]{ConwayPowell}.
Therefore, if~$\lambda \cong \lambda_{N_\Sigma}$, then we have the isometries
$$\lambda(1) \cong \lambda_{N_\Sigma}(1)=Q_{N_\Sigma} \cong Q_N \oplus  (0)^{\oplus 2g},$$
where the last isometry follows from a Mayer-Vietoris argument.
The following theorem (which is stated slightly more generally in Theorem~\ref{thm:SurfacesRelBoundary} below)
 shows that these invariants, with these two necessary conditions, are in fact also sufficient once an automorphism invariant is fixed.

\begin{theorem}
\label{thm:SurfacesRelBoundaryIntro}
Fix the following data:

\begin{enumerate}
\item  a simply-connected~$4$-manifold $N$ with boundary~$S^3$,
\item an oriented knot $K \subset S^3$,
\item
an integer~$g \in  \mathbb{Z}_{\geq 0},$
\item a nondegenerate Hermitian form~$(H,\lambda)$ over~$\Z[t^{\pm 1}]$ which presents~$M_{K,g}$ and satisfies~$\lambda(1)\cong Q_N \oplus  (0)^{\oplus 2g}$,
\item
a class $b \in \Aut(\Bl_K)/\Aut(\lambda)$.
\end{enumerate}
Up to equivalence rel.\ boundary, there exists a unique genus~$g$~$\Z$-surface $\Sigma \subset N$ with boundary~$K$ whose exterior $N_\Sigma$ has equivariant intersection form $\lambda$ and automorphism invariant $b$.

%
\end{theorem}

The action of the group~$\Aut(\lambda)$ on the set~$\Aut(\Bl_K)$ arises by restricting the action of $\Aut(\lambda)$ on~$\Aut(\partial \lambda) \cong \Aut(\Bl_{M_{K,g}}) \cong \Aut(\Bl_K) \oplus \operatorname{Sp}_{2g}(\Z)$ to the first summand.
Here the (non-canonical) isomorphism~$\Aut(\partial \lambda) \cong  \Aut(\Bl_{M_{K,g}})$ holds because the form~$\lambda$ presents $M_{K,g}$,
while the isomorphism $\Aut(\Bl_{M_{K,g}}) \cong \Aut(\Bl_K) \oplus \operatorname{Sp}_{2g}(\Z)$ is a consequence of~\cite[Propositions 5.6 and 5.7]{ConwayPowell}.

Again,  the construction is explicit.
The idea is that the set of topological surfaces (up to equivalence rel.\ boundary) is in bijection with the set of surface complements (up to homeomorphism rel.\ boundary).
So this theorem can be recovered from Theorem \ref{thm:ClassificationRelBoundary} by taking $Y$ to be $M_{K,g}$. We detail this in Section~\ref{sec:Discs} where we state the outcome as a bijection between $ \Aut(\Bl_K)/\Aut(\lambda)$ and the set of rel.\ boundary isotopy classes of $\Z$-surfaces $\Sigma \subset N$ with boundary $K$ and equivariant intersection form $\lambda_{N_\Sigma} \cong \lambda$.
Finally, we note that when~$N=D^4$,  equivalence rel.\ boundary can be upgraded to isotopy rel.\ boundary via the Alexander trick.  See also \cite[Theorem~F]{Orson-Powell-MCG} for more cases when equivalence can be upgraded to isotopy.


\begin{remark}
\label{rem:Discs}
Previous classification results of locally flat discs in $4$-manifolds include $\Z$-discs in~$D^4$~\cite{FreedmanQuinn,ConwayPowellDiscs},  $BS(1,2)$-discs in~$D^4$~\cite{FriedlTeichner,ConwayPowellDiscs} and $G$-discs in $D^4$ (under some  assumptions on the group~$G$)~\cite{FriedlTeichner,Conway}. In the latter case it is not known whether there are groups satisfying the assumptions other than $\Z$ and $BS(1,2)$.
Our result is the first classification of discs with non simply-connected exteriors in 4-manifolds other than~$D^4$.
\end{remark}


Before continuing with $\Z$-surfaces, we mention an application of Theorem~\ref{thm:SurfacesRelBoundaryIntro} to $H$-sliceness.
A knot~$K$ in~$\partial N$ is said to be (topologically) \emph{$H$-slice} if~$K$ bounds a locally flat, embedded disc~$D$ in~$N$ that represents the trivial class in~$H_2(N,\partial N)$. The study of $H$-slice knots has garnered some interest recently because of its potential applications towards producing small closed exotic 4-manifolds~\cite{ConwayNagel, ManolescuMarengonSarkarWillis,
ManolescuMarengonPiccirillo,
IidaMukherjeeTaniguchi,ManolescuPiccirillo,KjuchukovaMillerRaySakalli}.
Since~$\Z$-slice knots are $H$-slice (see e.g.~\cite[Lemma~5.1]{ConwayPowell}), Theorem~\ref{thm:SurfacesRelBoundaryIntro} therefore gives a new criterion for topological~$H$-sliceness.
Our results also apply in higher genus.
When~$N=D^4$, this is reminiscent of the combination of~\cite[Theorems 2 and 3]{FellerLewarkOnClassical} and~\cite[Theorem 1.1]{BorodzikFriedlLinking} (and for $g=0$ it is Freedman's theorem that Alexander polynomial one knots bound $\Z$-discs~\cite{Freedman:1984-1,FreedmanQuinn}).
In connected sums of copies of~$\C P^2$, this is closely related to~\cite[Theorem~1.3]{KjuchukovaMillerRaySakalli}. Compare also~\cite[Theorem~1.10]{FellerLewarkBalanced}, which applies in connected sums of copies of $\C P^2 \# \overline{\C P}^2$ and $S^2 \times S^2$.

\begin{corollary}
\label{cor:HSliceIntro}
Let~$N$ be a simply-connected~$4$-manifold with boundary~$S^3$ and let~$K \subset S^3$ be a knot.
If~$\Bl_{M_{K,g}}$ is presented by a nondegenerate Hermitian matrix~$A(t)$ such that~$A(1)$ is congruent to~$Q_N \oplus (0)^{\oplus 2g}$, then~$K$ bounds a genus~$g$~$\Z$-surface in~$N$.
In particular, when~$g=0$,~$K$ is~$H$-slice in~$N$.
\end{corollary}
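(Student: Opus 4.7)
The plan is simply to repackage the hypothesis as an instance of Theorem~\ref{thm:SurfacesRelBoundaryIntro} and apply it. Given the Hermitian matrix $A(t)$ presenting $\Bl_{M_{K,g}}$, set $H := \Z[t^{\pm 1}]^n$ and let $\lambda$ be the Hermitian form on $H$ whose matrix in the standard basis is $A(t)$ (with a sign adjustment to match the convention in Definition~\ref{def:presentation} if needed). Nondegeneracy of $A(t)$ is exactly the condition that $\widehat\lambda$ is injective, so $(H,\lambda)$ is a nondegenerate Hermitian form over $\Z[t^{\pm 1}]$.

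The first verification is that $(H,\lambda)$ presents $M_{K,g}$. By construction, $\coker(\widehat\lambda)$ is the $\Z[t^{\pm 1}]$-module presented by $A(t)$, and a direct comparison of the algebraic recipe defining $\partial\lambda$ with the Blanchfield form shows that the given presentation of $\Bl_{M_{K,g}}$ by $A(t)$ furnishes an isomorphism $h \colon \coker(\widehat\lambda) \xrightarrow{\cong} H_1(M_{K,g};\Z[t^{\pm 1}])$ intertwining $\partial\lambda$ with $\unaryminus\Bl_{M_{K,g}}$, as required by Definition~\ref{def:presentation}.

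The second verification is immediate: $\lambda(1) := \lambda \otimes_{\Z[t^{\pm 1}]} \Z_\varepsilon$ is the integer symmetric bilinear form represented by $A(1)$, which by hypothesis is congruent to $Q_N \oplus (0)^{\oplus 2g}$. Both hypotheses of Theorem~\ref{thm:SurfacesRelBoundaryIntro} are therefore satisfied, so that theorem guarantees $\operatorname{Surf}(g)^0_\lambda(N,K) \neq \emptyset$. Picking any element produces a locally flat embedded genus-$g$ $\Z$-surface $\Sigma \subset N$ with $\partial\Sigma = K$, which is the first assertion.

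For the final sentence, when $g=0$ the surface $\Sigma$ is a locally flat disc $D \subset N$ bounding $K$ whose exterior has fundamental group $\Z$, i.e.~a $\Z$-slice disc. The well-known implication that a $\Z$-slice disc is $H$-slice (e.g.~\cite[Lemma~5.1]{ConwayPowell}, cited in the paragraph just above the corollary) concludes the proof. There is no real obstacle in this argument beyond reconciling sign conventions between the phrase ``$A(t)$ presents $\Bl_{M_{K,g}}$'' and the built-in minus sign in Definition~\ref{def:presentation}; all of the topological content is carried by Theorem~\ref{thm:SurfacesRelBoundaryIntro}.
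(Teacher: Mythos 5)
Your proof is correct and is exactly the argument the paper intends: the corollary is stated as an immediate consequence of Theorem~\ref{thm:SurfacesRelBoundaryIntro}, with the $g=0$ case finished by the cited fact that $\Z$-slice implies $H$-slice. Your remark about reconciling the sign convention of Definition~\ref{def:presentation} with the informal phrasing ``$A(t)$ presents $\Bl_{M_{K,g}}$'' is the only point of care, and you handle it appropriately.
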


We also study~$\Z$-surfaces up to equivalence (instead of equivalence rel.\  boundary).
Here an additional technical requirement is needed on the knot exterior $E_K:=S^3 \setminus \nu(K)$.

\begin{theorem}
\label{thm:SurfacesWithBoundaryIntro}
 Let ~$K$ be a knot in~$S^3$ such that every isometry of~$\Bl_K$ is realised by an orientation-preserving homeomorphism~$E_K \to E_K$.
 If a nondegenerate Hermitian form~$(H,\lambda)$ over~$\Z[t^{\pm 1}]$  presents~$M_{K,g}$ and satisfies~$\lambda(1)\cong Q_N \oplus  (0)^{\oplus 2g}$, then up to equivalence, there exists a unique genus~$g$ surface~$\Sigma \subset N$ with boundary~$K$ and whose exterior  has equivariant intersection form~$\lambda$.
 \end{theorem}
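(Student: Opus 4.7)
\emph{Existence} is immediate from Theorem~\ref{thm:SurfacesRelBoundaryIntro}: under the hypotheses, the rel.\ boundary classification is already nonempty, so in particular some genus $g$ $\Z$-surface $\Sigma\subset N$ with boundary $K$ and equivariant intersection form $\lambda$ exists. For uniqueness up to equivalence, the plan is to reformulate the question in terms of exteriors and apply Theorem~\ref{thm:Classification} to the $3$-manifold $M_{K,g}$. Exactly as in the proof of Theorem~\ref{thm:SurfacesRelBoundaryIntro}, the map $\Sigma\mapsto N_\Sigma$ identifies $\operatorname{Surf(g)}_\lambda(N,K)$ with the image in $\mathcal{V}_\lambda(M_{K,g})$ of the surfaces in $N$ bounding $K$; the condition $\lambda(1)\cong Q_N\oplus(0)^{\oplus 2g}$ ensures that every element of $\mathcal{V}_\lambda(M_{K,g})$ actually arises in this way, because regluing $\Sigma_{g,1}\times D^2$ along the boundary $M_{K,g}$ recovers a $4$-manifold homeomorphic to $N$. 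Theorem~\ref{thm:Classification} then expresses $\mathcal{V}_\lambda(M_{K,g})$ as the quotient of $\Iso(\partial\lambda,\unaryminus\Bl_{M_{K,g}})/\Aut(\lambda)\cong\Aut(\Bl_{M_{K,g}})/\Aut(\lambda)$ by the action of $\Homeo^+_\varphi(M_{K,g})$ via the natural homomorphism $\Homeo^+_\varphi(M_{K,g})\to\Aut(\Bl_{M_{K,g}})$.

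It therefore suffices to show that this homomorphism surjects onto the splitting $\Aut(\Bl_{M_{K,g}})\cong \Aut(\Bl_K)\oplus\operatorname{Sp}_{2g}(\Z)$ of~\cite[Propositions 5.6 and 5.7]{ConwayPowell}. The $\operatorname{Sp}_{2g}(\Z)$-summand is realised by self-homeomorphisms of $\Sigma_{g,1}$ fixing $\partial\Sigma_{g,1}$ pointwise, extended by the identity in the $S^1$ factor and then by the identity on $E_K\subset M_{K,g}$; this is standard and is also the idea behind Example~\ref{example:bdy-surface-x-S1}. For the $\Aut(\Bl_K)$-summand, the hypothesis furnishes, for each $\alpha\in\Aut(\Bl_K)$, an orientation-preserving self-homeomorphism $h_\alpha\colon E_K\to E_K$ realising $\alpha$. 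The restriction $h_\alpha|_{T^2}$ is an element of $\operatorname{SL}(2,\Z)$ preserving the meridian class of $K$ up to sign; using that $\Homeo^+(\Sigma_{g,1}\times S^1)$ surjects onto $\Homeo^+(T^2)$ up to isotopy (via Dehn twists along $\partial\Sigma_{g,1}\times\{*\}$, rotations of the $S^1$ factor, and a reflection), one constructs a self-homeomorphism $g_\alpha$ of $\Sigma_{g,1}\times S^1$ whose restriction to $T^2$ agrees with $h_\alpha|_{T^2}$. The glued map $h_\alpha\cup_{T^2}g_\alpha$ is then a self-homeomorphism of $M_{K,g}$ whose image in $\Aut(\Bl_{M_{K,g}})$ has $\alpha$ as its $\Aut(\Bl_K)$-component. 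Combining this with the previous sentence proves surjectivity of $\Homeo^+_\varphi(M_{K,g})\to\Aut(\Bl_{M_{K,g}})$, so the action on $\Aut(\Bl_{M_{K,g}})/\Aut(\lambda)$ is transitive and uniqueness follows.

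The main obstacle I anticipate is this gluing/compatibility step, namely: (i)~showing that $h_\alpha|_{T^2}$ can always be matched by the restriction to $T^2$ of a self-homeomorphism of $\Sigma_{g,1}\times S^1$ on the nose (not only up to isotopy), so that $h_\alpha\cup_{T^2}g_\alpha$ is well-defined; and (ii)~in the odd case, verifying that the Kirby-Siebenmann invariant does not produce an extra $\Z_2$ factor, which follows because the decomposition $N=N_\Sigma\cup_{M_{K,g}}(\Sigma_{g,1}\times D^2)$ forces $\ks(N_\Sigma)=\ks(N)$ for every $\Sigma\in\operatorname{Surf(g)}_\lambda(N,K)$.
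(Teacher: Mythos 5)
Your existence argument coincides with the paper's (nonemptiness of $\operatorname{Surf(g)}^0_\lambda(N,K)$ via the rel.\ boundary theorem). The uniqueness argument, however, has a genuine gap at its very first step: the claim that $\Sigma\mapsto N_\Sigma$ \emph{identifies} $\operatorname{Surf(g)}_\lambda(N,K)$ with its image in $\mathcal{V}_\lambda(M_{K,g})$. The map is well defined, but to deduce $|\operatorname{Surf(g)}_\lambda(N,K)|=1$ from $|\mathcal{V}_\lambda(M_{K,g})|=1$ you need it to be injective: if $N_\Sigma$ and $N_{\Sigma'}$ are identified in $\mathcal{V}_\lambda(M_{K,g})$, there is a homeomorphism $\Phi\colon N_\Sigma\to N_{\Sigma'}$ whose boundary restriction is an \emph{arbitrary} element $f\in\Homeo^+_\varphi(M_{K,g})$, and you must upgrade $\Phi$ to a homeomorphism of pairs $(N,\Sigma)\cong(N,\Sigma')$. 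That requires extending $\Phi$ over the reglued $\Sigma_{g,1}\times D^2$ and $\overline{\nu}(K)$; a general $f$ need not preserve the decomposition $M_{K,g}=E_K\cup(\Sigma_{g,1}\times S^1)$, let alone restrict on the $\Sigma_{g,1}\times S^1$ piece to something extending over $\Sigma_{g,1}\times D^2$. This is precisely the obstruction the paper flags at the start of Subsection~\ref{sub:SurfacesBoundaryEq}: the inverse $\Psi$ of $\Theta$ from Proposition~\ref{prop:EmbVBijections} breaks down in the non-rel-boundary setting, because a homeomorphism of $\Z$-fillings of $M_{K,g}$ is not known to induce one of the filled-in manifolds. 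So even granting your transitivity claim, $|\mathcal{V}_\lambda(M_{K,g})|=1$ only says that any two exteriors are abstractly homeomorphic as fillings, not that the surfaces are equivalent.

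The paper sidesteps this entirely: uniqueness is quoted from \cite[Theorem~1.3]{ConwayPowell}, whose proof produces a homeomorphism of exteriors whose boundary restriction is of the special form ``a homeomorphism of $E_K$ that is the identity on $\partial E_K$, glued to a product homeomorphism of $\Sigma_{g,1}\times S^1$'', which manifestly extends over the regluing. The only work then needed is to remove the condition $f|_{\partial E_K}=\id$ from the hypothesis of that theorem, which the paper does using the Gordon--Luecke theorem: an orientation-preserving self-homeomorphism of $E_K$ preserving $[\mu_K]$ is (basepoint-preservingly) isotopic to one that is the identity on $\partial E_K$. If you want to salvage your route, this is also the missing ingredient: you must restrict to boundary homeomorphisms that extend over the regluing, which is a proper subgroup of $\Homeo^+_\varphi(M_{K,g})$. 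Two smaller points: your assertion that $\Homeo^+(\Sigma_{g,1}\times S^1)$ surjects onto the mapping class group of $T^2$ is false (for instance the class exchanging the two factors of $\partial\Sigma_{g,1}\times S^1$ does not extend for homological reasons); and the correct fix for matching $h_\alpha$ along $T^2$ is again Gordon--Luecke, which lets you take $h_\alpha|_{\partial E_K}=\id$ and extend by the identity.
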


The classification of closed~$\Z$-surfaces then follows from Theorem \ref{thm:SurfacesWithBoundaryIntro}.
To state the result, given a closed simply-connected $4$-manifold $X$,  we use $X_\Sigma$ to denote the exterior of a surface $\Sigma \subset X$
and~$N:=X \setminus \mathring{D}^4$ for the  manifold obtained by puncturing $X$.
The details are presented in Section~\ref{sub:Closed}. The idea behind the  proof is that 
closed surfaces are in bijective correspondence, with surfaces with boundary $U$, so we can apply Theorem~\ref{thm:SurfacesWithBoundaryIntro}.

\begin{theorem}
\label{thm:SurfacesClosedIntro}
Let~$X$ be a closed simply-connected~$4$-manifold.
If a nondegenerate Hermitian form~$(H,\lambda)$ over~$\Z[t^{\pm 1}]$ presents~$\Sigma_g \times S^1$ and satisfies~$\lambda(1)\cong Q_X \oplus  (0)^{\oplus 2g}$, then there exists a unique $($up to equivalence$)$ genus~$g$ surface~$\Sigma \subset X$ whose exterior has equivariant intersection form~$\lambda$.
 \end{theorem}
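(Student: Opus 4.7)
The plan is to reduce Theorem~\ref{thm:SurfacesClosedIntro} to Theorem~\ref{thm:SurfacesWithBoundaryIntro} applied to the unknot. Set $N := X \setminus \mathring{D}^4$; this is a simply-connected $4$-manifold with $\partial N = S^3$ and $Q_N \cong Q_X$. Let $U \subset S^3 = \partial N$ be the unknot. The goal is to construct a natural bijection $\operatorname{Surf(g)}_\lambda(N,U) \cong \operatorname{Surf(g)}_\lambda(X)$ and then invoke Theorem~\ref{thm:SurfacesWithBoundaryIntro} with $K = U$.

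To build the bijection, I would first define a capping-off map $\Phi \colon \operatorname{Surf(g)}_\lambda(N,U) \to \operatorname{Surf(g)}_\lambda(X)$ sending $\Sigma \subset N$ with $\partial \Sigma = U$ to the closed surface $\Sigma \cup D \subset X$, where $D \subset D^4 = X \setminus \mathring{N}$ is the standard flat disc bounded by $U$. For the inverse, given a closed $\Z$-surface $\Sigma' \subset X$, the existence of topological normal bundles for locally flat surfaces in $4$-manifolds allows one to find a $4$-ball $B \subset X$ meeting $\Sigma'$ in a standardly embedded disc with unknotted boundary in $\partial B$; removing $B$ then yields a surface in $X \setminus \mathring{B} \cong N$ with boundary $U$. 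Well-definedness on equivalence classes requires comparing different choices of the $4$-ball $B$, of the identification $X \setminus \mathring{B} \cong N$, and of the representative of the equivalence class of $\Sigma'$. I would establish this by combining uniqueness of tubular neighbourhoods of locally flat surfaces, the Alexander trick, and Freedman's theorem that locally flat $\Z$-discs in $D^4$ with the same boundary are isotopic rel.\ boundary.

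Next, I would verify that $\Phi$ respects the equivariant intersection form. If $\Phi(\Sigma) = \Sigma \cup D$, then $X_{\Sigma \cup D} = N_\Sigma \cup (D^4 \setminus \nu(D))$, glued along the $E_U$ portion of $\partial N_\Sigma \cong M_{U,g}$. Identifying $D^4 = D^2 \times D^2$ with $D = D^2 \times \{0\}$, one sees $D^4 \setminus \nu(D) \cong D^2 \times S^1 \times I \cong E_U \times I$, so the attachment is a collar attachment along $E_U$ and hence does not change the homeomorphism type. Thus $X_{\Sigma \cup D} \cong N_\Sigma$ and in particular $\lambda_{X_{\Sigma \cup D}} \cong \lambda_{N_\Sigma}$, so $\Phi$ restricts to a bijection $\operatorname{Surf(g)}_\lambda(N,U) \cong \operatorname{Surf(g)}_\lambda(X)$.

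Finally, I apply Theorem~\ref{thm:SurfacesWithBoundaryIntro} to $(N, U, \lambda, g)$. The assumption that every isometry of $\Bl_U$ lifts to an orientation-preserving self-homeomorphism of $E_U$ is vacuous since $\Bl_U = 0$. Since $M_{U,g} = E_U \cup_\partial (\Sigma_{g,1} \times S^1) = (D^2 \times S^1) \cup_\partial (\Sigma_{g,1} \times S^1) \cong \Sigma_g \times S^1$, the hypothesis that $(H,\lambda)$ presents $\Sigma_g \times S^1$ matches the presentation hypothesis of the theorem, and $Q_N \cong Q_X$ handles the intersection form condition. The theorem then provides a unique element of $\operatorname{Surf(g)}_\lambda(N,U)$, and via $\Phi$ a unique element of $\operatorname{Surf(g)}_\lambda(X)$. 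The step I expect to be most technically delicate is the construction and well-definedness of the bijection in the second paragraph; in particular, one must carefully combine the locally flat uniqueness statements for tubular neighbourhoods, for standard $4$-balls in $X$, and for $\Z$-discs in $D^4$, all performed in the topological category.
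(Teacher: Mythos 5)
Your proposal is correct and follows essentially the same route as the paper: puncture $X$ to get $N$, identify $M_{U,g}\cong \Sigma_g\times S^1$, establish the capping-off/puncturing bijection $\operatorname{Surf(g)}_\lambda(N,U)\approx\operatorname{Surf(g)}_\lambda(X)$, and apply Theorem~\ref{thm:SurfacesWithBoundaryIntro} to the unknot (whose Blanchfield form is trivial, so the realisability hypothesis is vacuous). The only cosmetic difference is in the injectivity/well-definedness step: where you invoke uniqueness of locally flat $\Z$-discs in $D^4$ rel.\ boundary, the paper also offers a more elementary argument combining the Alexander trick with the $3$-dimensional fact that genus-zero Seifert surfaces for the unknot are standard.
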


Note that the boundary 3-manifold in question here, $\Sigma_g \times S^1$, is the same one that appeared in Example~\ref{example:bdy-surface-x-S1}.
 We conclude with a couple of remarks on Theorems~\ref{thm:SurfacesRelBoundaryIntro},~\ref{thm:SurfacesWithBoundaryIntro},  and~\ref{thm:SurfacesClosedIntro}.
Firstly,  we note that for
 each theorem,
the uniqueness statements follow from~\cite{ConwayPowell}.
 Our contributions in this work are the existence statements.
Secondly, we note that similar results were obtained for closed surfaces with simply-connected complements by Boyer~\cite{BoyerRealization}.
Some open questions concerning $\Z$-surfaces are discussed in Subsection~\ref{sub:OpenQuestions}.

\subsection{Exotica for all equivariant intersection forms }
\label{sub:exoticaintro}
So far, we have seen that the data in Theorems~\ref{thm:ClassificationRelBoundary} and~\ref{thm:SurfacesRelBoundaryIntro} determine  the topological type of $\Z$-manifolds and $\Z$-surfaces respectively.
In what follows, we investigate the smooth failure of these statements.

One of the driving questions in smooth 4-manifold topology is whether every smoothable simply-connected closed 4-manifold admits multiple smooth structures. This question has natural generalisations to 4-manifolds with boundary and with other fundamental groups; we set up these generalisations with the following definition.

\begin{definition}
\label{def:ExoticallyRealisableRel}
For a 3-manifold~$Y$, a (possibly degenerate) symmetric form~$Q$ over~$\Z$ (resp.\ Hermitian form~$\lambda$ over~$\Z[t^{\pm 1}]$) is \textit{exotically realisable rel.~$Y$}
if there exists a pair of smooth simply-connected
4-manifolds~$M$ and~$M'$ with boundary $Y$ (resp.\ $\Z$-manifolds with boundary $Y$)
 and intersection form~$Q$ (resp.\  equivariant intersection form~$\lambda$) such that there is an orientation-preserving homeomorphism~$F \colon M\to M'$ (for $\pi_1 \cong \Z$, we additionally require that $F$ respects the identifications of $\pi_1(M)$ and $\pi_1(M')$ with $\Z$) but no diffeomorphism~$G \colon M\to M'$.
\end{definition}

In this language, the driving question above becomes (a subquestion of) the following: which symmetric bilinear forms over~$\Z$ are exotically realisable rel.~$S^3$?
There is substantial literature demonstrating that some forms are exotically realisable rel.~$S^3$ (we refer to~\cite{AkhmedovPark,AkhmedovPark2} both for the state of the art and for a survey of results on the topic) but there remain many forms, such as definite forms or forms with~$b_2<3$, for which determining exotic realisability rel.~$S^3$ remains out of reach.  For more general 3-manifolds, the situation is worse;  in fact it is an open question whether for every integer homology sphere $Y$ there exists \textit{some} symmetric  form $Q$ that is exotically realisable rel.~$Y$ \cite{EMM3manifolds}.

Presently there only seems to be traction on exotic realisability of intersection forms if one relinquishes control of the homeomorphism type of the boundary.
\begin{definition}\label{def:relexotic}
A symmetric form~$Q$ over~$\Z$ (resp.\ a Hermitian form~$\lambda$ over~$\Z[t^{\pm 1}]$) is \textit{exotically realisable} if there exists pair of smooth simply-connected
4-manifolds~$M$ and~$M'$ with intersection form~$Q$ (resp.\ $\Z$-manifolds with equivariant intersection form~$\lambda$)
such that there is an orientation-preserving homeomorphism~$F \colon M\to M'$ (for $\pi_1 \cong \Z$, we additionally require that $F$ respects the identifications of $\pi_1(M)$ and $\pi_1(M')$ with $\Z$) but no diffeomorphism~$G \colon M\to M'$.
\end{definition}

The following theorem, which appears in \cite{AR16} for $n=0$ and \cite{AkbulutYasui} for $n>1$,  shows that contrarily to the closed setting, \emph{every} symmetric bilinear form over $\Z$ is exotically realisable.

\begin{theorem}[{Akubulut-Yasui~\cite{AkbulutYasui} and Akbulut-Ruberman~\cite{AR16}}]
\label{thm:exoticsimplyconn}
Every symmetric bilinear form~$(\Z^n,Q)$ over~$\Z$ is exotically realisable.
\end{theorem}


Following our classification of $\Z$-manifolds with fixed boundary and fixed equivariant intersection form~$\lambda$ it is natural to ask which Hermitian forms~$\lambda$ are exotically realisable, with or without fixing a parametrisation of the boundary 3-manifold.
We resolve the latter.

\begin{theorem}\label{thm:exoticmanifolds}
Every Hermitian form $(H,\lambda)$ over~$\Z[t^{\pm 1}]$ is exotically realisable.
\end{theorem}

4-manifold topologists are also interested in finding smooth surfaces which are topologically but not smoothly isotopic.
While literature in the closed case includes~\cite{FinashinKreckViro,FintushelStern,
KimModifying,KimRubermanSmooth,
KimRubermanTopological,Mark,HoffmanSunukjian}
there has been a recent surge of interest in the relative setting on which we now focus~\cite{JuhaszMillerZemke,
Hayden,
HaydenKjuchukovaKrishnaMillerPowellSunukjian,
HaydenSundberg, DaiMallickStoffregen}; see also~\cite{AkbulutZeeman}.
Most relevant to us are the exotic ribbon discs from~\cite{Hayden}.
In order to prove that his discs in $D^4$ are topologically isotopic, Hayden showed that their exteriors have group $\Z$ and appealed to~\cite{ConwayPowellDiscs}.
From the perspective of this paper and~\cite{ConwayPowell},  any two $\Z$-ribbon discs are isotopic rel.\ boundary because their exteriors are aspherical and therefore have trivial equivariant intersection form.
To generalise Hayden's result to other forms than the trivial one, we introduce some terminology.

\begin{definition}
\label{def:realisedByExoticSurfaces}
For a fixed smooth simply-connected 4-manifold~$N$, with boundary $S^3$, a form~$\lambda$ over~$\Z[t^{\pm 1}]$ is \emph{realised by exotic $\Z$-surfaces in~$N$} if there exists a pair of smooth properly embedded~$\Z$-surfaces~$\Sigma$ and~$\Sigma'$ in~$N$,  with the same boundary,  whose exteriors have equivariant intersection forms isometric to~$\lambda$,  and which are topologically but not smoothly isotopic rel.\ boundary.
\end{definition}

Using this terminology,  Hayden's result states that the trivial form is realised by exotic $\Z$-discs (in~$D^4$).
The next result shows that in fact \emph{every} form is realised by exotic $\Z$-discs.

\begin{theorem}\label{thm:exoticdiscs}
Every Hermitian form~$(H,\lambda)$ over~$\Z[t^{\pm 1}]$,  such that~$\lambda(1)$ is realised as the intersection form of a smooth simply-connected 4-dimensional 2-handlebody~$N$ with boundary $S^3$, is realised by exotic $\Z$-discs in $N$.
\end{theorem}

\begin{remark}
\label{rem:Smooth}
We make a couple of remarks on Theorems~\ref{thm:exoticmanifolds} and~\ref{thm:exoticdiscs}.
\begin{itemize}
\item The~$11/8$ conjecture predicts that every integer intersection form which is realisable by a  smooth~$4$-manifold with~$S^3$ boundary is realisable by a smooth 4-dimensional 2-handlebody with~$S^3$ boundary, thus our hypothesis on the realisability of~$\lambda(1)$ by 2-handlebodies is likely not an additional restriction (a nice exposition on why this follows from the~$11/8$ conjecture is given in~\cite[page 24]{HLSX}).
\item The handlebody $N$ is very explicit: it can be built from $D^4$ by attaching $2$-handles according to $\lambda(1)$.
In particular,  when $\lambda$ is the trivial form, then $N=D^4$ and so Theorem~\ref{thm:exoticdiscs} demonstrates that there are exotic discs in $D^4$. This was originally proved in~\cite{Hayden}, and we note that our proof relies on techniques developed there.
\item The proof of Theorem~\ref{thm:exoticdiscs} also shows that every smooth $2$-handlebody with $S^3$ boundary contains a pair of exotic $\Z$-discs.
 We expand on this above the statement of Theorem~\ref{thm:ExoticDiscsMain}.
\end{itemize}
\end{remark}

We briefly mention the idea of the proof of Theorem~\ref{thm:exoticmanifolds}.
For a given Hermitian form $(H,\lambda)$ over~$\Z[t^{\pm 1}]$, we construct a Stein $4$-manifold $M$ with $\pi_1(M) \cong \Z$ and $\lambda_M \cong \lambda$ that contains a cork.
Twisting along this cork produces the $4$-manifold $M'$ and the homeomorphism $F \colon M \cong M'$.
We show that if $F|_\partial$ extended to a diffeomorphism $M \cong M'$,  two auxiliary $4$-manifolds $W$ and $W'$ (obtained from $M$ and $M'$ by adding a single $2$-handle) would be diffeomorphic.
We show this is not the case by proving that $W$ is Stein whereas $W'$ is not using work of Lisca-Matic~\cite{LiscaMatic}.  This proves that $M$ and $M'$ are non-diffeomorphic  rel.\ $F|_\partial$. We then use a result of \cite{AR16} to show that there exists a pair of smooth manifolds $V$ and $V'$, which are homotopy equivalent to~$M$ and~$M'$ respectively, and which are homeomorphic but not diffeomorphic to each other.
The proof of Theorem~\ref{thm:exoticdiscs} uses similar ideas.

\subsection*{Organisation}

In Section~\ref{sec:MainTechnicalIntro} we describe our main technical result and how it implies Theorem~\ref{thm:ClassificationRelBoundary}. 
In Section~\ref{sec:Prelim}, we recall and further develop the theory of equivariant linking numbers.
In Section~\ref{sec:reidemeister-torsion} we review the facts we will need on Reidemeister torsion.
Section~\ref{sec:ProofMainTechnical}, we prove our main technical result, Theorem~\ref{thm:MainTechnicalIntro}.
Section~\ref{sec:Discs} is concerned with our applications to surfaces and in particular, we prove Theorems~\ref{thm:SurfacesRelBoundaryIntro},~\ref{thm:SurfacesWithBoundaryIntro}
and~\ref{thm:SurfacesClosedIntro}.
Our results in the smooth category, namely Theorems~\ref{thm:exoticmanifolds} and~\ref{thm:exoticdiscs}, are proved in Section~\ref{sec:ubiq}.
Finally,  Section~\ref{sec:NonTrivialbAut}  exhibits the arbitrarily large collections promised in Example~\ref{ex:LargeStableClassIntro}

\subsection*{Conventions}
\label{sub:Conventions}

In Sections~\ref{sec:MainTechnicalIntro}-\ref{sec:Discs}
 and~\ref{sec:NonTrivialbAut}, we work in the topological category with locally flat embeddings unless otherwise stated.
In Section~\ref{sec:ubiq},  we work in the smooth category.

From now on, all manifolds are assumed to be compact, connected, based and oriented; if a manifold has a nonempty boundary, then the basepoint is assumed to be in the boundary.

If $P$ is manifold and $Q \subseteq P$ is a submanifold with closed tubular neighborhood $\ol{\nu}(Q) \subseteq P$, then~$P_Q := P \setminus \nu(Q)$ will always denote the exterior of $Q$ in $P$, that is the complement of the open tubular  neighborhood.
The only exception to this use of notation is that the exterior of a knot~$K$ in $S^3$ will be denoted $E_K$ instead of~$S^3_K$.

We write~$p \mapsto \overline{p}$ for the involution on~$\Z[t^{\pm 1}]$ induced by~$t \mapsto t^{-1}$.
Given a~$\Z[t^{\pm 1}]$-module~$H$, we write~$\overline{H}$ for the~$\Z[t^{\pm 1}]$-module whose underlying abelian group is~$H$ but with module structure given by~$p \cdot h=\overline{p}h$ for~$h \in H$ and~$p \in \Z[t^{\pm 1}]$.
We write $H^*:=\overline{\Hom_{\Z[t^{\pm 1}]}(H,\Z[t^{\pm 1}])}$.

If a pullback map $F^*$ is invertible we shall abbreviate $(F^*)^{-1}$ to $F^{-*}$.
Similarly, for an invertible square matrix $A$ we write $A^{-T} := (A^T)^{-1}$.

\subsection*{Acknowledgments}
We thank the referee of a previous draft of this paper for helpful comments on the exposition.
L.P.\ was supported in part by a Sloan Research Fellowship and a Clay Research Fellowship. L.P.\ thanks the National Center for Competence in Research (NCCR) SwissMAP of the Swiss National Science Foundation for their hospitality during a portion of this project.  M.P.\ was partially supported by EPSRC New Investigator grant EP/T028335/2 and EPSRC New Horizons grant EP/V04821X/2.

\section{The main technical realisation statement}
\label{sec:MainTechnicalIntro}
The goal of this section is to formulate our main technical theorem,  to explain how it implies Theorem~\ref{thm:ClassificationRelBoundary} from the introduction, and to formulate its non-relative analogue.
Along the way we also define the automorphism invariant in more detail.
We begin by defining a set of $\Z$-manifolds $\mathcal{V}_\lambda^0(Y)$ with boundary $Y$ and intersection form $\lambda$.
 Then we describe a map~$b\colon \mathcal{V}_\lambda^0(Y)\to\Iso(\partial \lambda,\unaryminus\Bl_Y)/\Aut(\lambda)$.
Theorem~\ref{thm:ClassificationRelBoundary} (as formulated in Remark~\ref{rem:BijectionRelBoundary}) then reduces to the statement that~$b$ is a bijection. As we will explain, the injectivity of~$b$ follows from~\cite[Theorem~1.10]{ConwayPowell}. The main technical result of this paper is Theorem \ref{thm:MainTechnicalIntro}, which gives the surjectivity of~$b$ (and thus implies Theorem \ref{thm:ClassificationRelBoundary}).
We also prove in this section that Theorem \ref{thm:Classification}, our absolute (i.e. non- rel. boundary) homeomorphism classification result,  follows from Theorem \ref{thm:ClassificationRelBoundary}.
We finish the section with an outline of the proof of Theorem~\ref{thm:MainTechnicalIntro}.

We start by describing the set $\mathcal{V}_\lambda^0(Y)$ from Theorem~\ref{thm:ClassificationRelBoundary} more carefully.

\begin{definition}
\label{def:V0lambdaY}
Let~$Y$ be a~$3$-manifold with an epimorphism~$\varphi \colon \pi_1(Y) \twoheadrightarrow \Z$ whose Alexander module is torsion, and let~$(H,\lambda)$ be a Hermitian form presenting~$Y$.
Consider the set~$S_\lambda(Y)$ of pairs~$(M,g)$, where
\begin{itemize}
\item $M$ is a~$\Z$-manifold with a fixed identification $\pi_1(M) \xrightarrow{\cong} \Z$, equivariant intersection form isometric to~$\lambda$, and boundary homeomorphic to~$Y$;
\item $g \colon  \partial M \xrightarrow{\cong} Y$ is an orientation-preserving homeomorphism such that~$Y \xrightarrow{g^{-1},\cong} \partial M \to M$ induces~$\varphi$ on fundamental groups.
\end{itemize}
Define~$\mathcal{V}_\lambda^0(Y)$ as the quotient of~$S_\lambda(Y)$ in which two pairs~$(M_1,g_1), (M_2,g_2)$ are deemed equal if and only if there is a homeomorphism~$\Phi \colon M_1 \cong M_2$ such that~$ \Phi|_{\partial M_1}=g_2^{-1} \circ g_1$.   Note that such a homeomorphism is necessarily orientation-preserving because $g_1$ and $g_2$ are.
For conciseness, we will say that~$(M_1,g_1)$ and~$(M_2,g_2)$ are \emph{homeomorphic rel.\ boundary} to indicate the existence of such a homeomorphism~$\Phi$.
\end{definition}

\begin{remark}
\label{rem:BijectionRelBoundary}
Using Definition~\ref{def:V0lambdaY}, Theorem~\ref{thm:ClassificationRelBoundary} is equivalent to the following statement.  \emph{If $\lambda$ presents $Y$, then~$\mathcal{V}_\lambda^0(Y)$ is nonempty and corresponds bijectively to}
\begin{itemize}
\item \emph{$\Iso(\partial \lambda,\unaryminus\Bl_Y)/\Aut(\lambda)$, if~$\lambda$ is an even form;}
 \item \emph{$\left( \Iso(\partial \lambda,\unaryminus\Bl_Y)/\Aut(\lambda)\right) \times \Z_2$ if~$\lambda$ is an odd form. The map to $\Z_2$ is given by the Kirby-Siebenmann invariant.}
\end{itemize}
The bijection is explicit and will be constructed in Construction~\ref{cons:EmbVBijection}.

Additionally, note that  since~$(H,\lambda)$ is assumed to present~$Y$, there is an isometry~$\partial \lambda \cong \unaryminus \Bl_Y$ and fixing a choice of one such isometry leads to a bijection
$$\Iso(\partial \lambda,\unaryminus\Bl_Y)/\Aut(\lambda) \approx \Aut(\partial \lambda)/\Aut(\lambda),$$
where $\Aut(\partial \lambda)$ denotes the group of self-isometries of $\partial  \lambda$.
Note however that this bijection is not canonical as it depends on the choice of the isometry~$\partial \lambda \cong -\Bl_Y$.
\end{remark}

\begin{construction}
\label{cons:Invariant}
[Constructing the map~$b \colon \mathcal{V}_\lambda^0(Y)\to\Iso(\partial \lambda,\unaryminus\Bl_Y)/\Aut(\lambda)$.]
\label{cons:PresentationAssociatedToManifold}
Let~$Y$ be a~$3$-manifold with an epimorphism~$\varphi \colon \pi_1(Y) \twoheadrightarrow \Z$ whose corresponding Alexander module is torsion, and let~$(H,\lambda)$ be a form presenting~$Y$.
Let~$(M,g)$ be an element of~$\mathcal{V}^0_\lambda(Y)$, i.e.\ $M$ is a~$\Z$-manifold with
equivariant intersection form isometric to~$\lambda$ and $g \colon \partial M  \cong Y$ is a homeomorphism as in Definition~\ref{def:V0lambdaY}.

In the text preceding Theorem \ref{thm:ClassificationRelBoundary}, we showed how~$M$ determines an isometry~$D_M  \in \Iso(\partial \lambda_M, \unaryminus \Bl_{\partial M})$.
Morally, one should think that this isometry~$D_M$ is the invariant we associate to~$M$.
For this to be meaningful however,  we instead need an isometry that takes value in a set defined in terms of just the 3-manifold~$Y$ and the form~$(H,\lambda)$, without referring to $M$ itself.
We resolve this by composing~$D_M$ with other isometries, so that our invariant is ultimately an element of~$\Iso(\partial \lambda,\unaryminus\Bl_Y)$. Once we have built the invariant, we will show it is well defined up to an action by~$\Aut(\lambda)$.

We first use $g$ to describe an isometry  $\Bl_{\partial M} \cong \Bl_Y$.
Since on the level of fundamental groups~$g$ intertwines the maps to $\Z$, \cite[Proposition 3.7]{ConwayPowell} implies that~$g$ induces an isometry
$$g_* \colon \Bl_{\partial M} \cong \Bl_Y.$$

Next we describe an isometry $\partial \lambda \cong \partial \lambda_M$.
The assumption that~$M$ has equivariant intersection form~$\lambda$ means by definition that there is an isometry  $F \colon \lambda  \cong \lambda_M$,  i.e.\ an isomorphism~$F \colon H\to H_2(M;\Z[t^{\pm 1}])$ that intertwines the forms~$\lambda$ and~$\lambda_M$. Note that there is no preferred choice of~$F$.
Any such~$F$ induces an isometry~$\partial F \in \Aut(\partial \lambda,\partial \lambda_M)$ as follows:
~$F \colon H\to H_2(M;\Z[t^{\pm 1}])$ gives an isomorphism~$(F^{*})^{-1} \colon H^*\to H_2(M;\Z[t^{\pm 1}])^*$ that descends to an isomorphism~$\coker(\widehat{\lambda})\cong \coker(\widehat{\lambda}_M)$ and is in fact an isometry; this is by definition
$$\partial F := (F^{*})^{-1} \colon \partial \lambda \cong \partial \lambda_M.$$
This construction is described in greater generality in~\cite[Subsection 2.2]{ConwayPowell}.
We shall henceforth abbreviate $(F^*)^{-1}$ to $F^{-*}$.

We are now prepared to associate  an isometry in~$\Iso(\partial \lambda,\unaryminus\Bl_Y)$ to~$(M,g)\in \mathcal{V}_\lambda^0(Y)$ as follows: choose an isometry~$F \colon \lambda_M \cong \lambda$ and consider the isometry
$$b_{(M,g,F)}:=g_* \circ D_M \circ \partial F \in \Iso(\partial \lambda,\unaryminus\Bl_Y).$$
We are not quite done, because we need to ensure that our invariant is independent of the choice of~$F$ and that $b$ defines a map on $\mathcal{V}_\lambda^0(Y)$.

First, we will make our invariant independent of the choice of~$F$. We require the following observation.
Given a Hermitian form $(H,\lambda)$ and linking form $(T,\ell)$,  there is a natural left action~$\Aut(\lambda) \curvearrowright \Iso(\partial \lambda,\ell)$ defined via
\begin{equation}\label{eq:autaction}
 G \cdot h  :=h \circ \partial G^{-1} \text{ for } G\in\Aut(\lambda) \text{ and } h \in   \Iso(\partial \lambda,\ell).
\end{equation}
In particular, we can consider~$$b_{(M,g)}:=g_* \circ D_M \circ \partial F \in \Iso(\partial \lambda,\unaryminus\Bl_Y)/\Aut(\lambda).$$
It is now not difficult to check that~$b_{(M,g)}$ is independent of the choice of~$F$.

The fact that if~$(M_0,g_0)$ and~$(M_1,g_1)$ are homeomorphic rel.\ boundary (recall  Definition~\ref{def:V0lambdaY}), then $b_{(M_0,g_0)}=b_{(M_1,g_1)}$ follows fairly quickly.
From now on we omit the boundary identification~$g \colon  \partial M \cong Y$ from the notation, writing~$b_M$ instead of~$b_{(M,g)}$. This concludes the construction of our automorphism invariant.
\end{construction}

We are now ready to state our main technical theorem.

\begin{theorem}
\label{thm:MainTechnicalIntro}
Let~$Y$ be a~$3$-manifold with an epimorphism~$\varphi \colon \pi_1(Y) \twoheadrightarrow \Z$ whose Alexander module is torsion, and let~$(H,\lambda)$ be a nondegenerate Hermitian form presenting~$Y$.
If~$b \in \Iso(\partial \lambda,\unaryminus\Bl_Y)/\Aut(\lambda)$ is an isometry, then there is a~$\Z$-manifold~$M$ with equivariant intersection form~$\lambda_M \cong \lambda$,  boundary~$Y$ and~$b_M=b$.
If the form is odd, then~$M$ can be chosen to have either~$\ks(M)=0$ or~$\ks(M)=1$.
\end{theorem}

We now describe how to obtain Theorem~\ref{thm:ClassificationRelBoundary} (as formulated in Remark~\ref{rem:BijectionRelBoundary}) by combining this result with~\cite{ConwayPowell}.

\begin{proof}[Proof of Theorem~\ref{thm:ClassificationRelBoundary} assuming Theorem~\ref{thm:MainTechnicalIntro}]
First, notice that Theorem~\ref{thm:MainTechnicalIntro} implies the surjectivity portion of the statement in
Theorem~\ref{thm:ClassificationRelBoundary}.
It therefore suffices to prove that the assignment~$ \mathcal{V}_\lambda^0(Y) \to \Iso(\partial \lambda,\unaryminus\Bl_Y)/\Aut(\lambda)$ which sends~$M$ to~$b_M$ is injective for~$\lambda$ even, and that the assignment~$\mathcal{V}^0_\lambda(Y) \to\left( \Iso(\partial \lambda,\unaryminus\Bl_Y)/\Aut(\lambda)\right) \times~\Z_2$ which sends~$M$ to~$(b_M,\ks(M))$ is injective for~$\lambda$ odd.

Let~$(M_0,g_0),(M_1,g_1)$ be two pairs representing elements in~$\mathcal{V}_\lambda^0(Y)$.
Each~$4$-manifold~$M_i$ comes with an isometry~$F_i\colon (H, \lambda)\to (H_2(M_i;\Z[t^{\pm 1}]),\lambda_{M_i})$ and for $i=0,1$, the homeomorphisms $g_i \colon \partial M_i \to Y$ are as in Definition~\ref{def:V0lambdaY}.
We then get epimorphisms
$$(g_i)_* \circ D_{M_i}\circ \partial F_i\circ \pi\colon H^* \twoheadrightarrow H_1(Y; \Z[t^{\pm 1}]).$$
Here $\pi \colon H^* \to \coker(\widehat{\lambda})$ denotes the canonical projection.
We assume that~$b_{M_0} = b_{M_1}$ and, if~$\lambda$ is odd, then we additionally assume that~$\ks(M_0)=\ks(M_1)$. The fact that~$b_{M_0} = b_{M_1}$ implies that there is an isometry~$F \colon (H,\lambda) \cong (H,\lambda)$ that makes the following diagram commute:
$$
\xymatrix @C+1.5cm{
0\ar[r] &H \ar[r]^{\widehat{\lambda}}\ar[d]_F & H^* \ar[r]^-{(g_0)_* \circ  D_{M_0} \circ \partial F_0 \circ \pi}\ar[d]_-{F^{-*}}& H_1(Y;\Z[t^{\pm 1}])\ar[d]^=  \ar[r]&0 \\
0\ar[r] &H  \ar[r]^{\widehat{\lambda}}& H^* \ar[r]^-{(g_1)_* \circ D_{M_1} \circ \partial F_1 \circ \pi}& H_1(Y;\Z[t^{\pm 1}]) \ar[r]&0.
}
$$
But now, by considering the isometry~$G \colon \lambda_{M_0} \cong \lambda_{M_1}$ defined by~$G:=F_1 \circ F \circ F_0^{-1}$, a quick verification
shows that~$(G,\id_Y)$ is a compatible pair in the sense of~\cite{ConwayPowell}.
Consequently~\cite[Theorem 1.10]{ConwayPowell} shows that there is a homeomorphism~$M_0 \cong M_1$ extending~$\id_Y$ and inducing~$G$; in particular~$M_0$ and~$M_1$ are homeomorphic rel.\ boundary.
\end{proof}

\begin{remark}
\label{rem:KSProof}
For $(Y,\varphi)$ as in Theorem~\ref{thm:MainTechnicalIntro}, we explain the fact (already mentioned in Remark~\ref{rem:MainTheorem}) that if $M_0$ and $M_1$ are spin $4$-manifolds with $\pi_1(M_i) \cong \Z$,  boundary homeomorphic to~$(Y,\varphi)$, isometric equivariant intersection form, and the same automorphism invariant, then their Kirby-Siebenmann invariants agree.
As explained during the proof of Theorem~\ref{thm:ClassificationRelBoundary},  these assumptions ensure the existence of a compatible pair~$(G,\id_Y)$.
This in turn implies that $M:=M_0 \cup_{g_0 \circ g_1^{-1}} M_1$ is spin and has fundamental group~$\Z $~\cite[Theorem 3.12]{ConwayPowell}.
The assertion now follows from additivity of $\ks$ and Novikov additivity of the signature:
$$\ks(M_0)+\ks(M_1) = \ks(M) \equiv \frac{\sigma(M)}{8}=\frac{\sigma(M_0)-\sigma(M_1)}{8}=0 \quad \pmod 2.$$
We also use that the signatures of $M$, $M_0$, and $M_1$ can be obtained from the respective equivariant intersection forms by specialising to $t=1$ and taking the signature.

In Section~\ref{sub:Example}, we exhibit examples of spin 4-manifolds with boundary homeomorphic to $\unaryminus L(8,1) \# (S^1 \times S^2)$ and isometric equivariant intersection form that have different Kirby-Siebenmann invariants, demonstrating that the automorphism invariant was needed in the argument of this remark.
\end{remark}

Next we outline the strategy of the proof of Theorem~\ref{thm:MainTechnicalIntro}.

\begin{proof}[Outline of the proof of Theorem~\ref{thm:MainTechnicalIntro}]
\label{pf:ProofStrategy}
The idea is to perform surgeries on~$Y$ along a set of generators of~$H_1(Y;\Z[t^{\pm 1}])$ to obtain a~$3$-manifold~$Y'$ with~$H_1(Y';\Z[t^{\pm 1}])=0$.
The verification that $H_1(Y';\Z[t^{\pm 1}])=0$ uses Reidemeister torsion.
We then use surgery theory to show that this~$Y'$ bounds a~$4$-manifold~$B$ with~$B \simeq S^1$; this step relies on Freedman's work in the topological category~\cite{Freedman, FreedmanQuinn,DET}.
The~$4$-manifold~$M$ is then obtained as the union of the trace of these surgeries with~$B$.
To show that in the odd case both values of the Kirby-Siebenmann invariant are realised, we use the star construction~\cite{FreedmanQuinn,StongRealization}.
The main difficulty of the proof is to describe the correct surgeries on~$Y$ to obtain~$Y'$; this is where the fact that~$\lambda$ presents~$\Bl_Y$ comes into play: we show that generators of~$H_1(Y;\Z[t^{\pm 1}])$ can be represented by a framed link~$\widetilde{L}$ with equivariant linking matrix equal to minus the transposed inverse of a matrix representing~$\lambda$.
\end{proof}

This is a strategy similar to the one employed in  Boyer's classification of simply-connected~$4$-manifolds with a given boundary~\cite{BoyerRealization}.
The argument is also reminiscent of~\cite[Theorem~2.9]{BorodzikFriedlClassical1}, where Borodzik and Friedl obtain bounds (in terms of a presentation matrix for~$\Bl_K$) on the number of crossing changes required to turn~$K$ into an Alexander polynomial one knot: they perform surgeries on the zero-framed surgery~$Y=M_K$ to obtain~$Y'=M_{K'}$,  where~$K'$ is an Alexander polynomial one knot.

\begin{remark}
\label{rem:HomotopyEquivalence}
As we mentioned in Construction~\ref{cons:Invariant}, if~$M_0$ and~$M_1$
are
homeomorphic rel.\ boundary, then~$b_{M_0}=b_{M_1}$ in~$\Iso(\partial \lambda,\unaryminus\Bl_Y)/\Aut(\lambda)$.
In fact the same proof shows more.
If two $4$-manifolds~$M_0$ and~$M_1$ that represent elements
of $\mathcal{V}^0_\lambda(Y)$ are \emph{homotopy equivalent} rel.\  boundary, then~$b_{M_0}=b_{M_1}$ in~$\Iso(\partial \lambda,\unaryminus\Bl_Y)/\Aut(\lambda)$.
\end{remark}

Next, we describe how the classification in the case where the homeomorphisms need not fix the boundary pointwise
follows from Theorem \ref{thm:ClassificationRelBoundary}.
To this effect,  we use~$\Homeo^+_\varphi(Y)$ to denote the orientation-preserving homeomorphisms of~$Y$ such that the induced map on~$\pi_1$ commutes with~$\varphi \colon \pi_1(Y) \twoheadrightarrow \Z$ and
we
describe the set of homeomorphism classes of $\Z$-manifolds that we will be working with.

\begin{definition}
\label{def:VlambdaY}
For $Y$ and $(H,\lambda)$ as in Definition~\ref{def:V0lambdaY},  define~$\mathcal{V}_\lambda(Y)$ as the quotient of~$S_\lambda(Y)$ in which two pairs~$(M_1,g_1), (M_2,g_2)$ are deemed equal  if and only if there is a homeomorphism~$\Phi \colon M_1 \cong M_2$ such that~$ \Phi|_{\partial M_1}=g_2^{-1} \circ f \circ g_1$ for some~$f \in \Homeo^+_\varphi(Y)$; note that such a homeomorphism $\Phi$ is necessarily orientation-preserving.
\end{definition}


We continue to set up notation to describe how
the non relative classification
follows from Theorem \ref{thm:ClassificationRelBoundary}.
Observe that the group~$\Homeo^+_\varphi(Y)$ acts on~$\mathcal{V}_\lambda^0(Y)$ by setting~$f \cdot (M,g):=(M,f \circ g)$ for~$f\in \Homeo^+_\varphi(Y)$. Further, observe that
\begin{equation}
\label{eq:NotRelBoundary}
 \mathcal{V}_\lambda(Y)=\mathcal{V}_\lambda^0(Y)/\Homeo^+_\varphi(Y).
 \end{equation}
Recall that  any~$f \in \Homeo^+_\varphi(Y)$ induces an isometry~$f_*$ of the Blanchfield form~$\Bl_Y$.
Thus the group~$\Homeo^+_\varphi(Y)$ acts on~$\Iso(\partial \lambda,\unaryminus \Bl_Y)$ by~$f \cdot h:=f_* \circ h$.
Finally, there is a natural left action~$\Aut(\lambda) \times \Homeo^+_\varphi(Y)$  on~$\Iso(\partial \lambda,\unaryminus \Bl_Y)$ defined via
\begin{equation}\label{eq:autaction2}
(F,f) \cdot h:=f_* \circ h \circ \partial F^{-1}.
\end{equation}

The non-relative classification statement reads as follows.

\begin{theorem}
\label{thm:Classification}
%
Let~$Y$ be a~$3$-manifold with an epimorphism~$\pi_1(Y) \twoheadrightarrow \Z$ whose Alexander module is torsion,  let~$(H,\lambda)$ be a nondegenerate Hermitian form over $\Z[t^{\pm 1}]$.
 Consider the set~$\mathcal{V}_\lambda(Y)$ of~$\Z$-manifolds
with boundary~$\partial M\cong Y$, and~$\lambda_M \cong \lambda$, considered up to orientation-preserving homeomorphism.

 \noindent  If the form $(H,\lambda)$ presents $Y$, then~$\mathcal{V}_\lambda(Y)$ is nonempty and corresponds bijectively to
\begin{enumerate}
\item~$\Iso(\partial \lambda,\unaryminus\Bl_Y)/(\Aut(\lambda) \times \Homeo^+_\varphi(Y))$, if~$\lambda$ is an even form;
 \item $\left(  \Iso(\partial \lambda,\unaryminus\Bl_Y)/(\Aut(\lambda) \times \Homeo^+_\varphi(Y))\right) \times \Z_2$ if~$\lambda$ is an odd form. The map to $\Z_2$ is given by the Kirby-Siebenmann invariant.
 \end{enumerate}
\end{theorem}

%


\begin{proof}
Thanks to Theorem~\ref{thm:ClassificationRelBoundary}  (as formulated in Remark~\ref{rem:BijectionRelBoundary}) and~\eqref{eq:NotRelBoundary}, it suffices to prove that the map~$b$ respects the~$\Homeo^+_\varphi(Y)$ actions, i.e.\ that~$b_{f \cdot (M,g)}=f \cdot b_{(M,g)}$, where~$g \colon \partial M \cong Y$ is a homeomorphism as in Definition~\ref{def:V0lambdaY} and $f \in \Homeo^+_\varphi(Y)$.
This now follows from the following formal calculation:
$b_{f \cdot (M,g)}=b_{(M,f \circ g)}=f_* \circ g_* \circ D_M \circ \partial F=f \cdot b_{(M,g)},$
 where~$F \colon \lambda_M \cong \lambda$ is an isometry and we used the definitions of the~$\Homeo^+_\varphi(Y)$ actions and of the map~$b$.
\end{proof}

\begin{remark}
\label{rem:UserGuide}
To make the results as user friendly as possible,  we spell out how to apply them in practice.
Fix an oriented $3$-manifold $Y$ with torsion Alexander module.
Two orientable $\Z$-manifolds $M_0$ and $M_1$ with boundary $Y$ are homeomorphic if and only if they have the same Kirby-Siebenmann invariants, and the following hold.
  \begin{enumerate}
    \item There are identifications $\psi_i \colon \pi_1(M_i) \xrightarrow{\cong} \Z$, for $i=0,1$, and
    \item there are homeomorphism $g_i \colon Y \xrightarrow{\cong} \partial M_i$, for $i=0,1$, and a surjection $\pi_1(Y) \to \Z$, such that $\psi_i \circ \operatorname{incl}_i \circ g_i = \varphi$ for $i=0,1$,
    and such that
    \item using the coefficient system induced by the $\psi_i$, and the orientations induced by the $g_i$ to define the intersection forms, there is an isometry
    \[F \colon (H_2(M_0;\Z[t^{\pm 1}]),\lambda_{M_0}) \cong (H_2(M_1;\Z[t^{\pm 1}]),\lambda_{M_1}), \text{ and }\]
    \item with respect to this isometry we have that $b_{M_0} = b_{M_1} \in \Iso(\partial \lambda_{M_0},\unaryminus\Bl_Y)/(\Aut(\lambda_{M_0}) \times \Homeo^+_\varphi(Y))$ or,  equivalently, there exists an isometry $F \colon \lambda_{M_0} \cong \lambda_{M_1}$ whose algebraic boundary $\partial F \colon \partial \lambda_{M_0} \cong \partial \lambda_{M_1}$ is induced by some orientation-preserving homeomorphism $f \colon Y \to Y$ that intertwines $\varphi$.
    In~\cite{ConwayPowell} such a pair $(f,F)$ was called \emph{compatible}.
  \end{enumerate}
\end{remark}

The next few sections are devoted to proving Theorem~\ref{thm:MainTechnicalIntro}.

\section{Equivariant linking and longitudes}
\label{sec:Prelim}

We collect some preliminary notions that we will need later on.
In Subsection \ref{sub:HomologyIntersections} we fix our notation for twisted homology and equivariant intersections.
In Subsection~\ref{sub:EquivariantLinking},  we collect some facts about linking numbers in infinite cyclic covers, while in Subsection~\ref{sub:Parallels},  we define an analogue of integer framings of a knot in~$S^3$ for knots in infinite cyclic covers.

\subsection{Covering spaces and twisted homology}
\label{sub:HomologyIntersections}
We fix our conventions on twisted homology and recall some facts about equivariant intersection numbers.
We refer the reader interested in the intricacies of transversality in the topological category to~\cite[Section 10]{FriedlNagelOrsonPowell}.
\medbreak

We first introduce some notation for infinite cyclic covers.
Given a space~$X$ that has the homotopy type of a finite CW complex,
 together with an epimorphism~$\varphi \colon \pi_1(X) \twoheadrightarrow \Z$, we write~$p\colon X^\infty \to X$ for the infinite cyclic cover corresponding to~$\ker(\varphi)$.
If~$A \subset X$ is a subspace
then we set~$A^\infty :=p^{-1}(A)$ and often write~$H_*(X,A;\Z[t^{\pm 1}])$ instead of~$H_*(X^\infty,A^\infty)$.
Similarly,  since~$\Q(t)$ is flat over~$\Z[t^{\pm 1}]$, we often write~$H_*(X,A;\Q(t))$ or~$H_*(X,A;\Z[t^{\pm 1}]) \otimes_{\Z[t^{\pm 1}]} \Q(t)$ instead of~$H_*(X^\infty,A^\infty) \otimes_{\Z[t^{\pm 1}]} \Q(t)$.

\begin{remark}
\label{rem:AlexanderPolynomial}
The \emph{Alexander polynomial} of $X$, denoted $\Delta_X$ is the order of the \emph{Alexander module}~$H_1(X;\Z[t^{\pm 1}])$.
While we refer to Remark~\ref{rem:AlexPoly} below for some recollections on orders of modules, here we simply note that $\Delta_X$ is a Laurent polynomial that is well defined up to multiplication by~$\pm t^k$ with~$k \in \Z$ and that if~$X=M_K$ is the $0$-framed surgery along a knot~$K$, then $\Delta_X$ is the Alexander polynomial of $K$.
\end{remark}

Next, we move on to equivariant intersections in covering spaces.

\begin{definition}
\label{def:EquivariantIntersection}
Let~$M$ be an~$n$-manifold (with possibly nonempty boundary) with an epimorphism~$\pi_1(M)\twoheadrightarrow \Z$.
For a~$k$-dimensional closed submanifold ~$A \subset M^\infty$ and an~$(n-k)$-dimensional closed submanifold~$A' \subset M^\infty$ such that $A$ and $t^jA'$ intersect transversely for all $j \in \Z$, we define the \emph{equivariant intersection}~$A\cdot_{\infty,M}A' \in \Z[t^{\pm1} ]$ as
$$A\cdot_{\infty, M}A'=\sum_{j\in \Z} (A\cdot_{M^\infty} (t^jA'))t^{-j},$$
 where~$\cdot_{M^\infty}$ denotes the usual (algebraic)  signed count of points of intersection.
If the boundary of~$M$ is nonempty and~$A' \subset M$ is properly embedded, then we can make the same definition and also write~$A\cdot_{\infty, M}A'  \in \Z[t^{\pm1} ]$.
\end{definition}

\begin{remark}
\label{rem:EquivariantIntersections}
We collect a couple of observations about equivariant intersections.
\begin{enumerate}
\item Equivariant intersections are well defined on homology and in fact~$A\cdot_{\infty, M}A'=\lambda([A'],[A])$, where~$\lambda$ denotes the equivariant intersection form
$$ \lambda \colon H_k(M;\Z[t^{\pm 1}]) \times H_{n-k}(M;\Z[t^{\pm 1}]) \to \Z[t^{\pm 1}].$$
The reason for which $A\cdot_{\infty, M}A'$ equals $\lambda([A'],[A])=\overline{\lambda([A],[A'])}$ instead of $\lambda([A],[A'])$ is due to the fact that we are following the conventions from~\cite[Section 2]{ConwayPowell} in which the adjoint of a Hermitian form $\lambda \colon H \times H \to \Z[t^{\pm 1}]$ is defined by the equation $\widehat{\lambda}(y)(x)=\lambda(x,y)$.
With these conventions $\lambda$ is linear in the first variable and anti-linear in the second, whereas~$\cdot_{\infty,M}$ is linear in the second variable and anti-linear in the first.
\item When~$\partial M \neq \emptyset$ and~$A \subset M$ is a properly embedded submanifold with boundary,  then again~$A\cdot_{\infty, M}A'=\lambda^\partial([A'],[A])$ where this time~$\lambda^\partial$ denotes the pairing
$$ \lambda^\partial \colon H_k(M;\Z[t^{\pm 1}]) \times H_{n-k}(M,\partial M;\Z[t^{\pm 1}]) \to \Z[t^{\pm 1}].$$
As previously $\lambda^\partial$ is linear in the first variable and anti-linear in the second.
\item The definition of the pairings~$\lambda$  and~$\lambda^\partial$ can be made with arbitrary twisted  coefficients.
In order to avoid extraneous generality, we simply mention that there are~$\Q(t)$-valued pairings~$\lambda_{\Q(t)}$ and~$\lambda_{\Q(t)}^\partial$ defined on homology with~$\Q(t)$-coefficients and that if~$A,B \subset M^\infty$ are closed submanifolds of complementary dimension, then~$\lambda_{\Q(t)}([A],[B])=\lambda([A],[B])$ and similarly for properly embedded submanifolds with boundary.
\end{enumerate}
\end{remark}

\subsection{Equivariant linking}
\label{sub:EquivariantLinking}
We recall definitions and properties of equivariant linking numbers.
Other papers that feature discussions of the topic include~\cite{PrzytyckiYasuhara, BorodzikFriedlLinking,KimRuberman}.
\medbreak

We assume for the rest of the section that~$Y$ is a~$3$-manifold and that~$\varphi \colon \pi_1(Y) \twoheadrightarrow \mathbb{Z}$ is an epimorphism such that the corresponding Alexander module~$H_1(Y;\Z[t^{\pm 1}])$ is torsion, i.e.\ ~$H_*(Y;\Q(t))=0$.
We also write~$p \colon Y^\infty \to Y$ for the infinite cyclic  cover corresponding to~$\ker(\varphi)$ so that~$H_1(Y;\Z[t^{\pm 1}])=H_1(Y^\infty)$.
Given a simple closed curve~$\widetilde{a} \subset Y^\infty$,
 we write~$a^\infty:=\bigcup_{k \in \Z} t^k \widetilde{a}$ for the union of all the translates of~$\widetilde{a}$ and~$a:=p(\widetilde{a}) \subset Y$ for the projection of~$\widetilde{a}$ down to~$Y$.
This way,  the covering map~$p \colon Y^\infty \to Y$ restricts to a covering map
$$ Y^\infty \setminus \nu(a^\infty) \to Y\setminus \nu (a)=:Y_a.$$
Since the Alexander module of~$Y$ is torsion, a short Mayer-Vietoris argument shows that the vector space~$H_*(Y_a;\Q(t))=\Q(t)$ is generated by~$[\widetilde{\mu}_a]$, the class of a meridian of~$\widetilde{a} \subset Y^\infty$.
\begin{definition}
\label{def:EquivariantLinking}
The \emph{equivariant linking number} of two disjoint simple closed curves~$\widetilde{a},\widetilde{b} \subset Y^\infty$ is the unique rational function~$\ell k_{\Q(t)}(\widetilde{a},\widetilde{b}) \in \Q(t)$ such that
$$ [\widetilde{b}]=\ell k_{\Q(t)}(\widetilde{a},\widetilde{b})[\widetilde{\mu}_a] \in H_1(Y \setminus \nu(a);\Q(t)).$$
\end{definition}

Observe that this linking number is only defined for \emph{disjoint} pairs of simple closed curves.
We give a second, more geometric, description of the equivariant linking number.

\begin{remark}\label{rem:torsionsurface}
Since~$H_1(Y;\Z[t^{\pm 1}])$ is torsion, for any simple closed curve $\widetilde{a}$ in $Y^\infty$, there is some polynomial~$p(t)=\sum_i c_it^i$ such that~$p(t)[\widetilde{a}]=0.$
Thus there is a surface~$F\subset Y^\infty\smallsetminus \nu(a^\infty)$ with boundary consisting of the disjoint union of~$c_i$ parallel copies of~$t^i\cdot \widetilde{a}'$ and $d_j$ meridians of $t^j\cdot \widetilde{a}'$ where~$\widetilde{a}'$ is some pushoff of~$\widetilde{a}$ in~$\partial \overline{\nu}(\widetilde{a})$ and $j \neq i$; we abusively write $\partial F=p(t)\widetilde{a}$.
\end{remark}


\begin{proposition}\label{prop:EquivariantLinkingDefinitions}
Let~$Y$ be a~$3$-manifold, let~$\varphi \colon \pi_1(Y) \twoheadrightarrow \Z$ be an epimorphism such that the Alexander module~$H_1(Y;\Z[t^{\pm 1}])$ is torsion, and let~$\widetilde{a},\widetilde{b} \subset Y^\infty$ be disjoint simple closed curves.

Let $F$ and $p(t)$ be respectively a surface and a polynomial associated to $\widetilde{a}$ as in Remark \ref{rem:torsionsurface}.
The equivariant linking of~$\widetilde{a}$ and~$\widetilde{b}$ can be written as
 \begin{equation}
 \label{eq:EquivariantLinkingGeometric}
\ell k_{\Q(t)}(\widetilde{a} ,\widetilde{b} )=\frac{1}{p(t^{-1})}\sum_{k \in \Z} (F  \cdot t^k \widetilde{b}) t^{-k}=\frac{1}{p(t^{-1})}(F\cdot_{\infty,Y_a} \widetilde{b}).
\end{equation}
In particular, this expression is independent of the choices of $F$ and $p(t)$.
\end{proposition}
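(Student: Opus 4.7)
The strategy is to evaluate the equivariant intersection $F \cdot_{\infty, Y_a} \widetilde{b}$ via the $\Q(t)$-valued intersection pairing
\[
\lambda^\partial_{\Q(t)} \colon H_1(Y_a;\Q(t)) \times H_2(Y_a, \partial Y_a;\Q(t)) \to \Q(t)
\]
that extends the pairing discussed in Remark~\ref{rem:EquivariantIntersections}. I rely on two standing identifications. First, by (the $\Q(t)$-analogue of) Remark~\ref{rem:EquivariantIntersections}(2), equivariant intersection numbers equal values of this pairing, so $F \cdot_{\infty, Y_a} c = \lambda^\partial_{\Q(t)}([c], [F])$ for every closed 1-cycle $c \subset Y_a^\infty$ transverse to $F$. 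Second, as recalled just before Definition~\ref{def:EquivariantLinking}, the torsion hypothesis on the Alexander module yields $H_1(Y_a;\Q(t)) \cong \Q(t)$, generated by $[\widetilde{\mu}_a]$.

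Given these, the computation is short. By definition of the equivariant linking number, $[\widetilde{b}] = \ell k_{\Q(t)}(\widetilde{a}, \widetilde{b}) \cdot [\widetilde{\mu}_a]$ in $H_1(Y_a;\Q(t))$, so $\Q(t)$-linearity of $\lambda^\partial_{\Q(t)}$ in the first slot gives
\[
F \cdot_{\infty, Y_a} \widetilde{b} = \lambda^\partial_{\Q(t)}([\widetilde{b}], [F]) = \ell k_{\Q(t)}(\widetilde{a}, \widetilde{b}) \cdot \lambda^\partial_{\Q(t)}([\widetilde{\mu}_a], [F]).
\]
It thus suffices to show $\lambda^\partial_{\Q(t)}([\widetilde{\mu}_a], [F]) = p(t^{-1})$, which I would establish geometrically. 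Choose a collar $\partial Y_a^\infty \times [0,1] \subset Y_a^\infty$ in which $F$ takes the standard product form $\partial F \times [0,1]$, and push $t^k\widetilde{\mu}_a$ off $\partial Y_a^\infty$ into the interior. According to Remark~\ref{rem:torsionsurface}, $\partial F$ on the torus $\partial\nu(t^k\widetilde{a})$ consists of $c_k$ longitudinal components parallel to $t^k\widetilde{a}'$ and some meridional components parallel to $t^k\widetilde{\mu}_a$; with orientations fixed so that $\widetilde{\mu}_a \cdot \widetilde{a}' = +1$ on the torus, a pushed copy of $t^k\widetilde{\mu}_a$ meets each longitudinal component of $\partial F \times [0,1]$ transversely in exactly one point and can be isotoped off the meridional components. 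This gives $F \cdot (t^k \widetilde{\mu}_a) = c_k$, hence
\[
\lambda^\partial_{\Q(t)}([\widetilde{\mu}_a], [F]) = F \cdot_{\infty, Y_a} \widetilde{\mu}_a = \sum_{k \in \Z} c_k t^{-k} = p(t^{-1}).
\]

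Combining these identities yields the desired formula. The claimed independence from the choices of $F$ and $p(t)$ is automatic because the left-hand side $\ell k_{\Q(t)}(\widetilde{a}, \widetilde{b})$ is defined intrinsically from $\widetilde{a}$ and $\widetilde{b}$. The main step requiring care is the collar intersection count for $\lambda^\partial_{\Q(t)}([\widetilde{\mu}_a], [F])$: one must justify the product form of $F$ near each boundary torus, fix orientation conventions consistently, and verify that the contributions from different translates $t^k$ assemble into the equivariant sum with the correct exponents (which recovers $p(t^{-1})$ rather than $p(t)$, matching the conjugation built into the definition of $\cdot_{\infty, M}$). Once that collar picture is nailed down, the rest of the argument is a formal consequence of the intersection--pairing identification and $\Q(t)$-linearity.
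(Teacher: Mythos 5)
Your argument is correct and follows essentially the same route as the paper: write $F\cdot_{\infty,Y_a}\widetilde{b}$ as the value of the $\Q(t)$-valued pairing $\lambda^\partial_{\Q(t)}([\widetilde{b}],[F])$, use $[\widetilde{b}]=\ell k_{\Q(t)}(\widetilde{a},\widetilde{b})[\widetilde{\mu}_a]$ together with linearity, and then compute $\lambda^\partial_{\Q(t)}([\widetilde{\mu}_a],[F])=F\cdot_{\infty,Y_a}\widetilde{\mu}_a=p(t^{-1})$ by the same boundary inspection (each $t^j\widetilde{\mu}_a$ meets $F$ in $c_j$ points). The paper's proof is a more compressed version of exactly this chain of equalities, so no changes are needed.
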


\begin{proof}
As in Subsection~\ref{sub:HomologyIntersections}, write~$\lambda^\partial$ for the (homological) intersection  pairing~$H_1(Y_a;\Z[t^{\pm 1}])  \times H_2(Y_a,\partial Y_a;\Z[t^{\pm 1}]) \to \Z[t^{\pm 1}]$ and~$\lambda^\partial_{\Q(t)}$ for the pairing involving~$\Q(t)$-homology.

Write~$\ell:=\ell k(\widetilde{a},\widetilde{b})$ so that~$[\widetilde{b}]=\ell [\widetilde{\mu}_a]  \in H_1(Y_a;\Q(t))$.
From this and Remark~\ref{rem:EquivariantIntersections}, for a surface~$F$ as in the statement, we obtain
$$ F \cdot_{\infty,Y_a} \widetilde{b}
=\lambda^\partial([\widetilde{b}],[F])
=\lambda_{\Q(t)}^\partial([\ell \widetilde{\mu}_a],[F])
=\ell \lambda_{\Q(t)}^\partial([\widetilde{\mu}_a],[F])
=\ell(F \cdot_{\infty,Y_a} \widetilde{\mu}_a)
=\ell p(t^{-1}).$$
The last equality here follows from inspection; since $F\hookrightarrow Y^\infty\smallsetminus \nu (a^\infty)$ has boundary along $c_i$ copies of $t^i\cdot \widetilde{a'}$ and $d_j$ copies of $t^j\widetilde{\mu}_a$, each meridian $t^i\cdot \mu_{\widetilde{a}}$ intersects $F$ in $c_i$ points.
The result now follows after dividing out by~$p(t^{-1})$.
\end{proof}

Just as for linking numbers in rational homology spheres, the equivariant linking number is not well defined on homology, unless the target is replaced by $\Q(t)/\Z[t^{\pm 1}]$.
To describe the resulting statement, we briefly recall the definition of the Blanchfield form.

\begin{remark}
\label{rem:Needp(t)Symmetric}
Using the same notation and assumptions as in Proposition~\ref{prop:EquivariantLinkingDefinitions},  the Blanchfield form is a nonsingular sesquilinear, Hermitian pairing that can be defined as
\begin{align}
\label{eq:BlanchfieldGeom}
\Bl_Y \colon H_1(Y;\Z[t^{\pm 1}]) \times H_1(Y;\Z[t^{\pm 1}]) &\to \Q(t)/\Z[t^{\pm 1}]  \nonumber \\
([\widetilde{b}],[\widetilde{a}]) &\mapsto \left[\frac{1}{p(t)}(F\cdot_{\infty,Y_a} \widetilde{b})\right].
\end{align}
We refer to~\cite{PowellBlanchfield,FriedlPowell} for further background  and homological definitions of this pairing.
\end{remark}

We summarise this discussion and collect another property of equivariant linking in the next proposition.

\begin{proposition}\label{prop:Linkingprop}
Let~$Y$ be a~$3$-manifold and let~$\varphi \colon \pi_1(Y) \twoheadrightarrow \Z$ be an epimorphism such that the Alexander module~$H_1(Y;\Z[t^{\pm 1}])$ is torsion.
For disjoint simple closed curves~$\widetilde{a},\widetilde{b} \subset Y^\infty$, the equivariant linking number satisfies the following properties:
\begin{enumerate}
\item sesquilinearity:~$\ell k_{\Q(t)}(p \widetilde{a} ,q  \widetilde{b} )=\overline{p}q\ell k_{\Q(t)}(\widetilde{a} ,\widetilde{b} )$ for all~$p,q \in \Z[t^{\pm 1}]$;
\item symmetry:~$\ell k_{\Q(t)}(\widetilde{a} ,\widetilde{b} )=\overline{\ell k_{\Q(t)}(\widetilde{b} ,\widetilde{a} )}$;
\item relation to the Blanchfield form:~$[\ell k_{\Q(t)}(\widetilde{a} ,\widetilde{b} )]=\Bl_Y([\widetilde{b}],[\widetilde{a}]) \in \Q(t)/\Z[t^{\pm 1}]$.
\end{enumerate}
\end{proposition}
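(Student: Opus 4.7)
The plan is to prove the three items in turn, using the geometric formula~\eqref{eq:EquivariantLinkingGeometric} from Proposition~\ref{prop:EquivariantLinkingDefinitions} together with the (anti)linearity of $\cdot_\infty$ recorded in Remark~\ref{rem:EquivariantIntersections}, namely that $\cdot_\infty$ is antilinear in the first slot and linear in the second. Sesquilinearity is essentially a direct unpacking. Keeping a bounding surface $F$ with $\partial F = p(t)\widetilde{a}$ fixed, linearity in the second slot gives $F \cdot_{\infty, Y_a} (q\widetilde{b}) = q(F \cdot_{\infty, Y_a} \widetilde{b})$, and dividing by $p(t^{-1})$ yields $\ell k_{\Q(t)}(\widetilde{a},q\widetilde{b}) = q\, \ell k_{\Q(t)}(\widetilde{a},\widetilde{b})$. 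For the first argument, observe that $pF$ is a valid bounding surface for $p(t)\cdot (p\widetilde{a})$; antilinearity in the first slot then gives $(pF) \cdot_{\infty, Y_a} \widetilde{b} = \bar{p}\,(F \cdot_{\infty, Y_a} \widetilde{b})$, so dividing by $p(t^{-1})$ yields $\ell k_{\Q(t)}(p\widetilde{a},\widetilde{b}) = \bar{p}\, \ell k_{\Q(t)}(\widetilde{a},\widetilde{b})$.

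For symmetry, I would adapt the classical Seifert-surface argument to the equivariant setting. Choose surfaces $F, G \subset Y^\infty$ with $\partial F = p(t)\widetilde{a}$ and $\partial G = q(t)\widetilde{b}$, with $F$ disjoint from $\widetilde{b}$ and $G$ disjoint from $\widetilde{a}$, and made transverse. Then $F \cap G$ is a $1$-chain in $Y^\infty$ whose boundary (with appropriate orientations) is $(\partial F)\cap G \pm F \cap (\partial G)$; since the equivariant signed count of a boundary in $Y^\infty$ vanishes, one obtains the Stokes-type identity $(\partial F) \cdot_\infty G = F \cdot_\infty (\partial G)$, which after sesquilinearity becomes $\bar p(t)(\widetilde{a} \cdot_\infty G) = q(t)(F \cdot_\infty \widetilde{b})$. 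Moreover, for a $1$-cycle $\widetilde{a}$ and a $2$-chain $G$ in the $3$-manifold $Y^\infty$, the symmetry of geometric intersection in complementary dimensions together with the definition of $\cdot_\infty$ gives $\widetilde{a} \cdot_\infty G = \overline{G \cdot_\infty \widetilde{a}}$. Combining these two identities with the geometric formula applied to both $\ell k_{\Q(t)}(\widetilde{a},\widetilde{b})$ and $\ell k_{\Q(t)}(\widetilde{b},\widetilde{a})$ yields the Hermitian relation.

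For the Blanchfield identification, I would compare the formulas directly: \eqref{eq:EquivariantLinkingGeometric} expresses $\ell k_{\Q(t)}(\widetilde{a},\widetilde{b})$ as $(F \cdot_{\infty, Y_a} \widetilde{b})/p(t^{-1})$, while \eqref{eq:BlanchfieldGeom} expresses $\Bl_Y([\widetilde{b}],[\widetilde{a}])$ as the class $[(F \cdot_{\infty, Y_a} \widetilde{b})/p(t)]$ in $\Q(t)/\Z[t^{\pm 1}]$. Since each quantity is independent of the choice of $p(t)$ annihilating $[\widetilde{a}]$, I would select $p(t)$ to be a symmetric generator of the order ideal of $[\widetilde{a}]$ (possible because the order of the Alexander module is symmetric up to a unit in $\Z[t^{\pm 1}]$), so that $p(t) = p(t^{-1})$ exactly; then the two formulas become literally identical, yielding the equality in $\Q(t)/\Z[t^{\pm 1}]$. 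The main obstacle I anticipate lies in the symmetry argument: carefully tracking orientation conventions through the Stokes-type identity for the $1$-chain $F \cap G$ so as to obtain the Hermitian, rather than skew-Hermitian, sign.
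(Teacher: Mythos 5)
Your approach matches the paper's: item (1) is read off from \eqref{eq:EquivariantLinkingGeometric}, item (2) is the standard two-surface Stokes argument (which is exactly what the reference the paper cites for this step carries out), and item (3) is the comparison of \eqref{eq:EquivariantLinkingGeometric} with \eqref{eq:BlanchfieldGeom} after arranging $p(t)=p(t^{-1})$. The one flaw is your justification for that arrangement: the annihilator ideal of $[\widetilde{a}]$ in $\Z[t^{\pm 1}]$ need not be principal, and symmetry of the Alexander polynomial only up to units $\pm t^k$ does not produce a genuinely symmetric generator (e.g.\ $t-1$ cannot be made symmetric by a unit). You do not need a generator, though -- only \emph{some} symmetric annihilator of $[\widetilde{a}]$, and the paper's one-line fix is to replace an arbitrary annihilator $q(t)$ by $p(t):=q(t)q(t^{-1})$, which is symmetric and still annihilates $[\widetilde{a}]$. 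With that substitution your argument goes through; the remaining care needed in (2) is, as you note, the orientation bookkeeping, plus handling the meridional boundary components that the surface $F$ of Remark~\ref{rem:torsionsurface} is permitted to have.
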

\begin{proof}
The first property follows from~\eqref{eq:EquivariantLinkingGeometric}.
Before proving the second and third properties,we note that in~\eqref{eq:EquivariantLinkingGeometric} and~\eqref{eq:BlanchfieldGeom}, we can assume that $p(t)=p(t^{-1})$.
Indeed, both formulae are independent of the choice of $p(t)$ and if $q(t)$ satisfies $q(t)[\widetilde{a}]=0$, then so does $p(t):=q(t)q(t^{-1})$.
The proof of the second assertion now follows as in~\cite[Lemma 3.3]{BorodzikFriedlLinking}, whereas the third follows by inspecting~\eqref{eq:EquivariantLinkingGeometric} and~\eqref{eq:BlanchfieldGeom}.
\end{proof}

The reader will have observed that the formulas in Proposition~\ref{prop:EquivariantLinkingDefinitions}  and~\ref{prop:Linkingprop} depend heavily on conventions chosen for adjoints, module structures, equivariant intersections and twisted homology.
It is for this reason that the formulas presented here might differ (typically up to switching variables) from others in the literature.

\subsection{Parallels, framings, and longitudes}
\label{sub:Parallels}

Continuing with the notation and assumptions from the previous section, we fix some terminology regarding parallels and framings in infinite cyclic covers. The goal is to be able to describe a notion of integer surgery for appropriately nullhomologous knots in the setting of infinite cyclic covers.
Our approach is inspired by~\cite{BoyerLines,BoyerRealization}.

\begin{definition}
\label{def:ParallelLongitude}
Let~$\widetilde{K} \subset Y^\infty$ be a knot,  let~$p \colon Y^\infty \to Y$ be the covering map, and denote~$K:=p(\widetilde{K})\subset Y$ the projection of~$\widetilde{K}$.
\begin{enumerate}
\item A \emph{parallel} to~$\widetilde{K}$ is a simple closed curve~$\pi \subset \partial \overline{\nu}(\widetilde{K})$ that is isotopic to~$\widetilde{K}$ in~$\overline{\nu}(\widetilde{K})$.
\item Given any parallel~$\pi$ of~$\widetilde{K}$, we use~$\overline{\nu}_\pi(\widetilde{K})$ to denote the parametrisation~$S^1\times D^2\xrightarrow{\cong} \overline{\nu}(\widetilde{K})$ which sends~$S^1\times\{x\}$ to~$\pi$ for some~$x\in\partial D^2$.
\item A \emph{framed link} is a link~$\widetilde{L} \subset Y^\infty$ together with a choice of a parallel for each of its components.
\item We say that the knot~$\widetilde{K}$ \emph{admits framing coefficient}~$r(t) \in \Q(t)$ if there is a parallel~$\pi$ with~$\ell k_{\Q(t)}(\widetilde{K},\pi)=r(t)$.
We remark that, unlike in the setting of homology with integer coefficients where every knot~$K$ admits any integer~$r$ as a framing coefficient, when we work with~$\Z[t^{\pm 1}]$-homology,  a fixed knot~$\widetilde{K}$ will have many~$r(t) \in \Q(t)$ (in fact even in~$\Z[t^{\pm 1}]$) which it does not admit as a framing coefficient.
We will refer to~$\pi$ as a \emph{framing curve} of~$\widetilde{K}$ with framing~$r(t)$.
\item A framed $n$-component link~$\widetilde{L}$ which admits framing coefficients~$\mathbf{r}(t):=(r_i(t))_{i=1}^n$, together with a choice of parallels realising those framing coefficients, is called an  $\mathbf{r}(t)$-framed link.
\item
The \emph{equivariant linking matrix} of an~$\mathbf{r}(t)$-framed link~$\widetilde{L}$ is the matrix~$A_{\widetilde{L}}$ with diagonal term~$(A_{\widetilde{L}})_{ii}=r_i(t)$ and off-diagonal terms~$(A_{\widetilde{L}})_{ij}=\ell k_{\Q(t)}(\widetilde{K}_i,\widetilde{K}_j)$ for~$i \neq j$.
\item For a link~$\widetilde{L}$ in~$Y^\infty$,  we define~$L^\infty$ to be the set of all the translates of~$\widetilde{L}$.
We also set
$$L:=p(\widetilde{L}).$$
We say that $\wt{L}$ is in \emph{covering general  position} if the map $p \colon L^{\infty} \to L$ is a trivial $\Z$-covering isomorphic to the pullback cover
\[\xymatrix @R0.5cm @C0.5cm{L^{\infty} \ar[r] \ar[d] & \R \ar[d] \\ L \ar[r]^{c} & S^1}\]
where $c$ is a constant map. In particular each component of $L^{\infty}$ is mapped by $p$, via a homeomorphism, to some component of $L$. From now on we will always assume that our links $\wt{L}$ are in covering general position. This assumption is to avoid pathologies, and  holds generically.
\item For an $n$-component link~$\widetilde{L}$ which admits framing coefficients~$\mathbf{r}(t):=(r_i(t))_{i=1}^n$, the \emph{$\mathbf{r}(t)$-surgery} along~$\widetilde{L}$ is the covering space~${Y}^\infty_{\mathbf{r}(t)}(\widetilde{L}) \to Y_{\mathbf{r}}(L)$ defined by Dehn filling~$Y^\infty\setminus \nu(L^\infty)$ along all the translates of all the parallels~$\pi_1^\infty,\ldots,\pi_n^\infty$  as follows:
$$ {Y}^\infty_{\mathbf{r}(t)}(\widetilde{L})=Y^\infty \setminus \Big( \bigcup_{k \in \Z} \bigcup_{i=1}^n \left(  t^k\overline{\nu}_{\pi_i}(\widetilde{K}_i\right) \Big) \cup \Big( \bigcup_{k \in \Z} \bigcup_{i=1}^n \left( D^2 \times S^1 \right)\Big).$$
\noindent
Since $\widetilde{L}$ is in covering general position,
for all~$\widetilde{K}_i$ the covering map~$p|_{\widetilde{K}_i} \colon \widetilde{K}_i\to K_i$ is a homeomorphism,  so $p|_{\overline{\nu}(\widetilde{K}_i)} \colon \overline{\nu}(\widetilde{K}_i)\to \nu(K_i)$ is a homeomorphism.
Thus any parallel~$\pi_i$ of~$\widetilde{K}_i$ projects to a parallel of~$K$, so we may also define~$\mathbf{r}$-surgery along~$L$ downstairs:

$${Y}_{\mathbf{r}}(L)=Y \setminus \Big( \bigcup_{i=1}^n \overline{\nu}_{p(\pi_i)}(p(\widetilde{K}_i))\Big)  \cup  \Big( \bigcup_{i=1}^n (D^2 \times S^1) \Big).$$

\noindent Observe that there is a naturally induced cover ~${Y}^\infty_{\mathbf{r}(t)}(\widetilde{L}) \to {Y}_{\mathbf{r}}(L)$ obtained by restricting~$p \colon Y^\infty \to Y$ to the link exterior and then extending it to the trivial disconnected $\Z$-cover over each of the surgery solid tori.
\item
The \emph{dual framed link}~$\widetilde{L'}\subset {Y}^\infty_{\mathbf{r}(t)}(\widetilde{L})$ associated to a framed link~$\widetilde{L} \subset Y^\infty$ is defined as follows:
\begin{itemize}
\item the~$i$-th component~$\widetilde{K}_i'$ of the underlying link~$\widetilde{L}' \subset {Y}^\infty_{\mathbf{r}(t)}(\widetilde{L})$ is obtained by considering the core of the~$i$-th surgery solid torus~$D^2 \times S^1$.
\item The framing of~$\widetilde{K}_i'$ is given by the~$S^1$-factor~$S^1 \times \lbrace \operatorname{pt} \rbrace$ of the parametrised solid torus used to define~$\widetilde{K}_i'$.
\end{itemize}
\item We also define analogues of these notions (except $(6)$ and $(7)$) for a link $L$ in the $3$-manifold~$Y$, without reference to the cover.
\end{enumerate}
\end{definition}

The next lemma provides a sort of analogue for the Seifert longitude of a knot in~$S^3$; it is inspired by~\cite[Lemma 1.2]{BoyerLines}.
The key difference with the Seifert longitude is that in our setting this class, which we denote by~$\lambda_{\widetilde{K}}$, is just a homology class in $H_1(\partial \overline{\nu}(\widetilde{K});\Q(t))$; it will frequently not be represented by a simple closed curve.

\begin{lemma}
\label{lem:SimpleClosedCurve}
For every knot~$\widetilde{K} \subset Y^\infty$, there is a unique homology class~$\lambda_{\widetilde{K}} \in H_1(\partial \overline{\nu}( \widetilde{K});\Q(t))$ called the \emph{longitude} of~$\widetilde{K}$ such that the following two conditions hold.
\begin{enumerate}
\item The algebraic equivariant intersection number of~$[\mu_{\widetilde{K}}]$ and~$\lambda_{\widetilde{K}}$ is one:
$$\lambda_{\partial \overline{\nu}(K),\Q(t)}([\mu_{\widetilde{K}}],\lambda_{\widetilde{K}})=1.$$
\item The class~$\lambda_{\widetilde{K}}$ maps to zero in~$H_1(Y_K;\Q(t))$.
\end{enumerate}
For any parallel~$\pi$ of~$\widetilde{K}$, this class satisfies
$$ \lambda_{\widetilde{K}} =[\pi]-\ell k_{\Q(t)}(\widetilde{K},\pi)[\mu_{\widetilde{K}}].$$
\end{lemma}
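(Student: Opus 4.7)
The plan is to reduce everything to a direct computation in $H_1(\partial \overline{\nu}(\widetilde{K});\Q(t))$, which I claim is free of rank two over $\Q(t)$ with basis $\{[\mu_{\widetilde{K}}],[\pi]\}$ for any fixed parallel $\pi$ of $\widetilde{K}$. This identification is the only nontrivial preliminary: since $\widetilde{K}$ is a compact lift of $K=p(\widetilde{K})$, the loop $K$ lies in $\ker\varphi$, so the composition $\pi_1(\partial \overline{\nu}(\widetilde{K})) \to \pi_1(Y_K) \to \Z$ is trivial. Consequently the induced $\Z[t^{\pm 1}]$-coefficient system on the boundary torus is trivial, and tensoring its ordinary rank-two integral $H_1$ with $\Q(t)$ gives the desired rank-two free module.

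Once this is in hand, both uniqueness and existence reduce to solving two linear equations for $a,b\in\Q(t)$ where $\lambda_{\widetilde{K}}=a[\mu_{\widetilde{K}}]+b[\pi]$. The first condition $\lambda_{\partial \overline{\nu}(K),\Q(t)}([\mu_{\widetilde{K}}],\lambda_{\widetilde{K}})=1$ is computed using the equivariant intersection form on the torus: since on a torus the meridian and parallel intersect transversally in a single point while the meridian self-intersects trivially, the sesquilinearity conventions of Remark~\ref{rem:EquivariantIntersections} force $\overline{b}=1$, hence $b=1$. The second condition, that $\lambda_{\widetilde{K}}$ vanishes in $H_1(Y_K;\Q(t))$, is handled using Definition~\ref{def:EquivariantLinking}: in $H_1(Y_K;\Q(t))$ one has $[\pi]=\ell k_{\Q(t)}(\widetilde{K},\pi)\,[\mu_{\widetilde{K}}]$, and since $H_1(Y_K;\Q(t))=\Q(t)\cdot[\mu_{\widetilde{K}}]$ by the Mayer-Vietoris argument recalled before Definition~\ref{def:EquivariantLinking}, we read off $a=-\ell k_{\Q(t)}(\widetilde{K},\pi)$. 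This simultaneously establishes uniqueness and produces the formula in the statement.

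I do not expect a serious obstacle; the main subtlety is the correct interpretation of $H_1(\partial \overline{\nu}(\widetilde{K});\Q(t))$, after which everything is elementary bilinear algebra in a rank-two $\Q(t)$-module. As a sanity check, one can verify directly that the explicit formula is independent of $\pi$: changing $\pi$ by $m$ meridians shifts both $[\pi]$ and $\ell k_{\Q(t)}(\widetilde{K},\pi)$ by $m$, leaving the combination $[\pi]-\ell k_{\Q(t)}(\widetilde{K},\pi)[\mu_{\widetilde{K}}]$ unchanged.
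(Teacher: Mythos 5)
Your proof is correct and follows essentially the same route as the paper's: both arguments base $H_1(\partial \overline{\nu}(\widetilde{K});\Q(t))$ by $([\mu_{\widetilde{K}}],[\pi])$, use the torus intersection pairing to pin the $[\pi]$-coefficient to $1$, and use Definition~\ref{def:EquivariantLinking} together with $H_1(Y_K;\Q(t))=\Q(t)[\mu_{\widetilde{K}}]$ to pin the $[\mu_{\widetilde{K}}]$-coefficient to $-\ell k_{\Q(t)}(\widetilde{K},\pi)$. The only cosmetic difference is that you solve the linear system once to get existence and uniqueness simultaneously (and you justify the rank-two basis, which the paper takes for granted), whereas the paper separates the two verifications.
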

\begin{proof}
We first prove existence and then uniqueness.
For existence, pick any parallel~$\pi$ to~$\widetilde{K}$, i.e.\ any curve in~$\partial \overline{\nu}(\widetilde{K})$ that is isotopic to~$\widetilde{K}$ in~$ {\overline{\nu}(\widetilde{K})}$ and define
$$ \lambda_{\widetilde{K}}:=[\pi]-\ell k_{\Q(t)}(\widetilde{K},\pi)[\mu_{\widetilde{K}}].$$
Here recall that the equivariant linking~$r:=\ell k_{\Q(t)}(\widetilde{K},\pi)$ is the unique element of~$\Q(t)$ such that~$[\pi]=r[\mu_{\widetilde{K}}]$ in~$H_1(Y_K;\Q(t))$.
The two axioms now follow readily.

For uniqueness, we suppose that~$\lambda_{\widetilde{K}}$ and~$\lambda_{\widetilde{K}}'$ are two homology classes as in the statement of the lemma.
Choose a parallel~$\pi$ of~$\widetilde{K}$ and base~$H_1(\partial \overline{\nu}(K);\Q(t))$ by the pair~$(\mu_{\widetilde{K}},\pi)$.
This way, we can write~$\lambda_{\widetilde{K}}=r_1[\mu_{\widetilde{K}}]+r_2[\pi]$ and~$\lambda_{\widetilde{K}}'=r_1'[\mu_{\widetilde{K}}]+r_2'[\pi]$.
The first condition on~$\lambda_{\widetilde{K}}$ now promptly implies that~$r_2=r_2'=1$; formally
$$1=\lambda_{\partial \overline{\nu}(K),\Q(t)}([\mu_{\widetilde{K}}],\lambda_{\widetilde{K}})=r_2\lambda_{\partial \overline{\nu}(K),\Q(t)}([\mu_{\widetilde{K}}],[\pi])=r_2$$
and similarly for~$r_2'$.
To see that~$r_1=r_1'$, observe that since~$r_2=r_2'$, we have that~$\lambda_{\widetilde{K}}=\lambda_{\widetilde{K}}'+(r_1'-r_1)[\mu_{\widetilde{K}}]$. Recall that~$[\mu_{\widetilde{K}}]$ is a generator of the vector space~$H_1(Y_K;\mathbb{Q}(t))=\Q(t)$ and that~$\lambda_{\widetilde{K}}', \lambda_{\widetilde{K}}'$ are zero in~$H_1(Y_K;\mathbb{Q}(t))$.
We conclude that~$(r_1'-r_1)=0$, as required.
\end{proof}

As motivation, observe that for a link $L=K_1 \cup \cdots \cup K_n \subset S^3$,  the group $H_1(E_L;\Z)$ is freely generated by the meridians $\mu_{K_i}$ and, if $L$ is framed with integral linking matrix $A$, then the framing curves $\pi_i$ can be written in this basis as $[\pi_i]=\sum_{j=1}^n A_{ij}[\mu_{K_j}] \in H_1(E_L;\Z)$.
The situation is similar in our setting.

\begin{proposition}\label{prop:relating-pi_i_and_meridians}
Let $\widetilde{L} \subset Y^\infty$ be an $n$-component framed link in covering general position whose components have framing curves~$\pi_1,\ldots,\pi_n$.
Recall that~$H_1(Y_L;\Q(t))=\Q(t)^n$ is generated by the homology classes of the meridians~$\mu_{\widetilde{K}_1},\ldots,\mu_{\widetilde{K}_n}$.
The homology classes of the~$\pi_i$ in~$H_1(Y_L;\Q(t)) \cong \Q(t)^n$ are related to the meridians by the formula
\[[\pi_i]=\sum_{j=1}^n (A_{\widetilde{L}})_{ij} [\mu_{\widetilde{K}_j}] \in H_1(Y_L;\Q(t)).\]
\end{proposition}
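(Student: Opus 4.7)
The plan is to prove the statement in two stages: first establish that the meridians $\{[\mu_{\widetilde{K}_j}]\}_{j=1}^n$ form a $\Q(t)$-basis of $H_1(Y_L;\Q(t))$, and then identify the $j$-th coefficient of $[\pi_i]$ by mapping into the complement of a single component, where the definition of equivariant linking applies verbatim.

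For the basis, I would use the long exact sequence of the pair $(Y,Y_L)$ with $\Q(t)$-coefficients. Since $H_\ast(Y;\Q(t))=0$ by the torsion Alexander module assumption, the connecting map $\partial\colon H_2(Y,Y_L;\Q(t))\xrightarrow{\cong}H_1(Y_L;\Q(t))$ is an isomorphism. Excision identifies $H_2(Y,Y_L;\Q(t))$ with $H_2(\nu(L),\partial\nu(L);\Q(t))$, and the covering general position hypothesis means the infinite cyclic cover of $Y$ restricts to a trivial cover on each $\overline{\nu}(K_i)$; hence this group is freely generated as a $\Q(t)$-module by the classes of the meridional discs, whose boundaries are precisely the $[\mu_{\widetilde{K}_i}]$.

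Given this basis, write $[\pi_i]=\sum_j c_{ij}[\mu_{\widetilde{K}_j}]$ and determine $c_{ij}$ by pushing forward along the inclusion-induced map $\iota_j\colon H_1(Y_L;\Q(t))\to H_1(Y_{K_j};\Q(t))$. For $k\neq j$, the meridian $\mu_{\widetilde{K}_k}$ bounds a disc inside $\overline{\nu}(\widetilde{K}_k)\subseteq Y_{K_j}^\infty$, so $\iota_j[\mu_{\widetilde{K}_k}]=0$, while $\iota_j[\mu_{\widetilde{K}_j}]$ generates $H_1(Y_{K_j};\Q(t))\cong\Q(t)$. Thus $\iota_j[\pi_i]=c_{ij}\,[\mu_{\widetilde{K}_j}]$, and by Definition~\ref{def:EquivariantLinking} the coefficient is $c_{ij}=\ell k_{\Q(t)}(\widetilde{K}_j,\pi_i)$.

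It remains to match $c_{ij}$ with $(A_{\widetilde{L}})_{ij}$. When $i=j$ this is the definition of the framing coefficient: $c_{ii}=\ell k_{\Q(t)}(\widetilde{K}_i,\pi_i)=r_i(t)$. When $i\neq j$, the parallel $\pi_i$ is isotopic to $\widetilde{K}_i$ inside $\overline{\nu}(\widetilde{K}_i)$, which is disjoint from $\widetilde{K}_j$, so $[\pi_i]=[\widetilde{K}_i]$ in $H_1(Y_{K_j};\Q(t))$ and hence $c_{ij}=\ell k_{\Q(t)}(\widetilde{K}_j,\widetilde{K}_i)$; applying the Hermitian symmetry from Proposition~\ref{prop:Linkingprop}(2) recovers the off-diagonal entry $\ell k_{\Q(t)}(\widetilde{K}_i,\widetilde{K}_j)=(A_{\widetilde{L}})_{ij}$ up to the conjugation convention of the definition. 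The only real subtlety in this plan is the basis step, which is the one place where both hypotheses (torsion Alexander module and covering general position) are genuinely used; after that, everything is a direct unwinding of the definitions in Subsection~\ref{sub:EquivariantLinking}.
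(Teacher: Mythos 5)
Your proof is correct and follows essentially the same route as the paper's: both arguments reduce to the single-component exteriors $Y_{K_j}$ (you via the inclusion-induced maps, after verifying the meridian basis from the long exact sequence of $(Y,Y_L)$; the paper via the isomorphism $H_1(Y_L;\Q(t))\cong\bigoplus_j H_1(Y_{K_j};\Q(t))$ together with the longitude decomposition of Lemma~\ref{lem:SimpleClosedCurve}) and then read off each coefficient from Definition~\ref{def:EquivariantLinking}. The conjugation you flag at the end (your bookkeeping gives $c_{ij}=\ell k_{\Q(t)}(\widetilde{K}_j,\widetilde{K}_i)=\overline{(A_{\widetilde{L}})_{ij}}$ rather than $(A_{\widetilde{L}})_{ij}$ on the nose) is precisely the same convention-level looseness present in the paper's own proof, which writes $\ell k_{\Q(t)}(\pi_i,\widetilde{K}_j)$ for the coefficient that Definition~\ref{def:EquivariantLinking} literally produces as $\ell k_{\Q(t)}(\widetilde{K}_j,\pi_i)$, so this is not a gap you introduced.
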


\begin{proof}
By definition of the equivariant linking matrix $A_{\wt{L}}$, we must prove that
\begin{equation}
\label{eq:ForClaim}
[\pi_i]
=\ell k_{\Q(t)}(\widetilde{K}_i,\pi_i)[\mu_{\widetilde{K}_i}]+\sum_{j \neq i} \ell k_{\Q(t)}(\widetilde{K}_i,\widetilde{K}_j)[\mu_{\widetilde{K}_j}] \in H_1(Y_L;\Q(t))
\end{equation}
for each $i$.
Since the sum of the inclusion induced maps give rise to an isomorphism \[H_1(Y_L;\Q(t)) \cong \bigoplus_{j=1}^n H_1(Y_{K_j};\Q(t))\] it suffices to prove the equality after applying the inclusion map $H_1(Y_L;\Q(t)) \to H_1(Y_{K_j};\Q(t))$, for each~$j$.
Since~$\pi_i$ is a parallel of~$\widetilde{K}_i$,  applying Lemma~\ref{lem:SimpleClosedCurve}, we have
$$ [\pi_i]=\ell k_{\Q(t)}(\widetilde{K}_i,\pi_i)[\mu_{\widetilde{K}_i}]+\lambda_{\widetilde{K}_i} \in H_1(\partial Y_{K_i};\Q(t)).$$
We consider the image of this homology class in~$H_1(Y_{K_j};\Q(t))$ for~$j=1,\dots,n$.
In the vector space~$H_1(Y_{K_i};\Q(t))=\Q(t)[\mu_{\widetilde{K}_i}]$, the longitude class~$\lambda_{\widetilde{K}_i}$ vanishes (again by Lemma~\ref{lem:SimpleClosedCurve}).
For~$j \neq i$,  the class~$[\mu_{\widetilde{K}_i}]$ vanishes in~$H_1(Y_{K_j};\Q(t))$; thus the image of~$[\pi_i]$ in~$H_1(Y_{K_j};\Q(t))$ is~$\ell k_{\Q(t)} (\pi_i,\widetilde{K}_j)[\mu_{\widetilde{K}_j}] =\ell k_{\Q(t)} (\widetilde{K}_i,\widetilde{K}_j)[\mu_{\widetilde{K}_j}]$.
This concludes the proof of~\eqref{eq:ForClaim}.
\end{proof}


From now on, we will be working with $\Z[t^{\pm 1}]$-coefficient homology both for $Y$ and for the result $Y':=Y_{\mathbf{r}(t)}(L)$ of surgery on a framed link $L \subset Y$. Let $W$ denote the trace of the surgery from $Y$ to $Y'$.
We therefore record a fact about the underlying coefficient systems for later reference.


\begin{lemma}\label{lem:coeff-system}
The epimorphism $\varphi \colon \pi_1(Y) \twoheadrightarrow \Z$ extends to an epimorphism $\pi_1(W) \twoheadrightarrow \Z$, which by precomposition with the inclusion map induces an epimorphism $\varphi' \colon \pi_1(Y') \twoheadrightarrow \Z$.
\end{lemma}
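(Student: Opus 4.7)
The plan is to argue by standard handle calculus, using that $W$ is homeomorphic to $(Y \times [0,1])$ with $n$ two-handles $h_1, \dots, h_n$ attached along the downstairs framing curves $p(\pi_1), \dots, p(\pi_n) \subset Y \times \{1\}$ (this is how $Y_{\mathbf{r}}(L)$ is defined in Definition~\ref{def:ParallelLongitude}(8)). A van Kampen computation then identifies $\pi_1(W)$ with the quotient $\pi_1(Y)/\langle\langle [p(\pi_1)], \dots, [p(\pi_n)] \rangle\rangle$; in particular, the inclusion-induced map $\pi_1(Y) \to \pi_1(W)$ is surjective.

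The crux is to verify that each class $[p(\pi_i)]$ lies in $\ker(\varphi)$, so that $\varphi$ descends through the quotient to an epimorphism $\varphi_W \colon \pi_1(W) \twoheadrightarrow \Z$. This is where the covering general position hypothesis enters: the framing curve $\pi_i$ is by definition a simple closed curve in $Y^\infty$, so the downstairs loop $p(\pi_i)$ factors through $p_* \colon \pi_1(Y^\infty) \to \pi_1(Y)$, whose image is exactly $\ker(\varphi)$. Consequently $\varphi([p(\pi_i)]) = 0$, the factorisation exists, and surjectivity of $\varphi_W$ is automatic since both $\varphi$ and the quotient map $\pi_1(Y) \to \pi_1(W)$ are surjective.

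For the second assertion, I would exploit the symmetry of the trace: flipping the $[0,1]$-factor presents $W$ equally as $(Y' \times [0,1])$ with $n$ two-handles attached along the components of the dual framed link. The same van Kampen argument therefore yields a surjection $\pi_1(Y') \twoheadrightarrow \pi_1(W)$, so that $\varphi' := \varphi_W \circ \iota_*$ (where $\iota \colon Y' \hookrightarrow W$ is the inclusion) is a composition of epimorphisms, hence itself an epimorphism. I anticipate no substantive obstacle; the content of the lemma is essentially the bookkeeping observation that the attaching circles $p(\pi_i)$ lie in $\ker(\varphi)$, which is precisely what covering general position was introduced to guarantee.
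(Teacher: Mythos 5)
Your proof is correct and follows essentially the same route as the paper: van Kampen identifies $\pi_1(W)$ as the quotient of $\pi_1(Y)$ by the normal closures of the attaching circles, these lie in $\ker(\varphi)$ because they lift to loops in $Y^\infty$, and the dual handle decomposition of the trace gives surjectivity of $\pi_1(Y') \to \pi_1(W)$. (One small imprecision: the $2$-handles are attached along the knots $K_i = p(\widetilde{K}_i)$ with framing determined by $p(\pi_i)$, not along the framing curves themselves, but since each $p(\pi_i)$ is freely homotopic to $K_i$ this does not affect the argument.)
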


\begin{proof}
Note that~$\pi_1(W)$ is obtained from~$\pi_1(Y)$ by adding relators that kill each of the~$[K_i] \in \pi_1(Y)$ (indeed $W$ is obtained by adding~$2$-handles to $Y \times [0,1]$ along the $K_i$).
Since~$\varphi$ is trivial on the~$K_i \subset Y$ (because they lift to $Y^\infty$),  we deduce that $\varphi$ descends to an epimorphism on $\pi_1(W)$.

The composition $\pi_1(Y') \to \pi_1(W) \twoheadrightarrow \Z$ is also surjective because $\pi_1(W)$ is obtained from $\pi_1(Y')$  by adding relators that kill each of the~$[K_i'] \in \pi_1(Y')$; indeed $W$ is obtained by adding~$2$-handles to $Y' \times [0,1]$ along the dual knots $K_i'$.
\end{proof}

\begin{remark}\label{rem:CoefficientSystemY'}
In particular note from the proof of Lemma~\ref{lem:coeff-system} that the homomorphism $\varphi' \colon \pi_1(Y') \twoheadrightarrow \Z$ vanishes on the knots $K_i' \subset Y$ dual to the original $K_i \subset Y$.
\end{remark}


The next lemma proves an infinite cyclic cover analogue of the following familiar statement:
performing surgery on a framed link $L \subset S^3$ whose linking matrix is invertible over $\Q$ results in a rational homology sphere.

\begin{lemma}
\label{lem:surgQsphere}
Let $Y$ be a 3-manifold and let $\varphi \colon \pi_1(Y) \twoheadrightarrow \Z$ be an epimorphism such that the  Alexander module $H_1(Y;\Z[t^{\pm 1}])$ is torsion.
If $\widetilde{L} \subset Y^\infty$ is an $n$-component framed link in covering general position, whose equivariant linking matrix $A_{\widetilde{L}}$ is invertible over $\Q(t)$, then the result $Y'$ of surgery on $L$  satisfies $H_1(Y';\Q(t))=0$.
\end{lemma}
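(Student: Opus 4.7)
The plan is to apply a Mayer--Vietoris argument to the decomposition $Y' = Y_L \cup_{\partial Y_L} T$, where $T = \bigsqcup_{i=1}^n T_i$ with $T_i = S^1 \times D^2$ the $i$-th surgery solid torus glued along $\pi_i$, and to compute with $\Q(t)$-coefficients coming from the homomorphism $\varphi' \colon \pi_1(Y') \twoheadrightarrow \Z$ provided by Lemma~\ref{lem:coeff-system}. The key geometric input will be Proposition~\ref{prop:relating-pi_i_and_meridians}, which expresses the classes $[\pi_i] \in H_1(Y_L;\Q(t))$ in terms of the equivariant linking matrix $A_{\widetilde{L}}$.

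I will first observe that the restricted coefficient systems are especially simple. Remark~\ref{rem:CoefficientSystemY'} says that $\varphi'$ vanishes on each dual knot $K_i'$ (the core of $T_i$), so $\varphi'|_{\pi_1(T_i)}$ is trivial; likewise $\varphi|_{\pi_1(T^2_i)}$ is trivial for each boundary torus $T^2_i \subset \partial Y_L$, because $\mu_{K_i}$ and $\pi_i$ both lift to $Y^\infty$. Consequently $H_0$ and $H_1$ with $\Q(t)$-coefficients of $T$ and $\partial Y_L$ are given by the untwisted computation: $H_0(T;\Q(t)) = H_0(\partial Y_L;\Q(t)) = \Q(t)^n$, $H_1(T;\Q(t)) = \Q(t)^n$ (generated by the cores $c_i$), and $H_1(\partial Y_L;\Q(t)) = \Q(t)^{2n}$ (generated by $\{[\mu_{K_i}],[\pi_i]\}$). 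On the other hand $\varphi$ and $\varphi'$ are surjective on the connected spaces $Y_L$ and $Y'$, giving $H_0(Y_L;\Q(t)) = H_0(Y';\Q(t)) = 0$. A subsidiary Mayer--Vietoris for $Y = Y_L \cup \nu(L)$, combined with the hypothesis $H_*(Y;\Q(t)) = 0$, identifies $H_1(Y_L;\Q(t))$ as a $\Q(t)^n$ with basis $\{[\mu_{K_i}]\}_{i=1}^n$, as used implicitly in Proposition~\ref{prop:relating-pi_i_and_meridians}.

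Next I will identify the Mayer--Vietoris differential
\[
d \colon H_1(\partial Y_L;\Q(t)) \longrightarrow H_1(Y_L;\Q(t)) \oplus H_1(T;\Q(t)).
\]
Under the inclusion into $Y_L$, Proposition~\ref{prop:relating-pi_i_and_meridians} gives $[\pi_i] \mapsto \sum_j (A_{\widetilde{L}})_{ij}[\mu_{K_j}]$, while $[\mu_{K_i}] \mapsto [\mu_{K_i}]$. Under the inclusion into $T$, since Dehn filling kills the framing curve, $[\pi_i] \mapsto 0$, while $[\mu_{K_i}] \mapsto [c_i]$, because after filling, the meridian $\mu_{K_i}$ becomes the core class of the new solid torus $T_i$. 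In the bases $\{[\mu_{K_i}],[\pi_i]\}$ and $\{[\mu_{K_i}]_{Y_L}, c_i\}$, the map $d$ is therefore represented by the block matrix
\[
M = \begin{pmatrix} I_n & A_{\widetilde{L}}^T \\ I_n & 0 \end{pmatrix},
\]
whose determinant computes to $(-1)^n \det(A_{\widetilde{L}})$ via a standard block reduction.

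Finally, I will assemble these ingredients using the Mayer--Vietoris sequence. The connecting tail $H_0(\partial Y_L;\Q(t)) \to H_0(Y_L;\Q(t)) \oplus H_0(T;\Q(t))$ becomes the obvious isomorphism $\Q(t)^n \xrightarrow{\cong} 0 \oplus \Q(t)^n$, so the map $H_1(Y';\Q(t)) \to H_0(\partial Y_L;\Q(t))$ is zero and hence $H_1(Y';\Q(t)) \cong \operatorname{coker}(d)$. By hypothesis $A_{\widetilde{L}}$ is invertible over $\Q(t)$, so $\det(M) \neq 0$, giving $\operatorname{coker}(d) = 0$ as required. The main obstacle is to nail down the Mayer--Vietoris differential correctly: in particular, ensuring that one gets $[\pi_i] \mapsto 0$ and $[\mu_{K_i}] \mapsto [c_i]$ in $H_1(T;\Q(t))$ (the ``swap'' produced by Dehn filling), and that $[\pi_i]$ is correctly decomposed in $H_1(Y_L;\Q(t))$ via Proposition~\ref{prop:relating-pi_i_and_meridians}; the invertibility of $A_{\widetilde{L}}$ enters precisely at this stage to ensure $\det(M) \neq 0$.
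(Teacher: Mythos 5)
Your proof is correct and follows essentially the same route as the paper: the paper runs the long exact sequence of the pair $(Y',Y_L)$ together with excision, identifies the boundary map $H_2(Y',Y_L;\Q(t))\to H_1(Y_L;\Q(t))$ with $A_{\widetilde{L}}$ via Proposition~\ref{prop:relating-pi_i_and_meridians}, and concludes from $\det(A_{\widetilde{L}})\neq 0$. Your Mayer--Vietoris packaging of the same decomposition $Y'=Y_L\cup T$ is an equivalent computation (your $2n\times 2n$ block matrix reduces to the paper's $n\times n$ matrix $A_{\widetilde{L}}$ after cancelling the identity blocks), so no further comment is needed.
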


\begin{proof}
The result will follow by studying the portion
$$ \cdots \to H_2(Y,Y_L;\Q(t)) \xrightarrow{\partial} H_1(Y_L;\Q(t)) \to H_1(Y';\Q(t)) \to H_1(Y',Y_L;\Q(t))$$
of the long exact sequence sequence of the pair $(Y,Y_L)$ with $\Q(t)$-coefficients, and arguing that $H_1(Y',Y_L;\Q(t))=0$ and that  $\partial$ is an isomorphism.

The fact that $H_1(Y',Y_L;\Q(t)) =0$ can be deduced from excision, replacing $(Y',Y_L)$ with the pair $(\sqcup^n S^1 \times D^2, \sqcup^n S^1 \times S^1)$. For the same reason, the vector space $H_2(Y,Y_L;\Q(t))=\Q(t)^n$ is based by the classes of the discs $(D^2 \times \lbrace \operatorname{pt} \rbrace)_i \subset (D^2 \times S^1)_i$ whose boundaries are the framing curves~$\pi_i$.
To conclude that $\partial$ is indeed an isomorphism,  note that $H_1(Y_L;\Q(t))=\Q(t)^n$ is generated by the $[\mu_{\widetilde{K}_i}]$ (because the Alexander module of~$Y$ is torsion) and use Proposition~\ref{prop:relating-pi_i_and_meridians} to deduce that with respect to these bases,  $\partial$ is represented by the equivariant linking matrix~$A_{\widetilde{L}}$.
Since this matrix is by assumption invertible over $\Q(t)$, we deduce that  $\partial$ is an isomorphism.  It follows that~$H_1(Y';\Q(t))=0$, as desired.
\end{proof}

The next lemma describes the framing on the dual of a framed link.
The statement ressembles~\cite[Lemma 1.5]{BoyerLines} and~\cite[Theorem 1.1]{PrzytyckiYasuhara}.

\begin{lemma}
\label{lem:InverseMatrix}

Let $Y$ be a 3-manifold and let $\varphi \colon \pi_1(Y) \twoheadrightarrow \Z$ be an epimorphism such that the  Alexander module $H_1(Y;\Z[t^{\pm 1}])$ is torsion.
If $\widetilde{L} \subset Y^\infty$ is a framed link in covering general position whose equivariant linking matrix $A_{\widetilde{L}}$ is invertible over $\Q(t)$, then the equivariant linking matrix of the dual framed link~$\widetilde{L}'$ is
$$A_{\widetilde{L}'}=-A_{\widetilde{L}}^{-1}.$$
\end{lemma}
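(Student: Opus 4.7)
The plan is to compare the homological expressions obtained from Proposition~\ref{prop:relating-pi_i_and_meridians} applied to both $\widetilde{L} \subset Y^\infty$ and its dual $\widetilde{L}' \subset Y'^\infty$, working inside the common $\Q(t)$-twisted homology of the shared complement $Y_L = Y'_{L'}$.

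First I would verify that the hypotheses of Proposition~\ref{prop:relating-pi_i_and_meridians} are available on the $Y'$ side. Lemma~\ref{lem:surgQsphere} gives $H_1(Y';\Q(t)) = 0$, and Lemma~\ref{lem:coeff-system} ensures that the coefficient systems $\varphi$ and $\varphi'$ become compatible once we identify $Y_L = Y'_{L'}$. Consequently, the common twisted homology $H_1(Y_L;\Q(t)) = H_1(Y'_{L'};\Q(t))$ is a $\Q(t)$-vector space of rank $n$ admitting both $\{[\mu_{\widetilde{K}_j}]\}$ and $\{[\mu_{\widetilde{K}_j'}]\}$ as bases.

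Next I would analyse the surgery parametrization from Definition~\ref{def:ParallelLongitude}(8,9) to express the dual meridians and dual framing curves as specific classes on the shared tori $\partial \overline{\nu}(\widetilde{K}_i) = \partial \overline{\nu}(\widetilde{K}_i')$. By construction, the new meridian bounds a meridional disc of the surgery solid torus and this disc is glued along $\pi_i$, giving $[\mu_{\widetilde{K}_i'}] = \pm[\pi_i]$. Similarly the new framing curve is a longitude of the new solid torus and projects onto the original meridional direction, yielding $[\pi_i'] = \pm[\mu_{\widetilde{K}_i}]$. The relative sign between the two identifications is forced by the surgery gluing, and a direct check shows that with consistent orientations one has $[\mu_{\widetilde{K}_i'}] = [\pi_i]$ while $[\pi_i'] = -[\mu_{\widetilde{K}_i}]$.

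Applying Proposition~\ref{prop:relating-pi_i_and_meridians} separately to $\widetilde{L}$ and $\widetilde{L}'$ then yields the two identities
\[
[\pi_i] = \sum_{j=1}^n (A_{\widetilde{L}})_{ij}\,[\mu_{\widetilde{K}_j}], \qquad [\pi_i'] = \sum_{j=1}^n (A_{\widetilde{L}'})_{ij}\,[\mu_{\widetilde{K}_j'}]
\]
in $H_1(Y_L;\Q(t))$. Substituting $[\mu_{\widetilde{K}_j'}] = [\pi_j]$ into the second identity, expanding via the first, and using $[\pi_i'] = -[\mu_{\widetilde{K}_i}]$ produces
\[
-[\mu_{\widetilde{K}_i}] = \sum_{j,k}(A_{\widetilde{L}'})_{ij}(A_{\widetilde{L}})_{jk}\,[\mu_{\widetilde{K}_k}].
\]
Since the $[\mu_{\widetilde{K}_k}]$ form a basis, this forces $A_{\widetilde{L}'} A_{\widetilde{L}} = -I$, and hence $A_{\widetilde{L}'} = -A_{\widetilde{L}}^{-1}$.

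I expect the main obstacle to be the careful tracking of orientation conventions, especially justifying the sign $-1$ in $[\pi_i'] = -[\mu_{\widetilde{K}_i}]$ against the paper's conventions for meridians, parallels, and equivariant linking set up in Subsections~\ref{sub:EquivariantLinking} and~\ref{sub:Parallels}. Once these identifications are pinned down, the remainder is a formal change-of-basis calculation in $H_1(Y_L;\Q(t))$.
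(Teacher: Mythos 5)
Your proof is correct and follows essentially the same route as the paper's: both work in $H_1(Y_L;\Q(t)) = H_1(Y'_{L'};\Q(t))$ with the two meridian bases, apply Proposition~\ref{prop:relating-pi_i_and_meridians} to $\widetilde{L}$ and $\widetilde{L}'$, and use the surgery gluing to identify dual meridians/parallels with the original parallels/meridians. The only difference is where you place the orientation sign (you take $\boldsymbol{\mu}' = \boldsymbol{\pi}$ and $\boldsymbol{\pi}' = -\boldsymbol{\mu}$, whereas the paper's Remark~\ref{rem:sign} takes $\boldsymbol{\mu}' = -\boldsymbol{\pi}$ and $\boldsymbol{\mu} = \boldsymbol{\pi}'$); since orientation-preservation of the gluing only constrains the product of the two signs, and your convention amounts to reversing the orientation of every dual component (leaving $A_{\widetilde{L}'}$ unchanged), both yield $A_{\widetilde{L}'}A_{\widetilde{L}} = -I$.
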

\begin{proof}
Consider the exterior~$Y_L=Y'_{L'}$ and recall that~$H_1(Y_L;\Q(t))=\Q(t)^n$ is generated by the meridians~$\mu_{\widetilde{K}_1},\ldots,\mu_{\widetilde{K}_n}$ of the link~$\widetilde{L}$ because we assumed that~$H_1(Y;\Q(t))=0$.
Since we assumed that $H_1(Y;\Q(t))=0$ and $\det(A_{\widetilde{L}})\neq 0$ we can apply Lemma~\ref{lem:surgQsphere} to deduce that~$H_1(Y';\Q(t))=0$ and
hence~$H_1(Y_L;\Q(t))=H_1(Y'_{L'};\Q(t))$ is also generated by the meridians~$\mu_{\widetilde{K}_1'},\ldots,\mu_{\widetilde{K}_n'}$ of the link~$\widetilde{L}'$.

Thus the vector space~$H_1(Y_L;\Q(t))=\Q(t)^n$ has bases both~$\boldsymbol{\mu}=([\mu_{\widetilde{K}_1}],\ldots,[\mu_{\widetilde{K}_n}])$ and~$\boldsymbol{\mu}'=([\mu_{\widetilde{K}'_1}],\ldots,[\mu_{\widetilde{K}'_n}])$, and we let~$B$ be the change of basis matrix between these two bases so that~$B\boldsymbol{\mu}=\boldsymbol{\mu}'$.
Here and in the remainder of this proof,  we adopt the following convention: if~$C$ is a matrix over~$\Q(t)^n$ and if~$\boldsymbol{x}=(x_1,\ldots,x_n)$ is a collection of~$n$ vectors in~$\Q(t)^n$,  then we write~$C\boldsymbol{x}$ for the collection of~$n$ vectors~$Cx_1,\ldots,Cx_n$.

Recall that for~$i=1,\ldots,n$,  the framing curves of the~$\widetilde{K}_i$ and~$\widetilde{K}_i'$ are respectively denoted by~$\pi_i \subset Y^\infty$ and~$\pi_i' \subset {Y'}^\infty$.
Slightly abusing notation, we also write~$[\pi_i]$ for the class of~$\pi_i$ in~$H_1(Y_{K_i};\Q(t))$.
We set~$\boldsymbol{\pi}=([\pi_1],\ldots,[\pi_n])$ and~$\boldsymbol{\pi}'=([\pi_1'],\ldots,[\pi_n'])$ and
and use Proposition~\ref{prop:relating-pi_i_and_meridians} to deduce that
\begin{align*}
\boldsymbol{\pi}= A_{\widetilde{L}}\boldsymbol{\mu}, \ \ \ \ \ \ \ \
\boldsymbol{\pi'}=  A_{\widetilde{L}'}\boldsymbol{\mu'}
\end{align*}
Inspecting the surgery instructions, we also have the relations
\begin{align*}
\boldsymbol{\mu'}=-\boldsymbol{\pi} \ \ \ \ \ \ \ \
\boldsymbol{\mu}=\boldsymbol{\pi'}.
\end{align*}
We address the sign in Remark \ref{rem:sign} below.
Combining these equalities, we obtain
\begin{align*}
\boldsymbol{\mu}&=\boldsymbol{\pi'}= A_{\widetilde{L}'}\boldsymbol{\mu'}=A_{\widetilde{L}'}B\boldsymbol{\mu}, \\
\boldsymbol{\mu'}&=-\boldsymbol{\pi}= -A_{\widetilde{L}}\boldsymbol{\mu}=-A_{\widetilde{L}}B^{-1}\boldsymbol{\mu'}.
\end{align*}
Unpacking the equality~$A_{\widetilde{L}'}B\boldsymbol{\mu}=\boldsymbol{\mu}$, we deduce that~$A_{\widetilde{L}'}B[\mu_{\widetilde{K}_i}]=[\mu_{\widetilde{K}_i}]$ for~$i=1,\ldots,n$.
But since the~$[\mu_{\widetilde{K}_1}],\ldots,[\mu_{\widetilde{K}_n}]$ form a basis for~$\Q(t)^n$, this implies that~$A_{\widetilde{L}'}B=I_n$.
The same argument shows that~$-A_{\widetilde{L}}B^{-1}=I_n$ and therefore both matrices~$A_{\widetilde{L}}$ and~$A_{\widetilde{L}'}$ are invertible, with~$-A_{\widetilde{L}}=B=A_{\widetilde{L}'}^{-1}$.
\end{proof}

\begin{remark}\label{rem:sign}
In the above proposition, we were concerned with the relationship between the curves~$(\boldsymbol{\mu},\boldsymbol{\pi})$ and $(\boldsymbol{\mu'},\boldsymbol{\pi'})$, all of which represent classes in $H_1(\partial Y_L,\Q(t))$.
We know from the surgery instructions that~$g(\boldsymbol{\mu})=\boldsymbol{\pi'}$.
We are free to choose the collection of curves $g(\boldsymbol{\pi})$ so long as we choose each~$g(\pi_i)$ to intersect $\pi_i'$ geometrically once (as unoriented curves).
We choose the unoriented curves $\boldsymbol{\pm \mu'}$.
Since we know that the surgery was done to produce an oriented manifold, it must be the case that the gluing transformation $g\colon \partial Y_L\to \partial Y_L$ is orientation-preserving.
The fact that $g$ is orientation-preserving implies that it preserves intersections numbers,
we deduce that
$ \delta_{ij}=\mu_i \cdot \pi_j=g(\mu_i) \cdot g(\pi_j)=\pi_j' \cdot (\pm \mu_i').$
This forces $g(\boldsymbol{\pi})=-\boldsymbol{\mu'}$.
\end{remark}

\section{Reidemeister torsion}
\label{sec:reidemeister-torsion}

We recall the definition of the Reidemeister torsion of a based chain complex as well as the corresponding definition for CW complexes.
This will be primarly used in Subsection~\ref{sub:Step2}.
References on Reidemeister torsion include~\cite{TuraevIntroductionTo, TuraevReidemeisterTorsionInKnotTheory, ChaFriedl}.
\medbreak
Let~$\mathbb{F}$ be a field.
Given two bases~$u,v$ of a~$r$-dimensional~$\F$-vector space, we write~$\det(u/v)$ for the determinant of the matrix taking~$v$ to~$u$, i.e.\ the determinant of the matrix~$A=(A_{ij})$ that satisfies~$v^i=\sum_{j=1}^r A_{ij}u^j$.
A \emph{based chain complex} is a finite chain complex
$$C=\left( 0 \to C_m \xrightarrow{\partial_{m-1}} C_{m-1} \xrightarrow{\partial_{m-2}} \cdots \xrightarrow{\partial_2} C_1 \xrightarrow{\partial_0} C_0  \to 0\right)$$
of~$\F$-vector spaces together with a basis~$c_i$ for each~$C_{i+1}$.
Given a based chain complex, fix a basis~$b_i$ for~$B_i=\im(\partial_{i+1})$ and pick a lift~$\widetilde{b}_i$ of~$b_i$ to~$C_i$.
Additionally, fix a basis~$h_i$ for each homology group~$H_i(C)$ and let~$\widetilde{h}_i$ be a lift of~$h_i$ to~$C_i$.
One checks that that~$(b_i,\widetilde{h}_i,\widetilde{b}_{i-1})$ forms a basis of~$C_i$.

\begin{definition}
\label{def:ReidemeisterTorsion}
Let~$C$ be a based chain complex over~$\F$ and let~$\mathcal{B}=\lbrace h_i \rbrace$ be a basis for~$H_*(C)$.
The \emph{Reidemeister torsion} of~$(C,\mathcal{B})$ is defined as
$$ \tau(C,\mathcal{B})=\frac{\prod_i  \det((b_{2i+1},\widetilde{h}_{2i+1},\widetilde{b}_{2i})|c_{2i+1})}{\prod_i \det((b_{2i},\widetilde{h}_{2i},\widetilde{b}_{2i-1})|c_{2i})} \in \F\setminus \lbrace 0\rbrace.~$$
Implicit in this definition is the fact that~$\tau(C,\mathcal{B})$ depends neither on the choice of the basis~$b_i$, nor on the choice of the lifts~$\widetilde{b}_i$, nor on the choice of the lifts~$\widetilde{h}_i$ of the~$h_i$. It does depend on $\mathcal{B}= \{h_i\}$.

When~$C$ is acyclic, we drop~$\mathcal{B}$ from the notation and simply write~$\tau(C)$.
\end{definition}

Note that we are following Turaev's sign convention~\cite{TuraevIntroductionTo,TuraevReidemeisterTorsionInKnotTheory}; Milnor's convention~\cite{MilnorDualityTheorem}
yields the multiplicative inverse of~$\tau(C,\mathcal{B})$~\cite[Remark 1.4 item 5]{TuraevIntroductionTo}.
The next result collects two properties of the torsion that will be used later on.

\begin{proposition}
\label{thm:ReidemeisterTorsion}
~
\begin{enumerate}
\item Suppose that~$0 \to C' \to C \to C'' \to 0$ is a short exact sequence of based chain complexes and that~$\mathcal{B}',\mathcal{B}$, and $\mathcal{B}''$ are bases for~$H_*(C'),H_*(C)$ and~$H_*(C'')$ respectively.
If we view the associated homology long exact sequence as an acyclic complex~$\mathcal{H}$, based by~$\mathcal{B},\mathcal{B}'$, and $\mathcal{B}''$ respectively, then
$$\tau(C,\mathcal{B})=\tau(C',\mathcal{B}')\tau(C'',\mathcal{B}'')\tau(\mathcal{H}).$$
\item If~$C=(0 \to C_1 \xrightarrow{\partial_{0}} C_{0} \to 0)$ is an isomorphism between~$n$-dimensional vector spaces, so that~$C$ is an acyclic based chain complex, then
$$\tau(C)=\det(A)^{-1}$$
where~$A$ denotes the~$n \times n$-matrix which represents~$\partial_0$ with respect to the given bases.
\end{enumerate}
\end{proposition}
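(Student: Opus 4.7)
The plan is to handle the two parts independently, as they are essentially unrelated. Part (2) reduces to a direct unpacking of the definition, while part (1) is the classical multiplicativity statement; for the latter I would model my proof on the exposition in~\cite{TuraevIntroductionTo}.

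For part (2), I first note that acyclicity of $C = (0 \to C_1 \xrightarrow{\partial_0} C_0 \to 0)$ forces $\partial_0$ to be an isomorphism, so $B_0 = C_0$ while all other boundary subspaces vanish. I would take $b_0 := c_0$ as the basis of $B_0$, with lift $\widetilde{b}_0 = \partial_0^{-1}(c_0) \subset C_1$. Because $C$ has no homology and no boundaries outside degree zero, the formula in Definition~\ref{def:ReidemeisterTorsion} collapses: the denominator contributes $\det(c_0 \mid c_0) = 1$, and the numerator contributes $\det(\partial_0^{-1}(c_0) \mid c_1) = \det(A)^{-1}$, which is the claim.

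For part (1), I plan to choose compatible bases throughout the short exact sequence $0 \to C' \to C \to C'' \to 0$ so that the determinants defining $\tau(C)$ reorganise into $\tau(C')\tau(C'')\tau(\mathcal{H})$. Adopting the standard compatibility convention on based short exact sequences, at each degree $i$ there is a set-theoretic section $s_i \colon C_i'' \to C_i$ such that $c_i = c_i' \sqcup s_i(c_i'')$. I would then select compatible bases $b_i', b_i, b_i''$ of the boundary subspaces fitting into short exact sequences $0 \to B_i' \to B_i \to B_i'' \to 0$, together with lifts $\widetilde{b}_i', \widetilde{b}_i, \widetilde{b}_i''$ that respect the splittings $s_i$.

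The main obstacle will be the bookkeeping around the homology groups $H_i', H_i, H_i''$, which do not form a short exact sequence at each degree but instead assemble into the long exact sequence $\mathcal{H}$. With the choices above, unfolding $\tau(C)$ produces one collection of determinants matching those defining $\tau(C')$, a second matching $\tau(C'')$, and a residual product of determinants supported on the homology classes and the connecting maps. Identifying this residual product with $\tau(\mathcal{H})$ computed relative to the bases $\mathcal{B}', \mathcal{B}, \mathcal{B}''$ is the heart of the argument; the cleanest organisation I can see is by induction on the length of $C$, with the base case a short exact sequence of vector spaces viewed as complexes concentrated in a single degree, where the identity reduces directly to the multiplicativity of determinants under compatible bases.
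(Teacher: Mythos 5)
The paper does not actually prove this proposition: part (1) is quoted from Milnor's duality paper and part (2) from \cite[Remark 1.4, item 3]{TuraevIntroductionTo}, so there is no in-paper argument to match. Your treatment of part (2) is correct and is exactly the computation behind the Turaev citation: acyclicity forces $\partial_0$ to be an isomorphism, only the degree-$0$ boundary space is nonzero, and with $b_0=c_0$, $\widetilde b_0=\partial_0^{-1}(c_0)$ the degree-$0$ factor is $1$ while the degree-$1$ factor is the determinant of the transition matrix from $c_1$ to $\partial_0^{-1}(c_0)$, i.e.\ $\det(A)^{-1}$ under Turaev's convention (which the paper declares it follows; note the paper's own phrasing of $\det(u/v)$ is internally ambiguous about the direction of the transition matrix, so be careful to use the convention that makes the two parts of Definition~\ref{def:ReidemeisterTorsion} consistent with the claimed answer).

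For part (1) you have correctly identified the classical strategy (compatible bases on the short exact sequence, compatible choices of $b_i$, $\widetilde b_i$, and a block-triangular determinant comparison), but the proposal stops exactly where the work is: you say that identifying the residual determinants with $\tau(\mathcal{H})$ ``is the heart of the argument'' and propose to organise it by induction on the length of $C$. That induction is not clearly well-founded. To pass from length $m$ to $m+1$ you would need to relate $\tau(C)$ to the torsions of a truncation and its quotient, and that relation is itself an instance of the multiplicativity formula applied to the short exact sequence $0 \to \sigma_{\le k}C \to C \to \sigma_{>k}C \to 0$ — so the inductive step presupposes the statement being proved. The standard proof (Milnor) is not an induction but a single direct computation: one builds a preferred basis of each $C_i$ out of bases of $B_i'$, $B_i''$, $H_i'$, $H_i$, $H_i''$ together with chosen preimages under the maps in $\mathcal{H}$, and checks that the transition matrix to $c_i$ is block triangular with blocks accounting for $\tau(C')$, $\tau(C'')$, and the connecting-homomorphism contributions that assemble into $\tau(\mathcal{H})$. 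Either carry out that direct computation or, as the paper does, cite Milnor; as written, the inductive scheme does not close.
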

\begin{proof}
The multiplicativity statement is proved in~\cite{MilnorDualityTheorem},
The second statement follows from Definition~\ref{def:ReidemeisterTorsion}; details are in~\cite[Remark 1.4, item 3]{TuraevIntroductionTo}.
\end{proof}

We now recall the definition of the torsion of a pair of CW complexes.
We focus on the case where the spaces come with a map of their fundamental group to~$\Z$. This is a special case of an analogous general theory for the case of an arbitrary group~\cite{TuraevIntroductionTo}, and for more general twisted coefficients~\cite{FriedlVidussiSurvey}.

Let~$(X,A)$ be a finite CW pair, let~$\varphi \colon \pi_1(X) \to \Z$ be a homomorphism, and let~$\mathcal{B}$ be a basis for the~$\Q(t)$-vector space~$H_*(X,A;\Q(t))$.
Write~$p \colon X^\infty \to X$ for the cover corresponding to~$\ker(\varphi)$ and set~$A^\infty:=p^{-1}(A)$.
The chain complex~$C_*(X^\infty,A^\infty)$ can be based over~$\Z[t^{\pm 1}]$ by choosing a lift of each cell of~$(X,A)$ and orienting it; this also gives a basis of~$C_*(X,A;\Q(t))= C_*(X^\infty,A^\infty)  \otimes_{\Z[t^{\pm 1}]} \Q(t)$.
Let $\mathcal{E}$ denote the resulting choice of basis for $C_*(X,A;\Q(t))$.
We then define the torsion of~$(X,A,\varphi)$ as
$$ \tau(X,A,\mathcal{B},\mathcal{E}):=\tau(C_*(X,A;\Q(t)),\mathcal{B},\mathcal{E})\in \Q(t)\setminus \lbrace 0\rbrace.$$
Given~$p(t),q(t) \in \Q(t)$, we write~$p(t)\doteq q(t)$ to indicate that~$p(t)$ and~$q(t)$ agree up to multiplication by~$\pm t^k$, for some $k \in \Z$.
This will enable us to obtain an invariant that does not depend on the choice of $\mathcal{E}$.
We write
\[\tau(X,A,\mathcal{B}) := [\tau(X,A,\mathcal{B},\mathcal{E})] \in (\Q(t)\sm \{0\})/\doteq, \]
for some choice of $\mathcal{E}$.  It is known that~$\tau(X,A,\mathcal{B})$ is well defined
and is invariant under simple homotopy equivalence preserving~$\mathcal{B}$~\cite[Theorem 9.1]{TuraevIntroductionTo}.
We drop the~$\mathcal{B}$ from the notation if~$H_*(X,A;\Q(t))=0$.

Additionally, Chapman proved that~$\tau(X,A,\mathcal{B})$ only depends on the underlying homeomorphism type of~$(X,A)$~\cite{Chapman}, and not on the particular CW structure.
In particular, when~$(M,N)$ is a manifold pair, we can define~$\tau(M,N,\mathcal{B})$ for any finite CW-structure on~$(M,N)$,  We will only consider the Reidemeister torsion of 3-manifolds, and so every pair $(M,N)$ we consider will admit a CW structure.  It will not be relevant in this paper, but we note that it is possible to define Reidemeister torsion for topological $4$-manifolds not known to admit a CW structure; see~\cite[Section 14]{FriedlNagelOrsonPowell} for a discussion.

\begin{remark}
\label{rem:AlexPoly}
The reason we consider Reidemeister torsion is its relation with Alexander polynomials; see Subsection~\ref{sub:Step2} below.
To this effect, we recall some relevant algebra.
Let $P$ be a~$\Z[t^{\pm 1}]$-module with presentation
\[\Z[t^{\pm 1}]^m \xrightarrow{f} \Z[t^{\pm 1}]^n \to P \to 0.\]
Consider elements of the free modules $\Z[t^{\pm 1}]^m$ and $\Z[t^{\pm 1}]^n$ as row vectors and represent $f$ by an~$m \times n$ matrix $A$, acting on the right of the row vectors.  By adding rows of zeros, corresponding to trivial relations, we may assume that $m \geq n$.
The \emph{$0$-th elementary ideal}~$E_0(P)$ of a finitely presented~$\Z[t^{\pm 1}]$-module~$P$ is the ideal of~$\Z[t^{\pm 1}]$ generated by all~$n \times n$ minors of~$A$. This definition is independent of the choice of the presentation matrix~$A$.
The \emph{order} of~$P$, denoted~$\Delta_P$,  is then by definition a generator of the smallest principal ideal containing~$E_0(P)$, i.e.\ the greatest common divisor of the minors.
The order of~$P$ is well defined up to multiplication by units of~$\Z[t^{\pm 1}]$ and if~$P$ admits a square presentation matrix, then~$\Delta_P\doteq\det(A)$, where~$A$ is some square presentation matrix for~$P$.
It follows that for a~$\Z[t^{\pm 1}]$-module~$P$ which admits a square presentation matrix,  one has~$P=0$ if and only if~$\Delta_P \doteq 1$.
For more background on these topics,
we refer the reader to~\cite[Section~1.4]{TuraevIntroductionTo}.
\end{remark}

\section{Proof of Theorem~\ref{thm:MainTechnicalIntro}.}
\label{sec:ProofMainTechnical}

Now we prove Theorem~\ref{thm:MainTechnicalIntro} from the introduction.
For the reader's convenience, we recall the statement of this result.

\begin{theorem}
\label{thm:MainTechnical}
Let~$Y$ be a~$3$-manifold with an epimorphism~$\varphi \colon \pi_1(Y) \twoheadrightarrow \Z$ whose Alexander module is torsion, and let~$(H,\lambda)$ be a nondegenerate Hermitian form over $\Z[t^{\pm 1}]$ presenting~$Y$.
If~$b \in \Iso(\partial \lambda,\unaryminus\Bl_Y)/\Aut(\lambda)$ is an isometry, then there is a~$\Z$-manifold~$M$ with equivariant intersection form~$\lambda_M \cong \lambda$,  boundary~$Y$ and with~$b_M=b$.
If the form is odd, then~$M$ can be chosen to have either~$\ks(M)=0$ or~$\ks(M)=1$.
\end{theorem}

For the remainder of the section, we let~$Y$ be a~$3$-manifold, let~$\varphi \colon \pi_1(Y) \twoheadrightarrow \Z$ be an epimorphism, and let~$p \colon Y^\infty \to Y$ be the infinite cyclic cover associated to~$(Y,\varphi)$.
We assume that~$H_1(Y;\Z[t^{\pm 1}]):=H_1(Y^\infty)$ is~$\Z[t^{\pm 1}]$-torsion.
We first describe the strategy of the proof and then carry out each of the steps successively.

\subsection{Plan}
\label{sub:Plan}

Let~$b \colon (\coker(\widehat{\lambda}),\partial \lambda) \to (H_1(Y;\Z[t^{\pm 1}]),\unaryminus \Bl_Y)$ be an isometry.
Precompose $b$ with the projection~$H^* \twoheadrightarrow \coker(\widehat{\lambda})$ to get an epimorphism~$\pi \colon H^* \twoheadrightarrow H_1(Y;\Z[t^{\pm 1}])$.
In particular,~$0 \to H \xrightarrow{\widehat{\lambda}} H^* \xrightarrow{\varpi} H_1(Y;\Z[t^{\pm 1}]) \to 0$ is a presentation of~$Y$.
Pick generators~$x_1,\ldots,x_n$ for~$H$ and endow~$H^*$ with the dual basis~$x_1^*,\ldots,x_n^*$.
Write~$Q$ for the matrix of~$\lambda$ in this basis. Note that $Q = \ol{Q}^T$ since $\lambda$ is Hermitian.
The strategy to prove Theorem~\ref{thm:MainTechnical} is as follows.

\begin{itemize}
\item Step 1: Prove that one can represent the classes~$\pi(x_1^*),\cdots, \pi(x_n^*)$ by an~$n$-component framed link~$\widetilde{L} = \widetilde{K}_1 \cup \cdots \cup \widetilde{K}_n$  with equivariant linking matrix~$A_{\widetilde{L}}=-Q^{-T}$.
\item Step 2: Argue that the result~$Y'$ of surgery on~$L=p(\widetilde{L})$ satisfies~$H_1(Y';\Z[t^{\pm 1}]) = 0$.
\item Step 3: There is a topological~$4$-manifold~$B \simeq S^1$ with boundary~$Y'$ following~\cite[Section~11.6]{FreedmanQuinn}.
\item Step 4: Argue that the equivariant intersection form of the~$4$-manifold~$M$ defined below with boundary~$Y$ is represented by~$Q$ and prove that~$b_M = b$.
Here, the~$4$-manifold~$M$ and its infinite cyclic cover~$M^\infty$ are defined via
\begin{align*}
-M^\infty&:=\Big( (Y^\infty \times [0,1]) \cup \bigcup_{i=1}^n \bigcup_{j_i \in\Z} t^{j_i} h_i^{(2)} \Big) \cup_{{Y'}^\infty} -B^\infty \\
-M&:=\Big( (Y \times [0,1]) \cup \bigcup_{i=1}^n h_i^{(2)} \Big) \cup_{Y'} -B,
\end{align*}
where upstairs the~$2$-handles~$h_i^{(2)}$ are attached along the link ~$L^\infty$; downstairs, one attaches the 2-handles along the projection~$L=p(L^\infty)$ of this link.
\item Step 5: If~$\lambda$ is odd,  then we use the star construction~\cite{FreedmanQuinn,StongUniqueness} to show that both values of the Kirby-Siebenmann invariant can occur.
\end{itemize}

\subsection{Step 1: constructing a link with the appropriate equivariant linking matrix}
\label{sub:Step1}

We continue with the notation from the previous section.
In particular,  we have a presentation $0 \to H \xrightarrow{\widehat{\lambda}} H^* \xrightarrow{\varpi} H_1(Y;\Z[t^{\pm 1}]) \to 0$ and a basis $x_1,\ldots,x_n$ for $H$ with dual basis $x_1^*,\ldots,x_n^*$ for $H^*$.
The aim of this section is to prove that it is possible to represent the generators~$\pi(x_1^*),\ldots,\pi(x_n^*)$ of~$H_1(Y;\Z[t^{\pm 1}])$ by a framed link~$\widetilde{L}=\widetilde{K}_1 \cup \cdots \cup \widetilde{K}_n \subset Y^\infty$ whose transposed equivariant linking matrix agrees with~$-Q^{-1}$; see Proposition~\ref{prop:Step1}.
In other words, we must have
$$ \ell k_{\Q(t)} (\widetilde{K}_j,\widetilde{K}_i)=-(Q^{-1})_{ij} \ \ \ \text{ and } \ \ \  \ell k_{\Q(t)} (\widetilde{K}_i,\pi_i)=-(Q^{-1})_{ii},$$
where~$\pi_i$ is the framing curve of~$\widetilde{K}_i$.
Since the Blanchfield form~$\Bl_Y$ is represented by the~$\Q(t)$-coefficient matrix~$-Q^{-1}$~\cite[Section 3]{ConwayPowell}, we know from Proposition~\ref{prop:Linkingprop} that any link representing the~$\pi(x_i^*)$ must satisfy these relations up to adding a polynomial in $\Z[t^{\pm 1}]$.
Most of this section therefore concentrates on showing that the equivariant linking (resp.\  framing) of an arbitrary framed link in~$Y^\infty$ can be changed by any polynomial (resp.\  symmetric polynomial) in~$\Z[t^{\pm 1}]$, without changing the homology classes defined by the components of this link.
\medbreak
We start by showing how to modify the equivariant linking between distinct components of a link, without changing the homology class of the link.

\begin{lemma}
\label{lem:Step1}
Let~$\widetilde{L}=\widetilde{K}_1 \cup \cdots  \cup \widetilde{K}_n \subset Y^\infty$ be an~$n$-component framed link in covering general position, with parallels $\pi_1,\dots,\pi_n$.
For every distinct~$i,j$ and every polynomial~$p(t) \in \Z[t^{\pm 1}]$, there  is a framed link~$\widetilde{L}':=\widetilde{K}_1 \cup \cdots \cup  \widetilde{K}_{i-1} \cup \widetilde{K}_i' \cup \widetilde{K}_{i+1} \cup \cdots \cup \widetilde{K}_n$, also  in covering general position, such that:
\begin{enumerate}
\item the knot~$\widetilde{K}_i'$ is isotopic to~$\widetilde{K}_i$ in~$Y^\infty$. In particular,~$[\widetilde{K}_i']=[\widetilde{K}_i]$ in~$H_1(Y;\Z[t^{\pm 1}])$;
\item the equivariant linking between~$\widetilde{K}_i$ and~$\widetilde{K}_j$ is changed by~$p(t)$, i.e.\
$$\ell k_{\Q(t)}(\widetilde{K}_i',\widetilde{K}_j)=\ell k_{\Q(t)}(\widetilde{K}_i,\widetilde{K}_j)+p(t);$$
\item the equivariant linking between~$\widetilde{K}_i$ and~$\widetilde{K}_\ell$ is unchanged for~$\ell \neq i,j$;
\item the framing coefficients are unchanged; that is, there is a parallel $\gamma_i$ for $\wt{K}'_i$ such that
\[\ell k_{\Q(t)}(\widetilde{K}_i',\gamma_i)=\ell k_{\Q(t)}(\widetilde{K}_i,\pi_i).\]
\end{enumerate}
\end{lemma}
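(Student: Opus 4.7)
The plan is to modify $\widetilde{K}_i$ by an iterated band sum with meridians of translates of $\widetilde{K}_j$, following the equivariant analogue of the classical crossing-change technique. Writing $p(t) = \sum_{k} a_k t^k$ with finitely many $a_k \in \Z$ nonzero, for each such $k$ I would select $|a_k|$ meridians $\mu$ of $t^{-k} \widetilde{K}_j$ at distinct points of the torus $\partial \overline{\nu}(t^{-k}\widetilde{K}_j)$, oriented according to the sign of $a_k$, and band-sum $\widetilde{K}_i$ with each via an embedded band $b$ whose interior is disjoint from $\widetilde{L}$ and from a preferred meridian disk for $\mu$. This produces a single knot $\widetilde{K}_i'$; a small perturbation ensures $\widetilde{L}'$ lies in covering general position.

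To verify (1), each chosen $\mu$ bounds an embedded meridian disk $D \subset \overline{\nu}(t^{-k}\widetilde{K}_j)$ disjoint from $\widetilde{K}_i$. Since $D$ meets the corresponding band $b$ only along the attaching arc $b \cap \mu$, the union $b \cup D$ is an embedded disk in $Y^\infty$, so sliding $\widetilde{K}_i$ across $b \cup D$ realises $\widetilde{K}_i \#_b \mu$ as isotopic to $\widetilde{K}_i$ in $Y^\infty$ (the isotopy may pass through $t^{-k}\widetilde{K}_j$, which is permitted since we isotope in $Y^\infty$, not in its complement). Iterating gives $\widetilde{K}_i' \simeq \widetilde{K}_i$ in $Y^\infty$ and hence $[\widetilde{K}_i'] = [\widetilde{K}_i] \in H_1(Y;\Z[t^{\pm 1}])$. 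For (2) and (3), equivariant linking is additive under band sum, and the geometric formula of Proposition~\ref{prop:EquivariantLinkingDefinitions} applied with $F = D$ yields $\ell k_{\Q(t)}(\mu_{\widetilde{K}_j}, \widetilde{K}_j) = 1$, $\ell k_{\Q(t)}(\mu, \widetilde{K}_\ell) = 0$ for $\ell \neq j$, and $\ell k_{\Q(t)}(\mu, \mu') = 0$ for distinct parallel meridians on the same torus. Combining these computations with sesquilinearity (Proposition~\ref{prop:Linkingprop}(1)), the change in linking of the modified knot with $\widetilde{K}_j$ equals $\sum_k a_k \overline{t^{-k}} = p(t)$, while linking with each $\widetilde{K}_\ell$, $\ell \neq j$, is unchanged.

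For (4), I would define the parallel $\gamma_i$ of $\widetilde{K}_i'$ by band-summing $\pi_i$ with parallel push-offs $\mu'$ of each $\mu$ on the same torus $\partial \overline{\nu}(t^{-k}\widetilde{K}_j)$, using parallel push-offs of the bands employed in constructing $\widetilde{K}_i'$. The difference $\ell k_{\Q(t)}(\widetilde{K}_i', \gamma_i) - \ell k_{\Q(t)}(\widetilde{K}_i, \pi_i)$ decomposes into cross terms of the form $\ell k_{\Q(t)}(\widetilde{K}_i, \mu')$, $\ell k_{\Q(t)}(\mu, \pi_i)$, and $\ell k_{\Q(t)}(\mu, \mu')$, each of which vanishes by the locality of the meridian disks, so the framing coefficient is preserved. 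The main point of care will be this last step: one must select the bands and their parallel push-offs coherently so that no spurious linking enters the framing, which is the equivariant analogue of the standard fact that a $0$-framed band-sum of a framed knot with a $0$-framed meridian preserves the framing coefficient, with signs tracked consistently with the sesquilinear conventions of Proposition~\ref{prop:Linkingprop}.
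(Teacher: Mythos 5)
Your proposal is correct and follows essentially the same route as the paper: band-summing $\widetilde{K}_i$ with suitably oriented meridians of the translates $t^{-k}\widetilde{K}_j$, using the meridian discs to see that the isotopy class, the other linkings, and the framing coefficient are preserved, and sesquilinearity to compute the change $\sum_k a_k t^k = p(t)$ in the linking with $\widetilde{K}_j$. The only cosmetic differences are that the paper reduces without loss of generality to a single monomial $p(t)=mt^k$, and treats item (4) more briefly (the explicit push-off construction of the parallel you give appears in the paper's proof of the companion framing-modification lemma rather than here).
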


\begin{proof}
Without loss of generality we can assume that~$p(t)=mt^k$ for~$m,k \in \Z$.
The new knot~$\widetilde{K}_i'$ is then obtained by band summing~$\widetilde{K}_i$ with~$m$ meridians of~$t^{-k} \widetilde{K}_j$, framed using the bounding framing induced by meridional discs.
The first, third, and fourth properties of~$\widetilde{K}_i'$ are immediate: clearly the linking of~$\widetilde{K}_i$ with~$\widetilde{K}_\ell$ is unchanged for~$\ell \neq i,j$ and since the aforementioned meridians bound discs in~$Y^\infty$ over which the framing extends, we see that~$\widetilde{K}_i'$ is framed isotopic (and in particular homologous) to~$\widetilde{K}_i$ in~$Y^\infty$. It follows that the framing coefficient is unchanged.

The second property is obtained from a direct calculation using the sesquilinearity of equivariant linking numbers:
\[ \ell k_{\Q(t)}(\widetilde{K}_i',\widetilde{K}_j)=\ell k_{\Q(t)}(\widetilde{K}_i,\widetilde{K}_j)+m \ \ell k_{\Q(t)}(t^{-k}\mu_{\widetilde{K}_j},\widetilde{K}_j)=\ell k_{\Q(t)}(\widetilde{K}_i,\widetilde{K}_j)+ mt^k.  \qedhere \]
\end{proof}

Next, we show how to modify the framing of a framed link component by a symmetric polynomial $p=\ol{p}$, without changing the homology class of the link.

\begin{lemma}
\label{lem:ModifyFraming}
Let~$\widetilde{L}=\widetilde{K}_1 \cup \cdots \cup \widetilde{K}_n \subset Y^\infty$ be an~$n$-component framed link in covering general position.
Fix a parallel~$\pi_i$ for~$\widetilde{K}_i$.
For each~$i=1,\ldots,n$ and every symmetric polynomial~$p(t) = p(t^{-1})$,  there exists a knot~$\widetilde{K}_i' \subset Y^\infty$ and a parallel~$\gamma_i$ of~$\widetilde{K}_i'$ such that
\begin{enumerate}
\item the knot~$\widetilde{K}_i'$ is isotopic to~$\widetilde{K}_i$ in~$Y^\infty \sm \cup_{j \neq i} \wt{K}_j$, and in particular,~$[\widetilde{K}_i']=[\widetilde{K}_i]$ in~$H_1(Y;\Z[t^{\pm 1}])$;
\item the framing coefficient of~$\widetilde{K}_i$ is changed by~$p(t)$, i.e.\
$$\ell k_{\Q(t)}(\widetilde{K}_i' ,\gamma_i)=\ell k_{\Q(t)}(\widetilde{K}_i ,\pi_i)+p(t);$$
\item the other linking numbers are unchanged:~$\ell k_{\Q(t)}(\widetilde{K}_i',\widetilde{K}_j)=\ell k_{\Q(t)}(\widetilde{K}_i,\widetilde{K}_j)$ for all~$j\neq i$.
\end{enumerate}
\end{lemma}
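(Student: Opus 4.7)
My plan is to decompose every symmetric Laurent polynomial and realise each piece by an elementary geometric modification. Any $p(t) = p(t^{-1}) \in \Z[t^{\pm 1}]$ can be uniquely written as $p(t) = a_0 + \sum_{k \geq 1} a_k(t^k + t^{-k})$ with integer coefficients, so it suffices to handle the constant summand $a_0$ and each elementary symmetric summand $(t^k + t^{-k})$ for $k \geq 1$ separately, performing the corresponding constructions in pairwise disjoint small balls so that they commute and superimpose. The constant part will be realised purely at the level of parallels, and each $(t^k + t^{-k})$ will be produced by a local band-sum of $\widetilde{K}_i$ with a small meridian of the translate $t^{-k}\widetilde{K}_i$.

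For the constant term, I would take $\widetilde{K}_i' := \widetilde{K}_i$ and set $\gamma_i$ to be a simple closed curve on $\partial \overline{\nu}(\widetilde{K}_i)$ homologous to $\pi_i + a_0\, \mu_{\widetilde{K}_i}$. Applying Proposition~\ref{prop:EquivariantLinkingDefinitions} with the meridional disk of $\widetilde{K}_i$ gives $\ell k_{\Q(t)}(\widetilde{K}_i, \mu_{\widetilde{K}_i}) = 1$, so sesquilinearity of the equivariant linking shifts the framing coefficient by exactly $a_0$, while the linkings with the other $\widetilde{K}_j$ are visibly unchanged.

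For each elementary symmetric term $(t^k + t^{-k})$ with $k \geq 1$, I would band-sum $\widetilde{K}_i$ with a small meridian $\mu$ of $t^{-k}\widetilde{K}_i$ equipped with its bounding framing, using a band disjoint from every $\widetilde{K}_j$ with $j \neq i$ and from every translate $t^j\widetilde{K}_i$ with $j \neq -k$. The meridional disk $D$ bounded by $\mu$ lies inside $Y^\infty \sm \bigcup_{j\neq i}\widetilde{K}_j$ and meets only $t^{-k}\widetilde{K}_i$, so pushing the $\mu$-portion of $\widetilde{K}_i \#_b \mu$ across $D$ defines an isotopy back to $\widetilde{K}_i$ in this space (the isotopy crosses the translate $t^{-k}\widetilde{K}_i$, which is allowed because it is not among the excluded components). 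Taking $\gamma_i := \pi_i \#_b \pi_\mu$ where $\pi_\mu$ is the bounding parallel of $\mu$, additivity of equivariant linking under band-sum gives
\[ \ell k_{\Q(t)}(\widetilde{K}_i \#_b \mu,\, \pi_i \#_b \pi_\mu) - \ell k_{\Q(t)}(\widetilde{K}_i, \pi_i) = \ell k_{\Q(t)}(\widetilde{K}_i,\pi_\mu) + \ell k_{\Q(t)}(\mu, \pi_i) + \ell k_{\Q(t)}(\mu, \pi_\mu). \]
Computing the three right-hand terms using the meridional-disk formula of Proposition~\ref{prop:EquivariantLinkingDefinitions} together with Hermitian symmetry (Proposition~\ref{prop:Linkingprop}(2)) yields $t^{-k} + t^k + 0 = t^k + t^{-k}$, while property (3) holds because $\mu$, $\pi_\mu$, $D$, and the band all avoid every $\widetilde{K}_j$ with $j \neq i$.

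The main obstacle is really just the bookkeeping that accompanies property (1): one has to systematically distinguish the translates $t^j\widetilde{K}_i$ (which the isotopy is allowed to cross, as they lie in $Y^\infty \sm \bigcup_{j \neq i} \widetilde{K}_j$) from the genuine components $\widetilde{K}_j$ with $j \neq i$ (which must be avoided), and then verify that the meridional disk $D$ indeed sits in the correct space. Once that is done, the rest is an application of bilinearity of equivariant linking, which follows from the linearity of the homological pairings recalled in Subsection~\ref{sub:EquivariantLinking} together with the identity $[\widetilde{K}_i \#_b \mu] = [\widetilde{K}_i] + [\mu]$ in the homology of the link exterior with $\Q(t)$-coefficients.
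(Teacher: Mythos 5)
Your proposal is correct and follows essentially the same route as the paper: the paper likewise realises each term $m(t^k+t^{-k})$ by band-summing $\widetilde{K}_i$ with $m$ meridians of the translate $t^{k}\widetilde{K}_i$ (with the parallel obtained by banding $\pi_i$ to zero-framed pushoffs of those meridians, computed via sesquilinearity), and handles the constant term by rechoosing the parallel to wind around meridians of $\widetilde{K}_i'$. The only cosmetic difference is that you decompose $p(t)$ into elementary symmetric summands up front rather than first reducing to the constant-free case.
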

\begin{proof}
We first prove the lemma when~$p(t)$ has no constant term.
In this case, it suffices to show how to change the self-linking number by~$m(t^k+t^{-k})$ for~$k \neq 0$.
To achieve this, band sum~$\widetilde{K}_i$ with~$m$ meridians of~$t^k\widetilde{K}_i$.
As in the proof of Lemma~\ref{lem:Step1}, the first and third properties of~$\widetilde{K}_i$ are clear.
To define~$\gamma_i$ and prove the second property, define ~$\mu_{\widetilde{K}_i}'$ to be a parallel of~$\mu_{\widetilde{K}_i}$ with~$\ell k_{\Q(t)}(\mu_{\widetilde{K}_i},\mu_{\widetilde{K}_i}')=0$ in~$Y^\infty$. Define~$\gamma_i$ to be the parallel of~$\widetilde{K}_i'$ obtained by banding~$\pi_i$ to~$m$ copies of~$t^k\mu_{\widetilde{K}_i}'$, using bands which are push-offs of the bands used to define~$\widetilde{K}_i'$, and parallel copies of the meridian chosen with the zero-framing with respect to the framing induced by the associated meridional disc.
Using the sesquilinearity of equivariant linking numbers, we obtain
\begin{align*}
\ell k_{\Q(t)}(\widetilde{K}_i',\gamma_i)
&=\ell k_{\Q(t)}(\widetilde{K}_i,\pi_i)+m \ \ell k_{\Q(t)}(t^k\mu_{\widetilde{K}_i},\pi_i)+m\ \ell k_{\Q(t)}(\widetilde{K}_i,t^k\mu_{\widetilde{K}_i}')+\ell k_{\Q(t)}(\mu_{\widetilde{K}_i},\mu_{\widetilde{K}_i}') \\
&=\ell k_{\Q(t)}(\widetilde{K}_i,\pi)+m(t^k+t^{-k}).
\end{align*}
We have therefore shown how to modify the self-linking within a fixed homology class by a symmetric polynomial with no constant term.

The general case follows: thanks to the previous paragraph, it suffices to describe how to change the self-linking by a constant, and this can be arranged by varying the choice of the parallel~$\gamma_i$ i.e.\ by additionally winding an initial choice of~$\gamma_i$ around the appropriate number of meridians of~$\widetilde{K}_i'$.
\end{proof}

By combining the previous two lemmas, we can now prove the main result of this section.

\begin{proposition}
\label{prop:Step1}
Let~$0 \to H \xrightarrow{\widehat{\lambda}} H^* \xrightarrow{\varpi} H_1(Y;\Z[t^{\pm 1}]) \to 0$ be a presentation of~$Y$.
Pick generators~$x_1,\ldots,x_n$ for~$H$ and endow~$H^*$ with the dual basis~$x_1^*,\ldots,x_n^*$.
Let~$Q$ be the matrix of~$\lambda$ with respect to these bases.
The classes~$\pi(x_1^*),\ldots,\pi(x_n^*)$ can be represented by simple closed curves~$\widetilde{K}_1,\ldots,\widetilde{K}_n \subset Y^\infty$ such that $\widetilde{L}=\widetilde{K}_1 \cup \cdots \cup\widetilde{K}_n$ is in covering general position and satisfies the following properties:
\begin{enumerate}
\item the equivariant linking of the~$\widetilde{K}_i$ satisfy
$\ell k_{\Q(t)}(\widetilde{K}_j,\widetilde{K}_i)=\unaryminus(Q^{-1})_{ij}$ for~$i \neq j$;
\item there exist parallels~$\gamma_1,\ldots,\gamma_n$ of~$\widetilde{K}_1,\ldots,\widetilde{K}_n$ such that~$\ell k_{\Q(t)}(\widetilde{K}_i,\gamma_i)=\unaryminus(Q^{-1})_{ii}$.
\end{enumerate}
In particular the parallel~$\gamma_i$ represents the homology class~$\unaryminus(Q^{-1})_{ii}[\mu_{\widetilde{K}_i}]+\lambda_{\widetilde{K}_i} \in H_1(\partial \overline{\nu} (K_i);\Q(t))$ and the transpose of the equivariant linking matrix of~$\widetilde{L}$ equals~$-Q^{-1}$.
\end{proposition}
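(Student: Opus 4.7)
The plan is to first produce an arbitrary framed link representing the classes~$\pi(x_i^*)$, and then to use Lemmas~\ref{lem:Step1} and~\ref{lem:ModifyFraming} to correct the equivariant linkings and framings to hit the prescribed values on the nose.

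First, I will represent each class~$\pi(x_i^*) \in H_1(Y^\infty)$ by a simple closed curve~$\widetilde{K}_i^0 \subset Y^\infty$, which is possible because every first homology class of a 3-manifold is represented by an embedded simple closed curve. By general position (in the non-compact 3-manifold $Y^\infty$), the~$\widetilde{K}_i^0$ can be chosen disjoint, and after a generic perturbation the resulting link~$\widetilde{L}^0 = \widetilde{K}_1^0 \cup \cdots \cup \widetilde{K}_n^0$ will be in covering general position. Choose any initial parallels~$\pi_1^0,\ldots,\pi_n^0$ (these exist since each~$\widetilde{K}_i^0$ has a tubular neighborhood) and record the initial linkings~$\ell_{ij}^0 := \ell k_{\Q(t)}(\widetilde{K}_j^0, \widetilde{K}_i^0)$ for~$i\ne j$ and the initial framing coefficients~$r_i^0 := \ell k_{\Q(t)}(\widetilde{K}_i^0, \pi_i^0)$.

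Next, I will compare these values with the targets. Under the conventions of Subsection~\ref{sub:HomologyIntersections} (as used in~\cite{ConwayPowell}), the Blanchfield form of~$Y$ is represented in the generating set~$\{\pi(x_i^*)\}$ by the matrix~$-Q^{-1}$, so by Proposition~\ref{prop:Linkingprop}(3) we have
\[ [\ell_{ij}^0] = \Bl_Y(\pi(x_i^*),\pi(x_j^*)) = -(Q^{-1})_{ij} \in \Q(t)/\Z[t^{\pm 1}], \]
and similarly for the diagonal entries. Hence the discrepancies~$p_{ij}(t) := \ell_{ij}^0 + (Q^{-1})_{ij}$ and $q_i(t):= r_i^0 + (Q^{-1})_{ii}$ all lie in~$\Z[t^{\pm 1}]$. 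Crucially, $q_i(t)$ is symmetric: $-(Q^{-1})_{ii}$ is symmetric because~$Q$ is Hermitian (hence so is~$Q^{-1}$, and diagonal entries of a Hermitian matrix are fixed by the involution), while $r_i^0$ is symmetric because a parallel of a knot is obtained from the knot by a push-off, which makes the self-framing Hermitian by Proposition~\ref{prop:Linkingprop}(2). A difference of symmetric polynomials is symmetric.

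Finally, I apply Lemma~\ref{lem:Step1} once for each ordered pair~$(i,j)$ with~$i \ne j$, replacing~$\widetilde{K}_i^0$ by a new knot that modifies the off-diagonal linking by~$-p_{ji}(t)$ (without affecting the homology class, the other off-diagonal linkings involving~$\widetilde{K}_i^0$, or the framing coefficient), and then apply Lemma~\ref{lem:ModifyFraming} once for each~$i$ to adjust the self-framing by the symmetric polynomial~$-q_i(t)$. The resulting framed link~$\widetilde{L}$ still represents the classes~$\pi(x_i^*)$ and satisfies the two claimed equalities. The concluding assertions follow formally: Lemma~\ref{lem:SimpleClosedCurve} gives~$[\gamma_i] = \ell k_{\Q(t)}(\widetilde{K}_i,\gamma_i)[\mu_{\widetilde{K}_i}] + \lambda_{\widetilde{K}_i} = -(Q^{-1})_{ii}[\mu_{\widetilde{K}_i}] + \lambda_{\widetilde{K}_i}$, and the identification~$A_{\widetilde{L}}^T = -Q^{-1}$ is immediate from the definition of the equivariant linking matrix. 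The main subtlety is the symmetry verification for the self-framing adjustment; everything else is a straightforward application of the lemmas already proved in this section.
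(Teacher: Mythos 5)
Your argument is correct and follows the paper's proof essentially verbatim: represent the classes by a link in covering general position, observe via Proposition~\ref{prop:Linkingprop} that the discrepancies with $-Q^{-1}$ are Laurent polynomials, verify that the diagonal discrepancy is a \emph{symmetric} polynomial, and then apply Lemmas~\ref{lem:Step1} and~\ref{lem:ModifyFraming}. One bookkeeping point: the off-diagonal correction should be performed once per \emph{unordered} pair $\{i,j\}$ — the condition for the other ordering then holds automatically by the Hermitian symmetry of $\ell k_{\Q(t)}$ and of $Q^{-1}$ — since carrying it out for both ordered pairs with the pre-computed discrepancies $p_{ij}=\overline{p_{ji}}$ would over-correct.
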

\begin{proof}
Represent the classes~$\pi(x_1^*),\ldots,\pi(x_n^*)$ by an~$n$-component link in~$Y^\infty$ that can be assumed to be in covering general position.
Use~$\widetilde{J}_1,\ldots,\widetilde{J}_n$ to denote the components of this link.
Thanks to Lemma~\ref{lem:Step1}, we can assume that the equivariant linking numbers of these knots coincide with the off-diagonal terms of~$Q^{-1}$; we can apply this lemma because for $i \neq j$ the rational functions~$\ell k_{\Q(t)} (\widetilde{J}_j,\widetilde{J}_i)$ and the corresponding~$-(Q^{-1})_{ij}$ both reduce mod~$\Z[t^{\pm 1}]$ to~$\Bl_Y(\pi(x_i^*),\pi(x_j^*))$ and thus differ by a Laurent polynomial~$p(t) \in \Z[t^{\pm 1}]$.

We arrange the framings and last assertion simultaneously.
For brevity,  from now on we write
$$r_i:=-(Q^{-1})_{ii}.$$
By Lemma~\ref{lem:SimpleClosedCurve}, for each $i$,
the class~$r_i[\mu_{\widetilde{J}_i}]+\lambda_{\widetilde{J}_i}$ can be rewritten as~$(r_i-\ell k_{\Q(t)}(\widetilde{J}_i,\pi_i))[\mu_{\widetilde{J}_i}]+[\pi_i]$ for any choice of parallel~$\pi_i$ for~$\widetilde{J}_i$.
Note that~$r_i-\ell k_{\Q(t)}(\widetilde{J}_i,\pi_i)$ is a Laurent polynomial: indeed both~$r_i$ and~$\ell k_{\Q(t)}(\widetilde{J}_i,\pi_i)$ reduce mod $\Z[t^{\pm 1}]$ to~$\Bl_Y(\pi([x_i^*]),\pi([x_i^*]))$.
\begin{claim*}
The polynomial~$r_i-\ell k_{\Q(t)}(\widetilde{J}_i,\pi_i)$ is symmetric.
\end{claim*}
\begin{proof}
We first assert that if~$\sigma$ is a parallel of~$\widetilde{J}_i$,  then~$\ell k_{\Q(t)}(\sigma,\widetilde{J}_i)$ is symmetric.
The rational function~$\ell k_{\Q(t)}(\sigma,\widetilde{J}_i)$ is symmetric if and only if~$\ell k_{\Q(t)}(\sigma,\widetilde{J}_i)=\overline{\ell k_{\Q(t)}(\sigma,\widetilde{J}_i)}$.
By the symmetry property of the equivariant linking form mentioned in Proposition~\ref{prop:Linkingprop}, this is equivalent to the equality~$\ell k_{\Q(t)}(\sigma,\widetilde{J}_i)=\ell k_{\Q(t)}(\widetilde{J}_i,\sigma)$ and in turn this equality holds because~the ordered link~$(\sigma,\widetilde{J}_i)$ is isotopic to the ordered link~$(\widetilde{J}_i,\sigma)$ in~$Y^\infty$.
This concludes the proof of the assertion that~$\ell k_{\Q(t)}(\sigma,\widetilde{J}_i)$ is symmetric.


We conclude the proof of the claim.
Thanks to the assertion, it now suffices to prove that~$r_i$ is symmetric.
To see this, note that since the matrix~$Q^{-1}$ is Hermitian (because~$Q$ is) we have~$r_i(t^{-1})=-(\overline{Q^{-1}})_{ii}=-(\overline{Q^{-T}})_{ii}=-(Q^{-1})_{ii}=r_i(t)$, as required.
\end{proof}
We can now apply Lemma~\ref{lem:ModifyFraming} to~$p(t):=r_i-\ell k_{\Q(t)}(\widetilde{J}_i,\pi_i)$ (which is symmetric by the claim) to isotope the~$\widetilde{J}_i$ to knots~$\widetilde{K}_i$ (without changing the equivariant linking) and to find parallels~$\gamma_1,\ldots,\gamma_n$ of~$\widetilde{K}_1,\ldots, \widetilde{K}_n$ that satisfy the equalities~$\unaryminus (Q^{-1})_{ii}=r_i=\ell k_{\Q(t)}(\widetilde{K}_i,\gamma_i)$.
This proves the second item of the proposition and the assertions in the last sentence follow because~$r_i[\mu_{\widetilde{K}_i}]+\lambda_{\widetilde{K}_i}=[\gamma_i]$ (by Lemma~\ref{lem:SimpleClosedCurve})
 and from the definition of the equivariant linking matrix.
\end{proof}

\subsection{Step 2: the result of surgery is a~$\Z[t^{\pm 1}]$-homology~$S^1 \times S^2$}
\label{sub:Step2}
Let $\widetilde{L} \subset Y^\infty$ be a framed link in covering general position.
Let~$Y'$ be the effect of surgery on the framed link~$L=p(\widetilde{L})$ with equivariant linking matrix $A_{\widetilde{L}}$ over $\Q(t)$.
We assume throughout this subsection that $\det(A_{\widetilde{L}})\neq 0$.
Our goal is to calculate the Alexander polynomial~$\Delta_{Y'}$ in terms of~$\Delta_Y$ and of the equivariant linking matrix of~$\widetilde{L} \subset Y^\infty$.
In Theorem~\ref{thm:OrderOfEffectOfSurgery} we will show that
\begin{equation}
\label{eq:AlexGoal}
\Delta_{Y'} \doteq \Delta_Y\det(A_{\widetilde{L}}).
\end{equation}
We then apply this to the framed link~$\widetilde{L} \subset Y^\infty$ that we built in Proposition~\ref{prop:Step1}; this framed link satisfies $\det(A_{\widetilde{L}})=\det(Q^{-T})\neq 0$.
Continuing with the notation from that proposition, we have~$\det(A_{\widetilde{L}})=\det(-Q^{-T}) \doteq \frac{1}{\Delta_Y}$ (because~$Q$ presents~$H_1(Y;\Z[t^{\pm 1}])$) so in this case~\eqref{eq:AlexGoal} implies that~$\Delta_{Y'} \doteq 1$, which in turn implies that~$Y'$  is a~$\Z[t^{\pm 1}]$-homology~$S^1 \times S^2$; see Remark~\ref{rem:AlexPoly} and Proposition~\ref{prop:Step2}.

 \medbreak
We start by outlining the proof of~\eqref{eq:AlexGoal}, which will be later recorded as Theorem ~\ref{thm:OrderOfEffectOfSurgery}.
\begin{proof}[Outline of proof of Theorem~\ref{thm:OrderOfEffectOfSurgery}]
\label{rem:Step3IdeaOfProof}
Our plan is to compute the Reidemeister torsion~$\tau(Y')$ in terms of the Reidemeister torsion~$\tau(Y)$, and then, for $Z=Y,Y'$ to use the relation
\begin{equation}\label{eq:Alextotorsion}
\Delta_{Z}=\tau(Z)(t-1)^2
\end{equation}
from~\cite[Theorem 1.1.2]{TuraevReidemeisterTorsionInKnotTheory} to derive~\eqref{eq:AlexGoal}. We note that in our setting we are allowed to write~$\tau(Y)$ and~$\tau(Y')$ for the Reidemeister torsions without having to choose bases~$\mathcal{B}$; this is because both~$H_*(Y;\Q(t))=0$ and~$H_*(Y';\Q(t))=0$, recall Lemma \ref{lem:surgQsphere} and Section~\ref{sec:reidemeister-torsion}; here note that we can apply Lemma \ref{lem:surgQsphere} because we are assuming that $\det(A_{\widetilde{L}})\neq 0$.

We will calculate~$\tau(Y')$ from~$\tau(Y)$ by studying the long exact sequence of the pairs~$(Y,Y_L)$ and~$(Y',Y_L)$ with~$\Q(t)$ coefficients.
More concretely, in Construction~\ref{cons:Bases}, we endow the $\Q(t)$-vector spaces~$H_*(Y,Y_L;\Q(t))$,~$H_*(Y',Y_L;\Q(t))$, and~$H_*(Y_L;\Q(t))$ with bases that we denote by~$\mathcal{B}_{Y,Y_L},\mathcal{B}_{Y',Y_L}$, and~$\mathcal{B}_{Y_L}$ respectively.
In Lemma~\ref{lem:MultiplicativityTorsion}, we then show
that
$$
\tau(Y)\tau(\mathcal{H}_L)^{-1}\doteq\tau(Y_L,\mathcal{B}_{Y_L})\doteq \tau(Y')\tau(\mathcal{H}_{L'})^{-1},
$$
where~$\mathcal{H}_L$ and~$\mathcal{H}_{L'}$ respectively denote the long exact sequences in~$\Q(t)$-homology of the pairs~$(Y,Y_L)$ and~$(Y',Y_L)$.
Finally, we prove that~$\tau(\mathcal{H}_L) \doteq 1$ and~$\tau(\mathcal{H}_{L'}) \doteq \det(A_{\widetilde{L}})$.
From~\eqref{eq:Alextotorsion} and the previous equation we then deduce
\[\frac{\Delta_Y}{(t-1)^2 \cdot 1} \doteq  \tau(Y)\tau(\mathcal{H}_L)^{-1}\doteq \tau(Y')\tau(\mathcal{H}_{L'})^{-1} \doteq \frac{\Delta_{Y'}}{(t-1)^2 \cdot \det(A_{\widetilde{L}})}.
\]
The equality~$\Delta_{Y'} \doteq \Delta_Y\det(A_{\widetilde{L}})$ follows promptly.
\end{proof}

We start filling in the details with our choice of bases for the previously mentioned~$\Q(t)$-homology vector spaces.

\begin{construction}
\label{cons:Bases}
We fix bases for $H_*(Y,Y_L;\Q(t))$, $H_*(Y',Y_L;\Q(t))$, and $H_*(Y_L;\Q(t))$, that we will respectively denote by~$\mathcal{B}_{Y,Y_L},\mathcal{B}_{Y',Y_L}$ and~$\mathcal{B}_{Y_L}$.
\begin{itemize}
\item
We base the~$\Q(t)$-vector spaces~$H_*(Y,Y_L;\Q(t))$ and~$H_*(Y',Y_L;\Q(t))$.
Excising~$\mathring{Y}_L$, we obtain~$H_i(Y,Y_L;\Q(t))=\bigoplus_{i=1}^n H_i(D^2 \times S^1,S^1 \times S^1;\Q(t))$ where~$n$ is the number of components of~$L$.
Similarly, by excising~$\mathring{Y}_L \cong \mathring{Y}_{L'}$, we have~$H_i(Y',Y_L;\Q(t))=\bigoplus_{i=1}^n H_i(S^1 \times D^2,S^1 \times S^1;\Q(t))$.
Since the map $\pi_1(S^1) \to \Z$ determining the coefficients is trivial,  \[\bigoplus_{i=1}^n H_i(S^1 \times D^2,S^1 \times S^1;\Q(t)) \cong \bigoplus_{i=1}^n H^{3-i}(S^1;\Q(t)) \cong \bigoplus_{i=1}^n H^{3-i}(S^1;\Z) \otimes \Q(t).\]  These homology vector spaces are only non-zero when~$i=2,3$.  in which case they are isomorphic to~$\Q(t)^n$.

We now pick explicit generators for these vector spaces.
Endow~$S^1 \times S^1$ with its usual cell structure,  with one~$0$-cell,  two~$1$-cells and one~$2$-cell~$e^2_{S^1 \times S^1}$.
Note that~$D^2 \times S^1$ is obtained from~$S^1 \times S^1\times I$ by additionally attaching a 3-dimensional~$2$-cell~$e^2_{D^2 \times S^1}$ and~$3$-cell, ~$e^3_{D^2 \times S^1}$, where  on  the chain level~$\partial e^3_{D^2 \times S^1}=e^2_{D^2 \times S^1}+e^2_{S^1\times S^1}-e^2_{D^2 \times S^1}=e^2_{S^1\times S^1}$.
We now fix once and for all lifts of these cells to the covers.
It follows that for~$k=2,3$:
\begin{align*}
H_k(Y,Y_L;\Q(t))&=C_k(Y,Y_L;\Q(t))=C_k(D^2 \times S^1,S^1 \times S^1;\Q(t))=\bigoplus_{i=1}^n \Q(t) (\widetilde{e}_{D^2 \times S^1}^k)_i \\
H_k(Y',Y_L;\Q(t))&=C_k(Y',Y_L;\Q(t))=C_k(S^1 \times D^2,S^1 \times S^1;\Q(t))=\bigoplus_{i=1}^n  \Q(t) (\widetilde{e}_{S^1 \times D^2}^k)_i.
\end{align*}
\item We now base~$H_*(Y_L;\Q(t))$.
Since~$H_*(Y;\Q(t))=0$,  a Mayer-Vietoris argument shows that~$H_1(Y_L;\Q(t)) \cong \Q(t)^n$,  generated by the meridians~$\mu_{\widetilde{K}_i}$ of~$\widetilde{L}$. Mayer-Vietoris also shows that the inclusion of the boundary induces an isomorphism~$\Q(t)^n=H_2(\partial Y_L;\Q(t)) \cong H_2(Y_L;\Q(t))$.
We can then base~$H_2(Y_L;\Q(t))$ using fixed lifts of the aforementioned~$2$-cells~$(e^2_{S^1 \times S^1})_i$ generating each of the torus factors of~$\partial Y_L$.
Summarising, we have
\begin{align*}
 H_1(Y_L;\Q(t))&=\bigoplus_{i=1}^n \Q(t)\mu_{\widetilde{K}_i},\\
  H_2(Y_L;\Q(t))&=\bigoplus_{i=1}^n \Q(t)(\widetilde{e}^2_{S^1 \times S^1})_i.
\end{align*}
\end{itemize}
\end{construction}

The next lemma reduces the calculation of~$\Delta_{Y'}$ to the calculation of~$\tau(\mathcal{H}_L)$ and~$\tau(\mathcal{H}_{L'})$.
Here, recall that~$\tau(\mathcal{H}_L)$ and~$\tau(\mathcal{H}_{L'})$ denote the torsion of the long exact sequences~$\mathcal{H}_L$ and~$\mathcal{H}_{L'}$ of the pairs $(Y,Y_L)$ and $(Y',Y_L)$, viewed as based acyclic complexes with bases~$\mathcal{B}_{Y_L},\mathcal{B}_{Y,Y_L}$,  and~$\mathcal{B}_{Y',Y_L}$.

\begin{lemma}
\label{lem:MultiplicativityTorsion}
If~$H_1(Y;\Q(t))=0$ and $\det(A_{\widetilde{L}})\neq 0$, then we have
\begin{align*}
\tau(Y)&\doteq \tau(Y_L,\mathcal{B}_{Y_L})\cdot \tau(\mathcal{H}_L), \\
 \tau(Y')&\doteq\tau(Y_L,\mathcal{B}_{Y_L})\cdot \tau(\mathcal{H}_{L'}).
\end{align*}
In particular, we have
$$\Delta_{Y'}\cdot \tau(\mathcal{H}_{L})\doteq\Delta_Y\cdot \tau(\mathcal{H}_{L'}).$$
\end{lemma}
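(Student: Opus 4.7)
The plan is to derive both equalities from the multiplicativity of Reidemeister torsion (Proposition~\ref{thm:ReidemeisterTorsion}~(1)) applied to the short exact sequences of based chain complexes
\begin{equation*}
0 \to C_*(Y_L;\Q(t)) \to C_*(Y;\Q(t)) \to C_*(Y,Y_L;\Q(t)) \to 0,
\end{equation*}
and its analogue with $Y$ replaced by $Y'$, each complex being based by lifts of cells as in Construction~\ref{cons:Bases}. Before invoking the formula I need to check that $\tau(Y)$ and $\tau(Y')$ are well defined without a choice of homology basis. For $Y$ this follows from the hypothesis that the Alexander module is torsion, and for $Y'$ it follows from Lemma~\ref{lem:surgQsphere} (whose hypothesis $\det(A_{\widetilde{L}})\neq 0$ is assumed) combined with Poincar\'e duality over the field $\Q(t)$, which forces every $H_i(Y';\Q(t))$ to vanish.

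The multiplicativity formula then gives
\begin{align*}
\tau(Y) &\doteq \tau(Y_L,\mathcal{B}_{Y_L}) \cdot \tau((Y,Y_L),\mathcal{B}_{Y,Y_L}) \cdot \tau(\mathcal{H}_L), \\
\tau(Y') &\doteq \tau(Y_L,\mathcal{B}_{Y_L}) \cdot \tau((Y',Y_L),\mathcal{B}_{Y',Y_L}) \cdot \tau(\mathcal{H}_{L'}),
\end{align*}
so the two displayed equalities of the lemma will follow once I show that the relative torsions $\tau((Y,Y_L),\mathcal{B}_{Y,Y_L})$ and $\tau((Y',Y_L),\mathcal{B}_{Y',Y_L})$ are $\doteq 1$. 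For this, recall from Construction~\ref{cons:Bases} that, after excising $\mathring{Y}_L$, the relative chain complex $C_*(Y,Y_L;\Q(t))$ decomposes as a direct sum of $n$ copies of $C_*(D^2\times S^1,S^1\times S^1;\Q(t))$, which is concentrated in degrees two and three with basis $(\widetilde{e}^2_{D^2\times S^1})_i$ and $(\widetilde{e}^3_{D^2\times S^1})_i$. The relative boundary of $\widetilde{e}^3_{D^2\times S^1}$ equals $\widetilde{e}^2_{S^1\times S^1}$ upstairs, but this cell lies in the subcomplex and so becomes zero in the relative complex; hence the boundary map vanishes. Consequently the chain basis coincides with the homology basis $\mathcal{B}_{Y,Y_L}$ on each factor, and Definition~\ref{def:ReidemeisterTorsion} yields $\tau((Y,Y_L),\mathcal{B}_{Y,Y_L})\doteq 1$ by inspection. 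The same argument with the dual cell structure handles the primed case.

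Combining the two resulting equalities produces
\begin{equation*}
\tau(Y) \cdot \tau(\mathcal{H}_{L'}) \doteq \tau(Y_L,\mathcal{B}_{Y_L}) \cdot \tau(\mathcal{H}_L) \cdot \tau(\mathcal{H}_{L'}) \doteq \tau(Y') \cdot \tau(\mathcal{H}_L).
\end{equation*}
Substituting $\Delta_Z \doteq \tau(Z)(t-1)^2$ from~\eqref{eq:Alextotorsion} on both sides and cancelling the common $(t-1)^2$ factors yields the final identity $\Delta_{Y'}\cdot \tau(\mathcal{H}_L)\doteq \Delta_Y\cdot \tau(\mathcal{H}_{L'})$. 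The main place to exercise care is the cellular computation that makes the relative torsions trivial; the remainder is formal bookkeeping with the multiplicativity formula.
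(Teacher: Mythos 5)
Your proof is correct and follows essentially the same route as the paper's: multiplicativity of torsion applied to the short exact sequences of the pairs $(Y,Y_L)$ and $(Y',Y_L)$, reduction to showing the relative torsions are $\doteq 1$ via the cellular decomposition into solid-torus pieces with vanishing relative differentials, and then Turaev's relation $\Delta_Z \doteq \tau(Z)(t-1)^2$ to convert the torsion identity into the Alexander polynomial identity. The only presentational difference is that the paper derives the final identity from the first two displayed equalities rather than combining them before substituting, which is immaterial.
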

\begin{proof}
We start by proving that the last statement follows from the first.
First note that since the vector spaces~$H_1(Y;\Q(t))$ and~$H_1(Y';\Q(t))$ vanish (for the latter we use Lemma~\ref{lem:surgQsphere} which applies since $\det(A_{\widetilde{L}})\neq 0$), the Alexander polynomials of~$Y$ and~$Y'$ are nonzero.
Next,~\cite[Theorem 1.1.2]{TuraevReidemeisterTorsionInKnotTheory} implies that~$\tau(Y)(t-1)^2=\Delta_Y$ and similarly for~$Y'$.  Therefore
$\Delta_{Y'}/\Delta_Y=\tau(Y')/\tau(Y).$
The first part of the lemma implies that $\tau(Y')/\tau(Y)=\tau(\mathcal{H}_{L'})/\tau(\mathcal{H}_{L})$. Combining these equalities,
$$\frac{\Delta_{Y'}}{\Delta_Y}=\frac{\tau(Y')}{\tau(Y)}=\frac{\tau(\mathcal{H}_{L'})}{\tau(\mathcal{H}_{L})}, $$
from which the required statement follows immediately.

To prove the first statement of the lemma, it suffices to prove that~$\tau(Y,Y_L,\mathcal{B}_{Y,Y_L})=1$ as well as~$\tau(Y',Y_L,\mathcal{B}_{Y',Y_L})=1$: indeed, the required equalities then follow by applying the multiplicativity of Reidemeister torsion (the first item of Proposition~\ref{thm:ReidemeisterTorsion}) to the short exact sequences
\[0 \to C_*(Y_L;\Q(t)) \to C_*(Y;\Q(t)) \to C_*(Y,Y_L;\Q(t)) \to 0,\]
leading to $\tau(Y) = \tau(Y_L) \cdot \tau(Y,Y_L,\mathcal{B}_{Y,Y_L}) \cdot \tau(\mathcal{H}_{L}) = \tau(Y_L) \cdot 1 \cdot \tau(\mathcal{H}_{L})$ as desired. And similarly for the pair~$(Y',Y_L)$.

We use Definition~\ref{def:ReidemeisterTorsion} to prove that~$\tau(Y,Y_L,\mathcal{B}_{Y,Y_L})=1$; again the proof for~$L'$ is analogous.
We endow~$Y$ and~$Y_L$ with cell structures for which~$Y_L$ and $\partial Y_L$ are subcomplexes of~$Y$, and~$Y$ is obtained from~$Y_L$ by attaching~$n$ solid tori to $\partial Y_L$.
By definition of the relative chain complex, we have~$C_*(Y,Y_L;\Q(t))=C_*(Y;\Q(t))/C_*(Y_L;\Q(t))$.
Since we are working with cellular chain complexes we deduce that
$$C_*(Y,Y_L;\Q(t))=C_*(Y;\Q(t))/C_*(Y_L;\Q(t))=\bigoplus_{i=1}^n C_*(D^2 \times S^1;\Q(t))/C_*(S^1 \times S^1;\Q(t)).$$
Using the cell structures described in Construction~\ref{cons:Bases},~$D^2 \times S^1$ is obtained from~$S^1 \times S^1$ by attaching a~$2$-cell and a~$3$-cell.
By the above sequence of isomorphisms, this shows that~$C_i(Y,Y_L;\Q(t))=0$ for~$i\neq 2,3$ and gives a basis for~$C_2(Y,Y_L;\Q(t))$ and~$C_3(Y,Y_L;\Q(t))$.
In fact, this also implies that~$C_i(Y,Y_L;\Q(t))=H_i(Y,Y_L;\Q(t))$ and that the differentials in the chain complex are zero, as was mentioned in Construction~\ref{cons:Bases}.
Thus,  the basis of~$C_*(Y,Y_L;\Q(t))$ corresponds exactly to the way we based~$H_*(Y,Y_L;\Q(t))$ in Construction~\ref{cons:Bases}.
Therefore the change of basis matrix is the identity and so the torsion is equal to~$1$.
This concludes the proof of the lemma.
\end{proof}

Our goal is now to show that~$\tau(\mathcal{H}_L) \doteq 1$ and~$\tau(\mathcal{H}_{L'}) \doteq \det(A_{\widetilde{L}})$.  We start by describing the long exact sequences~$\mathcal{H}_L$ and~$\mathcal{H}_{L'}$.

\begin{lemma}
\label{lem:LESSimple}
Assume that~$H_1(Y_L;\Q(t))=0$ and $\det(A_{\widetilde{L}})\neq 0$.
The only nontrivial portions of the long exact sequence of the pairs~$(Y,Y_L)$ and~$(Y,Y_{L'})$ with~$\Q(t)$-coefficients are of the following form:
\begin{align*}
\mathcal{H}_L=&\, \Big( 0 \to H_3(Y,Y_L;\Q(t)) \xrightarrow{\partial_3^L } H_2(Y_L;\Q(t)) \to 0 \to H_2(Y,Y_{L};\Q(t)) \xrightarrow{\partial_2^L } H_1(Y_{L};\Q(t)) \to 0 \Big), \\
\mathcal{H}_{L'}=&\, \Big( 0 \to H_3(Y',Y_L;\Q(t)) \xrightarrow{\partial_3^{L'}} H_2(Y_L;\Q(t)) \to 0 \to H_2(Y',Y_{L};\Q(t)) \xrightarrow{\partial_2^{L'} } H_1(Y_L;\Q(t)) \to 0 \Big).
\end{align*}
Additionally,  with respect to the bases of Construction~\ref{cons:Bases},
\begin{itemize}
\item the homomorphism~$\partial_2^{L'}$ is represented by ~$-A_{\widetilde{L}}^{-1}$, i.e.~minus the inverse of the equivariant linking matrix for~$\widetilde{L}$;
\item the homomorphisms $\partial_2^L$, $\partial_3^L$, and~$\partial_3^{L'}$ are represented by identity matrices.
\end{itemize}
\end{lemma}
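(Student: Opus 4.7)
My plan is to combine the cellular description of Construction~\ref{cons:Bases} with the framed-link calculus of Section~\ref{sec:Prelim}. The simplification of the two long exact sequences is essentially bookkeeping, while identifying $\partial_2^{L'}$ with the equivariant linking matrix is where the content lies.

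First I would establish that the long exact sequences of the pairs $(Y,Y_L)$ and $(Y',Y_L)$ with $\Q(t)$-coefficients collapse to the displayed six-term sequences. By hypothesis the Alexander module of $Y$ is torsion, so $H_*(Y;\Q(t))=0$; since $\det(A_{\widetilde{L}})\neq 0$, Lemma~\ref{lem:surgQsphere} gives $H_*(Y';\Q(t))=0$ as well. Construction~\ref{cons:Bases} shows that $H_k(Y,Y_L;\Q(t))$ and $H_k(Y',Y_L;\Q(t))$ are free of rank $n$ for $k=2,3$ and vanish otherwise, while $H_k(Y_L;\Q(t))$ is free of rank $n$ for $k=1,2$ and zero otherwise (the vanishing in degree $3$ follows because $Y_L$ is a compact 3-manifold with nonempty boundary). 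Feeding these groups into the two long exact sequences yields precisely the displayed sequences and forces each of $\partial_3^L$, $\partial_2^L$, $\partial_3^{L'}$, $\partial_2^{L'}$ to be an isomorphism.

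Next I would check that $\partial_3^L,\partial_3^{L'}$, and $\partial_2^L$ are each represented by the identity matrix. For $\partial_3$, the chain-level identity $\partial e^3_{D^2\times S^1}=e^2_{S^1\times S^1}$ from Construction~\ref{cons:Bases} (and the analogous identity for $e^3_{S^1\times D^2}$) immediately identifies the map with the identity in the chosen bases on each component. For $\partial_2^L$, the relative 2-cell $(\widetilde{e}^2_{D^2\times S^1})_i$ is a meridian disk of $\overline{\nu}(K_i)$, whose cellular boundary is the meridian $[\mu_{\widetilde{K}_i}]$, which is the chosen basis element of $H_1(Y_L;\Q(t))$.

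The main obstacle is computing $\partial_2^{L'}$, since here the two homology groups are based using structures coming from \emph{different} links, namely $L$ and its dual $L'$. The relative 2-cell $(\widetilde{e}^2_{S^1\times D^2})_i$ is a meridian disk of the newly glued solid torus $\overline{\nu}(K'_i)\subset Y'$, whose cellular boundary is the dual meridian $[\mu_{\widetilde{K}'_i}]$. I would then use the surgery-gluing convention recorded in Remark~\ref{rem:sign}, which identifies $\mu_{\widetilde{K}'_i}=-\pi_i$ on the shared boundary torus $\partial\overline{\nu}(K_i)=\partial\overline{\nu}(K'_i)$, to rewrite the image under the inclusion $\partial\overline{\nu}(K'_i)\hookrightarrow Y_L$ as $-[\pi_i]\in H_1(Y_L;\Q(t))$. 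Finally, Proposition~\ref{prop:relating-pi_i_and_meridians} expresses $[\pi_i]=\sum_j (A_{\widetilde{L}})_{ij}[\mu_{\widetilde{K}_j}]$ in the chosen meridian basis. Assembling these columns and tracking signs yields the matrix described by the equivariant linking data, and invoking Lemma~\ref{lem:InverseMatrix} (which gives $A_{\widetilde{L}'}=-A_{\widetilde{L}}^{-1}$) rewrites the answer in the form $-A_{\widetilde{L}}^{-1}$ stated in the lemma.
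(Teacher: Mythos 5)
Your reduction of the two long exact sequences and your treatment of $\partial_3^L$, $\partial_3^{L'}$, and $\partial_2^L$ coincide with the paper's argument and are fine (the paper derives $H_*(Y;\Q(t))=0=H_*(Y';\Q(t))$ via connectivity of the covers, Poincar\'e duality and Lemma~\ref{lem:surgQsphere}, exactly as you do). The problem is your last step. Your own computation gives
\[
\partial_2^{L'}\big((\widetilde{e}^2_{S^1\times D^2})_i\big)=[\mu_{\widetilde{K}'_i}]=-[\pi_i]=-\sum_j (A_{\widetilde{L}})_{ij}[\mu_{\widetilde{K}_j}],
\]
so the matrix you have assembled is $-A_{\widetilde{L}}$ (or its transpose, depending on the row/column convention), \emph{not} $-A_{\widetilde{L}}^{-1}$. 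The closing sentence --- that invoking Lemma~\ref{lem:InverseMatrix} (i.e.\ $A_{\widetilde{L}'}=-A_{\widetilde{L}}^{-1}$) ``rewrites the answer in the form $-A_{\widetilde{L}}^{-1}$'' --- is a non-sequitur: that identity expresses the linking matrix of the dual link in terms of $A_{\widetilde{L}}$; it cannot turn the matrix $-A_{\widetilde{L}}$ you just computed into its inverse. Since $-A_{\widetilde{L}}$ and $-A_{\widetilde{L}}^{-1}$ are genuinely different matrices in general, the proof as written does not establish the displayed formula for $\partial_2^{L'}$.

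You should be aware that you have run into a real tension in the source rather than a pure slip of your own. The paper's proof identifies $\partial_2^{L'}$ with the change-of-basis matrix $B$ defined by $\boldsymbol{\mu}'=B\boldsymbol{\mu}$ and then cites the proof of Lemma~\ref{lem:InverseMatrix} for ``$B=-A_{\widetilde{L}}^{-1}$''; but what that proof actually establishes is $B=-A_{\widetilde{L}}$ (with $A_{\widetilde{L}'}=B^{-1}$), in agreement with your geometric computation. A sanity check with $p$-framed surgery on an unknot contained in a ball confirms that the meridian disc of the dual knot maps to $\pm p$ times the original meridian, i.e.\ the matrix is $\pm A_{\widetilde{L}}$, not $\pm A_{\widetilde{L}}^{-1}$. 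The discrepancy is harmless downstream only because Theorem~\ref{thm:OrderOfEffectOfSurgery} uses nothing beyond $\det(\partial_2^{L'})^{\pm 1}\doteq\det(A_{\widetilde{L}})^{\pm 1}$, and the exponent there depends on the degree convention used to view the long exact sequence as a based acyclic complex. So the honest conclusions of your argument are: $\partial_2^{L'}$ is represented by $-A_{\widetilde{L}}$ up to transpose, and hence $\det(\partial_2^{L'})\doteq\det(A_{\widetilde{L}})$; to match the lemma as literally stated you would either need to exhibit a convention under which the representing matrix inverts (there isn't one for the bases of Construction~\ref{cons:Bases}), or note the typo and check that the determinant identity is all that is used later.
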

\begin{proof}
Since~$Y^\infty$ and~${Y'}^\infty$ are connected,
we have~$H_0(Y;\Z[t^{\pm 1}])=\Z$ and~$H_0(Y';\Z[t^{\pm 1}])=\Z$, so~$H_0(Y;\Q(t))=0$ and~$H_0(Y';\Q(t))=0$.
Since we are working with field coefficients, Poincar\'e duality and the universal coefficient theorem imply that~$H_3(Y;\Q(t))=0$ and~$H_3(Y';\Q(t))=0$.
As observed in Construction~\ref{cons:Bases} above, by excision, the only non-zero relative homology groups of~$(Y,Y_L)$ and~$(Y',Y_L)$ are
\begin{align*}
H_i(Y,Y_L;\Q(t))=\Q(t)^n \ \ \ \ &\text{ and } \ \ \ \ H_i(Y',Y_L;\Q(t))=\Q(t)^n
\end{align*}
for~$i=2,3$.
Next, since by assumption~$H_1(Y;\Q(t))=0$, duality and the universal coefficient theorem imply that~$H_2(Y;\Q(t))=0$.
Since we proved in Lemma~\ref{lem:surgQsphere} that~$H_1(Y';\Q(t))=0$, (here we used $\det(A_{\widetilde{L}}) \neq 0$) the same argument shows that~$H_2(Y';\Q(t))=0$.
This establishes the first part of the lemma.

We now prove the statement concerning~$\partial_2^L$ and~$\partial_2^{L'}$.
Recall from Construction~\ref{cons:Bases} that we based the vector spaces~$H_2(Y,Y_L;\Q(t))$ and~$H_2(Y',Y_L;\Q(t))$ by meridional discs to the~$\widetilde{K}_i$ and~$\widetilde{K}_i'$ respectively.
The map~$\partial_2^L$ takes each disc to its boundary, the meridian~$\mu_{\widetilde{K}_i}$; since these meridians form our chosen basis for~$H_1(Y_L;\Q(t))$, we deduce that~$\partial_2^L$ is represented by the identity matrix.
The map~$\partial_2^{L'}$ also takes each meridional disc to its boundary, the meridian~$\widetilde{\mu}_{K_i'}$ to the dual knot.
It follows that~$\partial_2^{L'}$ is represented by the change of basis matrix~$B$ such that~$\boldsymbol{\mu}'=B\boldsymbol{\mu}$.
But during the proof of Lemma~\ref{lem:InverseMatrix} we saw that~$B=-A_{\widetilde{L}}^{-1}$ .

Finally, we prove that~$\partial_3^L$ and~$\partial_3^{L'}$ are represented by identity matrices.
In Construction~\ref{cons:Bases},  we based~$H_3(Y,Y_L;\Q(t))$ and~$H_3(Y',Y_L;\Q(t))$ using respectively (lifts of) the~$3$-cells of the~$(D^2 \times S^1)_i$ and~$(S^1 \times D^2)_i$.
Now both~$\partial_3^L$ and~$\partial_3^{L'}$ take these~$3$-cells to their boundaries.
But as we noted in Construction~\ref{cons:Bases}, these boundaries are (algebraically) the~$2$-cells~$(e^2_{S^1 \times S^1})_i$.
In other words both~$\partial_3^L$ and~$\partial_3^{L'}$ map our choice of ordered bases to our other choice of ordered bases, and are therefore represented in these bases by  identity matrices, as required.
This concludes the proof of  Lemma~\ref{lem:MultiplicativityTorsion}.
\end{proof}



As we now understand the exact sequences~$\mathcal{H}_L$ and~$\mathcal{H}_{L'}$ we can calculate their torsions, leading to the proof of the main result of this subsection.

\begin{theorem}
\label{thm:OrderOfEffectOfSurgery}
If~$H_1(Y_L;\Q(t))=0$ and $\det(A_{\widetilde{L}})\neq 0$, then we have
$$\Delta_{Y'}\doteq \det(A_{\widetilde{L}})\Delta_Y.$$
\end{theorem}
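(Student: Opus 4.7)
The plan is to follow the outline sketched just before the statement: leverage the multiplicativity of Reidemeister torsion to reduce the computation of $\Delta_{Y'}/\Delta_Y$ to a calculation of the torsions of two short acyclic complexes arising from the long exact sequences of the pairs $(Y,Y_L)$ and $(Y',Y_L)$.

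More concretely, my first step will be to appeal to~\cite[Theorem 1.1.2]{TuraevReidemeisterTorsionInKnotTheory} to write $\tau(Y)=\Delta_Y/(t-1)^2$ and $\tau(Y')=\Delta_{Y'}/(t-1)^2$; here we are allowed to drop the basis from $\tau(Y)$ and $\tau(Y')$ because $H_*(Y;\Q(t))=H_*(Y';\Q(t))=0$ (the vanishing for $Y'$ uses Lemma~\ref{lem:surgQsphere} together with our hypothesis $\det(A_{\widetilde{L}})\neq 0$). Lemma~\ref{lem:MultiplicativityTorsion} then gives $\tau(Y_L,\mathcal{B}_{Y_L})\doteq \tau(Y)\tau(\mathcal{H}_L)^{-1}\doteq \tau(Y')\tau(\mathcal{H}_{L'})^{-1}$, so that
\[
\frac{\Delta_{Y'}}{\Delta_Y}\doteq \frac{\tau(Y')}{\tau(Y)}\doteq \frac{\tau(\mathcal{H}_{L'})}{\tau(\mathcal{H}_L)}.
\]
Hence it suffices to show $\tau(\mathcal{H}_L)\doteq 1$ and $\tau(\mathcal{H}_{L'})\doteq \det(A_{\widetilde{L}})$.

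Both of these computations proceed by applying Lemma~\ref{lem:LESSimple}, which identifies $\mathcal{H}_L$ and $\mathcal{H}_{L'}$ as acyclic complexes whose only nonzero differentials are isomorphisms of free $\Q(t)$-modules of rank $n$, represented in the bases of Construction~\ref{cons:Bases} by the identity matrix (for $\partial_2^L$, $\partial_3^L$, $\partial_3^{L'}$) and by $-A_{\widetilde{L}}^{-1}$ (for $\partial_2^{L'}$). Because the middle term of each long exact sequence vanishes, each complex splits as a direct sum of two length-two isomorphism complexes, and part~(1) of Proposition~\ref{thm:ReidemeisterTorsion} lets me multiply their individual torsions. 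Part~(2) of the same proposition then evaluates each such torsion as a determinant of a change-of-basis matrix. For $\mathcal{H}_L$ all three determinants are $\det(I)=1$, giving $\tau(\mathcal{H}_L)\doteq 1$. For $\mathcal{H}_{L'}$ the only nontrivial contribution comes from $\partial_2^{L'}$, and a direct computation with the formula in Definition~\ref{def:ReidemeisterTorsion} shows that it contributes $\det(-A_{\widetilde{L}}^{-1})^{\pm 1}\doteq \det(A_{\widetilde{L}})^{\pm 1}$, the exponent being determined by the position of this differential in the complex. The output is $\tau(\mathcal{H}_{L'})\doteq \det(A_{\widetilde{L}})$.

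Combining these, the chain of equalities
\[
\frac{\Delta_Y}{(t-1)^2}\doteq \tau(Y)\doteq \tau(Y_L,\mathcal{B}_{Y_L})\doteq \tau(Y')\tau(\mathcal{H}_{L'})^{-1}\doteq \frac{\Delta_{Y'}}{(t-1)^2\det(A_{\widetilde{L}})}
\]
yields $\Delta_{Y'}\doteq \det(A_{\widetilde{L}})\Delta_Y$, as desired. The main obstacle here is purely bookkeeping: keeping straight the conventions for Reidemeister torsion (sign, position shift, whether one uses Turaev's or Milnor's sign convention) so that the factor of $\det(A_{\widetilde{L}})$ appears with the correct exponent and is not inverted. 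Everything else reduces to invoking lemmas established in the preceding subsections.
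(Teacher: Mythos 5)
Your proposal is correct and follows essentially the same route as the paper: Lemma~\ref{lem:LESSimple} plus Proposition~\ref{thm:ReidemeisterTorsion} to evaluate $\tau(\mathcal{H}_L)\doteq 1$ and $\tau(\mathcal{H}_{L'})\doteq\det(\partial_3^{L'})/\det(\partial_2^{L'})\doteq\det(A_{\widetilde{L}})$, followed by Lemma~\ref{lem:MultiplicativityTorsion} and Turaev's relation $\Delta_Z\doteq\tau(Z)(t-1)^2$ to conclude. Your bookkeeping worry resolves exactly as in the paper, since the $\partial_2^{L'}$ differential sits in the denominator and $\det(-A_{\widetilde{L}}^{-1})^{-1}\doteq\det(A_{\widetilde{L}})$ up to the unit $\pm 1$ absorbed by $\doteq$.
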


\begin{proof}
Use the bases from Construction~\ref{cons:Bases}.
Combine the second item of Proposition~\ref{thm:ReidemeisterTorsion} with Lemma~\ref{lem:LESSimple} to obtain:
\begin{align*}
\tau(\mathcal{H}_L) \doteq \frac{\det(\partial_3^L)}{\det(\partial_2^L)} \doteq 1\text{ and }
\tau(\mathcal{H}_{L'})\doteq \frac{\det(\partial_3^{L'})}{\det(\partial_2^{L'})} \doteq \det(A_{\widetilde{L}}).
\end{align*}
We deduce that
$\tau(\mathcal{H}_{L'})/\tau(\mathcal{H}_{L}) \doteq \det(A_{\widetilde{L}}).$
Apply Lemma~\ref{lem:MultiplicativityTorsion} to obtain
$$\frac{\Delta_{Y'}}{\Delta_Y} \doteq \frac{\tau(\mathcal{H}_{L'})}{\tau(\mathcal{H}_{L})}\doteq \det(A_{\widetilde{L}}).$$
Rearranging yields the desired equality.
\end{proof}

As a consequence, we complete the second step of the plan from Subsection~\ref{sub:Plan}.
\begin{proposition}
\label{prop:Step2}
Let~$0 \to H \xrightarrow{\widehat{\lambda}} H^* \xrightarrow{\varpi} H_1(Y;\Z[t^{\pm 1}]) \to 0$ be a presentation of~$Y$.
Pick generators~$x_1,\ldots,x_n$ for~$H$ and endow~$H^*$ with the dual basis~$x_1^*,\ldots,x_n^*$.
Let~$Q$ be the matrix of~$\lambda$ with respect to these bases.
The classes~$\pi(x_1^*),\ldots,\pi(x_n^*)$ can be represented by a framed link~$\widetilde{L}$ in covering general position with equivariant linking matrix~$A_{\widetilde{L}}=-Q^{-T}$.
In addition, the~$3$-manifold~$Y'$ obtained by surgery on~$Y$ along $L$  satisfies~$H_1(Y';\Z[t^{\pm 1}])=0$.
\end{proposition}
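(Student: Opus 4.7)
The plan is to combine Proposition~\ref{prop:Step1} with Theorem~\ref{thm:OrderOfEffectOfSurgery} to compute $\Delta_{Y'}$, and then to deduce the vanishing of $H_1(Y';\Z[t^{\pm 1}])$ via Remark~\ref{rem:AlexPoly}. The first assertion, that the classes $\pi(x_i^*)$ are represented by a framed link $\widetilde{L}$ with equivariant linking matrix $-Q^{-T}$, is exactly the content of Proposition~\ref{prop:Step1}; the nondegeneracy of $\lambda$ ensures $\det(Q) \in \Q(t) \setminus \{0\}$, so that $-Q^{-T}$ is well-defined.

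For the second assertion, I would first verify the hypotheses of Theorem~\ref{thm:OrderOfEffectOfSurgery}. The matrix $A_{\widetilde{L}} = -Q^{-T}$ has determinant $\det(A_{\widetilde{L}}) = (-1)^n \det(Q)^{-1}$, which is nonzero since $\lambda$ is nondegenerate, and $H_*(Y;\Q(t)) = 0$ because $H_1(Y;\Z[t^{\pm 1}])$ is $\Z[t^{\pm 1}]$-torsion by hypothesis. Since $Q$ is a square presentation matrix for $\coker(\widehat{\lambda}) = H_1(Y;\Z[t^{\pm 1}])$, Remark~\ref{rem:AlexPoly} gives $\Delta_Y \doteq \det(Q)$. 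Theorem~\ref{thm:OrderOfEffectOfSurgery} then yields
\[
\Delta_{Y'} \doteq \det(A_{\widetilde{L}}) \cdot \Delta_Y \doteq (-1)^n \det(Q)^{-1} \det(Q) \doteq 1.
\]

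The remaining and most delicate step is to upgrade $\Delta_{Y'} \doteq 1$ to $H_1(Y';\Z[t^{\pm 1}]) = 0$. By Lemma~\ref{lem:surgQsphere}, applied using $\det(A_{\widetilde{L}}) \neq 0$, we have $H_1(Y';\Q(t)) = 0$, so $H_1(Y';\Z[t^{\pm 1}])$ is $\Z[t^{\pm 1}]$-torsion. To invoke Remark~\ref{rem:AlexPoly}, I need a square presentation matrix for this module, and here I would appeal to the standard fact that a closed oriented $3$-manifold with torsion Alexander module admits such a presentation. One way to see this is via a Heegaard splitting of $Y'$, which yields a CW structure with equal numbers of $1$- and $2$-cells, so that the associated cellular chain complex with $\Z[t^{\pm 1}]$-coefficients gives a square presentation of the Alexander module; equivalently, this reflects the fact that a torsion Alexander module of a closed oriented $3$-manifold has projective dimension one, hence a length-one free resolution by modules of equal rank. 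With such a presentation in hand, Remark~\ref{rem:AlexPoly} concludes $H_1(Y';\Z[t^{\pm 1}]) = 0$, as required.
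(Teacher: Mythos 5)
Your first two steps coincide with the paper's: Proposition~\ref{prop:Step1} produces $\widetilde{L}$, Theorem~\ref{thm:OrderOfEffectOfSurgery} gives $\Delta_{Y'}\doteq \det(A_{\widetilde{L}})\Delta_Y \doteq 1$, and Lemma~\ref{lem:surgQsphere} shows the module is torsion. The gap is in your final step, where you justify the existence of a square presentation matrix for $H_1(Y';\Z[t^{\pm 1}])$ by appealing to the cellular chain complex of a Heegaard splitting. That argument does not work: for a genus-$g$ splitting the $\Z[t^{\pm 1}]$-chain complex of the infinite cyclic cover is $0 \to C_3 \to C_2 \xrightarrow{\partial_2} C_1 \xrightarrow{\partial_1} C_0 \to 0$ with $\partial_1 \neq 0$ (a $1$-cell $\gamma$ maps to $(t^{\varphi(\gamma)}-1)$ times the $0$-cell), so the Alexander module is $\ker(\partial_1)/\operatorname{im}(\partial_2)$ and not $\coker(\partial_2)$. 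The square matrix $\partial_2$ presents the \emph{relative} module $H_1(\widetilde{Y'},\widetilde{Y'}^{(0)})$, and extracting a square presentation of $H_1(\widetilde{Y'})$ from this — or equivalently showing $\pd(H_1(Y';\Z[t^{\pm 1}]))\leq 1$ — is precisely the nontrivial point. (The underlying fact that a torsion Alexander module of a closed oriented $3$-manifold has projective dimension at most one can be proved, but it requires Poincar\'e duality together with the universal coefficient spectral sequence; it is not visible from the chain complex alone, and your parenthetical restatement of it is an assertion, not an argument. Note also that the paper explicitly cautions in Remark~\ref{rem:MainTheorem} that in general $H_1(Y;\Z[t^{\pm 1}])$ need not have projective dimension one, so this is not a fact one can cite as standard without care.)

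The paper closes this gap differently, using the hypothesis that $(H,\lambda)$ presents $Y$. Excision gives $H_2(Y,Y_L;\Z[t^{\pm 1}])\cong\Z[t^{\pm 1}]^n\cong H_2(Y',Y_L;\Z[t^{\pm 1}])$ and the vanishing of the corresponding $H_1$ relative groups, and since $H_2(Y;\Z[t^{\pm 1}])\cong\Z\cong H_2(Y';\Z[t^{\pm 1}])$ are torsion, the long exact sequences of the pairs truncate to
\begin{align*}
& 0  \to H_2(Y,Y_L;\Z[t^{\pm 1}]) \to H_1(Y_L;\Z[t^{\pm 1}]) \to H_1(Y;\Z[t^{\pm 1}]) \to 0, \\
& 0  \to H_2(Y',Y_L;\Z[t^{\pm 1}]) \to H_1(Y_L;\Z[t^{\pm 1}]) \to H_1(Y';\Z[t^{\pm 1}]) \to 0.
\end{align*}
Since $H_1(Y;\Z[t^{\pm 1}])\cong\coker(\widehat{\lambda})$ has projective dimension at most one \emph{by hypothesis}, the first sequence (with free left-hand term) gives $\pd(H_1(Y_L;\Z[t^{\pm 1}]))\leq 1$, and the second then gives $\pd(H_1(Y';\Z[t^{\pm 1}]))\leq 1$; only then does Remark~\ref{rem:AlexPoly} apply. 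You should replace your Heegaard-splitting paragraph with an argument of this kind.
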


\begin{proof}
The existence of~$\widetilde{L}$ representing the given
generators and with equivariant linking matrix~$A_{\widetilde{L}}=-Q^{-T}$ is proved in Proposition \ref{prop:Step1}.
Since~$Q^{T}$ presents~$H_1(Y;\Z[t^{\pm 1}])$, we have~$\det(Q) \doteq \Delta_Y$ and therefore~$\det(A_{\widetilde{L}}) \doteq \frac{1}{\Delta_Y}$.
Theorem~\ref{thm:OrderOfEffectOfSurgery} now implies that~$\Delta_{Y'} \doteq 1$.

A short argument is now needed to use Remark~\ref{rem:AlexPoly} in order to conclude~$H_1(Y';\Z[t^{\pm 1}])=0$: we require that this torsion module admits a square presentation matrix, i.e.\ has projective dimension at most~$1$, denoted~$\pd (H_1(Y';\Z[t^{\pm 1}])) \leq 1$.
Here recall that that a $\Z[t^{\pm 1}]$-module~$P$ \emph{has projective dimension at most~$k$} if~$\operatorname{Ext}^i_{\Z[t^{\pm 1}]}(P;V)=0$ for every~$\Z[t^{\pm 1}]$-module~$V$ and every~$i\geq k+1$,
and that for a short exact sequence $0 \to A \to B \to C \to 0$ of $\Z[t^{\pm 1}]$-modules, the associated long exact sequence in $\operatorname{Ext}(-;V)$ groups implies that:
\begin{enumerate}[(a)]
\item if $\pd (C) \leq 1$ and $A$ is free, then $\pd (B) \leq 1$;
\item if $\pd (B) \leq 1$ and $A$ is free, then $\pd (C) \leq 1$.
\end{enumerate}
The  following paragraph proves that~$\pd (H_1(Y';\Z[t^{\pm 1}])) \leq 1$.
As~$H_1(Y;\Z[t^{\pm 1}])$ and~$H_1(Y';\Z[t^{\pm 1}])$ are torsion (for the latter recall Lemma~\ref{lem:surgQsphere}),  a duality argument implies that~$H_2(Y;\Z[t^{\pm 1}])=\Z$ and~$H_2(Y';\Z[t^{\pm 1}])=\Z$ (see e.g.  the first item of~\cite[Lemma 3.2]{ConwayPowell}).
Since these modules are torsion and since excision implies that
\begin{align*}
H_2(Y,Y_L;\Z[t^{\pm 1}])=\Z[t^{\pm 1}]^n \ \ \ \ &\text{ and } \ \ \ \ H_2(Y',Y_L;\Z[t^{\pm 1}])=\Z[t^{\pm 1}]^n \\
H_1(Y,Y_L;\Z[t^{\pm 1}])=0\ \ \ \ &\text{ and } \ \ \ \ H_1(Y',Y_L;\Z[t^{\pm 1}])=0,
\end{align*}
we deduce that the maps~$H_2(Y;\Z[t^{\pm 1}]) \to  H_2(Y,Y_L;\Z[t^{\pm 1}])$ and~$H_2(Y';\Z[t^{\pm 1}]) \to  H_2(Y',Y_L;\Z[t^{\pm 1}])$ are both trivial leading to the short exact sequences
\begin{align*}
& 0  \to H_2(Y,Y_L;\Z[t^{\pm 1}]) \to H_1(Y_L;\Z[t^{\pm 1}]) \to H_1(Y;\Z[t^{\pm 1}]) \to 0, \\
& 0  \to H_2(Y',Y_L;\Z[t^{\pm 1}]) \to H_1(Y_L;\Z[t^{\pm 1}]) \to H_1(Y';\Z[t^{\pm 1}]) \to 0.
\end{align*}
Next we apply the facts (a) and (b) on projective dimension given above.
Since the torsion module~$H_1(Y;\Z[t^{\pm 1}])$ is presented by~$(H,\lambda)$,  it has projective dimension at most~$1$ and since $H_2(Y,Y_L;\Z[t^{\pm 1}])$ is  free, the first short exact sequence implies that $H_1(Y_L;\Z[t^{\pm 1}])$ has projective dimension at most~$1$.
Since~$ H_2(Y',Y_L;\Z[t^{\pm 1}])$ is free, the second short exact sequence now implies that~$\pd (H_1(Y';\Z[t^{\pm 1}])) \leq 1$ as required.

As explained above, since~$\pd (H_1(Y';\Z[t^{\pm 1}])) \leq 1$ and~$\Delta_{Y'} \doteq 1$,  Remark~\ref{rem:AlexPoly} now allow us to conclude that~$H_1(Y';\Z[t^{\pm 1}])=0$, as required.
\end{proof}

\subsection{Step 3: every~$\Z[t^{\pm 1}]$-homology~$S^1 \times S^2$ bounds a homotopy  circle.}
\label{sub:Step3}

The goal of this subsection is to prove the following theorem, which is a generalisation of a key step in the proof that Alexander polynomial one knots are topologically slice.

\begin{theorem}
\label{thm:Step3}
Let~$Y$ be a~$3$-manifold with an epimorphism~$\pi_1(Y) \twoheadrightarrow \Z$ whose Alexander module vanishes, i.e.~$H_1(Y;\Z[t^{\pm 1}])=0$.
Then there exists a~$4$-manifold~$B$ with a homotopy equivalence~$g \colon B \xrightarrow{\simeq} S^1$ so that~$\partial B \cong Y$ and~$\pi_1(Y) \twoheadrightarrow \pi_1(B) \xrightarrow{g_*}\pi_1(S^1) \cong \Z$ agrees with~$\varphi.$
\end{theorem}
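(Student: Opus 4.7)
The plan is to apply topological surgery theory in the style of Freedman--Quinn~\cite{FreedmanQuinn}, exploiting the fact that $\Z$ is a good group so that the sphere embedding theorem~\cite{DET} is available. We first produce \emph{any} 4-manifold filling of~$Y$ carrying a compatible $\pi_1$-structure, and then surger it into a homotopy~$S^1$ rel.\ boundary.

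First I would use low-dimensional bordism theory to produce an initial filling. The classifying map $Y \to B\Z = S^1$ for~$\varphi$ represents an element of the bordism group $\Omega_3(S^1) = 0$, so there exists a 4-manifold~$W$ with $\partial W \cong Y$ together with a map $f \colon W \to S^1$ extending the map $Y \to S^1$ classifying~$\varphi$. I then perform surgery on embedded circles in the interior of~$W$ representing a set of normal generators of $\ker(f_*)$; this does not alter the boundary, and the result (still called~$W$) satisfies $\pi_1(W) \cong \Z$ with the inclusion $\partial W \hookrightarrow W$ inducing~$\varphi$ on fundamental groups.

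It remains to kill $H_i(W;\Z[t^{\pm 1}])$ for $i \geq 1$. Now $H_1(W;\Z[t^{\pm 1}]) = 0$ since $\pi_1(W) \cong \Z$ maps isomorphically to~$\Z$, and a Poincar\'e--Lefschetz duality argument using $H_1(Y;\Z[t^{\pm 1}]) = 0$, in the spirit of~\cite[Lemma~3.2]{ConwayPowell}, shows that $H_3(W;\Z[t^{\pm 1}]) = 0$ and that $H_2(W;\Z[t^{\pm 1}])$ is a finitely generated free $\Z[t^{\pm 1}]$-module equipped with a nondegenerate equivariant intersection form. After connect-summing $W$ with sufficiently many copies of $S^2 \times S^2$, this form can be assumed to be hyperbolic and so admits a Lagrangian half-basis $e_1, \ldots, e_k$ with pairwise vanishing equivariant intersections and vanishing self-intersections.

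The main obstacle, and the heart of the argument, is to realise this algebraic Lagrangian geometrically by a disjoint collection of locally flat embedded 2-spheres with trivial normal bundles. This is exactly where Freedman's topological surgery theory enters: by the sphere embedding theorem~\cite{FreedmanQuinn, DET}, applicable since $\pi_1(W) \cong \Z$ is a good group, such spheres exist. Topological surgery on them kills the submodule they generate and, by duality for the hyperbolic form, their duals as well, producing a 4-manifold~$B$ with $\partial B = Y$, $\pi_1(B) \cong \Z$, and $H_i(B;\Z[t^{\pm 1}]) = 0$ for all $i \geq 1$. The infinite cyclic cover $B^\infty$ is then simply connected with trivial reduced homology, hence contractible, so the map $B \to S^1 = B\Z$ classifying a generator of $\pi_1(B)$ is the required homotopy equivalence~$g$ compatible with~$\varphi$.
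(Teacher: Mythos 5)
Your overall strategy (produce a filling via bordism, then surger it to a homotopy circle using Freedman's sphere embedding theorem for the good group $\Z$) is the same as the paper's, but you execute the surgery step "by hand" rather than through the surgery exact sequence, and this is where a genuine gap appears. The claim that ``after connect-summing $W$ with sufficiently many copies of $S^2 \times S^2$, this form can be assumed to be hyperbolic'' is not true in general: stabilisation by hyperbolic summands never changes the class of $\lambda_W$ in the Witt group, so the form is stably hyperbolic only if it is already even and has zero signature. An oriented-bordism filling $W$ obtained from $\Omega_3(S^1)=0$ carries neither property. You must first (i) arrange that the intersection form is even, and (ii) kill the signature; only then does stabilisation produce a hyperbolic form with a Lagrangian that the sphere embedding theorem can realise by framed embedded spheres. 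The paper handles (i) by working with \emph{framed} bordism, $\Omega_3^{\fr}(B\Z) \cong \Z/24 \oplus \Z/2$ (the $\Z/2$ being an Arf invariant that vanishes because $H_1(Y;\Z[t^{\pm 1}])=0$), so that $W$ is stably parallelisable and in particular comes with normal data and an even form; and it handles (ii) by connect-summing with copies of $\pm E_8$-manifolds to make the surgery obstruction in $L_4(\Z[t^{\pm 1}]) \cong L_4(\Z) \cong 8\Z$ vanish. Neither step is cosmetic: without them your Lagrangian half-basis need not exist.

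A secondary, related omission: even granting hyperbolicity, surgery on the Lagrangian spheres requires them to be framed and to admit the transverse/dual sphere data that the sphere embedding theorem needs; this is exactly the normal-invariant bookkeeping that the framed bordism input and the formal surgery sequence supply in the paper's argument. If you repair the two points above (framed bordism instead of oriented bordism, plus the $\pm E_8$ signature correction), your hands-on version of the middle-dimensional surgery goes through and is essentially equivalent to the paper's appeal to exactness of the surgery sequence for $(X,Y)$ with $X = \mathcal{M}(Y \to S^1)$.
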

\begin{proof}
This proof can be deduced by combining various arguments from~\cite[Section~11.6]{FreedmanQuinn}, so we only outline the main steps.
First we use framed bordism to find some 4-manifold whose boundary is~$Y$, with a map to~$S^1$ realising~$\varphi$,  as in~\cite[Lemma 11.6B]{FreedmanQuinn}.
This map might not be a homotopy equivalence, but  we then we will use surgery theory to show that~$W$ is bordant rel.\  boundary to a homotopy circle.

To start the first step, recall that every oriented 3-manifold admits a framing of its tangent bundle.
Using the axioms of a generalised homology theory, we have
\[\Omega_3^{\fr}(B\Z) \cong \Omega_3^{\fr} \oplus \Omega_2^{\fr} \cong \Z/24 \oplus \Z/2.\]
We consider the image of~$(Y,\varphi)$ in~$\Omega_3^{\fr}(B\Z)$.
The first summand can be killed by changing the choice of framing of the tangent bundle of~$Y$; see~\cite[proof of Lemma 11.6B]{FreedmanQuinn} for details.
 The second summand is detected by an Arf invariant, which vanishes thanks to the assumption that~$H_1(Y;\Z[t^{\pm 1}])=0$; details are again  in~\cite[proof of Lemma 11.6B]{FreedmanQuinn}.
Therefore there exists a framed 4-manifold~$W$ with framed boundary~$Y$, such that the map~$Y \to S^1$ associated with~$\varphi$ extends over~$W$.

Now we use surgery theory to show that $W$ is bordant rel.\  boundary to a homotopy circle.
Consider the mapping cylinder
\begin{equation}
\label{eq:MappingCylinder}
X := \mathcal{M}(Y \xrightarrow{\varphi} S^1).
\end{equation}
We claim that~$(X,Y)$ is a Poincar\'{e} pair.
The argument is similar to~\cite[Proposition~11.C]{FreedmanQuinn}.
As~$X \simeq S^1$,  the connecting homomorphism from the exact sequence of the pair $(X,Y)$ gives an isomorphism~$\partial \colon H_4(X,Y) \cong H_3(Y) \cong \Z$.
We then define the required fundamental class as~$[X,Y]:=\partial^{-1}([Y]) \in H_4(X,Y)$.
Using~$H_1(Y;\Z[t^{\pm 1}])=0$,  one can now use the same argument as in~\cite[Lemma 3.2]{FriedlTeichner} to show that the following cap product is an isomorphism:
$$- \cap [X,Y] \colon H^i(X,Y;\Z[t^{\pm 1}]) \to H_{4-i}(X;\Z[t^{\pm 1}]).$$
This concludes the proof of the fact that~$(X,Y)$ is a Poincar\'{e} pair.

The end of the argument follows from the exactness of the surgery sequence for $(X,Y)$ as in~\cite[Proposition 11.6A]{FreedmanQuinn} but we outline some details for the reader unfamiliar with surgery theory.
Since~$(X,Y)$ is a Poincar\'e pair,  we can consider its set~$\mathcal{N}(X,Y)$ of normal invariants.
The set~$\mathcal{N}(X,Y)$ consists of normal bordism classes of degree one normal maps to~$X$ that restrict to a homeomorphism on the boundary, where a bordism restricts to a product cobordism homeomorphic to~$Y \times I$ between the boundaries.
The next paragraph uses the map~$W \to S^1$ to define an element of~$\mathcal{N}(X,Y)$.

Via the homotopy equivalence~$X \simeq S^1$,  the map~$Y \to S^1 \simeq X$ extends to~$F \colon W \to S^1 \simeq X$.
It then follows from the naturality of the long exact sequence of the pairs~$(W,Y)$ and~$(X,Y)$ that~$F$ has degree one.
We therefore obtain a degree one map $(F,\id_Y) \colon (W,Y) \to (X,Y)$.
To upgrade~$(F,\id_Y)$ to a degree one normal map, take a trivial (stable)  bundle $\xi \to X$ over the codomain.   Normal data is determined by a (stable) trivialisation of $TW \oplus F^*\xi$.  The framing of~$W$ provides a trivialisation for the first summand, while any choice of trivialisation for $F^*\xi$ can be used for the second summand. We therefore have a degree one normal map
\[\big( (F,\id_Y) \colon (W,Y) \to (X,Y)\big) \in \mathcal{N}(X,Y).\]

Our goal is to change $W$ to $W\#^{\ell} Z$, where $Z = E_8$, and then to do surgery on the interior of the domain~$(W \#^\ell Z,Y)$ to convert $F$ into a homotopy equivalence $(F',\id_Y) \colon (B,Y) \to (X,Y)$.
Since the fundamental group~$\Z$ is a good group, surgery theory says that this is possible
if and only if $\ker(\sigma)$
 is nonempty~\cite[Section 11.3]{FreedmanQuinn}.
Here
\[\sigma \colon \mathcal{N}(X,Y) \to L_4(\Z[t^{\pm 1}])\]
is the surgery obstruction map.
Essentially,
it takes the intersection pairing on~$H_2(W;\Z[t^{\pm 1}])$ and considers it in the Witt group of nonsingular, Hermitian, even forms over~$\Z[t^{\pm 1}]$ up to stable isometry, where stabilisation is by hyperbolic forms \[\left(\Z[t^{\pm 1}] \oplus \Z[t^{\pm 1}],\begin{pmatrix}
  0 & 1 \\ 1 & 0
\end{pmatrix}\right).\]
Shaneson splitting~\cite{ShanesonSplitting} implies that
$L_4(\Z[t^{\pm 1}]) \cong L_4(\Z) \oplus L_3(\Z) \cong L_4(\Z) \cong 8\Z.$
The last isomorphism is given by taking the signature.
We take the connected sum of~$W \to X$ with copies of~$(E_8 \to S^4)$ or~$(-E_8 \to S^4)$, to arrange that the signature becomes zero.
Then the resulting normal map~$W \#^{\ell} Z \to X$ has trivial surgery obstruction in~$L_4(\Z[t^{\pm 1}])$ (i.e.\ lies in $\ker(\sigma)$) and therefore is normally bordant to a homotopy equivalence
$(F',\id_Y) \colon (B,Y) \to (X,Y)$, as desired.
Since the mapping cylinder $X$ from~\eqref{eq:MappingCylinder} is a homotopy circle, so is $B$. This concludes the proof of the theorem.
\end{proof}

\subsection{Step 4: constructing a~$4$-manifold that induces the given boundary isomorphism}
\label{sub:Step4}

We begin by recalling the notation and outcome of Proposition~\ref{prop:Step2}.
Let~$b \in \Iso(\partial \lambda,\unaryminus\Bl_Y)$ be an isometry of linking forms.
Pulling this back to~$H$, we obtain a presentation
$$0 \to H \xrightarrow{\widehat{\lambda}} H^* \xrightarrow{\varpi} H_1(Y;\Z[t^{\pm 1}]) \to 0$$
of~$Y$.
Pick generators~$x_1,\ldots,x_n$ for~$H$ and endow~$H^*$ with the dual basis~$x_1^*,\ldots,x_n^*$.
Let~$Q$ be the matrix of~$\lambda$ with respect to these bases.
By Propositions~\ref{prop:Step1} and~\ref{prop:Step2}, the classes~$\pi(x_1^*),\ldots,\pi(x_n^*)$ can be represented by a framed link~$\widetilde{L} \subset Y^\infty$ in covering general position with transposed equivariant linking matrix~$-Q^{-1}$ and the~$3$-manifold~$Y'$ obtained by surgery on~$L=p(\widetilde{Y})$ satisfies~$H_1(Y';\Z[t^{\pm 1}])=0$.
Applying Theorem~\ref{thm:Step3}, there is a topological~$4$-manifold~$B$ with boundary~$Y'$ and such that~$B \simeq S^1$.

We now define a~$4$-manifold~$M$ with boundary~$Y$ as follows: begin with~$Y\times I$ and attach 2-handles to~$Y\times \{1\}$ along the framed link~$L:=p(\widetilde{L})$~(here recall that $p \colon Y^\infty \to Y$ denotes the covering map), so that the resulting boundary is~$Y'$.
Call this 2-handle cobordism~$W$, and observe that $\partial^-W=-Y$.
We can now cap $\partial^+W\cong Y'$ with~$-B$.
Since $W\cup -B$ has boundary $-Y$, we define $M$ to be $-W\cup B$. We can then consider the corresponding~$\Z$-cover:
\begin{align*}
-M^\infty&:=\Big( (Y^\infty \times [0,1]) \cup \bigcup_{i=1}^n \bigcup_{j_i \in\Z} t^{j_i} h_i^{(2)} \Big) \cup_{{Y'}^\infty} -B^\infty =W^\infty \cup_{{Y'}^\infty} -B^\infty\\
-M&:=\Big( (Y \times [0,1]) \cup \bigcup_{i=1}^n h_i^{(2)} \Big) \cup_{{Y'}} -B=:W \cup_{{Y'}} -B,
\end{align*}
in which the~$2$-handles are attached along the framed link~$\widetilde{L}$ upstairs and its framed projection~$L$ downstairs.

We begin by verifying some properties of $M$.

\begin{lemma}
\label{lem:Pi1Z}
The $\Z$-manifold $M$ has boundary $Y$.
\end{lemma}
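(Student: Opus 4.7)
The plan is to exploit the dual handle decomposition of the 2-handle cobordism $W$ to realise $M$ as a 2-handle attachment to $B$, and then to observe that the attaching circles are nullhomotopic in $B$.

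First, I would reinterpret $W$ from the opposite direction: as a cobordism from $Y'$ to $Y$, it admits a dual handle decomposition $W \cong Y' \times I \cup (\text{2-handles along the dual knots } K_i' \subset Y')$. Gluing $B$ to $W$ along $Y'$ and absorbing the collar $Y' \times I$ into $B$, we obtain a homeomorphism
\[ M \;\cong\; B \cup (\text{2-handles attached along } K_i' \subset \partial B). \]
Theorem~\ref{thm:Step3} provides $\pi_1(B) \cong \Z$ together with an identification under which the inclusion $Y' = \partial B \hookrightarrow B$ induces $\varphi'$ on fundamental groups. By Remark~\ref{rem:CoefficientSystemY'}, $\varphi'([K_i']) = 0$, and since $\pi_1(B) \cong \Z$ is abelian this forces each $K_i'$ to be nullhomotopic in $B$. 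Attaching 2-handles along nullhomotopic curves does not alter $\pi_1$, so $\pi_1(M) \cong \pi_1(B) \cong \Z$.

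For the ribbon boundary condition, I would construct an explicit map $f \colon M \to S^1$ realising the identification $\pi_1(M) \cong \Z$ just obtained and whose restriction to $Y$ is $\varphi$. On $W \subset M$, extend $\varphi \colon Y \to S^1$ over the 2-handles; this is possible because by construction each $K_i$ lifts to $Y^\infty$, so $\varphi([K_i]) = 0$. On $B \subset M$, take the homotopy equivalence $g \colon B \xrightarrow{\simeq} S^1$ provided by Theorem~\ref{thm:Step3}, whose restriction to $\partial B$ realises $\varphi'$. By Lemma~\ref{lem:coeff-system}, the map on $W$ also restricts to $\varphi'$ on $Y' = W \cap B$. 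Since $S^1$ is a $K(\Z,1)$, maps to $S^1$ are determined up to homotopy by their induced maps on $\pi_1$, so after a small homotopy we may assume the two pieces agree on $Y'$ and glue to yield $f$. Then $f_*$ restricts on $\pi_1(B)$ to the isomorphism $g_*$, which forces $f_* \colon \pi_1(M) \to \Z$ to coincide with our identification $\pi_1(M) \cong \Z$, and by construction $(f|_Y)_* = \varphi$, as required. The only subtle point is matching the two extensions of $\varphi'$ on $Y'$, but this is handled by the standard $K(\Z,1)$ argument.
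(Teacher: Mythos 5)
Your computation of $\pi_1(M)$ is essentially the paper's: both arguments realise $M$ as $B$ with $2$-handles attached along the dual knots $K_i'$, invoke van Kampen, and use Remark~\ref{rem:CoefficientSystemY'} together with the fact from Theorem~\ref{thm:Step3} that $\pi_1(Y')\to\pi_1(B)\xrightarrow{g_*}\Z$ is $\varphi'$ to conclude that the attaching circles are nullhomotopic in $B$, so $\pi_1(M)\cong\pi_1(B)\cong\Z$. (Your appeal to abelianness is superfluous --- injectivity of $g_*$ already does the job --- but harmless.) For the ribbon condition you diverge: the paper simply chains two surjections, $\pi_1(Y)\twoheadrightarrow\pi_1(W)$ (since $W$ is built from $Y\times I$ by adding $2$-handles) and $\pi_1(W)\twoheadrightarrow\pi_1(M)$ (van Kampen again, using $\pi_1(Y')\twoheadrightarrow\pi_1(B)$), which is shorter and avoids any gluing of maps. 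Your route --- building a global map $f\colon M\to S^1$ by extending $\varphi$ over the $2$-handles, matching it with $g$ on $Y'$ via the $K(\Z,1)$ property and homotopy extension --- is correct and buys something extra: it verifies not just surjectivity of $\pi_1(\partial M)\to\pi_1(M)$ but that the composite $\pi_1(Y)\to\pi_1(M)\xrightarrow{\cong}\Z$ equals $\varphi$, which is part of what ``ribbon boundary $Y$'' means under the paper's conventions (and is needed for $M$ to represent an element of $\mathcal{V}^0_\lambda(Y)$). The paper leaves that compatibility implicit in the construction, so your version is marginally more complete at the cost of a slightly longer argument.
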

\begin{proof}
We first prove that $\pi_1(M)\cong \Z$.
A van Kampen argument shows that~$\pi_1(M)$ is obtained from~$\pi_1(B)$ by modding out the~$[\iota(\widetilde{K}'_i)]$ where~$\widetilde{K}_1',\ldots,\widetilde{K}_n'$ denote the components of the framed link dual to~$\widetilde{L}$ and where~$\iota \colon \pi_1(Y') \to \pi_1(B)$ is the inclusion induced map.
Recall from Lemma~\ref{lem:coeff-system} and Remark~\ref{rem:CoefficientSystemY'} that the epimorphism~$\varphi \colon \pi_1(Y) \twoheadrightarrow \Z$ induces an epimorphism~$ \varphi' \colon \pi_1(Y') \twoheadrightarrow \Z$ and that~$\varphi'([K_i'])=0$ for~$i=1,\ldots,n$.
Since Theorem~\ref{thm:Step3} ensures that~$\iota$ agrees with $\varphi'$, we deduce that the classes~$[\iota(\widetilde{K}'_i)]$ are trivial and therefore~$\pi_1(M)\cong \pi_1(B) \cong \Z$.

Next we argue that as a $\Z$-manifold $M$ has boundary $Y$.
Since the inclusion induced map $\pi_1(Y) \to \pi_1(W)$ is surjective, it suffices to prove that the inclusion induced map $\pi_1(W) \to \pi_1(M)$ is surjective.
This follows from van Kampen's theorem: as $\pi_1(Y') \to \pi_1(B)$ is surjective, so is $\pi_1(W) \to \pi_1(M)$.
\end{proof}




It is not too hard to compute, as we will do in Proposition~\ref{prop:BasisH2} below, that $H_2(M;\Z[t^{\pm 1}])$ is f.g. free of rank $n$.
To complete step 4, we must prove the following two claims.
\begin{enumerate}
\item The equivariant intersection form~$\lambda_M$ of~$M$ is represented by~$Q$; i.e.~$\lambda_M$ is isometric to~$\lambda$.
\item The~$4$-manifold~$M$ satisfies~$b_M =b \in \Iso(\partial \lambda,\unaryminus\Bl_Y)/\Aut(\lambda)$.
\end{enumerate}

The proof of the first claim follows a standard outline; for the hasty reader we will give the outline here, and for the record we provide a detailed proof at the end of the subsection.

\begin{proof}[Proof outline of claim (1)]
Since by setup the transposed equivariant linking matrix of the framed link~$\widetilde{L}$ is~$-Q^{-1},$ Proposition~\ref{lem:InverseMatrix} shows that the transposed equivariant linking matrix of the dual link~$\widetilde{L}'$ is~$Q$. Thus, it suffices to show that~$\lambda_M$ is presented by the transposed equivariant linking matrix of~$\widetilde{L}'$.

While it was natural initially to build~$W^\infty$ by attaching 2-handles to~$Y^\infty\times I$, in what follows it will be more helpful to  view~$-W^\infty$ as being obtained from~$Y'\times I$ by attaching~$2$-handles to the framed link~$\widetilde{L}'$ dual to~$\widetilde{L}$. In particular, the components of~$\widetilde{L}'$ bound the cores of the~$2$-handles.

Recall that~$H_1(Y';\Z[t^{\pm 1}])=0$ by Proposition~\ref{prop:Step2} and that~$H_2(B;\Z[t^{\pm 1}])=0$ by Proposition~\ref{thm:Step3}.
Let~$\Sigma_i$ denote a surface in~${Y'}^\infty$ with boundary~$\widetilde{K}'_i$, and let~$F_i$ be the surface in~$M$ formed by~$\Sigma_i$ capped with the core of the 2-handle attached along~$\widetilde{K}'_i$.
The proof that~$H_2(M;\Z[t^{\pm 1}])$ is freely generated by the~$[F_i]$ and that the equivariant intersection form~$\lambda_M$ is represented by the transposed equivariant linking matrix of~$\widetilde{L'}$ (which we showed above is~$Q$), is now routine; the details are expanded in Propositions~\ref{prop:BasisH2} and~\ref{thm:IntersectionForm} below.
\end{proof}

As promised, the section now concludes with a detailed proof of the claims.
Firstly in Construction~\ref{cons:BasisH2}, we give the detailed construction of the surfaces~$F_i$ that were mentioned in the proof outline.
Secondly, in Proposition~\ref{prop:BasisH2} we show that these surfaces lead to a basis of~$H_2(M;\Z[t^{\pm 1}])$.
Thirdly, in Proposition~\ref{thm:IntersectionForm} we conclude the proof of the first claim by showing that with respect to this basis,~$\lambda_M$ is represented by the transposed equivariant linking matrix of~$\widetilde{L'}$.
Finally, in Proposition~\ref{prop:step4}, we prove the second claim.

\begin{construction}
\label{cons:BasisH2}
For~$i=1,\ldots,n$, we define the closed surfaces~$F_i \subset -W^\infty \subset M^\infty$ that were mentioned in the outline.
As~$H_1(Y';\Z[t^{\pm 1}])=0$ (by Step 2), each component~$\widetilde{K}_i'$ of~$\widetilde{L}'$ bounds a surface~$\Sigma_i \subset {Y'}^\infty$.
Additionally, each~$\widetilde{K}'_i$ (considered in $Y' \times \lbrace 1 \rbrace$) bounds the core of one of the (lifted) 2-handles in the dual handle decomposition of~$-W$. Define the surface~$F_i \subset -W^\infty \subset M^\infty$ by taking the union of~$\Sigma_i$ with this core.
\end{construction}


The next proposition shows that the surfaces~$F_i'$ give a basis for~$H_2(M;\Z[t^{\pm 1}])$.
It is with respect to this basis that we will calculate~$\lambda_M$ in Proposition~\ref{thm:IntersectionForm} below.

\begin{proposition}
\label{prop:BasisH2}
The following isomorphisms hold:
\begin{align*}
H_2(-W;\Z[t^{\pm 1}])=\Z \oplus \bigoplus_{i=1}^n \Z[t^{\pm 1}] [F_i], \ \ \ \text{ and } \ \ \  H_2(M;\Z[t^{\pm 1}])=\bigoplus_{i=1}^n \Z[t^{\pm 1}] [F_i].
\end{align*}
\end{proposition}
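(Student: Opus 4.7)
The plan is to compute both homology modules via the long exact sequence of an appropriate pair, combined with excision, exploiting the fact that the relevant relative homology is concentrated in degree two.

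First I would compute $H_*(-W, Y'; \Z[t^{\pm 1}])$. Since $-W^\infty$ is obtained from $Y'^\infty \times I$ by attaching a $2$-handle to each translate $t^j \widetilde{K}_i'$ of the components of the dual link $\widetilde{L}'$, excision identifies $H_k(-W, Y'; \Z[t^{\pm 1}])$ with a direct sum indexed over $i$ and $j$ of copies of $H_k(D^2 \times D^2, S^1 \times D^2) \cong H_k(D^2, S^1)$. This module is concentrated in degree $2$, where it is free of rank $n$ over $\Z[t^{\pm 1}]$, with a $\Z[t^{\pm 1}]$-basis given by the cores of the $2$-handles attached to $\widetilde{K}_1', \ldots, \widetilde{K}_n'$. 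Combining this with $H_1(Y'; \Z[t^{\pm 1}]) = 0$ (Proposition~\ref{prop:Step2}) and $H_2(Y'; \Z[t^{\pm 1}]) \cong \Z$ (the duality argument recalled in the proof of that proposition), the long exact sequence of the pair $(-W, Y')$ reduces to a short exact sequence
$$0 \to \Z \to H_2(-W; \Z[t^{\pm 1}]) \to \Z[t^{\pm 1}]^n \to 0.$$
This sequence splits since $\Z[t^{\pm 1}]^n$ is free. By Construction~\ref{cons:BasisH2}, $F_i$ is the union of the surface $\Sigma_i \subset Y'^\infty$ with the core of the $i$-th lifted $2$-handle, so its image in the quotient $H_2(-W, Y'; \Z[t^{\pm 1}]) \cong \Z[t^{\pm 1}]^n$ is precisely the $i$-th generator. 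Therefore the classes $[F_1], \ldots, [F_n]$ lift a $\Z[t^{\pm 1}]$-basis of the quotient and provide a splitting of the sequence, yielding the first isomorphism of the proposition.

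For the second isomorphism, I would apply the long exact sequence of the pair $(M, B)$. Since $B \simeq S^1$ by Theorem~\ref{thm:Step3}, the infinite cyclic cover $B^\infty$ is homotopy equivalent to $\R$, so $H_k(B; \Z[t^{\pm 1}]) = 0$ for all $k \geq 1$. Excision identifies $H_*(M, B; \Z[t^{\pm 1}])$ with $H_*(-W, Y'; \Z[t^{\pm 1}])$, which we have just shown to be concentrated in degree $2$ and free of rank $n$ there. The long exact sequence of $(M, B)$ then yields an isomorphism $H_2(M; \Z[t^{\pm 1}]) \cong H_2(M, B; \Z[t^{\pm 1}]) \cong \Z[t^{\pm 1}]^n$, under which each $[F_i]$ maps to the $i$-th generator, giving the desired basis. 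The proof is essentially mechanical once the vanishing statements from Steps~2 and~3 are in hand; the only thing requiring care is consistently tracking the $\Z[t^{\pm 1}]$-module structure through the excision isomorphisms.
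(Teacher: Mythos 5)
Your proof is correct and follows essentially the same route as the paper: the same decompositions ($-W$ as $Y'\times[0,1]$ plus the dual $2$-handles, and $M$ as $-W\cup_{Y'}B$), the same key inputs ($H_1(Y';\Z[t^{\pm 1}])=0$, $H_2(Y';\Z[t^{\pm 1}])\cong\Z$, triviality of the reduced $\Z[t^{\pm 1}]$-homology of $B$), and the same splitting via the classes $[F_i]$. The only cosmetic difference is that you package the computation as long exact sequences of pairs together with excision, whereas the paper phrases it via Mayer--Vietoris; these are interchangeable here.
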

\begin{proof}
These follow by standard arguments using Mayer-Vietoris, which we outline now.

The first equality follows from the observation that~$-W^\infty$ is obtained from~${Y'}^\infty\times [0,1]$ by attaching the dual 2-handles to the $h^{(2)}_i$. Morally, since~$H_1(Y';\Z[t^{\pm 1}])=0$ (Step 2), each dual 2-handle contributes a free generator.
 The additional~$\Z$ summand comes from~$H_2(Y' \times [0,1];\Z[t^{\pm 1}]) \cong\mathbb{Z}$.
  More formally, one applies Mayer-Vietoris with~$\Z[t^{\pm 1}]$-coefficients to the decomposition of $W$ as the union of $Y' \times [0,1]$ with the dual 2-handles, which since the dual 2-handles are contractible and $H_1(Y';\Z[t^{\pm 1}])=0$ yields  the short exact sequence:
$$  0 \to H_2(Y' \times [0,1];\Z[t^{\pm 1}]) \to  H_2(-W;\Z[t^{\pm 1}]) \xrightarrow{\partial} H_1(\overline{\nu}(L');\Z[t^{\pm 1}]) \to 0.$$
%
Since $\varphi'([L']) =0$, $H_1(\overline{\nu}(L');\Z[t^{\pm 1}]) \cong \bigoplus_{i=1}^n \Z[t^{\pm 1}]$, generated by the $[K_i']$. Mapping each~$[K_i']$ to~$[F_i]$ determines a splitting.

For the second equality, note that since~$B$ is a homotopy circle and $g_* \colon \pi_1(B) \to \Z$ is an isomorphism, $B$ has no (reduced)~$\Z[t^{\pm 1}]$-homology.
The Mayer-Vietoris exact sequence associated to the decomposition~$M=-W \cup_{Y' \times \{1\}} B$ therefore yields the short exact sequence
$$  0 \to H_2(Y';\Z[t^{\pm 1}]) \to  H_2(-W;\Z[t^{\pm 1}]) \to H_2(M;\Z[t^{\pm 1}]) \to 0.$$
Appealing to our computation of~$H_2(-W;\Z[t^{\pm 1}])$, we deduce that~$H_2(M;\Z[t^{\pm 1}])$ is freely generated by the~$[F_i]$.
\end{proof}




Now we prove the first claim of the previously mentioned outline.

\begin{proposition}
\label{thm:IntersectionForm}
With respect to the basis of~$H_2(M;\Z[t^{\pm 1}])$ given by the~$[F_1],\ldots,[F_n]$, the equivariant intersection form~$\lambda_M$ of~$M$ is given by the transposed equivariant linking matrix of the framed link~$\widetilde{L}'$ dual to~$\widetilde{L}$.
\end{proposition}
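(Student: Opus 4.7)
The plan is to represent the classes $[F_j]$ by convenient homologous surfaces so that the geometric intersection $F_i \cdot_{\infty,M} F_j$ reduces to an equivariant linking number in $Y'^\infty$. The enabling facts are that $B^\infty$ is acyclic in positive degrees (it is contractible since $B \simeq S^1$), that $H_1(Y';\Z[t^{\pm 1}]) = 0$ so each $\widetilde{K}_i'$ bounds a surface $\Sigma_i$ in $Y'^\infty$, and that by Lemma~\ref{lem:intonhomology} the intersection count does not depend on the choice of bounding surface inside $B^\infty$.

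For the off-diagonal case $i \neq j$, I would push $\Sigma_j$ off $Y'^\infty$ into $\operatorname{Int}(B^\infty)$ to obtain a surface $\Sigma_j^B$ with the same boundary, so that $F_j^B := \Sigma_j^B \cup D_j$ represents $[F_j]$. In the expansion $F_i \cdot_{\infty,M} F_j^B = \sum_k (F_i \cdot t^k F_j^B) t^{-k}$, the cores $D_i$ and $t^k D_j$ are pairwise disjoint (different handles or different translates), the surface pieces $\Sigma_i \subset Y'^\infty$ and $t^k \Sigma_j^B \subset B^\infty$ only meet on $Y'^\infty$ at their common boundaries, and by covering general position $\widetilde{K}_i' \cap t^k \widetilde{K}_j' = \emptyset$. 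The only nonzero contribution then comes from $\Sigma_i \cap t^k D_j = \Sigma_i \cap t^k \widetilde{K}_j'$, and summing over $k$ yields $\Sigma_i \cdot_{\infty,Y'} \widetilde{K}_j' = \ell k_{\Q(t)}(\widetilde{K}_i',\widetilde{K}_j')$ by Proposition~\ref{prop:EquivariantLinkingDefinitions} (applied with $p(t) = 1$). Combining this with the convention $A \cdot_{\infty,M} A' = \lambda_M([A'],[A])$ from Remark~\ref{rem:EquivariantIntersections} produces the off-diagonal entries of $A_{\widetilde{L}'}^T$.

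For the diagonal $i = j$, I would use a framing pushoff: the core $D_i$ admits a parallel disk $D_i^\pi$ inside its 2-handle whose boundary is the framing curve $\pi_i' \subset Y'^\infty$, and since $\pi_i'$ is nullhomologous in the acyclic $B^\infty$ it bounds some surface $\Sigma_i^\pi$ there. Setting $F_i^\pi := \Sigma_i^\pi \cup D_i^\pi$ gives a closed surface, and under the Mayer-Vietoris identification of Proposition~\ref{prop:BasisH2} both $F_i$ and $F_i^\pi$ intersect the cocore of the $i$-th dual 2-handle algebraically once and miss the other cocores, so they represent the same class. Repeating the transversality analysis of the previous paragraph, after replacing $\Sigma_i$ by its restriction $\Sigma_i^\circ$ to $Y'^\infty \setminus \nu(K_i'^\infty)$ (whose boundary is a Seifert-type pushoff on $\partial \overline{\nu}(\widetilde{K}_i')$), leaves only the contribution $\Sigma_i^\circ \cdot_{\infty,Y'_{K_i'}} \pi_i'$, which by Proposition~\ref{prop:EquivariantLinkingDefinitions} equals $\ell k_{\Q(t)}(\widetilde{K}_i',\pi_i') = r_i'(t) = (A_{\widetilde{L}'})_{ii}$.

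The main obstacle I anticipate is the diagonal case, specifically verifying that $F_i^\pi$ genuinely represents $[F_i]$ (and not a unit multiple), and handling the transversality near $\widetilde{K}_i'$, where $\Sigma_i$ and $\pi_i'$ share an ambient tubular neighborhood and the naive geometric intersection is not transverse. A careful Seifert-type push-off, combined with bookkeeping of signs dictated by the transpose convention in Remark~\ref{rem:EquivariantIntersections}, should resolve this and establish $\lambda_M = A_{\widetilde{L}'}^T$ with respect to the basis $[F_1], \ldots, [F_n]$.
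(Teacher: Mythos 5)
Your proposal is correct and follows essentially the same strategy as the paper's proof: represent each class $[F_i]$ by a surface split along the separating copy of ${Y'}^{\infty}$, reduce the four\-dimensional intersection count to an equivariant linking computation in ${Y'}^{\infty}$, and handle the diagonal entries via a framing push-off with boundary $\pi_i'$. The one genuine difference is the middle step: the paper caps $\Sigma_i'$ and $\Sigma_j'$ off with auxiliary surfaces $G_i,G_j\subset B^\infty$ and extracts $\ell k_{\Q(t)}(\widetilde{K}_i',\widetilde{K}_j')$ from the vanishing of $\lambda_B$ on the resulting closed surfaces, whereas you count the intersections of $\Sigma_i$ with the translates $t^k\widetilde{K}_j'$ directly at the seam and invoke Proposition~\ref{prop:EquivariantLinkingDefinitions}. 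Both work; yours is slightly more economical but shifts the burden onto the local transversality check at the seam (a surface lying in the hypersurface ${Y'}^\infty$ meeting a surface crossing it along $t^k\widetilde{K}_j'$), which you correctly identify as the point needing care. Two small remarks: the cleanest justification that $[F_i^\pi]=[F_i]$ is that their difference, after filling in the product region between the parallel cores, is a closed surface in $B^\infty$ and $H_2(B;\Z[t^{\pm 1}])=0$ --- the same fact underlying Lemma~\ref{lem:intonhomology} --- rather than the cocore-intersection argument you sketch; and the overall sign, which you defer to ``bookkeeping,'' is what the paper's orientation-reversal terms $r(\cdot)$ in its four-term expansion are there to pin down.
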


\begin{proof}
Recall from Construction~\ref{cons:BasisH2} that for $i=1,\ldots,n$, the surface $F_i \subset -W^\infty \subset M^\infty$ was obtained as the union of a surface $\Sigma_i \subset {Y'}^\infty$ whose boundary is~$\widetilde{K}_i'$ with the core of a (lifted) $2$-handle in the dual handle decomposition of $W$.
For $i=1,\ldots,n$, define $F_i'$ to be a surface isotopic to $F_i$ obtained by pushing the interior of $\Sigma_i$ into $B^\infty$. Let $\Sigma_i'$ be such a push-in.
Since $F_i$ and $F_i'$ are isotopic for every $i=1,\dots,n$, we can use the $F_i'$ to calculate~$\lambda_M$.
Fix real numbers $0<s_1 < \cdots < s_n <1$.  We model $\Sigma_i'$ in the coordinates of a collar neighborhood $\partial B \times [0,1]$ as \[\Sigma_i' :=  (\partial \Sigma_i \times [0,s_i]) \cup (\Sigma_i \times \{s_i\}).\]

We start by calculating the equivariant intersection form~$ \lambda_M([F_i'],[F_j'])$ for~$i \neq j$.
Since the aforementioned cores of the dual 2-handles are pairwise disjoint, we obtain
$$\overline{\lambda_M([F_i'],[F_j'])}=F_i'\cdot_{\infty,M} F_j'=\Sigma_i'\cdot_{\infty, B} \Sigma_j'.$$
Recall that we use~$A_{\widetilde{L}'}$ to be the linking matrix of the framed link~$L'$.
It therefore remains to show that~$\Sigma_i'\cdot_{\infty, B} \Sigma_j'=(A_{\widetilde{L}'})_{ij}$.
Assume without loss of generality that $i>j$, and so $s_i > s_j$.
Also note that $\partial \Sigma_i \cap \partial \Sigma_j = \emptyset$.
By inspecting the locations of the intersections, it follows that
\[\Sigma_i'\cdot_{\infty,B}\Sigma_j' = (\partial\Sigma_i \times [0,s_i]) \cdot_{\infty,B} (\Sigma_j \times \{s_j\}) = \partial \Sigma_i\cdot_{\infty,\partial B}\Sigma_j=\ell k_{\Q(t)}(\widetilde{K}_i',\widetilde{K}_j'),\]
where the last equality uses the definition of the equivariant linking number in~$\partial B=Y'$.
For~$i \neq j$, we have therefore proved that
$$\lambda_M([F_j'],[F_i'])=\Sigma_i'\cdot_{\infty, B} \Sigma_j'=\ell k_{\Q(t)}(\widetilde{K}_i',\widetilde{K}'_j).$$
It remains to prove that~$\lambda_M([F_i'],[F_i'])=(A_{\widetilde{L}'})_{ii}$.
By definition of the dual framed knot~$\widetilde{K}_i'$,  we have~$(A_{\widetilde{L}'})_{ii}=\ell k_{\Q(t)}(\widetilde{K}_i',\pi_i')$, where~$\pi_i'$ denotes the framing curve of $\widetilde{K}_i'$.

Perform a small push-off of the surface~$\Sigma_i' \subset B^\infty$ to obtain a surface~$\Sigma_i'' \subset B^\infty$ isotopic to~$\Sigma_i' \subset B^\infty$ with boundary~$\partial \Sigma_i''=\pi_i'$.
Cap off $\Sigma_i''$ with a parallel disjoint copy of the cocore of the 2-handle, yielding a closed surface $F_i''$ that is isotopic to $F_i'$, and such that all the intersections between the two occur between $\Sigma_i'$ and $\Sigma_i''$.
As in the~$i\neq j$ case, we then have
$$\lambda_M([F_i'],[F_i'])=\Sigma_i'\cdot_{\infty,B}\Sigma_i''=\ell k_{\Q(t)}(\widetilde{K}_i',\pi_i').$$
We have therefore shown that the equivariant intersection form of~$M$ is represented by the transposed linking matrix~$A_{\widetilde{L}'}^T$ and this concludes the proof of the proposition.
\end{proof}

Finally, we prove the second claim of our outline, thus completing step 4.

\begin{proposition}\label{prop:step4}
Let~$Y$ be a~$3$-manifold with an epimorphism~$\varphi \colon \pi_1(Y) \twoheadrightarrow \Z$ whose Alexander module is torsion, and let~$(H,\lambda)$ be a nondegenerate Hermitian form presenting~$Y$.
If~$b \in \Iso(\partial \lambda,\unaryminus\Bl_Y)/\Aut(\lambda)$ is an isometry, then there is a~$\Z$-manifold~$M$ with equivariant intersection form~$\lambda_M\cong \lambda$,  boundary~$Y$ and with~$b_M=b$.
\end{proposition}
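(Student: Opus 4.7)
The $4$-manifold $M$ constructed above already has $\pi_1(M) \cong \Z$ and ribbon boundary $Y$ by Lemma~\ref{lem:Pi1Z}, while Proposition~\ref{thm:IntersectionForm} guarantees that its equivariant intersection form is isometric to $\lambda$. Consequently the only thing left to verify is the equality $b_M = b$ in $\Iso(\partial\lambda, \unaryminus\Bl_Y)/\Aut(\lambda)$. The plan is to compute $b_M$ for a carefully chosen representative and exhibit it as being literally equal to $b$, not merely equal up to the $\Aut(\lambda)$-action.

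I would pick the isometry $F \colon (H,\lambda) \xrightarrow{\cong} (H_2(M;\Z[t^{\pm 1}]), \lambda_M)$ sending each $x_i$ to $[F_i]$, which is valid because Proposition~\ref{thm:IntersectionForm} shows that $\lambda_M$ and $\lambda$ are represented by the same matrix $Q$ in these bases. Then $F^{-*}$ sends $x_i^*$ to $[F_i^*]$, and since the identification $g \colon \partial M \to Y$ is tautologically the identity in our construction, unwinding Construction~\ref{cons:Invariant} reduces the problem to proving $D_M([F_i^*]) = [\widetilde{K}_i] \in H_1(Y;\Z[t^{\pm 1}])$ for each $i$; this will then match $b([x_i^*]) = \pi(x_i^*) = [\widetilde{K}_i]$ by construction of $\widetilde{L}$ in Subsection~\ref{sub:Step1}. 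Chasing the diagram~\eqref{eq:SES}, $D_M([F_i^*])$ is the image under the connecting homomorphism $\partial \colon H_2(M,\partial M;\Z[t^{\pm 1}]) \to H_1(\partial M;\Z[t^{\pm 1}])$ of the class corresponding to $[F_i^*]$ under $\operatorname{ev}^{-1} \circ \operatorname{PD}$. I propose to identify this class as $[C_i]$, where $C_i \subset h_i^{(2)}$ is the core disc of the $i$-th $2$-handle attached to $Y \times \{1\}$ along $\widetilde{K}_i$; this is a properly embedded disc in $M$ with $\partial C_i = \widetilde{K}_i \subset \partial M$.

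The only nontrivial step is verifying that $[C_i]$ is indeed the class corresponding to $[F_i^*]$, which amounts to checking the equivariant intersection equality $C_i \cdot_{\infty, M} F_j = \delta_{ij}$ in the sense of Remark~\ref{rem:EquivariantIntersections}. Since $C_i$ lies entirely in the $2$-handle cobordism $W$ and is disjoint from $Y'$, while $\Sigma_j \subset {Y'}^\infty$, no intersections arise from the $\Sigma_j$ portion of $F_j$. The only contribution comes from the local core--cocore intersection $D^2 \times \{0\} \cap \{0\} \times D^2$ inside $h_i^{(2)}$, yielding a single transverse point precisely when $j = i$ and with no translate $t^k F_j$, $(k,j) \neq (0,i)$, contributing anything, since the relevant handles are then disjoint from $C_i$. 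This is the main technical content of the argument, though it is considerably simpler than the analogous computation in Proposition~\ref{thm:IntersectionForm} since only one intersection point is involved per handle. With $[C_i]$ identified, the connecting homomorphism sends $[C_i] \mapsto [\partial C_i] = [\widetilde{K}_i]$ tautologically, and composing with $g_* = \id$ yields $b_M([x_i^*]) = [\widetilde{K}_i] = b([x_i^*])$ for every $i$. Hence $b_M = b$ as elements of $\Iso(\partial\lambda, \unaryminus\Bl_Y)$, and in particular in the quotient $\Iso(\partial\lambda, \unaryminus\Bl_Y)/\Aut(\lambda)$.
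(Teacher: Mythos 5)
Your proposal is correct and follows essentially the same route as the paper: the paper also chooses the isometry $F \colon x_i \mapsto [F_i]$, reduces $b_M = b$ to a diagram chase on the dual basis elements $x_i^*$, and identifies $(\operatorname{PD}\circ\operatorname{ev}^{-1})([F_i]^*)$ with the class of the core $\widetilde{c}_i$ of the $i$-th $2$-handle, whose boundary is $\widetilde{K}_i$. The only difference is that you make explicit the core--cocore intersection computation $C_i \cdot_{\infty,M} F_j = \delta_{ij}$ that the paper subsumes under "the geometric interpretation of $\operatorname{PD}\circ\operatorname{ev}^{-1}$", which is a reasonable amount of added detail rather than a new idea.
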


\begin{proof}
Let~$M$ be the 4-manifold with boundary~$Y$ constructed as described above.
The manifold~$M=-W \cup_{Y'} B$ comes with a homeomorphism~$g \colon \partial M \cong Y$,  because~$-W$ is obtained from~$Y \times [0,1]$ by adding~$2$-handles.
We already explained why~$M$ has intersection form isometric to~$\lambda$ but we now make the isometry more explicit.
Define an isomorphism~$F \colon H \to H_2(M;\Z[t^{\pm 1}])$ by mapping~$x_i$ to~$[F_i]$, where the~$F_i \subset M^\infty$ are the surfaces built in Construction~\ref{cons:BasisH2}.
This is an isometry because, by combining Proposition~\ref{thm:IntersectionForm} with Lemma~\ref{lem:InverseMatrix}, we get
$$\lambda_M([F_i],[F_j])=(A_{\widetilde{L}'})_{ji}=-(A_{\widetilde{L}}^{-1})_{ji}
=Q_{ij}=\lambda(x_i,x_j).$$
We now check that~$b_M = b$ by proving that~$b=g_* \circ D_M \circ \partial F$.
This amounts to proving that the bottom square of the following diagram commutes (we refer to Construction~\ref{cons:PresentationAssociatedToManifold}  if a refresher on the notation is needed):
$$
\xymatrix@R0.5cm@C1.7cm{
H^* \ar[r]^-{F^{-*},\cong} \ar@{->>}[d]^-{\operatorname{proj}}&H_2(M;\Z[t^{\pm 1}])^* \ar[r]^-{\operatorname{PD} \circ \operatorname{ev}^{-1},\cong}\ar@{->>}[d]^-{\operatorname{proj}}&H_2(M,\partial M;\Z[t^{\pm 1}])\ar@{->>}[d]^-{\delta_M} \\
\coker(\widehat{\lambda}) \ar[r]^-{\partial F,\cong}\ar[d]^-{=}&  \coker(\widehat{\lambda}_M) \ar[r]^-{D_M,\cong}& H_1(\partial M;\Z[t^{\pm 1}])\ar[d]^-{g_*,\cong} \\
\coker(\widehat{\lambda}) \ar[rr]^-{b,\cong}&&H_1(Y;\Z[t^{\pm 1}]).
}
$$
The top squares of this diagram commute by definition of~$\partial F$ and~$D_M$.
Since the top vertical maps are surjective, the commutativity of the bottom square is now equivalent to the commutativity of the outer square.
It therefore remains to prove that~$g_* \circ \delta_M \circ (\operatorname{PD} \circ \operatorname{ev}^{-1}) \circ F^{-*}=\pi$; (recall that by definition~$\pi=b \circ \operatorname{proj}$).
In fact, it suffices to prove this on the~$x_i^*$ as they form a basis of~$H^*$.
Writing~$c_i$ for the core of the 2-handles attached to~$Y \times [0,1]$,  union a product of their attaching circles with $[0,1]$ in $Y \times [0,1]$, note that the $c_i$ intersects $F_j$ in $\delta_{ij}$ points, since $F_j$ is built from a surface in ${Y'}^{\infty}$ union the cocore of the $j$th 2-handle.
We have
$$g_* \circ \delta_M \circ (\operatorname{PD} \circ \operatorname{ev}^{-1}) \circ F^{-*}(x_i^*)
=g_* \circ  \delta_M \circ (\operatorname{PD} \circ \operatorname{ev}^{-1})([F_i]^*)
=g_* \circ  \delta_M ([\widetilde{c}_i])
=[\widetilde{K}_i]=\pi(x_i^*).
$$
Here we use successively the definition of~$F$, the geometric interpretation of~$\operatorname{PD} \circ \operatorname{ev}^{-1}$, the fact that~$\widetilde{g}(\partial \widetilde{c}_i)=\widetilde{K}_i$ and the definition of the~$\widetilde{K}_i$.
Therefore the outer square commutes as asserted.
This concludes the proof that~$b=g_* \circ D_M \circ \partial F$ and therefore~$b_M = b$, as required.
\end{proof}

\subsection{Step 5: fixing the Kirby-Siebenmann invariant and concluding}
\label{sub:Step5}

The conclusion of Theorem~\ref{thm:MainTechnical} will follow promptly from Proposition~\ref{prop:step4} once we recall how, in the odd case,  it is possible to modify the Kirby-Siebenmann invariant of a given $4$-manifold with fundamental group~$\Z$.
This is achieved using the star construction, a construction which we now recall following~\cite{FreedmanQuinn} and~\cite{StongRealization}.
In what follows, $*\C P^2$ denotes the Chern manifold, i.e.\ the unique simply-connected topological $4$-manifold homotopy equivalent to~$\C P^2$ but with~$\ks(*\C P^2)=1$.
\medbreak

Let~$M$ be a topological~$4$-manifold with (potentially empty) boundary,  good fundamental group~$\pi$ and such that the second Stiefel-Whitney class of the universal cover $w_2(\wt{M})$ is nontrivial.
There is a~$4$-manifold~$*M$, called the \emph{star partner of~$M$} that is rel.\ boundary homotopy equivalent to~$M$ but has the opposite Kirby-Siebenmann invariant from that of~$M$~\cite[Theorem 10.3~(1)]{FreedmanQuinn}.  See~\cite{teichner-star} or~\cite[Propostion~5.8]{KPR-counterexamples} for a more general condition under which a star partner exists.

\begin{remark}
For fundamental group $\Z$, every non-spin 4-manifold has $w_2(\wt{M}) \neq 0$.  To see this, we use the exact sequence
\[0 \to H^2(B\pi;\Z/2) \to H^2(M;\Z/2) \xrightarrow{p^*} H^2(\wt{M};\Z/2)^\pi,\]
where $\pi := \pi_1(M)$. This can be deduced from the Leray-Serre spectral sequence for the fibration $\wt{M} \to M \to B\pi$; see e.g.\ \cite[Lemma~3.17]{KLPT}.  For $\pi =\Z$ the first term vanishes, so $p^*$ is injective. By naturality, $p^*(w_2(M)) = w_2(\wt{M})$, so $w_2(M) \neq 0$ implies $w_2(\wt{M}) \neq 0$ as desired.  It follows that for a non-spin 4-manifold $M$ with fundamental group $\Z$, \cite[Theorem 10.3]{FreedmanQuinn} applies and there is a star partner.
\end{remark}

To describe $*M$, consider the~$4$-manifold~$W:=M \# (*\C P^2)$ and note that the inclusions $M \hookrightarrow W$ and~$* \C P^2 \hookrightarrow W$ induce a splitting
\begin{equation}
\label{eq:StarSplitting}
\pi_2(M) \oplus (\pi_2(*\C P^2) \otimes_\Z \Z[\pi]) \xrightarrow{\cong}  \pi_2(W).
 \end{equation}
By~\cite[Theorem 10.3~(1)]{FreedmanQuinn} (cf.\ \cite[Proposition~5.8]{KPR-counterexamples})  there exists a $4$-manifold~$*M$ and an orientation-preserving homeomorphism
$$ h \colon W \xrightarrow{\cong} *M \# \C P^2$$
that respects the splitting on $\pi_2$ displayed in~\eqref{eq:StarSplitting}.
The star partner $*M$ is also unique up to homeomorphism, by \cite[Corollary~1.2]{StongUniqueness}.

To be more precise about the condition on $h$,  let $\iota \colon \pi_2(*\C P^2) \otimes_\Z \Z[\pi] \to   \pi_2(M \# (*\C P^2))=\pi_2(W)$ denotes the split isometric injection induced by the zigzag $*\C P^2 \leftarrow *\C P^2 \sm \mathring{D}^4 \rightarrow W$, and let $\operatorname{incl}_* \colon \pi_2(\C P^2) \to \pi_2(*M \# \C P^2)$ be defined similarly.
Then we say that $h$ \emph{respects the splitting on $\pi_2$} if for some isomorphism $f \colon \pi_2(*\C P^2) \xrightarrow{\cong} \pi_2(\C P^2)$, the following diagram commutes
\begin{equation*}
\xymatrix{
\pi_2(*\C P^2) \otimes_\Z \Z[\pi] \ar@{^{(}->}[r]^-\iota \ar[d]_{f \otimes \id}^{\cong} & \pi_2(W)  \ar[d]^{h_*}_\cong \\
\pi_2(\C P^2) \otimes_\Z \Z[\pi]  \ar@{^{(}->}[r]^-{\operatorname{incl}_*} & \pi_2(*M \# \C P^2).
}
\end{equation*}
Since both horizontal maps in this diagram are split, this implies that $h_*$ induces an isomorphism~$g \colon \pi_2(M) \xrightarrow{\cong} \pi_2(*M)$, and so $h_*$ splits as follows:
\[h_* = (g_*, f_* \otimes \id) \colon \pi_2(M) \oplus (\pi_2(*\C P^2) \otimes_\Z \Z[\pi]) \xrightarrow{\cong} \pi_2(*M) \oplus (\pi_2(\C P^2) \otimes_\Z \Z[\pi]).\]

We recall that~$M$ and~$*M$ are orientation-preserving homotopy equivalent rel.\ boundary.
This will ensure that their automorphism invariants agree.
The argument is due to Stong~\cite[Section~2]{StongUniqueness}, and a proof can also be found in \cite[Lemma~5.7]{KPR-counterexamples}.

\begin{proposition}
\label{prop:StarHomotopyEquivalence}
If~$M$ is a topological~$4$-manifold with boundary, good fundamental group~$\pi$ and whose universal cover has nontrivial second Stiefel-Whitney class,  then~$M$ is orientation-preserving homotopy equivalent rel.\ boundary to its star partner~$*M$.
\end{proposition}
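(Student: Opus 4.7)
The plan is to construct an explicit orientation-preserving map $f \colon M \to *M$ that restricts to the identity on $\partial M = \partial *M = Y$, and then verify that $f$ is a rel.\ boundary homotopy equivalence by computing its effect on $\pi_1$, $\pi_2$, and the equivariant intersection form.

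First, I would construct $f$. Pick an orientation-preserving homotopy equivalence $\psi \colon *\C P^2 \xrightarrow{\simeq} \C P^2$; one exists since both are simply-connected CW complexes of the form $S^2 \cup_{\eta} D^4$, so Whitehead's theorem produces a cellular homotopy equivalence extending the identity on the $2$-skeleton. The connected sum $\psi \# \operatorname{id}_M$, performed in the interior away from $\partial M$, yields an orientation-preserving rel.\ boundary homotopy equivalence
\[
\Psi \colon M \# *\C P^2 \xrightarrow{\simeq} M \# \C P^2,
\]
and the composition $g := h \circ \Psi^{-1} \colon M \# \C P^2 \to *M \# \C P^2$ is an orientation-preserving rel.\ boundary homotopy equivalence whose induced $\pi_2$-map carries the $\pi_2(\C P^2) \otimes_\Z \Z[\pi]$ summand identically to the corresponding summand on the right, since $h$ respects the splitting in~\eqref{eq:StarSplitting} and $\psi_*$ sends $[\alpha']$ to $[\alpha]$.

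Next, I would cancel the $\C P^2$ factors. Using that $g_*$ preserves the canonical class $[\mathbb{C}P^1] \in \pi_2(\C P^2) \subset \pi_2(M \# \C P^2)$, homotope $g$ rel.\ boundary so that it maps a standard $\mathbb{C}P^1 \subset M \# \C P^2$ to a standard $\mathbb{C}P^1 \subset *M \# \C P^2$ and restricts to a homeomorphism of tubular neighborhoods; since $\pi$ is good, the necessary transversality and obstruction arguments are available. The complement of such a tubular neighborhood of $\mathbb{C}P^1$ in $M \# \C P^2$ is rel.\ boundary homotopy equivalent to $M \setminus \operatorname{int}(D^4)$, and similarly on the target side, so the restriction of $g$ induces a rel.\ boundary homotopy equivalence $M \setminus \operatorname{int}(D^4) \xrightarrow{\simeq} *M \setminus \operatorname{int}(D^4)$. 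Extending trivially over the removed $4$-cell yields the desired map $f \colon M \to *M$.

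Finally, I would verify that $f$ is a homotopy equivalence rel.\ boundary. By construction $f$ induces the identity on $\pi_1$, and its effect on $\pi_2$ is the restriction of $h_*$ to the first summand of~\eqref{eq:StarSplitting}, which is an isometry of equivariant intersection forms because that summand is $\lambda_W$-orthogonal to the second. Combining this with Poincar\'e-Lefschetz duality for the pairs $(\wt M, \partial \wt M)$ and $(\wt{*M}, \partial \wt{*M})$ and the fact that $f$ is the identity on $\partial$ shows $f_*$ is an isomorphism on all $H_*$ of universal covers. Whitehead's theorem then implies $f$ is a rel.\ boundary homotopy equivalence, and it is orientation-preserving by construction. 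The main obstacle is the cancellation step; the hypothesis that $h$ respects the $\pi_2$-splitting is precisely what enables it.
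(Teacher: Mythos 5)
Your overall strategy --- pass from $h \colon M \# *\C P^2 \cong *M \# \C P^2$ to a map $M \to *M$ by ``removing'' the $\C P^2$ factors --- is the right instinct, and your first step (forming $g := h \circ \Psi^{-1} \colon M \# \C P^2 \to *M \# \C P^2$ preserving the class of $\C P^1$) is fine modulo standard care about connected sums of maps. The gap is in the cancellation step. You assert that $g$ can be homotoped rel.\ boundary so that it carries a standard $\C P^1$ to a standard $\C P^1$, restricts to a homeomorphism of tubular neighbourhoods, and hence restricts to a homotopy equivalence of the complements. This is a splitting problem for a homotopy equivalence along the $3$-sphere bounding $\nu(\C P^1)$, i.e.\ exactly the connected-sum cancellation problem for $\# \C P^2$, and it is not delivered by ``transversality and obstruction arguments'' plus goodness of $\pi$: goodness gives you the disc embedding theorem and the surgery/s-cobordism machinery, but splitting a homotopy equivalence along a separating $S^3$ carries its own obstructions and requires an actual argument. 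As stated, the hardest step of your proof is the unproved one.

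The paper (following Stong) sidesteps the splitting entirely with a purely homotopy-theoretic trick: attach a $3$-cell to $W = M \# *\C P^2$ along $\alpha'$ and to $*M \# \C P^2$ along $\alpha$. Since $\C P^2 \cup_\alpha e^3 \simeq S^4$ (and likewise for $*\C P^2 \cup_{\alpha'} e^3$), one gets $W \cup_{\alpha'} e^3 \simeq M \# S^4 = M$ and $(*M \# \C P^2) \cup_\alpha e^3 \simeq *M$; and because $h(\alpha') \simeq \alpha$, the homeomorphism $h$ extends over the attached $3$-cells to give the desired rel.\ boundary homotopy equivalence
\[
M \simeq W \cup_{\alpha'} e^3 \xrightarrow{\; h \cup \id \;} (*M \# \C P^2) \cup_\alpha e^3 \simeq *M.
\]
If you want to keep your approach, you would need to replace the geometric cancellation by this coning-off argument (or supply a genuine splitting theorem); the hypothesis that $h$ respects the $\pi_2$-splitting is used precisely to know $h(\alpha') \simeq \alpha$, so that $h$ extends over the $3$-cells.
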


%

We are ready to prove Theorem~\ref{thm:MainTechnical}, whose statement we now recall for the reader's convenience.
Let~$Y$ be a~$3$-manifold with an epimorphism~$\pi_1(Y) \twoheadrightarrow \Z$ whose Alexander module is torsion, and let~$(H,\lambda)$ be a form presenting~$Y$.
If~$b \in \Iso(\partial \lambda,\unaryminus\Bl_Y)/\Aut(\lambda)$ is an isometry, then there is a~$\Z$-manifold~$M$ with equivariant intersection form~$\lambda_M$,  boundary~$Y$ and with~$b_M=b$.
If the form is odd, then~$M$ can be chosen to have either~$\ks(M)=0$ or~$\ks(M)=1$.
We now conclude the proof of this theorem.
\begin{proof}[Proof of Theorem~\ref{thm:MainTechnical}]
In Proposition~\ref{prop:step4}, we proved the existence of a $\Z$~manifold~$M$ with equivariant intersection form~$\lambda_M$, boundary~$Y$ and with~$b_M = b$.
It remains to show that if~$\lambda$ is odd, then~$M$ can be chosen to have either~$\ks(M)=0$ or~$\ks(M)=1$.
This is possible by using the star partner $*M$ of $M$.
Indeed Proposition~\ref{prop:StarHomotopyEquivalence} implies that $M$ and $*M$ are homotopy equivalent rel.\ boundary and therefore Remark~\ref{rem:HomotopyEquivalence} ensures that~$b_{*M} =b_{M}$ is unchanged.
\end{proof}

\subsection{An example}
\label{sub:Example}

Remark~\ref{rem:KSProof} shows that if $M_0$ and $M_1$ are spin~$4$-manifolds with $\pi_1(M_i) \cong \Z$,  boundary homeomorphic to~$(Y,\varphi)$,  isometric equivariant intersection form, and the same automorphism invariant, then their Kirby-Siebenmann invariants agree.
The next proposition  shows that the condition on the automorphism invariant is necessary.  After the proof, we offer an extended example to illustrate the proof of Theorem \ref{thm:MainTechnicalIntro} and to show that it is possible to work with the automorphism invariants and the $\Q(t)$-valued linking numbers explicitly.

\begin{proposition}
\label{prop:KSSpin}
There are two spin~$4$-manifolds~$M_0$ and~$M_1$ with $\pi_1 \cong \Z$, equivariant intersection form isometric to~$\lambda:= (-8)$ and boundary homeomorphic to~$Y := \unaryminus L(8,1) \# (S^1 \times S^2)$ that are distinguished both by their Kirby-Siebenmann invariants and their automorphism invariants.
\end{proposition}

\begin{proof}
The manifolds~$M_0$ and~$M_1$ are obtained by boundary connect summing~$S^1 \times D^3$ to simply-connected~$4$-manifolds~$V_0$ and~$V_1$ that we now describe.
Up to homeomorphism, there are two simply-connected~$4$-manifolds~$V_0$  and~$V_1$ with intersection form~$\lambda' = (-8) \colon \Z \times \Z \to \Z$, and boundary homeomorphic to the lens space~$Y' := \unaryminus L(8,1)$. They are distinguished by Boyer's simply-connected version of the automorphism invariant~\cite[Corollary E]{BoyerRealization}.
We construct them explicitly and show that~$\ks(V_0) \neq \ks(V_1)$.

The~$(-8)$-trace on the unknot,~$V_0:=X_{-8}(U)$, gives the first of these~$4$-manifolds.
Towards describing~$V_1$,  first note that from ~$\unaryminus L(8,1)$ one can obtain  the integer homology sphere~$S_{+1}^3(T_{2,3})$ by a Dehn surgery along the framed knot~$K_1$ illustrated in Figure~\ref{fig:surgeryinlens}.  Note also that~$S_{+1}^3(T_{2,3})$ bounds a contractible topological 4-manifold~$C$. We can now build~$\unaryminus V_1$ by beginning with~$\unaryminus L(8,1)\times I$, attaching a~$+1$ framed 2-handle along~$K_1$, and capping off with~$\unaryminus C$.  The resulting manifold~$\unaryminus V_1$ has~$\partial (\unaryminus V_1)=L(8,1)$, so~$\partial V_1=\unaryminus L(8,1)$ as desired.


\begin{figure}[!htbp]
\center
\begin{overpic}[width=0.5\textwidth,tics=10]{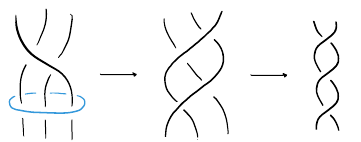}
 \put (22,5) {\textcolor{teal}{$-1$}}
  \put (19,21) {$-8$}
    \put (65,5) {$1$}
        \put (95,5) {$1$}
\end{overpic}
\caption{Peforming $-1$ surgery on the blue knot $K_1$ in the lens space $L(-8,1)$ yields the 3-manifold obtained by $+1$ surgery on the right handed trefoil in $S^3$.
Each frame of the figure should be imagined to be vertically braid closed.  The first homeomorphism indicated is a Rolfsen twist, the second is an isotopy in $S^3$. } 
\label{fig:surgeryinlens}
\end{figure}

 The manifolds~$V_0$ and~$V_1$ are simply-connected, spin,  have boundary homeomorphic to~$\unaryminus L(8,1)$,  and intersection form isometric to~$(-8)$.
 We have that~$\ks(V_0)=0$ (because~$V_0$ is smooth),  whereas~$\ks(V_1)=\ks(C)=\mu(S_{+1}^3(T_{2,3}))=\operatorname{Arf}(T_{2,3})=1$.
 Here~$\mu$ denotes the Rochlin invariant and the relation between~$\ks$ and~$\mu$ is due to Gonz\'ales-Acu\~{n}a~\cite{GonzalezAcuna}.

 The manifolds~$M_0$ and~$M_1$ are now obtained by setting
$$M_0:=V_0 \natural (S^1 \times D^3) \quad \text{ and} \quad  M_1:=V_1 \natural (S^1 \times D^3).$$
  The manifolds~$M_0$ and~$M_1$ have~$\pi_1(M_i) \cong \Z$,  boundary homeomorphic to~$Y = \unaryminus L(8,1) \# (S^1 \times S^2)$,  and equivariant intersection form isometric to~$(-8) \colon \Z[t^{\pm 1}] \times \Z[t^{\pm 1}] \to \Z[t^{\pm 1}]$.
The additivity of the Kirby-Siebenmann invariant implies that~$\ks(M_0)=\ks(V_0)=0$ whereas~$\ks(M_1)=\ks(V_1)=1.$
The manifolds must have distinct automorphism invariants,  since otherwise by the classification (Theorem~\ref{thm:Classification}) they would be homeomorphic and hence would have the same Kirby-Siebenmann invariants.
\end{proof}

\begin{example}
\label{ex:ExampleKS}
To provide an explicit example of our realisation procedure from the proof of Theorem~\ref{thm:MainTechnicalIntro}, we describe how the manifolds~$M_0$ and~$M_1$ realise two distinct, explicit automorphism invariants.

Fix a model of~$Y := \unaryminus L(8,1) \# (S^1 \times S^2)$ as surgery on a $2$-component unlink $L_1 \cup L_2$ with framings~$(-8,0)$.
Consider the epimorphism~$\varphi \colon \pi_1(Y) \cong \Z_8 * \Z \to \Z$ given by sending the meridian~$\mu_{L_1}$ of~$L_1$ to~$0$ and the meridian~$\mu_{L_2}$ to~$1$.
Fix a lift~$\wt{\mu}_{L_1}$ of~$\mu_{L_1}$ to the infinite cyclic cover and note that it generates~$H_1(Y;\Z[t^{\pm 1}]) \cong  \Z[t^{\pm 1}]/(8)$ and satisfies~$\Bl_Y(\wt{\mu}_{L_1},\wt{\mu}_{L_1})=1/8.$
One way to see this latter equality is to use the calculation of the linking form of lens spaces.

A verification shows that~$Y$ is presented by the Hermitian form
\begin{align*}
\lambda \colon \Z[t^{ \pm 1}] \times \Z[t^{ \pm 1}] &\to \Q(t)/\Z[t^{\pm 1}] \\
&(x,y) \mapsto 8x\overline{y}.
\end{align*}
Note also that multiplication by~$3$ induces an isometry of~$\Bl_Y \cong -\partial \lambda$.
Using the notation from the proof of Step 1 in Section~\ref{sub:Plan}, we let~$x_1$ be a generator of~$\Z[t^{\pm 1}]$, and we let~$x_1^* \in \Z[t^{\pm 1}]^*$ be the dual generator.
In these bases, the matrix of~$\lambda$ is~$Q = (-8)$.
We therefore obtain two elements of~$\Iso(\partial \lambda,\unaryminus\Bl_{Y})$ by considering
\begin{align*}
&b_0 \colon \Z[t^{\pm 1}]/(8) \xrightarrow{\cong} H_1(Y;\Z[t^{\pm 1}]),\, [x_1^*] \mapsto [\wt{\mu}_{L_1}], \\
&b_1  \colon  \Z[t^{\pm 1}]/(8) \xrightarrow{\cong}  H_1(Y;\Z[t^{\pm 1}]),\, [x_1^*] \mapsto 3[\wt{\mu}_{L_1}].
\end{align*}
Since~$\Aut(\lambda)=\{ \pm t^k\}_{k \in \Z}$,  it follows that~$b_0$ and~$b_1$ remain distinct in~$\Iso(\partial \lambda,\unaryminus\Bl_{Y})/\Aut(\lambda).$
That they remain in  distinct orbits of the action of~$\Homeo^+_{\varphi}(Y)$ requires the following claim.


\begin{claim*}
The group~$\Homeo^+_{\varphi}(Y)$ acts on~$H_1(Y;\Z[t^{\pm 1}])$ as follows: for each $\psi \in \Homeo^+_{\varphi}(Y)$, we have that $\psi \cdot x=\pm t^k x$ for some~$k \in \Z$.
\end{claim*}
\begin{proof}
In~\cite[Theorem~3]{deSa-Rourke} we find the statement that every automorphism of a connected sum of 3-manifolds is a composition of slides, permutations, and automorphisms of the factors. That article was an announcement, and the theorem is actually due to Hendriks-Laudenbach~\cite[~\S5,~Th\'eor\`eme]{Hendriks-Laudenbach}.
 For our purposes the statement in~\cite{deSa-Rourke} is easier to apply, which is why we mention it.
Permutations are irrelevant here since there is a unique irreducible factor. Sliding the~$\unaryminus L(8,1)$ factor around the generator of~$S^1 \times S^2$ exactly corresponds to an action by~$t^n$.
Sliding the handle sends a generator~$t \in \pi_1(S^1 \times S^2)$ to~$g\cdot t$ where~$g \in \Z_8$.
However it acts trivially on a generator of~$\pi_1(L(8,1))$ and hence acts trivially on~$H_1(Y;\Z[t^{\pm 1}])$.
 It remains to consider automorphisms of the irreducible factor, i.e.\ of~$L(8,1)$.  Bonahon \cite{Bonahon} proved that every element of~$\Homeo^+(L(8,1))$ acts by~$\pm 1$ on~$H_1(L(8,1))$, and hence such an element acts by~$\pm 1$ on~$H_1(Y;\Z[t^{\pm 1}])$.
 Combining these conclusions, we see that every homeomorphism~$f \in \Homeo^+_{\varphi}(Y)$ acts by~$\pm t^n$ on~$H_1(Y;\Z[t^{\pm 1}])$, for some sign and some~$n \in \Z$, as asserted.
\end{proof}

The claim implies that the isometries~$b_0$ and~$b_1$ determine distinct elements in the orbit set~$\Iso(\partial \lambda,\unaryminus\Bl_{Y})/\Aut(\lambda) \times \Homeo^+_{\varphi}(Y)$.
We will show that applying the realisation process of Theorem~\ref{thm:MainTechnical} to these elements results in~$M_0$ and~$M_1$ respectively.
Following the notation of Section~\ref{sub:Plan}, for~$i=0,1$, precompose~$b_i$ with the canonical projection~$\Z[t^{\pm 1}]^* \to \Z[t^{\pm 1}]/(8)$ to get the epimorphism
$$\varpi_i \colon \Z[t^{\pm 1}]^*  \xrightarrow{}\Z[t^{\pm 1}]/(8)  \xrightarrow{b_i} H_1(Y;\Z[t^{\pm 1}]$$
For~$i=0,1$, let~$\widetilde{K}_i \subset Y^\infty$ be a framed knot representing~$\varpi_i(x_1^*)$ and let~$K_i \subset Y$ be its projection down to~$Y$.
We can assume that~$\widetilde{K}_i \subset \unaryminus L(8,1) \subset  (S^2 \times \R) \#_{k \in \Z} t^k(\unaryminus L(8,1))=Y^\infty$.
Thinking of~$Y$ as the~$(-8,0)$-framed surgery on the unlink~$L_1 \cup L_2$,  one can arrange also for~$K_i$ to be disjoint from~$L_1 \cup L_2$.
Consider the~$3$-component link~$K_i \cup L_1 \cup L_2 \subset S^3$.
Note that~$K_i \cup L_2$ is split from~$L_1$, ~$\ell k (K_0,L_1)=1$ and~$\ell k(K_1,L_1)=3.$
When we refer to a framing of~$K_i$, it will be as a knot in~$S^3$.
Let~$\pi_{K_1}$ (resp.~$\pi_{K_0}$) be the~$(\unaryminus 1)$-parallel of~$K_1$ (resp.~$0$-parallel of~$K_0$),  and let~$\wt{\pi}_{\wt{K}_i}$ be a lift of~$\pi_{K_i}$ to~$Y^\infty$, which is a parallel of~$\wt{K}_i$ for~$i=0,1.$

The next claim carries out by hand the first step of the plan described in Section~\ref{sub:Plan}.

\begin{claim*}
For~$i=0,1$,  the knot~$\widetilde{K}_i  \subset Y^\infty$ represents the homology class~$\varpi_i(x_1^*)$, and the parallel~$\widetilde{\pi}_{\widetilde{K}_i}$ satisfies
$$ \ell k_{\Q(t)}(\widetilde{K}_i,\widetilde{\pi}_{\widetilde{K}_i})=\tmfrac{1}{8}.$$
In particular,~$\widetilde{K}_i$ has equivariant linking matrix~$A_{\widetilde{K}_i}=\left(\tmfrac{1}{8}\right)=\unaryminus \left(\tmfrac{1}{\unaryminus 8}\right)=-Q^{-T}$ for~$i=0,1.$
\end{claim*}
\begin{proof}
The assertion concerning the homology class holds by construction and so we focus on the equivariant linking number calculation.
The proofs are similar for~$M_0$ and~$M_1$, so we give the most details for~$M_1$, since that is the more complicated case, and then we sketch the easier case of~$M_0$.
We will use the equation
\begin{equation}\label{eq-lk-no-eqn}
  [\wt{\pi}_{\wt{K}_1}] = \lk_{\Q(t)}(\wt{K}_1,\wt{\pi}_{\wt{K}_1}) [\mu_{\wt{K}_1}] \in H_1(Y \sm \nu(K_1);\Q(t))
\end{equation} from Definition~\ref{def:EquivariantLinking}.
The~$\Z$-cover~$Y^\infty$ of~$Y$ is~$(S^2 \times \R) \#_{k \in \Z} t^k (\unaryminus L(8,1))$, and there is no linking between curves in different~$L(8,1)$ summands.
 Thus it suffices to investigate the~$\Q$-valued linking number of~$K_1$ and~$\pi_{K_1}$ in~$Y' := \unaryminus L(8,1)$, and consider the result as an element of~$\Q(t)$. Formally speaking, we use an isomorphism
\[H_1(Y^\infty\sm \cup_{i\in \Z} t^i \cdot \nu(\wt{K}_1)) \cong H_1(Y' \sm \nu(K_1)) \otimes_{\Z} \Z[t^{\pm 1}],\]
and then tensor both sides further by~$-\otimes_{\Z[t^{\pm 1}]} \Q(t)$.
 We compute in the right hand side and translate to a conclusion about the left hand side.

Since~$Y':=\unaryminus L(8,1)=S^3_{(-8)}(L_1)$, the manifold~$Y'\sm \nu(K_1)$ is obtained from the exterior of the~$2$-component link~$L_1 \cup K_1 \subset S^3$ by Dehn filling~$L_1$ with surgery coefficient~$-8$.
Since~$\ell k(L_1,K_1)=3$, the homology is therefore
$$ H_1(Y' \sm \nu(K_1)) \cong \frac{\Z\langle\mu_{L_1} \rangle \oplus \Z \langle\mu_{K_1} \rangle}
{\langle -8\mu_{L_1} + 3 \mu_{K_1} \rangle} \cong \Z.$$
We now express~$[\pi_{K_1}]$ as a multiple of~$[\mu_{K_1}]$, as required to calculate the framing of~$K_1$.
Since~$\pi_{K_1}$ is a~$(-1)$-parallel of~$K_1$ we have~$[\pi_{K_1}] = 3[\mu_{L_1}] - [\mu_{K_1}]$.
One checks that~$\bsm 1 & 3 \\ -3 & -8 \esm \bsm-8 \\ 3 \esm = \bsm 1 \\ 0 \esm,$  so one can use the invertible matrix~$\bsm 1 & 3 \\ -3 & -8 \esm$ to change coordinates to the presentation
$$\Z \xrightarrow{\bsm 1\\ 0 \esm} \Z \oplus \Z \to H_1(Y' \sm \nu(K_1)) \to 0.$$
In this presentation, we compute that
\begin{align*}
[\mu_{K_1}]&=\operatorname{proj}_2 \circ \bsm 1 & 3 \\ -3 & -8 \esm \bsm 0 \\ 1 \esm  = -8 \in \Z \cong H_1(Y' \sm \nu(K_1)), \\
[\pi_{K_1}]&=\operatorname{proj}_2 \circ \bsm 1 & 3 \\ -3 & -8 \esm \bsm 3 \\ -1 \esm  = -1 \in \Z \cong H_1(Y' \sm \nu(K_1)).
\end{align*}
Hence passing to the~$\Z$-cover, tensoring up to~$\Q(t)$ coefficients, and applying  \eqref{eq-lk-no-eqn}, we see that~~$-1 = \lk_{\Q(t)}(\wt{K}_1,\wt{\pi}_{\wt{K}_1}) \cdot (-8)$ so, as asserted
$$\lk_{\Q(t)}(\wt{K}_1,\wt{\pi}_{\wt{K}_1}) = \tmfrac{1}{8} \in \Q(t).$$
As indicated above, a similar computation shows the same result for~$M_0$.
Here are some details.
The space~$Y' \sm \nu(K_0)$ is obtained from the exterior of the link~$K_0 \cup L_1 \subset S^3$ by Dehn filling $L_1$ with framing~$-8$.
Since~$\ell k(L_1,K_0)=1$,  it follows that
$$H_1(Y' \sm \nu(K_0)) \cong \frac{\Z\langle\mu_{L_1} \rangle \oplus \Z \langle\mu_{K_0} \rangle }{\langle-8\mu_{L_1} +  \mu_{K_0}\rangle} \cong \Z.$$
We now express~$[\pi_{K_0}]$ as a multiple of~$[\mu_{K_0}]$, as required to calculate the framing of~$K_0$.
Since~$\pi_{K_0}$ is a~$0$-parallel of~$K_0$ we have~$[\pi_{K_0}] = [\mu_{L_1}]$.
Use the invertible matrix~$\bsm 1 & 8 \\ 0 & 1\esm$ to change coordinates to the presentation
$$\Z \xrightarrow{\bsm 1\\ 0 \esm} \Z \oplus \Z \to H_1(Y' \sm \nu(K_1)) \to 0.$$
In this presentation, we compute that
\begin{align*}
[\mu_{K_0}]&=\operatorname{proj}_2 \circ \bsm 1 & 8 \\ 0 & 1\esm \bsm 0 \\ 1 \esm  = 8 \in \Z \cong H_1(Y' \sm \nu(K_0)), \\
[\pi_{K_0}]&=\operatorname{proj}_2 \circ \bsm 1 & 8 \\ 0 & 1\esm \bsm 1 \\ 0 \esm  = 1 \in \Z \cong H_1(Y' \sm \nu(K_0)).
\end{align*}
Hence passing to the~$\Z$ cover, tensoring up to~$\Q(t)$ coefficients,  one obtains
%
$$\lk_{\Q(t)}(\wt{K}_0,\wt{\pi}_{\wt{K}_0}) =\tmfrac{1}{8} \in \Q(t).$$
This concludes the proof of the claim.
\end{proof}

The combination of the claim with Step~$2$ of the plan from Section~\ref{sub:Plan} implies that surgery along~$K_i$ yields a~$\Z[t^{\pm 1}]$-homology~$3$-sphere for~$i=0,1.$
In order to recover the construction described during the proof of Proposition~\ref{prop:KSSpin} however,  we take~$\widetilde{K}_i$ (and therefore~$K_i  \subset \unaryminus L(8,1)$) to be the unknot for~$i=0,1$:  as described in the proposition, surgery on~$Y$ along~$K_0$ and~$K_1$ then yields~$S^1 \times S^2$ and~$(S^1 \times S^2) \# S^3_{+1}(T_{2,3})$ respectively.
The infinite cyclic covers of these manifolds have vanishing Alexander modules yielding a ``by hand" version of Step~$2$.

Step~$3$ is carried out by capping off with~$S^1 \times D^3$ and~$(S^1 \times D^3) \natural C$ respectively; both of these are homotopy~$S^1 \times D^3$s.
Thus~$M_0$ and~$M_1$ are obtained by the realisation process of our main theorem.
%
It follows that~$b_{M_0} =b_0 \neq b_1=b_{M_1}$,  as asserted.
\color{black}
\end{example}

In summary,  the Kirby-Siebenmann invariant of spin 4-manifolds is not always controlled by the boundary and the intersection form.
Rather, the automorphism invariant must be taken into account as well.

An explanation for this is that the automorphism invariant can act nontrivially on the spin structures. Using $b_0$ to fix an isometry $\partial \lambda \cong -\Bl_Y$, $b_1$ determines an automorphism of $\Bl_Y$.  If this automorphism preserved the quadratic enhancement of $\Bl_Y$ determined by a spin structure (or by the presentation of $\partial \lambda \cong \Bl_Y$ as the boundary of an even Hermitian form~\cite[p.243]{RanickiExact}, \cite[Definition~2.5]{CCP}) then the induced spin structures on $Y$ would agree. Then $M_0$ and $M_1$ would be stably homeomorphic and hence their Kirby-Siebenmann invariants would be the same; see~\cite[Proposition~4.2]{CCP}.
But when we consider an automorphism of the linking form that does not preserve the quadratic enhancement, as is the case for $b_1$ above, then the Kirby-Siebenmann invariants can be different, as with the example just given.

Finally, we note that the example just given, without adding the copies of $S^1 \times D^3$, is also compelling in the simply-connected case.
We gave it for infinite cyclic fundamental group since that is the topic of the present paper. 

\color{black}

\section{Application to~$\Z$-surfaces in~$4$-manifolds}
\label{sec:Discs}

Recall that a \emph{$\Z$-surface} refers to a locally flat, embedded surface in a $4$-manifold whose complement has infinite cyclic fundamental group.
In this section we apply our classification of $4$-manifolds with fundamental group $\Z$ to the study of $\Z$-surfaces in simply-connected $4$-manifolds and prove Theorems~\ref{thm:SurfacesRelBoundaryIntro},~\ref{thm:SurfacesWithBoundaryIntro}, and~\ref{thm:SurfacesClosedIntro} from the introduction.
In Subsection~\ref{sub:Boundary}, we focus on~$\Z$-surfaces with boundary up to equivalence rel.\ boundary.
In the shorter Subsections~\ref{sub:SurfacesBoundaryEq} and~\ref{sub:Closed}, we respectively study surfaces with boundary up to equivalence (not necessarily rel.\ boundary) and closed surfaces. Subsection~\ref{sub:OpenQuestions} lists some open problems.

\subsection{Surfaces with boundary up to equivalence rel.\ boundary}
\label{sub:Boundary}
Let~$N$ be a simply-connected~$4$-manifold with boundary homeomorphic to $S^3$. We fix once and for all a particular homeomorphism $h \colon \partial N \cong S^3$. Let~$K \subset S^3$ be a knot. Thus $K$ and $h$ determine a knot in $\partial N$, which we also denote by $K$.
The goal of this subsection is to give an algebraic description of the set of~$\Z$-surfaces in~$N$ with boundary $K$ up to equivalence rel.\ boundary.

We begin with some conventions.
Given a properly embedded~$\Z$-surface~$\Sigma \subset N$ in a simply-connected~$4$-manifold,
denote its exterior by~$N_\Sigma:=N\smallsetminus \nu(\Sigma)$.
Throughout this section, we will refer to embedded surfaces simply as $\Sigma$, and abstract surfaces as~$\Sigma_{g,b}$, where $g$ is the genus and $b$ is the number of boundary components; we may sometimes write $\Sigma_g$ when $b=0$.
Recall that throughout,~$\Sigma_{g,b}$ and $N$ will be oriented.
This data determines orientations on $S^3$, $K$, and every meridian of an embedding of~$\Sigma_{g,b}$.
Observe that the~$\pi_1(N_\Sigma) \cong \Z$ hypothesis implies that~$[\Sigma,\partial \Sigma]=0\in H_2(N,\partial N)$ by~\cite[Lemma 5.1]{ConwayPowell}, so the relative Euler number of the normal bundle of $\Sigma$, with respect to the zero-framing of $\nu (\partial N)$, vanishes~\cite[Lemma~5.2]{ConwayPowell}.
From now on, we choose a framing~$\nu(\Sigma) \cong \Sigma \times \mathring{D}^2 \cong \Sigma \times \R^2$ compatible with the orientation and with the property that for each simple closed curve~$\gamma_k \subset \Sigma$, we have~$\gamma_k \times \lbrace e_1 \rbrace \subset N  \setminus \Sigma$ is nullhomologous in~$N  \setminus \Sigma$.
We will refer to such a framing as a \emph{good framing}.
As such,
when ~$\partial\Sigma=K\subset\partial N$
we can identify the boundary of~$N_\Sigma$ as
$$\partial N_\Sigma \cong E_K \cup_\partial (\Sigma_{g,1} \times S^1)=:M_{K,g},$$
where the gluing~$\partial$ takes~$\lambda_K$ to~$\partial\Sigma\times\{ \operatorname{pt}\}$.

We call two locally flat surfaces~$\Sigma,\Sigma' \subset N$ with boundary~$K \subset \partial N \cong S^3$ \emph{equivalent rel.\ boundary} if there is an orientation-preserving homeomorphism of pairs~$(N,\Sigma) \cong (N,\Sigma')$ that is pointwise the identity on~$\partial N \cong S^3$.
Note that if~$\Sigma \subset N$ is a~$\Z$-surface with boundary~$K$,  then~$N_\Sigma$ is a $\Z$-manifold with boundary~$\partial N_\Sigma \cong M_{K,g}$~\cite[Lemma 5.4]{ConwayPowell} and~$H_1(M_{K,g};\Z[t^{\pm 1}])\cong H_1(E_K;\Z[t^{\pm 1}]) \oplus \Z^{2g}$
is torsion because the Alexander module~$H_1(E_K;\Z[t^{\pm 1}])$ of~$K$ is torsion~\cite[Lemma~5.5]{ConwayPowell}.
Additionally, note that the equivariant intersection form~$\lambda_{N_\Sigma}$ of a surface exterior~$N_\Sigma$  must present~$M_{K,g}$.

Consequently, as we did for manifolds, it is natural to fix a form~$(H,\lambda)$ that presents~$M_{K,g}$ and to consider the set~$\operatorname{Surf(g)}^0_\lambda(N,K)$ of genus~$g$~$\Z$-surfaces in~$N$ with boundary~$K$ and~$\lambda_{N_\Sigma}\cong \lambda$.

\begin{definition}
\label{def:Surface(g)RelBoundary}
For a nondegenerate Hermitian form~$(H,\lambda)$ over $\Z[t^{\pm 1}]$ that presents~$M_{K,g}$, set
$$\operatorname{Surf(g)}^0_\lambda(N,K):=\lbrace \Z\text{-surfaces~$\Sigma \subset N$ for~$K$ with } \lambda_{N_\Sigma}\cong \lambda \rbrace/\text{ equivalence rel.~$\partial$}.$$
\end{definition}

There is an additional necessary condition for this set to be nonempty.
For conciseness, we write~$\lambda(1):=\lambda \otimes_{\Z[t^{\pm 1}]} \Z_\varepsilon$, where $\Z_\varepsilon$ denotes $\Z$ with the trivial $\Z[t^{\pm 1}]$-module structure.
This way,  if~$A(t)$ is a matrix that represents~$\lambda$, then~$A(1)$ represents~$\lambda(1)$.
Additionally, recall that if~$W$ is a~$\Z$-manifold, then~$\lambda_W(1) \cong Q_W$,  where~$Q_W$ denotes the standard intersection form of~$W$; see e.g.~\cite[Lemma 5.10]{ConwayPowell}.
Thus, if we take~$W=N_\Sigma$ and assume that~$\lambda \cong \lambda_{N_\Sigma}$, then
$$\lambda(1) \cong \lambda_{N_\Sigma}(1) \cong Q_{N_\Sigma} \cong Q_N \oplus  (0)^{\oplus 2g},$$
where the last isometry follows from a Mayer-Vietoris argument.
Thus,  for the set~$\operatorname{Surf(g)}^0_\lambda(N,K)$ to be nonempty, it is also necessary that~$\lambda(1)\cong Q_N \oplus  (0)^{\oplus 2g}$.

For the final piece of setup for the statement of the main result of the section,  we describe an action of~$\operatorname{Homeo}^+(\Sigma_{g,1},\partial)$ on the set~$\Iso(\partial \lambda,\unaryminus \Bl_{M_{K,g}})$ as follows.
First, a rel.\ boundary homeomorphism  $x \colon \Sigma_{g,1} \to \Sigma_{g,1}$ induces an isometry $x''_* \colon \Bl_{M_{K,g}} \cong  \Bl_{M_{K,g}} $ as follows. Extend~$x$ to a self homeomorphism $x'$ of $\Sigma_{g,1}\times S^1$ by defining $x'(s,\theta)=(x(s),\theta)$.
Then extend $x'$ by the identity over $E_K$; in total one obtains a self homeomorphism $x''$ of $M_{K,g}$.
Now lift this homeomorphism to the covers and take the induced map on $H_1$ to get $x''_* \colon \Bl_{M_{K,g}} \cong  \Bl_{M_{K,g}}$.
The required action is now by postcomposition; for $f \in \Iso(\partial \lambda,\unaryminus \Bl_{M_{K,g}})$, define  $x \cdot f := x''_* \circ f$.
The main result of this section proves Theorem \ref{thm:SurfacesRelBoundaryIntro} from the introduction.
The formulation of the result is different than in the introduction, but clearly equivalent.

\begin{theorem}
\label{thm:SurfacesRelBoundary}
Let~$N$ be a simply-connected~$4$-manifold with boundary~$\partial N \cong S^3$ and let~$K \subset S^3$ be a knot.
Given a nondegenerate Hermitian form~$(H,\lambda)$ over~$\Z[t^{\pm 1}]$, the following assertions are equivalent:
\begin{enumerate}
\item
the Hermitian form~$(H,\lambda)$ presents~$M_{K,g}$ and satisfies~$\lambda(1)\cong Q_N \oplus  (0)^{\oplus 2g}$;
\item the set~$\operatorname{Surf(g)}^0_\lambda(N,K)$ is nonempty and there is a bijection
$$\operatorname{Surf(g)}^0_\lambda(N,K) \approx \Iso(\partial \lambda,\unaryminus\Bl_{M_{K,g}})/(\Aut(\lambda)\times \operatorname{Homeo}^+(\Sigma_{g,1},\partial)).$$
\end{enumerate}
\end{theorem}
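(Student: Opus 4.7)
The plan is to reduce the classification of $\Z$-surfaces to the classification of $4$-manifolds with ribbon boundary provided by Theorem~\ref{thm:ClassificationRelBoundary}. The implication $(2) \Rightarrow (1)$ has essentially been established in the discussion preceding the theorem: any $\Z$-surface $\Sigma$ produces a $4$-manifold $N_\Sigma$ with ribbon boundary $M_{K,g}$ whose equivariant intersection form $\lambda_{N_\Sigma}$ presents $M_{K,g}$, and a standard Mayer--Vietoris argument gives $\lambda_{N_\Sigma}(1) \cong Q_{N_\Sigma} \cong Q_N \oplus (0)^{\oplus 2g}$.

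For $(1) \Rightarrow (2)$, the central construction is a map
\[
\Phi \colon \operatorname{Surf(g)}^0_\lambda(N,K) \longrightarrow \mathcal{V}^0_\lambda(M_{K,g})/\operatorname{Homeo}^+(\Sigma_{g,1}, \partial),
\]
sending $\Sigma \mapsto [N_\Sigma]$ equipped with a boundary identification $\partial N_\Sigma \cong M_{K,g}$ that depends on the choice of a good framing of $\nu(\Sigma)$, and so is well defined only up to precomposition by the extension to $M_{K,g}$ (via the $\Sigma_{g,1} \times S^1$ factor) of a self-homeomorphism of $\Sigma_{g,1}$ rel.\ boundary. Injectivity of $\Phi$ is routine: an equivalence of exteriors rel.\ boundary (up to regauging the boundary) extends by regluing the tubular neighborhoods to an equivalence $(N,\Sigma) \cong (N,\Sigma')$ rel.\ boundary. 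Once $\Phi$ is shown to be a bijection, composition with the bijection $\mathcal{V}^0_\lambda(M_{K,g}) \approx \Iso(\partial\lambda, -\Bl_{M_{K,g}})/\Aut(\lambda)$ from Theorem~\ref{thm:ClassificationRelBoundary} and passage to the further quotient yields the desired description. In the odd case, the $\Z_2$ factor appearing in Theorem~\ref{thm:ClassificationRelBoundary} is eliminated because the Kirby--Siebenmann invariant of $N_\Sigma$ is pinned down by that of $N$ through the additivity formula $\ks(N) = \ks(N_\Sigma) + \ks(\nu(\Sigma)) + \ks(D^4) = \ks(N_\Sigma)$, so we restrict attention to the preimage of $\ks(N) \in \Z_2$.

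The main obstacle is surjectivity of $\Phi$: given $W \in \mathcal{V}^0_\lambda(M_{K,g})$, we must realise it as $N_\Sigma$ for some $\Sigma \subset N$. The construction is to cap off $W$ by gluing $\Sigma_{g,1} \times D^2$ along the $\Sigma_{g,1} \times S^1$ portion of $\partial W = M_{K,g}$, producing a $4$-manifold $V$ whose remaining boundary assembles to $S^3 = E_K \cup (S^1 \times D^2)$. A van Kampen computation, using that the curves $\gamma_i \subset \Sigma_{g,1}$ become null-homotopic in $\Sigma_{g,1} \times D^2$ and are already null-homotopic in $W$ (because the coefficient system $\pi_1(M_{K,g}) \twoheadrightarrow \Z$ sends them to zero and $\pi_1(W) \cong \Z$), shows $\pi_1(V) = 1$. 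A Mayer--Vietoris computation combined with the hypothesis $\lambda(1) \cong Q_N \oplus (0)^{\oplus 2g}$ then identifies $Q_V$ with $Q_N$; the torus classes $[\gamma_i \times S^1]$ account for precisely the $(0)^{\oplus 2g}$ summand, which is killed upon filling. Boyer's classification \cite{BoyerUniqueness, BoyerRealization} of simply-connected $4$-manifolds with boundary $S^3$ by intersection form and Kirby--Siebenmann invariant then supplies a homeomorphism $V \cong N$ rel.\ boundary, and transporting $\Sigma_{g,1} \times \{0\} \subset \Sigma_{g,1} \times D^2 \subset V$ through this homeomorphism produces a $\Z$-surface $\Sigma \subset N$ with $N_\Sigma \cong W$ and $\partial \Sigma = K$.

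The remaining subtlety, which deserves care, is to verify that the regauging ambiguity in the boundary identification $\partial N_\Sigma \cong M_{K,g}$ translates on the algebraic side precisely to the postcomposition action of $\operatorname{Homeo}^+(\Sigma_{g,1}, \partial)$ on $\Iso(\partial\lambda, -\Bl_{M_{K,g}})$ described above the theorem. This follows by unpacking the formula $b_M = g_* \circ D_M \circ \partial F$ from Construction~\ref{cons:Invariant} and using naturality of the Blanchfield form under the induced self-homeomorphisms of $M_{K,g}$: changing the boundary identification by $x \in \operatorname{Homeo}^+(\Sigma_{g,1}, \partial)$ modifies $g$ by $x''$ and therefore modifies $b_M$ by $x''_*$, matching the action in \eqref{eq:autaction2}.
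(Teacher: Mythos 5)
Your proposal is correct and follows essentially the same route as the paper: pass to exteriors, prove surjectivity by capping a given filling $W$ of $M_{K,g}$ with $\Sigma_{g,1}\times D^2$ and invoking the classification of simply-connected $4$-manifolds with $S^3$ boundary to identify the result with $N$, pin down the Kirby--Siebenmann invariant via additivity, and check that the reparametrisation ambiguity matches the $\operatorname{Homeo}^+(\Sigma_{g,1},\partial)$ action on $\Iso(\partial\lambda,\unaryminus\Bl_{M_{K,g}})$. The only (immaterial) differences are that the paper factors the argument through a set of parametrised embeddings before quotienting, and cites Freedman rather than Boyer for the boundary-homeomorphism extension.
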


\begin{remark}
\label{rem:SurfacesRelBoundaryTheorem}
We collect some remarks concerning Theorem~\ref{thm:SurfacesRelBoundary}.
\begin{itemize}
\item If $(H,\lambda)$ presents $M_{K,g}$, then there is a non-canonical bijection
 $$\frac{\Iso(\partial \lambda,\unaryminus\Bl_{M_{K,g}})}{(\Aut(\lambda)\times \operatorname{Homeo}^+(\Sigma_{g,1},\partial))} \approx \frac{ \Aut(\partial \lambda)}{(\Aut(\lambda) \times \operatorname{Homeo}^+(\Sigma_{g,1},\partial))}.$$
In addition,  we have the isomorphism~$\Aut(\partial \lambda)   \cong \Aut(\Bl_{M_{K,g}})\cong \Aut (\Bl_K) \oplus \operatorname{Sp}_{2g}(\Z)$ where the latter is the group of automorphisms of the symplectic intersection pairing of~$\Sigma_{g,1}$~\cite[Propositions 5.6 and 5.7]{ConwayPowell}.
The group $\operatorname{Homeo}^+(\Sigma_{g,1},\partial)$ acts trivially on the first summand and transitively on the second.
Therefore one can express the quotients above as
 \[\Aut (\Bl_K)/ \Aut (\lambda),\]
where the action of $\Aut(\lambda)$ on~$\Aut(\Bl_K)$ arises by restricting the action of $\Aut(\lambda)$ on $\Aut(\partial \lambda)\cong \Aut(\Bl_{M_{K,g}}) \cong \Aut(\Bl_K) \oplus \operatorname{Sp}_{2g}(\Z)$ to the first summand.
We stress again that the isomorphism $\Aut(\partial \lambda) \cong \Aut(\Bl_{M_{K,g}})$ is not canonical.
The set $\Aut (\Bl_K)/ \Aut (\lambda)$ was mentioned in Theorem~\ref{thm:SurfacesRelBoundaryIntro} from the introduction.
\item The action of~$\operatorname{Homeo}^+(\Sigma_{g,1},\partial)$ on~$\Iso(\partial \lambda,\unaryminus \Bl_{M_{K,g}})$ factors through the corresponding mapping class group $\operatorname{Mod}^+(\Sigma_{g,1},\partial) := \pi_0(\operatorname{Homeo}^+(\Sigma_{g,1},\partial))$.
In particular, Theorem~\ref{thm:SurfacesRelBoundary} could have equally well been stated using  $\operatorname{Mod}^+(\Sigma_{g,1},\partial)$ instead of $\operatorname{Homeo}^+(\Sigma_{g,1},\partial)$.
\item  Our surface set~$\operatorname{Surf(g)}^0_\lambda(N,K)$ is defined up to equivalence, hence Theorem~\ref{thm:SurfacesRelBoundary} only gives a classification of surfaces up to equivalence (instead of ambient isotopy).
This is because we prove Theorem \ref{thm:SurfacesRelBoundary} as a consequence of Theorem \ref{thm:ClassificationRelBoundary} and the equivalence on~$\mathcal{V}^0_\lambda(M_{K,g})$ is up to \textit{any }homeomorphism rel.\ boundary, not just homeomorphisms in a prescribed isotopy class.  As a consequence,
when $N$ admits homeomorphisms which are not isotopic to the identity rel.\ boundary,  there can be~$\Z$-surfaces that are equivalent rel.\ boundary but not ambient isotopic. Here is an example.
\black


Let~$K \subset S^3$ be a knot with nontrivial Alexander polynomial~$\Delta_K$, that bounds a~$\Z$-disc in a punctured~$\C P^2$ with intersection form represented by the~$1 \times 1$ matrix~$(\Delta_K)$.
Let~$N$ be given by the boundary connected sum with another punctured~$\C P^2$ (so that~$N$ is a punctured~$\C P^2\# \C P^2$), and
denote the same~$\Z$-disc considered in~$N$ by~$D$.
There is a self-homeomorphism~$\tau \colon N \to N$ that induces~$\bsm 0 & 1 \\ 1 & 0 \esm$ on~$H_2(N) \cong \Z^2$.
Isotope~$\tau$ to be the identity on~$\partial N \cong S^3$.
The discs~$D$ and~$\tau (D)$ are equivalent rel.\ boundary.
But a short computation shows that the equivariant intersection forms of the
exteriors are
$\bsm\Delta_K & 0 \\ 0 & 1 \esm \text{ and } \bsm 1 & 0 \\ 0 & \Delta_K \esm$
respectively.
A straightforward computation shows that every~$\Z[t^{\pm 1}]$-isometry between these two forms augments over~$\Z$ to~$\bsm 0 & 1 \\ 1 & 0 \esm$.
It follows that there is no ambient isotopy between~$D$ and~$\tau(D)$.
\end{itemize}
\end{remark}

Theorem~\ref{thm:SurfacesRelBoundary} will be proved in three steps.
\begin{enumerate}
\item We define a map $\Theta$ from a set of equivalence classes of embeddings~$\Sigma_{g,1} \hookrightarrow N$, which we denote~$\operatorname{Emb}_\lambda^0(\Sigma_{g,1},N;K)$ and which we will define momentarily,
to the set of manifolds~$\mathcal{V}_\lambda^0(M_{K,g})$ from Definition~\ref{def:V0lambdaY}.
By Theorem~\ref{thm:ClassificationRelBoundary},~$\mathcal{V}_\lambda^0(M_{K,g})$  corresponds bijectively to the set of isometries~$\Iso(\partial \lambda,\unaryminus \Bl_{M_{K,g}})/\Aut(\lambda)$.
\item We prove that the map $\Theta$ is a bijection, by defining a map $\Psi$ in the other direction, from the set of manifolds to the set of embeddings, and showing that both $\Theta \circ \Psi$ and $\Psi \circ \Theta$ are the identity maps.
\item We describe the set of surfaces~$\operatorname{Surf(g)}^0_\lambda(N,K)$ as a quotient of~$\operatorname{Emb}_\lambda^0(\Sigma_{g,1},N;K)$ by $\operatorname{Homeo}^+(\Sigma_{g,1},\partial)$.
We show that this action and the actions of~$\operatorname{Homeo}^+(\Sigma_{g,1},\partial)$ on~$\mathcal{V}_\lambda^0(M_{K,g})$ and $\Iso(\partial \lambda,\unaryminus \Bl_{M_{K,g}})/\Aut(\lambda)$ are all compatible. Passing to orbits leads to the bijection in Theorem~\ref{thm:SurfacesRelBoundary}.
 This step is largely formal.
\end{enumerate}

\subsubsection*{Step $(1)$: From embeddings to manifolds}

\black
For the first step, we give some definitions and construct the map which will be the bijection in Theorem~\ref{thm:SurfacesRelBoundary}.

Consider the following set:
$$\operatorname{Emb}_\lambda^0(\Sigma_{g,1},N;K)=\frac{\lbrace e \colon \Sigma_{g,1} \hookrightarrow N \mid e(\Sigma_{g,1}) \text{ is a } \Z\text{-surface for~$K$ with } \lambda_{N_{e(\Sigma_{g,1})}}\cong \lambda \rbrace }{\text{ equivalence rel.~$\partial$}}.$$
Two embeddings~$e_1,e_2$ are \emph{equivalent rel.\ boundary} if there exists a homeomorphism~$\Phi \colon N \to N$ that is the identity on~$\partial N \cong S^3$ and satisfies~$\Phi \circ e_1=e_2$.

In what follows, we let~$\varphi \colon \pi_1(M_{K,g}) \twoheadrightarrow \Z$ be the epimorphism such that the induced map $\varphi' \colon H_1(M_{K,g}) \twoheadrightarrow \Z$ is the unique epimorphism that maps the meridian of~$K$ to~$1$ and the other generators to zero.
When we write~$\mathcal{V}^0_\lambda(M_{K,g})$, it is with respect to this epimorphism~$\varphi$. Recall also that we have a fixed homeomorphism $h\colon\partial N\to S^3$; whenever we say $\partial N\cong S^3$, it is with this fixed $h$.


In addition to our homeomorphism $h \colon \partial N \to S^3$, we fix once and for all the following data.
\begin{itemize}
\item A closed tubular neighborhood $\overline{\nu}(K) \subset \partial N$.
Since we have already fixed $h$, and since we are abusively using $K$ for both the knot $K$ in $\partial N$ and for the image $h(K)$ in $S^3$, this choice of $\overline{\nu}(K) \subset \partial N$ also determines a particular neighborhood $\overline{\nu}(K) \subset S^3$. We will use~$E_K$ exclusively to denote the complement of ${\nu}(K)$ in $S^3$.
\item A homeomorphism $D \colon \partial \Sigma_{g,1} \times S^1 \to \partial \overline{\nu}(K)$ that takes $\partial \Sigma_{g,1}\times \{1\}$ to the $0$-framed longitude of $K$ and $\lbrace \operatorname{pt} \rbrace \times S^1$ to the meridian of $K$ such that
$$ M_{K,g}=E_{K} \cup_D \Sigma_{g,1} \times S^1.$$
\end{itemize}

These choices can change the bijection, however we are interested only in the existence of a bijection, so this is not an issue.

Next we define the map which will be the bijection in Theorem~\ref{thm:SurfacesRelBoundary}.

\begin{construction}
\label{cons:EmbVBijection}
We construct a map~$\Theta \colon \operatorname{Emb}_\lambda^0(\Sigma_{g,1},N;K) \to \mathcal{V}^0_\lambda(M_{K,g})$.

Let~$e \colon \Sigma_{g,1} \hookrightarrow N$ be an embedding that belongs to~$\operatorname{Emb}_\lambda^0(\Sigma_{g,1},N;K)$.
We will assign to~$e$ a pair~$(N_{e(\Sigma_{g,1})},f)$, where~$f \colon \partial N_{e(\Sigma_{g,1})} \to M_{K,g}$ is a homeomorphism.
The pair we construct will depend on several choices, but we will show that the outcome is independent of these choices up to equivalence in $\mathcal{V}_\lambda^0(M_{K,g})$.


To cut down on notation we set $\Sigma:=e(\Sigma_{g,1})$ and describe the choices on which our pair $(N_{\Sigma},f)$ will a priori depend.
\begin{enumerate}
\item An embedding $\iota \colon \overline{\nu}(\Sigma) \hookrightarrow N$ of the normal bundle of $\Sigma$ such that $\iota(\overline{\nu}(\Sigma)) \cap \partial N$ agrees with our fixed tubular neighbhourhood of $K$.
\item A good framing
$ \gamma \colon \overline{\nu}(\Sigma) \cong \Sigma_{g,1} \times D^2$ such that $h|\circ \iota\circ\gamma^{-1}=D$:
\begin{equation}
\label{eq:Compatible}
\xymatrix{
\partial \Sigma_{g,1} \times S^1 \ar[r]^-D \ar[d]^{\gamma^{-1}}&\partial \overline{\nu}(K) \subset E_{K}\\
\gamma^{-1}(\partial \Sigma_{g,1} \times S^1)  \ar[r]^-{\iota|}& \iota(\gamma^{-1}(\partial \Sigma_{g,1} \times S^1)) \subset \partial N \setminus \nu(K)  \ar[u]^{h|}.
}
\end{equation}
In this diagram, $h|$ denotes the restriction of our fixed identification $h \colon \partial N \cong S^3$ and $D \colon \partial \Sigma_{g,1} \times S^1 \to \partial \overline{\nu}(K)$ is the homeomorphism that we fixed above.
\end{enumerate}

We also record some of the notation that stems from these choices.
\begin{itemize}
\item The boundary of the surface exterior $N_\Sigma$ decomposes as
\begin{equation}
\label{eq:DecompositionBoundarySurfaceExterior}
\partial N_{\Sigma} \cong \big(\partial N \setminus \nu(K)\big) \cup \Big{(}\partial \iota(\overline{\nu}(\Sigma))\smallsetminus \left( \iota(\nu(\Sigma)) \cap \partial N \right)\Big{)}.
\end{equation}
Here the first part of this union is homeomorphic to a knot exterior, while the second is homeomorphic to $\Sigma_{g,1} \times S^1$.
\item Restricting our fixed homeomorphism $h \colon \partial N \cong S^3$ to the knot exterior part in~\eqref{eq:DecompositionBoundarySurfaceExterior}, we obtain the homeomorphism
$$h| \colon \partial N \smallsetminus \nu(K) \to E_{K} \subset M_{K,g}.$$
\item On the circle bundle part of~\eqref{eq:DecompositionBoundarySurfaceExterior}, we consider the homeomorphism
  \[\gamma| \circ \iota^{-1} \colon \Big{(}\partial \iota(\overline{\nu}(\Sigma))\smallsetminus \left( \iota(\nu(\Sigma)) \cap \partial N \right) \Big{)} \to \Sigma_{g,1} \times S^1 \subset M_{K,g}.\]
  Here by the slightly abusive notation $\iota^{-1}$, we mean that since $\iota \colon \overline{\nu}(\Sigma) \hookrightarrow N$ is an embedding,  it is  a homeomorphism onto its image, whence the inverse.
\end{itemize}

The diagram in~\eqref{eq:Compatible} ensures that $h|$ and $\gamma| \circ \iota^{-1}$ can be glued together to give rise to the homeomorphism we have been building towards:
\begin{equation}
\label{eq:BoundaryHomeoSurface}
 f_\gamma \colon \partial N_{\Sigma} \to M_{K,g},  \ \ \ \ f_\gamma:=(h|) \cup (\gamma| \circ \iota^{-1}).
 \end{equation}
\end{construction}

Set $\Theta(e):=(N_{\Sigma},f_\gamma)$.
We need to verify that $\Theta$ gives rise to a map $ \operatorname{Emb}_\lambda^0(\Sigma_{g,1},N;K) \to \mathcal{V}^0_\lambda(M_{K,g})$.
In other words, we need to check that modulo homeomorphisms rel.\ boundary,~$\Theta(e)$
does not depend on the embedding $\iota \colon \overline{\nu}(\Sigma) \hookrightarrow N$
nor on the particular choice of the good framing~$\gamma$ subject to the condition in~\eqref{eq:Compatible}.
We also have to verify that equivalent embeddings produce equivalent manifolds.
\black
\begin{itemize}
\item
First we show that the construction is independent of $\gamma$ and $\iota$.
Pick another embedding $\iota' \colon \overline{\nu}(e(\Sigma_{g,1})) \hookrightarrow N$ of the normal bundle and another good framing $\gamma' \colon \overline{\nu}(e(\Sigma_{g,1})) \cong \Sigma_{g,1} \times D^2$ with the same hypothesis about compatibility with $D$.
This leads to boundary homeomorphisms $f_\gamma:=(h|) \cup (\gamma|  \circ \iota^{-1})$ and $f_{\gamma'}:=(h|) \cup (\gamma'| \circ {\iota'}^{-1})$ and  we must show that the following pairs are equivalent rel.\ boundary:
\begin{equation}
\label{eq:WantEquivalent}
 (N_{e_\iota(\Sigma_{g,1})},f_\gamma) \text{ and } (N_{e_{\iota'}(\Sigma_{g,1})},f_{\gamma'}) .
\end{equation}
 For a moment we are keeping track of the embeddings $\iota$ and $\iota'$ in our  notation for exteriors.
More explicitly, we set $N_{e_\iota(\Sigma_{g,1})}:=N \setminus \iota(\nu(e(\Sigma_{g,1})))$ and similarly for $\iota'$.

By uniqueness of tubular neighourhoods~\cite[Theorem~9.3D]{FreedmanQuinn}, there is an isotopy of embeddings
$\Gamma_t \colon \Sigma_{g,1}\times D^2 \hookrightarrow N$ such that $\Gamma_0=\iota \circ \gamma^{-1}$ and $\Gamma_1=\iota' \circ \gamma'^{-1}$ that fixes a neighborhood of $\partial \Sigma_{g,1} \times D^2$.
Then by the Edwards-Kirby isotopy extension theorem~\cite{KirbyEdwards1971}, there is an isotopy of homeomorphisms $F_t\colon N \to N$ with $F_1\circ \iota \circ \gamma^{-1}=\iota' \circ \gamma'^{-1}$ and $F_0=\id_N$ and such that $F_t$ is the identity on a neighborhood of the boundary $\partial N$ for every $t \in [0,1]$.
We will argue that this $F_1$ restricted to the exteriors $N_{e_{\iota}(\Sigma_{g,1})}$ and $N_{e_{\iota'}(\Sigma_{g,1})}$ gives a rel.\  boundary homeomorphism between the pairs in~\eqref{eq:WantEquivalent}.

We wish to argue that
the restriction of $F_1$ to the surface exteriors identifies $(N_{e_{\iota}(\Sigma_{g,1})},f_\gamma)$ with $(N_{e_{\iota'}(\Sigma_{g,1})},f_{\gamma'})$ as elements of $\mathcal{V}^0_\lambda(M_{K,g})$.
Consider the following diagram:
$$
\xymatrix @C+1.5cm{
M_{K,g} \ar[d]^=& \partial N_{e_\iota(\Sigma_{g,1})} \ar[l]_{f_\gamma=(h|) \cup(\gamma|  \circ \iota^{-1})} \ar[r]^{\subset}  \ar[d]^{F_1}& N_{e_\iota(\Sigma_{g,1})} \ar[r]^{\subset} \ar[d]^{F_1}& N \ar[d]^{F_1} \\
M_{K,g} & \partial N_{e_{\iota'}(\Sigma_{g,1})} \ar[r]^{\subset} \ar[l]_{f_{\gamma'}=(h|) \cup (\gamma'|  \circ {\iota'}^{-1})}& N_{e_{\iota'}(\Sigma_{g,1})} \ar[r]^{\subset}& N.
}
$$
The right two squares certainly commute, while the left square commutes because the homeomorphism $F_1 \colon N \to N$ is rel.\ boundary and because, by construction, $\gamma| \circ \iota^{-1}=F_1\circ \gamma'|   \circ {\iota'}^{-1}$.
In total, we have:

\begin{equation}
\label{eq:VerificationTheta}
 f_{\gamma'} \circ F_1
= \left( (h|) \cup (\gamma'|  \circ {\iota'}^{-1}) \right) \circ F_1
=(h| \circ F_1 ) \cup (\gamma'|  \circ {\iota'}^{-1}\circ F_1)
=h| \cup (\gamma \circ \iota^{-1})=f_{\gamma}.
\end{equation}

%


\item  We now show that the map $\Theta$ from Construction~\ref{cons:EmbVBijection} is well defined up to  rel.\ boundary homeomorphisms of $N$; recall that this is the equivalence relation on the domain $\operatorname{Emb}_\lambda^0(\Sigma_{g,1},N;K)$.
Assume that~$e,e' \colon \Sigma_{g,1} \hookrightarrow N$ are embeddings that are homeomorphic rel.\ boundary via a homeomorphism~$F \colon N \to N$.
Pick good framings $\gamma,\gamma'$ for $\overline{\nu}(e(\Sigma_{g,1}))$ and $\overline{\nu}(e'(\Sigma_{g,1}))$ as well as an embedding $\iota' \colon \overline{\nu}(e'(\Sigma_{g,1})) \hookrightarrow N$.
We now consider the embedding $ \iota:= F^{-1} \circ \iota' \circ (\gamma')^{-1} \circ \gamma$. The following diagram commutes:
\begin{equation}
\label{eq:DiagramVerificationThetaRelBoundary}
\xymatrix{
\Sigma_{g,1} \times D^2 \ar[r]^-{\gamma^{-1},\cong} \ar[d]^=& \overline{\nu}(e(\Sigma_{g,1})) \ar[r]^-{\iota,\cong}& \iota(\overline{\nu}(e'(\Sigma_{g,1}))) \ar[r]^-{\subset } \ar[d]^{F|}& N  \ar[d]^F\\
\Sigma_{g,1} \times D^2 \ar[r]^-{{\gamma'}^{-1},\cong}& \overline{\nu}(e'(\Sigma_{g,1})) \ar[r]^-{\iota',\cong}& \iota'(\overline{\nu}(e(\Sigma_{g,1}))) \ar[r]^-{\subset } & N.
}
\end{equation}
As in Construction~\ref{cons:EmbVBijection}, the choice of framings leads to boundary homeomorphisms
\begin{align*}
& f=(h|)\cup(\gamma| \circ \iota^{-1}) \colon \partial N_{e_{\iota}(\Sigma_{g,1})} \xrightarrow{\cong} M_{K,g}, \\
& f'=(h|)\cup(\gamma'| \circ {\iota'}^{-1} ) \colon \partial N_{e'_{\iota'}(\Sigma_{g,1})} \xrightarrow{\cong} M_{K,g}.
\end{align*}
As in~\eqref{eq:VerificationTheta},  using the diagram from~\eqref{eq:DiagramVerificationThetaRelBoundary} and the fact that $F$ is a rel.\ boundary homeomorphism, we deduce that $F|=f'^{-1}\circ f$ and that $F$ restricts to a rel.\ boundary homeomorphism
$$F|\colon N_{e_{\iota}(\Sigma_{g,1})}\to N_{e'_{\iota'}(\Sigma_{g,1})}.$$
We conclude that
$(N_{e(\Sigma_{g,1})},f)$ is equivalent to $(N_{e'(\Sigma_{g,1})},f')$ in $\mathcal{V}^0_\lambda(M_{K,g})$.
\end{itemize}
This concludes the verification that the map $\Theta$ from Construction~\ref{cons:EmbVBijection} is well defined.

\begin{remark}
\label{rem:OmitEmbedding}
From now on,  we continue to use the notation $\Sigma:=e(\Sigma_{g,1})$ and we omit the choice of an embedding $\iota \colon \overline{\nu}(\Sigma_{g,1}) \hookrightarrow N$ from the notation since we have shown that $\Theta(e)$ is independent of the choice of embedding $\iota$ up to equivalence in $\mathcal{V}^0_\lambda(M_{K,g})$.
In practice this means that we will simply write $\overline{\nu}(\Sigma) \subset N$.
Since we omit~$\iota$ from the notation, we also allow ourselves to think of (the inverse of) a good framing $\gamma$ as giving an embedding
$$ \gamma^{-1} \colon \Sigma_{g,1} \times D^2 \hookrightarrow \overline{\nu}(\Sigma) \subset N.$$
Similarly,  given a choice of such a good framing, we now write the homeomorphism from~\eqref{eq:BoundaryHomeoSurface} as
\begin{equation}
\label{eq:BoundaryHomeoSurfaceNoEmbedding}
 f_\gamma \colon \partial N_{\Sigma} \to M_{K,g},  \ \ \ \ f_\gamma:=(h|) \cup (\gamma|),
 \end{equation}
once again omitting $\iota$ from the notation.
We sometimes also omit the choice of the framing $\gamma$ from the notation, writing instead $\Theta(e)=(N_{\Sigma},f)$.
\end{remark}

\subsubsection*{Step $(2)$: From manifolds to embeddings}

We set up some notation aimed towards proving that~$\Theta$ is a bijection when the form $\lambda$ is even, and that $\Theta$ is a bijection when $\lambda$ is odd and the Kirby-Siebenmann is fixed.
Set~$\varepsilon:=\ks(N)$ and write~$\mathcal{V}^{0,\varepsilon}_\lambda(M_{K,g})$ for the subset of those manifolds in~$\mathcal{V}^{0}_\lambda(M_{K,g})$ whose Kirby-Siebenmann invariant  equals~$\varepsilon$.
Observe that by additivity of the Kirby-Siebenmann invariant (see e.g.~\cite[Theorem 8.2]{FriedlNagelOrsonPowell}), if $\lambda$ is odd and~$\Sigma\subset N$ is a $\Z$-surface,  then~$\ks(N_\Sigma)=\ks(N)=\varepsilon$, so the image of $\Theta$ lies in $\mathcal{V}^{0,\varepsilon}_\lambda(M_{K,g})$.
The next proposition is the next step in the proof of Theorem~\ref{thm:SurfacesRelBoundary}.

\begin{proposition}
\label{prop:EmbVBijections}
Let~$N$ be a simply-connected~$4$-manifold with boundary~$\partial N \cong S^3$,  let~$K \subset S^3$ be a knot and let~$(H,\lambda)$ be  a nondegenerate Hermitian form with $\lambda(1) \cong Q_N \oplus (0)^{2g}.$
\begin{enumerate}
\item If~$\lambda$ is even, then the map~$\Theta$ from Construction~\ref{cons:EmbVBijection} determines a bijection
$$\operatorname{Emb}_\lambda^0(\Sigma_{g,1},N;K) \to \mathcal{V}^0_\lambda(M_{K,g}).$$
\item If~$\lambda$ is odd, then the map~$\Theta$ from Construction~\ref{cons:EmbVBijection} determines a bijection
$$\operatorname{Emb}_\lambda^0(\Sigma_{g,1},N;K) \to \mathcal{V}^{0,\varepsilon}_\lambda(M_{K,g}),$$
where $\varepsilon=\ks(N)$.
\end{enumerate}
\end{proposition}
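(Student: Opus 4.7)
The plan is to construct an inverse $\Psi$ to $\Theta$ and verify that both $\Theta \circ \Psi$ and $\Psi \circ \Theta$ are the identity. Given a representative $(W, f)$ of a class in $\mathcal{V}^0_\lambda(M_{K,g})$ with $f \colon \partial W \xrightarrow{\cong} M_{K,g} = E_K \cup_D (\Sigma_{g,1} \times S^1)$, form
\[ N' := W \cup_{f^{-1}} (\Sigma_{g,1} \times D^2), \]
obtained by gluing $\Sigma_{g,1} \times D^2$ to $W$ along the $\Sigma_{g,1} \times S^1$ portion of $\partial W$. The leftover boundary is $E_K \cup (\partial \Sigma_{g,1} \times D^2)$; inspection of $D$ shows that the meridian of the solid torus $\partial \Sigma_{g,1} \times D^2 \cong S^1 \times D^2$ is identified with $\mu_K$, so this is the trivial Dehn filling of $E_K$, whence $\partial N' \cong S^3$. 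The core surface $\Sigma_{g,1} \times \{0\} \subset N'$ has boundary identified with the zero-framed longitude of $K$, hence with $K$ itself.

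The main obstacle is to produce an orientation-preserving homeomorphism $\Phi \colon N' \xrightarrow{\cong} N$ rel.\ boundary; setting $\Psi([W, f]) := [\Phi \circ (\Sigma_{g,1} \times \{0\} \hookrightarrow N')]$ will then give the required embedding. Such a $\Phi$ is supplied by Boyer's classification \cite{BoyerUniqueness} of simply-connected topological $4$-manifolds with $S^3$-boundary rel.\ boundary, where the Kirby-Siebenmann invariant is the extra distinguishing datum in the odd case. The three properties of $N'$ to verify are: (i) $\pi_1(N') = 1$, which follows from van Kampen together with the ribbon boundary hypothesis (the $S^1$ factor of $\Sigma_{g,1} \times S^1$ maps to a generator of $\pi_1(W) \cong \Z$ and is killed upon gluing in $\Sigma_{g,1} \times D^2$); (ii) $Q_{N'} \cong Q_N$, which follows from a Mayer-Vietoris calculation identifying $H_2(N')$ with the cokernel of the injective map $H_2(\Sigma_{g,1} \times S^1) \hookrightarrow H_2(W)$ (injectivity uses $H_3(N') = 0$, a consequence of simple-connectivity), together with the observation that this image consists of classes with representatives in $\partial W$, which can be pushed off into a collar and hence lie in the radical of $Q_W = \lambda_W(1) \cong \lambda(1) \cong Q_N \oplus (0)^{\oplus 2g}$; since both the image and the radical have rank $2g$, they coincide, yielding $Q_{N'} \cong Q_W/\mathrm{rad}(Q_W) \cong Q_N$; (iii) in the odd case, $\ks(N') = \ks(W) = \varepsilon = \ks(N)$ by additivity of the Kirby-Siebenmann invariant and $\ks(\Sigma_{g,1} \times D^2) = 0$. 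The embedding $e := \Phi \circ (\Sigma_{g,1} \times \{0\} \hookrightarrow N')$ is a $\Z$-surface because its exterior deformation retracts to $W$, and its equivariant intersection form is $\lambda_W \cong \lambda$.

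Well-definedness of $\Psi$—independence from the choice of gluing, from the choice of $\Phi$ afforded by Boyer's theorem, and from the representative of $[W, f]$—follows from arguments parallel to those already employed for $\Theta$ in Construction~\ref{cons:EmbVBijection}. Uniqueness of tubular neighborhoods and the Edwards-Kirby isotopy extension theorem handle the first two choices; a rel.\ boundary homeomorphism between two representatives of a class in $\mathcal{V}^0_\lambda(M_{K,g})$ extends by the identity across $\Sigma_{g,1} \times D^2$ to a rel.\ boundary homeomorphism of the resulting $N'$s that carries core surface to core surface.

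Finally, the verifications $\Theta \circ \Psi = \mathrm{id}$ and $\Psi \circ \Theta = \mathrm{id}$ follow by inspection. For $\Theta \circ \Psi$: starting from $(W, f)$, capping off produces $N' \cong N$ containing the core surface, and removing a tubular neighborhood of this core recovers $W$ together with its original boundary identification $f$ (up to the tautological framing of $\Sigma_{g,1} \times D^2$). For $\Psi \circ \Theta$: starting from an embedding $e$, the union $N_\Sigma \cup \overline{\nu}(\Sigma)$, where $\overline{\nu}(\Sigma) \cong \Sigma_{g,1} \times D^2$ via a good framing $\gamma$, is canonically $N$ by uniqueness of tubular neighborhoods, and the resulting core surface is isotopic rel.\ boundary to $e(\Sigma_{g,1})$.
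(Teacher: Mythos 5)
Your proposal is correct and follows essentially the same route as the paper: build $\Psi$ by regluing $\Sigma_{g,1}\times D^2$ along $f^{-1}$, check simple connectivity, $Q\cong Q_N$ via Mayer--Vietoris, and the Kirby--Siebenmann invariant, then invoke the topological classification of simply-connected $4$-manifolds with $S^3$ boundary (the paper cites Freedman where you cite Boyer, which is equivalent here) to extend the boundary identification, and finally verify the two compositions. The only differences are cosmetic: your radical-based phrasing of the intersection-form step, and a lighter treatment of the framing bookkeeping in the $\Theta\circ\Psi=\id$ and $\Psi\circ\Theta=\id$ verifications, which the paper carries out explicitly via particular choices of $\vartheta$ and of the good framing $\gamma$.
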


\begin{proof}
We construct an inverse~$\Psi$ to the assignment~$\Theta \colon e \mapsto (N_{e(\Sigma_{g,1})},f)$ from Construction~\ref{cons:EmbVBijection}; this will in fact take up most of the proof.
Let~$(W,f)$ be a pair, where~$W$ is a~$4$-manifold with fundamental group~$\pi_1(W)\cong\Z$,  equivariant intersection form~$\lambda_W\cong \lambda$ and,  in the odd case,  Kirby-Siebenmann invariant~$\ks(W)=\varepsilon$, and~$f  \colon \partial W \cong M_{K,g}$ is a homeomorphism.

The inverse~$\Psi(W,f)$ is an embedding $\Sigma_{g,1} \hookrightarrow N$ defined as follows.
Glue~$\Sigma_{g,1} \times D^2$ to~$W$ via the homeomorphism~$f^{-1}|_{\Sigma_{g,1}\times S^1}$.
This produces a~$4$-manifold~$\widehat{W}$ with boundary~$\partial \widehat{W}=(\partial W \setminus f^{-1}(\Sigma_{g,1}\times S^1)) \cup (\partial \Sigma_{g,1} \times D^2)$, together with an embedding
$$ \times \lbrace 0 \rbrace  \colon \Sigma_{g,1}\hookrightarrow \widehat{W} \ \ \ \ x \mapsto  (x,0) \in \Sigma_{g,1}\times \{0\} \subset \Sigma_{g,1} \times D^2.$$
Note for now that~$\partial \Sigma_{g,1}\times \{0\}\subset\partial\widehat{W}$  bounds a genus~$g$~$\Z$-surface in $\widehat{W}$ (with exterior~$W$).

We will use the homeomorphism~$f \colon \partial W \to  M_{K,g}$ to define a homeomorphism~$f' \colon  \partial \widehat{W} \to \partial N~$ and then use  Freedman's classification of compact simply-connected 4-manifolds with~$S^3$ boundary,  to deduce that this homeomorphism extends to a homeomorphism~$F\colon \widehat{W} \to N$. We will then take our embedding to be
$$\Psi(W,f)  :=F\circ  (\times \lbrace 0 \rbrace)
\colon \Sigma_{g,1}\hookrightarrow N.$$
The next paragraphs flesh out the details of this construction.
Namely, firstly we build $f' \colon \partial \widehat{W} \to \partial N$ and secondly we argue it extends to a homeomorphism $F \colon \widehat{W} \to N$.
\begin{itemize}
\item Towards building this $f'$, first observe that we get a natural homeomorphism $\partial\widehat{W}\to S^3$ as follows.
Restricting~$f$  gives a homeomorphism~$f| \colon \partial W \setminus f^{-1}(\Sigma_{g,1}\times S^1) \cong S^3\setminus \nu(K)$.
Recall that the homeomorphism $D \colon \partial \Sigma_{g,1} \times S^1 \to \partial \overline{\nu}(K)$ sends $\partial\Sigma_{g,1}\times\{\operatorname{pt}\}$ to $\lambda_K$ and~$\{\cdot\}\times\partial D^2$ to $\mu_K$, where $\lambda_K$ and $\mu_K$ respectively denote the Seifert longitude and meridian of $K\subset S^3$.
Since~$\mu_K$ bounds a disc in $\overline{\nu}(K)$, this homeomorphism extends to a homeomorphism
\color{black}
\begin{equation}
\label{eq:varphi}
\vartheta\colon \partial\Sigma_{g,1}\times D^2\to \overline{\nu}(K).
\end{equation}
Note that $\vartheta$ is well defined up to isotopy.
Consider the following diagram:
$$
\xymatrix{
\partial W \setminus f^{-1}(\Sigma_{g,1}^\circ \times S^1) \ar[r]^-{f|,\cong}& S^3\setminus \nu(K) \\
\partial \Sigma_{g,1} \times S^1 \ar[u]_{f^{-1}|_{\partial \Sigma_{g,1} \times S^1}} \ar[d]^{\subset} \ar[r]^{D,\cong}& \partial \overline{\nu} (K)  \ar[u]_{\subset} \ar[d]^{\subset}  \\
\partial \Sigma_{g,1} \times D^2 \ar[r]^{\vartheta,\cong}& \overline{\nu}(K).
}
$$
The bottom square commutes by definition of $\vartheta$, whereas the top square commutes because~$f|$ is obtained by restricting $f \colon \partial W \to M_{K,g}=(S^3\setminus \nu(K)) \cup_D \Sigma_{g,1} \times S^1$.
The commutativity of this diagram implies that~$f$ and~$\vartheta$ combine to a homeomorphism
$$ f|\cup\vartheta \colon =\partial\widehat{W} \to S^3.$$
\color{black}
Then~$h^{-1} \circ (f|\cup\vartheta)$ gives the required homeomorphism
$$f':= h|^{-1} \circ (  f|  \cup \vartheta) \colon \partial \widehat{W} \to \partial N.$$ Further, we observe that $f'(\partial\Sigma_{g,1})=K$.

\item To prove that this homeomorphism extends to a homeomorphism~$\widehat{W} \cong N$, we will appeal to Freedman's theorem that for every pair of simply-connected topological~$4$-manifolds with boundary homeomorphic to~$S^3$, the same intersection form,
and the same Kirby-Siebenmann invariant, every homeomorphism between the boundaries extends to a homeomorphism between the 4-manifolds~\cite{Freedman}.
 We check now that the hypotheses are satisfied.

First, we argue that~$\widehat{W}$ is simply-connected.
The hypothesis that~$W$ lies in~$\mathcal{V}^0_\lambda(M_{K,g})$ implies that there is an isomorphism~$\widehat{\varphi} \colon \pi_1(W) \xrightarrow{\cong} \Z$ such that~$\varphi=\widehat{\varphi}\circ\kappa$, where~$\kappa$ is the inclusion induced map~$\pi_1(M_{K,g})\to\pi_1(W)$ (see Definition \ref{def:V0lambdaY}). Since we required that~$\varphi(\mu_K)$ generates~$\Z$, we must have that~$\kappa(\mu_K)$ generates~$\pi_1(W)\cong\Z$.
Since gluing~$\Sigma_{g,1} \times D^2$ along~$\Sigma_{g,1} \times S^1$ has the effect of killing~$\kappa(\mu_K)$, we conclude that~$\widehat{W}$ is simply-connected as claimed.

Next we must show that~$Q_{\widehat{W}}$ is isometric to~$Q_N$.
A Mayer-Vietoris argument establishes the isometry~$Q_{\widehat{W}}\oplus (0)^{\oplus 2g}\cong Q_W$.
It then follows from our assumption on the Hermitian form~$(H,\lambda)$ that  we have the isometries
$$   Q_{\widehat{W}} \oplus (0)^{\oplus 2g} \cong Q_W \cong \lambda_W(1) \cong \lambda(1) \cong Q_N \oplus  (0)^{\oplus 2g}.$$
This implies that~$Q_{\widehat{W}} \cong Q_N$ because both forms are nonsingular (indeed~$\partial \widehat{W}\cong \partial N \cong S^3$).

In the even case, we deduce that both~$\widehat{W}$ and~$N$ are spin.
In the odd case, using the additivity of the Kirby-Siebenmann invariant (see e.g.~\cite[Theorem 8.2]{FriedlNagelOrsonPowell}), we have~$\ks(\widehat{W})=\ks(W)=\varepsilon=\ks(N)$.

Therefore~$\widehat{W}$ and~$N$ are simply-connected topological~$4$-manifolds with boundary~$S^3$, with the same intersection form
and the same Kirby-Siebenmann invariant. Freedman's classification of simply-connected~$4$-manifolds with boundary~$S^3$ now ensures that the homeomorphism~$f'  \colon \partial   \widehat{W} \to \partial N$ extends to a homeomorphism $F\colon \widehat{W} \to N$ that induces the isometry $Q_{\widehat{W}} \cong Q_N$ and fits into the following commutative diagram
\begin{equation}
\label{eq:DiagramForf'}
\xymatrix{
(\partial W \setminus f^{-1}(\Sigma_{g,1}\times S^1)) \cup (\partial \Sigma_{g,1} \times D^2) \ar[r]^-{=}\ar[d]^-{h|^{-1}  \circ  (f|  \cup \vartheta|)}& \partial \widehat{W} \ar[r]^\subset \ar[d]^{f'}& \widehat{W}\ar[d]^-{F} \\
(\partial N\setminus \nu(K)) \cup \overline{\nu}(K)\ar[r]^-{=}&  \partial N \ar[r]^\subset &N.
}
\end{equation}
\end{itemize}
As mentioned above,  we obtain an embedding as
\begin{equation}
\label{eq:DefOfPsi}
\Psi(W,f):=\Big{(}e\colon \Sigma_{g,1} \xrightarrow{\times \lbrace 0\rbrace} \widehat{W} \xrightarrow{F,\cong}N\Big{)}.
\end{equation}
 This concludes the construction of our embedding~$\Psi(W,f)$.

We must check that this construction gives rise to a map~$\Psi \colon \mathcal{V}^0_\lambda(M_{K,g})\to \operatorname{Emb}_\lambda^0(\Sigma_{g,1},N;K).$
In other words, we verify that,  up to homeomorphisms of $N$ rel.\ boundary,  the embedding $e$ from~\eqref{eq:DefOfPsi} depends neither on the choice of isometry~$Q_{\widehat{W}} \cong Q_N$ nor the choice of $\vartheta$ from \eqref{eq:varphi} nor the homeomorphism~$\widehat{W} \cong N$ extending our boundary homeomorphism nor on the homeomorphism rel.\ boundary type of~$(W,f)$.

\begin{itemize}
\item  The precise embedding~$e$ depends on the homeomorphism~$\widehat{W} \cong N$ chosen to extend a given $f'$.
 This homeomorphism in turn depends on the choice of isometry~$Q_{\widehat{W}} \cong Q_N$.
 However for any two choices $F_1$ and $F_2$ of homeomorphisms~$\widehat{W} \cong N$ extending $f'$,  the resulting embeddings are equivalent rel. boundary, as can be seen by composing one choice of homeomorphism with the inverse of the other:
$$
\xymatrix@R0.5cm{
\Sigma \ar[r]^{[\times 0]} \ar[d]^=&\widehat{V} \ar[r]^{F_1}\ar[d]^=& W \ar[d]^{F_2 \circ F_1^{-1}}\\
\Sigma \ar[r]^{[\times 0]} &\widehat{V} \ar[r]^{F_2}& W.
}
 $$
So the equivalence class of the surface~$\Psi(W,f)$ does not depend on the choice of isometry~$Q_{\widehat{W}} \cong Q_N$ nor on the choice of homeomorphism~$\widehat{W} \cong N$ realizing this isometry and extending $f'$.
\item Next, we show that the definition is independent of the choice of $\vartheta \colon  \partial \Sigma_{g,1} \times D^2 \to \overline{\nu}(K)$ within its isotopy class.
If $\vartheta_0,\vartheta_1 \colon \partial \Sigma_{g,1} \times D^2 \to \overline{\nu}(K)$ are isotopic, then so are the resulting homeomorphisms $f_0':=(f| \cup \vartheta_0|),f_1':=(f| \cup \vartheta_1|) \colon \partial \widehat{W} \to \partial N$ via an isotopy $f_s'$.

\begin{claim*}
There is an isotopy $F_s \colon \widehat{W} \to N$ extending $f_s'$.
\end{claim*}
\begin{proof}
Pick a homeomorphism $F_0 \colon  \widehat{W} \to N$ extending $f_0'$; when we constructed~$\Psi(W,f)$, we argued that such an $F_0$ exists.
There are collars $\partial \widehat{W} \times [0,1]$ and $\partial N \times [0,1]$ such that $F_0|_{\partial \widehat{W} \times [0,1]}=f_0' \times [0,1]$.
Here it is understood that the boundaries of $\widehat{W} $ and $N$ are respectively given by $\partial \widehat{W} \times \lbrace 0 \rbrace$ and $\partial N \times \lbrace 0 \rbrace$.

The idea is to implant the isotopy $f_s'$ between $f_0',f_1'$ in these collars in order to obtain an isotopy between $F_0$ and a homeomorphism $F_1$ that retricts to $f_1'$ on the boundary.
To carry out this idea, consider the restriction
$$ F_0| \colon \widehat{W} \setminus (\partial \widehat{W} \times [0,1]) \to N \setminus (\partial N \times [0,1]).$$
Define an isotopy of homeomorphisms between the collars via the formula
\begin{align*}
G_s \colon \partial \widehat{W} \times [0,1] &\to \partial N \times [0,1] \\
(x,t) &\mapsto
(f_{(1-t)s}'(x),t).
\end{align*}
Since $G_s(x,1)=(f_0'(x),1)$ for every $s$,  we obtain the required isotopy as $F_s:=G_s \cup F_0$.
By construction $F_i$ restricts to $f_i'$ on the boundary for $i=0,1$, thus concluding the proof of the claim.
\end{proof}
Thanks to the claim, we can use $F_0$ and $F_1$ to define the embeddings  $e_0:=F_0 \circ (\times \lbrace 0 \rbrace)$ and $e_1:=F_1 \circ (\times \lbrace 0 \rbrace)$.
This way,  $F_1 \circ F_0^{-1} \colon N \to N$ is an equivalence rel.\ boundary between $e_0$ and $e_1$ so that the definition of $\Psi$ is independent of the choice of $\vartheta$ within its isotopy class.

\item Next we check the independence of the rel.\ boundary homeomorphism type of~$(W,f)$.
If we have~$(W_1,f_1)$ and~$(W_2,f_2)$ that are equivalent rel.\ boundary, then there is a homeomorphism~$\Phi \colon W_1 \to W_2$ that satisfies~$f_2 \circ \Phi| =f_1$.
This homeomorphism extends to~$\widehat{\Phi}:=\Phi \cup \id_{\Sigma_{g,1} \times D^2} \colon \widehat{W}_1 \to \widehat{W}_2$ and therefore to a homeomorphism~$N \to N$ that is, by construction rel.\ boundary.
A formal verification using this latter homeomorphism then shows that the embeddings~$\Psi(W_1,f_1)$ and~$\Psi(W_2,f_2)$ are equivalent rel.\ boundary.
\end{itemize}

Now we prove that the maps~$\Theta$ and~$\Psi$ are mutually inverse.

\begin{itemize}
\item First we prove that~$\Psi \circ \Theta=\id$.
Start with an embedding~$e \colon \Sigma_{g,1} \hookrightarrow N$ and write~$\Theta(e)=(N_{e(\Sigma_{g,1})},f)$ with~$f=(h|) \cup (\gamma|) \colon \partial N_{e(\Sigma_{g,1})} \to M_{K,g}$ the homeomorphism described in Construction~\ref{cons:EmbVBijection}.
Then~$\Psi(\Theta(e))$ is an embedding
$$ \Sigma_{g,1} \xrightarrow{\times \lbrace 0 \rbrace} N_{e(\Sigma_{g,1})} \cup_f (\Sigma_{g,1} \times D^2) \xrightarrow{F,\cong} N.$$
We showed that the equivalence class of this embedding is independent of the homeomorphism~$F$ that extends~$f$.
It suffices to show that we can make choices so that $\Psi(\Theta(e))$ recovers $e$. This can be done explicitly as follows.
Choose $\vartheta:=h\circ\gamma^{-1}  \colon  \partial \Sigma_{g,1} \times D^2 \to \overline{\nu}(K)$.
Then we have
$f'=\id_{\partial N \setminus \nu(K)}  \cup (h^{-1} \circ (h \circ \gamma^{-1}))=\id_{\partial N \setminus \nu(K)} \cup \gamma|^{-1}$ where the notation is as in~\eqref{eq:DiagramForf'} (with $W=N_{e(\Sigma_{g,1})}$).
We already know an extension of $f'$, namely $\id_{N_{e(\Sigma_{g,1})}} \cup \gamma^{-1}$, which we take to be $F$.
Thus $\Psi(\Theta(e))=\gamma^{-1}|_{\Sigma_{g,1} \times \lbrace 0 \rbrace} \colon \Sigma_{g,1} \hookrightarrow N$ which, by definition of a normal bundle,  agrees with the initial embedding $e$.

\item Next we prove that~$\Theta \circ \Psi=\id$.
This time we start with a pair~$(W,f)$ consisting of a 4-manifold~$W$ and a homeomorphism~$f \colon  \partial W \to M_{K,g}$.
Then~$\Psi(W,f)$ is represented by an embedding ~$e \colon \Sigma_{g,1} \xrightarrow{ \times \lbrace 0 \rbrace} \widehat{W} \xrightarrow{F,\cong} N$.
Recall that we write~$h \colon \partial N \to S^3$ for our preferred homeomorphism and that by construction,  on the boundaries,~$F$ restricts to
$$h|^{-1} \circ (f|  \cup \vartheta) \colon \partial \widehat{W} \to \partial N$$
where (the isotopy class of)~$\vartheta \colon \partial \Sigma_{g,1} \times D^2 \to \overline{\nu}(K)$ satisfies the properties listed below equation~\eqref{eq:varphi}.

We frame $\Sigma_{g,1} \times \lbrace 0 \rbrace \subset \widehat{W}$ via the unique homeomorphism $\operatorname{fr}\colon\overline{\nu}(\Sigma_{g,1}\times\{0\})\to\Sigma_{g,1}\times D^2$ that makes the following diagram commute:
$$
\xymatrix{ \overline{\nu}(\Sigma_{g,1}\times \lbrace 0 \rbrace)
 \ar[rr]^{\operatorname{fr}}\ar[dr]^{\operatorname{incl} }&&  \Sigma_{g,1} \times D^2  \ar[dl]_{\operatorname{incl} } \\
&\widehat{W}=W\cup(\Sigma_{g,1} \times D^2 ).&
}
$$
We then frame $e(\Sigma_{g,1}) \subset N$ via
$$\gamma:= \operatorname{fr} \circ F^{-1}|  \colon \overline{\nu}(e(\Sigma_{g,1})) \cong \Sigma_{g,1} \times D^2.$$
This framing is good thanks to the definition of $\varphi \colon \pi_1(M_{K,g}) \to \Z$ as the unique epimorphism that maps the meridian of $K$ to $1$ and the other generators to zero: indeed this implies that the curves on $\Sigma_{g,1} \times \lbrace 0\rbrace$ are nullhomologous in $W$ and therefore the same thing holds for $e(\Sigma_{g,1})\subset N$.
It can be verified that this framing satisfies the condition from~\eqref{eq:Compatible}.

We then obtain~$\Theta(\Psi(W,f))=(N_{\Sigma} := N \setminus \nu(e(\Sigma_{g,1})),h| \cup \gamma|)$, where, as dictated by Construction~\ref{cons:EmbVBijection}, the boundary homeomorphism is~$h| \cup \gamma| \colon \partial N_\Sigma \to M_{K,g}$.
Here we are making use of the fact that up to equivalence, we can choose any framing in the definition of~$\Theta$.

We have to prove that~$(N_{\Sigma},h| \cup \gamma|)$ is homeomorphic rel.\ boundary to~$(W,f)$.
We claim that the restriction of~$F \colon \widehat{W} \to N$ gives the required homeomorphism.
To see this, consider the following diagram
$$
\xymatrix @C+0.3cm{
M_{K,g} \ar[d]^=& (\partial W \setminus f^{-1}(\Sigma_{g,1} \times S^1)) \cup (f^{-1}(\Sigma_{g,1} \times S^1))\ar[l]_-{f,\cong} \ar[r]^-{=}  \ar[d]^{f':=(h|^{-1} \circ f|) \cup F| }& \partial W \ar[r]^{\subset} \ar[d]^{F|}&  W \ar[r]^{\subset} \ar[d]^{F|}& \widehat{W} \ar[d]^F \\
M_{K,g} & (\partial N \setminus \nu(K)) \cup (\partial \overline{\nu}(\Sigma) \setminus (\nu(\Sigma) \cap \partial N)))  \ar[r]^-{=} \ar[l]_-{h| \cup \gamma|}& \partial N_\Sigma \ar[r]^{\subset}& N_\Sigma \ar[r]^{\subset} &N.
}
$$
The right two squares certainly commute. In the second-from-left square, we have just expanded out $\partial W$ and $\partial N_\Sigma$, as well as written $F|$ explicitly on the regions where we have an explicit description from the construction of $\Psi$. So this square commutes.

It remains to argue that the left square commutes.
By construction $F|_{\partial \widehat{W}}=f'=  h^{-1} \circ (f| \cup \vartheta)$.
Thus on the knot exteriors, we have that $F|=h^{-1} \circ f|$ and so the left portion of the square commutes on the knot exteriors.

Now it remains to prove that $\gamma| \circ F|=f$.
By definition of $\gamma=\operatorname{fr} \circ F^{-1}$,  we must show that $\operatorname{fr}|=f|$ on  $f^{-1}(\Sigma_{g,1} \times S^1)$.
First note that $\operatorname{fr}$ has domain $\overline{\nu}(\Sigma_{g,1} \times \lbrace 0 \rbrace)  \subset \widehat{W}=W\cup (\Sigma_{g,1} \times D^2)$, so it appears we are attempting to compare maps which have different domains. However, the definition of $\widehat{W}$ identifies the portion of the boundary of $\overline{\nu}(\Sigma_{g,1})$ that we are interested in with $f^{-1}(\Sigma_{g,1}\times S^1)\subset \partial W$ via $f^{-1}|\circ \operatorname{fr|}$, so it makes sense to compare $f$ on $f^{-1}(\Sigma_{g,1} \times S^1)$ with $\operatorname{fr|}$ on $\operatorname{fr|}^{-1}\circ f|_{f^{-1}(\Sigma_{g,1} \times S^1)}$. These maps are tautologically equal.
Therefore the left hand side of the diagram commutes and  this concludes the proof that~$\Theta \circ \Psi=\id$.
\end{itemize}

We have shown that~$\Theta$ and~$\Psi$ are mutually inverse, and so both are bijections.
This completes the proof of Proposition~\ref{prop:EmbVBijections}.
\end{proof}

\subsubsection*{Step $(3)$: From embeddings to submanifolds}
Now we deduce a description of~$\operatorname{Surf(g)}^0_\lambda(N,K)$ from Proposition~\ref{prop:EmbVBijections}.
Note that $ \operatorname{Surf(g)}^0_\lambda(N,K)$ arises as the orbit set
$$ \operatorname{Surf(g)}^0_\lambda(N,K)= \operatorname{Emb}_\lambda^0(\Sigma_{g,1},N;K)/\operatorname{Homeo}^+(\Sigma_{g,1},\partial),$$
where the left action of~$x \in \operatorname{Homeo}^+(\Sigma_{g,1},\partial)$ on~$e\in \operatorname{Emb}_\lambda^0(\Sigma_{g,1},N;K)$ is defined by $x \cdot e=e \circ x^{-1}$.
There is a surjective map~$\operatorname{Emb}_\lambda^0(\Sigma_{g,1},N;K) \to  \operatorname{Surf(g)}^0_\lambda(N,K)$ that maps an embedding~$e \colon \Sigma_{g,1} \hookrightarrow N$ onto its image.
One then verifies that this map descends to a bijection on the orbit set.

Next, we note that~$\operatorname{Homeo}^+(\Sigma_g,\partial)$ acts on the sets~$\mathcal{V}^0_\lambda(M_{K,g})$ and~$\mathcal{V}^{0,\varepsilon}_\lambda(M_{K,g})$
as follows.
A rel.\ boundary homeomorphism~$x \colon \Sigma_{g,1} \to \Sigma_{g,1}$ extends to a self homeomorphism $x'$ of $\Sigma_{g,1}\times S^1$ by defining $x'(s,\theta)=(x(s),\theta)$.
Then extend $x'$ by the identity over $E_K$; in total one obtains a self homeomorphism $x''$ of $M_{K,g}$.
The required action is now by postcomposition: for $(W,f)$ representing an element of $\mathcal{V}^0_\lambda(M_{K,g})$ or~$\mathcal{V}^{0,\varepsilon}_\lambda(M_{K,g})$, define $x \cdot (W,f):=(W,x'' \circ f )$.



The following proposition is now a relatively straightforward consequence of Proposition~\ref{prop:EmbVBijections}.

\begin{proposition}
\label{prop:SurfBijectionCorrected}
Let~$N$ be a simply-connected~$4$-manifold with boundary~$\partial N \cong S^3$,  let~$K \subset S^3$ be a knot and let~$(H,\lambda)$ be  a nondegenerate Hermitian form with $\lambda(1) \cong Q_N \oplus (0)^{2g}.$
\begin{enumerate}
\item If~$\lambda$ is even, then the map~$\Theta$ from Construction~\ref{cons:EmbVBijection} descends to a bijection
$$\operatorname{Surf(g)}^0_\lambda(N,K) \to \mathcal{V}^0_\lambda(M_{K,g})/\operatorname{Homeo}^+(\Sigma_{g,1},\partial).$$
\item If~$\lambda$ is odd, then the map~$\Theta$ from Construction~\ref{cons:EmbVBijection} descends to a bijection
$$\operatorname{Surf(g)}^0_\lambda(N,K) \to \mathcal{V}^{0,\varepsilon}_\lambda(M_{K,g})/\operatorname{Homeo}^+(\Sigma_{g,1},\partial),$$
where $\varepsilon=\ks(N)$.
\end{enumerate}
\end{proposition}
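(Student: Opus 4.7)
The plan is to derive Proposition~\ref{prop:SurfBijectionCorrected} from Proposition~\ref{prop:EmbVBijections} by showing that the bijection $\Theta$ is equivariant with respect to natural $\operatorname{Homeo}^+(\Sigma_{g,1},\partial)$-actions on its domain and codomain, and then passing to orbit sets.

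First, I would verify that the map ``take the image of an embedding'' descends to a bijection
\[\operatorname{Emb}_\lambda^0(\Sigma_{g,1},N;K)/\operatorname{Homeo}^+(\Sigma_{g,1},\partial) \xrightarrow{\cong} \operatorname{Surf(g)}^0_\lambda(N,K),\]
where the action is $x \cdot e := e \circ x^{-1}$. Surjectivity is immediate. For injectivity, two embeddings $e_1, e_2$ with the same image $\Sigma$ differ by the reparametrization $x := e_1^{-1} \circ e_2$, which is rel.\ $\partial \Sigma_{g,1}$ since both $e_1$ and $e_2$ restrict to the fixed inclusion $\partial\Sigma_{g,1} \hookrightarrow K \subset \partial N$; conversely, reparametrizing an embedding by an element of $\operatorname{Homeo}^+(\Sigma_{g,1},\partial)$ does not change its image, so precomposition with $x^{-1}$ preserves the $\operatorname{Surf(g)}^0_\lambda$-class.

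Next, the key step is to verify equivariance: $\Theta(e \circ x^{-1}) = x \cdot \Theta(e)$ in $\mathcal{V}^0_\lambda(M_{K,g})$ for every $e$ and every $x \in \operatorname{Homeo}^+(\Sigma_{g,1},\partial)$. Since $e$ and $e \circ x^{-1}$ have the same image $\Sigma$, the exterior $N_\Sigma$ is unchanged; only the boundary identification $f \colon \partial N_\Sigma \to M_{K,g}$ differs. Given a good framing $\gamma \colon \overline{\nu}(\Sigma) \cong \Sigma_{g,1} \times D^2$ satisfying condition~\eqref{eq:Compatible} (which can be chosen so that $\gamma^{-1}(\cdot,0) = e$), the composition $\gamma' := (x \times \id_{D^2}) \circ \gamma$ is a good framing of the same normal bundle satisfying~\eqref{eq:Compatible} with $e \circ x^{-1}$, because $x$ is the identity on $\partial\Sigma_{g,1}$. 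Unwinding the definition~\eqref{eq:BoundaryHomeoSurfaceNoEmbedding} gives
\[f_{\gamma'} = (h|) \cup \gamma'| = (h|) \cup \bigl((x \times \id_{S^1}) \circ \gamma|\bigr) = x'' \circ f_\gamma,\]
where $x''$ extends $x \times \id_{S^1}$ by the identity over $E_K$, because $x''$ is the identity on the knot exterior portion of $M_{K,g}$ onto which $h|$ maps. This is precisely the action of $x$ on $(N_\Sigma, f_\gamma) = \Theta(e)$.

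Finally, because $\Theta$ is a bijection onto $\mathcal{V}^0_\lambda(M_{K,g})$ in the even case and onto $\mathcal{V}^{0,\varepsilon}_\lambda(M_{K,g})$ in the odd case by Proposition~\ref{prop:EmbVBijections}, and is equivariant by the above computation, it passes to a bijection on the orbit sets, yielding the proposition. The main work having already been done in Proposition~\ref{prop:EmbVBijections}, there is no substantive obstacle beyond the framing bookkeeping in the equivariance calculation; the only mild subtlety is ensuring that the action of $\operatorname{Homeo}^+(\Sigma_{g,1},\partial)$ on $\mathcal{V}^0_\lambda(M_{K,g})$ genuinely descends to the $\mathcal{V}^{0,\varepsilon}$-subset in the odd case, which is clear since postcomposing the boundary identification by a homeomorphism of $M_{K,g}$ preserves the Kirby--Siebenmann invariant of $W$.
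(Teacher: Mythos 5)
Your proposal is correct and follows essentially the same route as the paper: reduce to the equivariance $\Theta(e \circ x^{-1}) = x \cdot \Theta(e)$ via Proposition~\ref{prop:EmbVBijections}, and verify it by choosing the good framing $(x \times \id_{D^2}) \circ \gamma$ for the reparametrised embedding so that $f_{\gamma'} = x'' \circ f_\gamma$. The extra remarks you include (the identification of $\operatorname{Surf(g)}^0_\lambda(N,K)$ with the orbit set of embeddings, and the preservation of the Kirby--Siebenmann invariant in the odd case) are points the paper handles in the surrounding text rather than in the proof itself.
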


\begin{proof}
Thanks to Proposition~\ref{prop:EmbVBijections}, it is enough to check that~$\Theta(x \cdot e)=x \cdot \Theta(e)$ for~$x \in \Homeo(\Sigma_{g,1},\partial)$ and~$e \colon \Sigma_{g,1} \hookrightarrow N$ an embedding representing an element of~$\operatorname{Emb}_\lambda^0(\Sigma_{g,1},N;K)$.
By definition of~$\Theta$, we know that~$\Theta(x \cdot e)$ is~$(N_{e(x^{-1}(\Sigma_{g,1}))},f_{e\circ x^{-1}})$ and~$x \cdot \Theta(e)=(N_{e(\Sigma_{g,1})},x'' \circ f_{e})$ where the~$f_e,f_{e \circ x^{-1}}$ are homeomorphisms from the boundaries of these surface exteriors to~$M_{K,g}$ that can be constructed,  up to equivalence rel.\ boundary,  using any choice of good framing; recall Construction~\ref{cons:EmbVBijection}.
In what follows, we will make choices of framings so that the pairs~$\Theta(x \cdot e)=(N_{e(x^{-1}(\Sigma_{g,1}))},f_{e \circ x^{-1}})$ and~$x \cdot \Theta(e)=(N_{e(\Sigma_{g,1})},x'' \circ f_e)$ are equivalent rel.\ boundary.

Pick a good framing $\gamma \colon \overline{\nu}(e(\Sigma_{g,1})) \cong \Sigma_{g,1} \times D^2$ so that $\Theta(e)=(N_{e(\Sigma_{g,1})},f_e)=(N_{e(\Sigma_{g,1})},h| \cup \gamma|)$.
Since $\gamma^{-1} \colon \Sigma_{g,1} \times D^2 \hookrightarrow N$ satisfies $\gamma^{-1}|_{\Sigma_{g,1} \times \lbrace 0 \rbrace}=e$, we deduce that $\gamma^{-1} \circ (x^{-1} \times \id_{D^2})$ gives an embedding of the normal bundle of $e \circ x^{-1}$.
We can therefore choose the inverse $\gamma_{e \circ x}:=(x  \times \id_{D^2}) \circ \gamma$ as a good framing for the embedding $e \circ x^{-1}$.
Using this choice of good framing to construct $f_{e \circ x^{-1}}$, we have~$\Theta(e \circ x^{-1})=(N_{e \circ x^{-1}(\Sigma_{g,1})},h| \cup ((x \times \id_{D^2}) \circ \gamma|))$.
Using these observations and the fact that $x$ is rel.\ boundary,  we obtain
\begin{align*}
  \Theta(x \cdot e)
&=\Theta(e \circ x^{-1})
=(N_{e \circ x^{-1}(\Sigma_{g,1})},h| \cup ((x \times \id_{D^2}) \circ \gamma|))\\
&=(N_{e \circ x^{-1}(\Sigma_{g,1})},x'' \circ (h| \cup \gamma|))
  =x \cdot (N_{e(\Sigma_{g,1})},f_e)
=x \cdot  \Theta(e).
\end{align*}
This proves that the pairs~$\Theta(x \cdot e)=(N_{e(x^{-1}(\Sigma_{g,1}))},f_{e \circ x^{-1}})$ and~$x \cdot \Theta(e)=(N_{e(\Sigma_{g,1})},f_e)$ are equivalent rel.\ boundary and thus concludes the proof of the proposition.
\end{proof}

We now deduce our description of the surface set, thus proving the main result of this section.

\begin{proof}[Proof of Theorem \ref{thm:SurfacesRelBoundary}]
We have already argued the $(2) \Rightarrow (1)$ direction below Definition~\ref{def:Surface(g)RelBoundary}, and so we focus on the converse.
Since we assumed that $\lambda(1)\cong Q_N \oplus  (0)^{\oplus 2g}$,  we can apply Proposition~\ref{prop:SurfBijectionCorrected} to deduce that
 if~$\lambda$ is even then the map~$\Theta$ from Construction~\ref{cons:EmbVBijection} induces a bijection
$$\operatorname{Surf(g)}^0_\lambda(N,K) \to \mathcal{V}^0_\lambda(M_{K,g})/\operatorname{Homeo}^+(\Sigma_{g,1},\partial)$$
while if~$\lambda$ is odd,  for~$\varepsilon:=\ks(N)$, the map~$\Theta$ induces a bijection
$$\operatorname{Surf(g)}^0_\lambda(N,K) \to \mathcal{V}^{0,\varepsilon}_\lambda(M_{K,g})/\operatorname{Homeo}^+(\Sigma_{g,1},\partial).$$
Since we assumed that $(H,\lambda)$ presents $M_{K,g}$,
the theorem will follow from Theorem~\ref{thm:ClassificationRelBoundary} once we show that the map $b \colon V_\lambda^0(M_{K,g}) \to \Iso(\partial \lambda,\unaryminus \Bl_{M_{K,g}})/\Aut(\lambda)$ from Construction~\ref{cons:Invariant} intertwines the~$\Homeo^+(\Sigma_{g,1},\partial)$-actions, i.e. satisfies~$b_{x\cdot (W,f)}=x \cdot b_{(W,f)}$ for every~$x \in \Homeo^+(\Sigma_{g,1},\partial)$ and for every pair~$(W,f)$ representing an element of~$V_\lambda^0(M_{K,g})$.

This follows formally from the definitions of the actions:
on the one hand,  for some isometry $F \colon \lambda \cong  \lambda_W$, we have~$b_{x\cdot (W,f)}=b_{(W,x''\circ f)}=x''_*\circ  f_* \circ D_W \circ \partial F$;  on the other hand,  we have~$x \cdot b_{(W,f)}$ is~$x \cdot (f_* \circ D_W \circ \partial F)$ and this gives the same result.
This concludes the proof of Theorem~\ref{thm:SurfacesRelBoundary}.
\end{proof}

\subsection{Surfaces with boundary up to equivalence}
\label{sub:SurfacesBoundaryEq}

The study of surfaces up to equivalence (instead of equivalence rel.\  boundary) presents additional challenges:
while there is still a map $\Theta \colon \operatorname{Emb}_\lambda(\Sigma_{g,1},N;K) \to \mathcal{V}_\lambda(M_{K,g})$, the proof of  Proposition~\ref{prop:EmbVBijections} (in which we constructed an inverse $\Psi$ of $\Theta$)  breaks down because if~$W$ and~$W'$ are homeomorphic~$\Z$-fillings of~$M_{K,g}$, it is unclear whether we can always find a homeomorphism~$W \cup (\Sigma_{g,1} \times D^2) \cong W' \cup (\Sigma_{g,1} \times D^2)$.
We nevertheless obtain the following result.

\begin{theorem}
\label{thm:SurfacesWithBoundary}
Let~$N$ be a simply-connected~$4$-manifold with boundary~$\partial N \cong S^3$,  let~$K$ be a knot such that every isometry of~$\Bl_K$ is realised by an orientation-preserving homeomorphism~$E_K \to E_K$
and let~$(H,\lambda)$ be a nondegenerate Hermitian form over~$\Z[t^{\pm 1}]$.
 The following assertions are equivalent:
 \begin{enumerate}
\item the Hermitian form~$\lambda$ presents~$M_{K,g}$ and~$\lambda(1)\cong Q_N \oplus  (0)^{\oplus 2g}$;
\item up to equivalence, there exists a unique genus~$g$ surface~$\Sigma \subset N$ with boundary~$K$ and whose exterior has equivariant intersection form~$\lambda$, i.e.~$|\operatorname{Surf(g)}_\lambda(N,K)|=1$.
\end{enumerate}
 \end{theorem}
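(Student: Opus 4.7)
The plan is to deduce Theorem~\ref{thm:SurfacesWithBoundary} from Theorem~\ref{thm:SurfacesRelBoundary} by quotienting by the extra freedom allowed when the boundary homeomorphism is not required to be the identity. The implication (2)$\Rightarrow$(1) will be handled exactly as in the discussion preceding Theorem~\ref{thm:SurfacesRelBoundaryIntro}: any $\Z$-surface $\Sigma \subset N$ with boundary $K$ and exterior $N_\Sigma$ of equivariant intersection form $\lambda$ forces $\lambda$ to present $\partial N_\Sigma \cong M_{K,g}$ and $\lambda(1)\cong Q_{N_\Sigma}\cong Q_N \oplus (0)^{\oplus 2g}$ via Mayer-Vietoris, so both conditions in (1) are forced. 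For (1)$\Rightarrow$(2), existence is immediate: by Theorem~\ref{thm:SurfacesRelBoundary} the set $\operatorname{Surf(g)}^0_\lambda(N,K)$ is nonempty, and it surjects onto $\operatorname{Surf(g)}_\lambda(N,K)$. The entire substance is therefore uniqueness.

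For uniqueness, I will first describe the projection $\operatorname{Surf(g)}^0_\lambda(N,K) \twoheadrightarrow \operatorname{Surf(g)}_\lambda(N,K)$ as the orbit map of an action of $\Homeo^+(S^3,K)$, the group of orientation-preserving self-homeomorphisms of $S^3$ preserving $K$ setwise. The action is defined by choosing, for each $\phi \in \Homeo^+(S^3,K)$, an extension $\Phi \colon N \to N$ given by Freedman's classification of simply-connected topological $4$-manifolds with boundary $S^3$ (this is the same extension theorem used inside the proof of Proposition~\ref{prop:EmbVBijections}), and then sending a surface $\Sigma$ to $\Phi(\Sigma)$; two choices of $\Phi$ differ by a rel.\ boundary self-homeomorphism of $N$, so the image is well defined in $\operatorname{Surf(g)}^0_\lambda(N,K)$. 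Conversely any orientation-preserving self-homeomorphism of $N$ restricts to an element of $\Homeo^+(S^3,K)$, so $\operatorname{Surf(g)}_\lambda(N,K)$ is exactly this quotient.

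Next, using Theorem~\ref{thm:SurfacesRelBoundary} together with the identification in Remark~\ref{rem:SurfacesRelBoundaryTheorem}, one has a (non-canonical) bijection $\operatorname{Surf(g)}^0_\lambda(N,K) \approx \Aut(\Bl_K)/\Aut(\lambda)$, exploiting the splitting $\Aut(\partial\lambda)\cong\Aut(\Bl_{M_{K,g}}) \cong \Aut(\Bl_K)\oplus \operatorname{Sp}_{2g}(\Z)$ and the fact that $\Homeo^+(\Sigma_{g,1},\partial)$ acts trivially on the first summand and transitively on the second. Restriction to $E_K \subset M_{K,g}$ defines a homomorphism $\Homeo^+(S^3,K) \to \Homeo^+(E_K)$, and I will verify that under this identification the action of $\Homeo^+(S^3,K)$ on $\Aut(\Bl_K)/\Aut(\lambda)$ factors through the canonical map $\Homeo^+(E_K) \to \Aut(\Bl_K)$, acting by left multiplication. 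The standing hypothesis on $K$ states exactly that this map is surjective, whence $\Homeo^+(S^3,K)$ acts transitively on $\Aut(\Bl_K)/\Aut(\lambda)$ and so $\operatorname{Surf(g)}_\lambda(N,K)$ reduces to a single orbit.

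The main obstacle will be the bookkeeping in the third step: namely, verifying that the induced action of $\Homeo^+(S^3,K)$ on $\Aut(\Bl_K)/\Aut(\lambda)$ corresponds to left multiplication by the image in $\Aut(\Bl_K)$. Concretely, one must track how composing the boundary identification $g \colon \partial N_\Sigma \xrightarrow{\cong} M_{K,g}$ by (the restriction of an extension of) $\phi \in \Homeo^+(S^3,K)$ affects the invariant $b_{(N_\Sigma,g)} = g_* \circ D_{N_\Sigma} \circ \partial F$ from Construction~\ref{cons:Invariant}, and check that the induced map on $\Aut(\partial\lambda) \cong \Aut(\Bl_K)\oplus\operatorname{Sp}_{2g}(\Z)$ restricts to the isometry $\phi_* \in \Aut(\Bl_K)$ on the first summand and to the identity on the second (since $\phi|_{\Sigma_{g,1}\times S^1}$ can be chosen to be the identity after an isotopy of $\phi$ supported in a neighbourhood of $K$). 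Once this is done, transitivity of the action and the conclusion follow immediately.
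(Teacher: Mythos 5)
Your route is genuinely different from the paper's. The paper disposes of uniqueness in one line by citing \cite[Theorem~1.3]{ConwayPowell} (the uniqueness half of the surface classification from the earlier paper), and the only actual work in its proof is showing that the technical hypothesis of that theorem --- that the realising homeomorphism $f \colon E_K \to E_K$ be the identity on $\partial E_K$ --- is automatic. You instead re-derive uniqueness from Theorem~\ref{thm:SurfacesRelBoundary} by exhibiting $\operatorname{Surf(g)}_\lambda(N,K)$ as the quotient of $\operatorname{Surf(g)}^0_\lambda(N,K) \approx \Aut(\Bl_K)/\Aut(\lambda)$ by an action of $\Homeo^+(S^3,K)$ and arguing transitivity. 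This is more self-contained (modulo the rel.\ boundary theorem) and makes the mechanism of the uniqueness visible, at the cost of the bookkeeping you flag in your last paragraph; that bookkeeping is of the same nature as the equivariance checks already carried out for Proposition~\ref{prop:SurfBijectionCorrected} and in the proof of Theorem~\ref{thm:SurfacesRelBoundary}, so it is routine.

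There is, however, one genuine gap at the decisive step. Your transitivity argument needs the composite $\Homeo^+(S^3,K) \to \Homeo^+(E_K) \to \Aut(\Bl_K)$ to be surjective, whereas the standing hypothesis only asserts surjectivity of the second map. The ``whence'' does not follow: you must additionally show that every orientation-preserving homeomorphism of $E_K$ realising an isometry of $\Bl_K$ extends to a self-homeomorphism of $(S^3,K)$, and hence (after composing with the isotopy of $S^3$ to the identity, implanted in a collar) to a self-homeomorphism of $N$. This is true, but it requires the Gordon--Luecke theorem: such an $f$ fixes the based class $[\mu_K]$, so $f|_{\partial E_K}$ is isotopic to $\id_{\partial E_K}$, so after an isotopy $f$ is the identity on $\partial E_K$ and extends over $\overline{\nu}(K)$. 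This is precisely the argument the paper makes (for the parallel purpose of removing the boundary condition in the cited theorem), so the gap is fillable --- but as written your proof silently assumes it, and without it the hypothesis on $K$ does not yield transitivity of the $\Homeo^+(S^3,K)$-action.
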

 \begin{proof}
 We already proved the fact that the second statement implies the first, so we focus on the converse.
We can apply Theorem~\ref{thm:SurfacesRelBoundary} to deduce that~$\operatorname{Surf(g)}^0_\lambda(N,K)$ is nonempty, this implies in particular that~$\operatorname{Surf(g)}_\lambda(N,K)$ is nonempty.
Since this set is nonempty, we assert that the hypothesis on~$K$ ensures we can apply~\cite[Theorem 1.3]{ConwayPowell} to deduce that~$|\operatorname{Surf(g)}_\lambda(N,K)|=1$.

In contrast to Theorem~\ref{thm:SurfacesWithBoundary}, the statement of~\cite[Theorem 1.3]{ConwayPowell}  contains the additional condition that the orientation-preserving homeomorphism~$f \colon E_K \to E_K$ be the identity on~$\partial E_K$.
We show that this assumption is superfluous, so that we can apply~\cite[Theorem~1.3]{ConwayPowell} without assuming that $f|_{\partial E_K}=\id_{\partial E_K}$.

First, note that since~$f$ realises an isometry of~$\Bl_K$, it is understood that $f$ preserves a basepoint~$x_0$ and satisfies~$f([\mu_K])=[\mu_K]$, where~$[\mu_K] \in \pi_1(E_K,x_0)$ is the based homotopy class of a meridian of~$K$.
An application of the Gordon-Luecke theorem~\cite{GordonLuecke} now implies that~$f|_{\partial E_K}$ is isotopic to~$\id_{\partial E_K}$; this isotopy can be assumed to be basepoint preserving by~\cite[page~57]{FarbMargalit}.
Implanting this basepoint preserving isotopy in a collar neighborhood of~$\partial E_K$ implies that~$f$ itself is basepoint preserving isotopic to a homeomorphism $E_K \to E_K$ that restricts to the identity on~$\partial E_K$. This completes the proof that the extra assumption in the statement of \cite[Theorem~1.3]{ConwayPowell} can be assumed to hold without loss of generality.
 \end{proof}

\subsection{Closed surfaces}
\label{sub:Closed}
We now turn our attention to closed~$\Z$-surfaces.
Let~$X$ be a closed simply-connected~$4$-manifold and let~$\Sigma \subset X$ be a closed~$\Z$-surface with genus~$g$, whose normal bundle we frame as in the case with boundary.
With this framing, we can now identify the boundary of~$X_\Sigma:=X \setminus \nu(\Sigma)$ as
$$\partial X_\Sigma \cong  \Sigma_g \times S^1.$$
Two such surfaces~$\Sigma$ and~$\Sigma'$ are \emph{equivalent} if there exists an orientation-preserving homeomorphism~$(X,\Sigma) \cong (X,\Sigma')$.
Again as in the case of surfaces with boundary,~$X_\Sigma$ is a $\Z$ manifold and~$H_1( \Sigma_g \times S^1;\Z[t^{\pm 1}]) \cong \Z^{2g}$ is torsion.
Additionally, note that the equivariant intersection form~$\lambda_{X_\Sigma}$ of a surface exterior~$X_\Sigma$ must present~$ \Sigma_g \times S^1$.

\begin{definition}
\label{def:Surface(g)Closed}
For a nondegenerate Hermitian form~$(H,\lambda)$ over~$\Z[t^{\pm 1}]$ presenting~$\Sigma_g \times S^1$, set
$$\operatorname{Surf(g)}_\lambda(X):=\lbrace \Z\text{-surface~$\Sigma \subset X$ with } \lambda_{X_\Sigma}\cong \lambda \rbrace /\text{ equivalence}.$$
\end{definition}

As for~$\Z$-surfaces with nonempty boundary,  in order for~$\operatorname{Surf(g)}_\lambda(X)$ to be nonempty it is additionally necessary that~$\lambda(1)\cong Q_X \oplus  (0)^{\oplus 2g}$.
It was proved in~\cite[Theorem 1.4]{ConwayPowell}  that whenever~$\operatorname{Surf(g)}_\lambda(X)$ is nonempty, it contains a single element.
We improve this statement to include an existence clause.

\begin{theorem}
\label{thm:SurfacesClosed}
Let~$X$ be a closed simply-connected~$4$-manifold.
Given a nondegenerate Hermitian form~$(H,\lambda)$ over~$\Z[t^{\pm 1}]$, the following assertions are equivalent:
\begin{enumerate}
\item the Hermitian form~$\lambda$ presents~$\Sigma_g \times S^1$ and~$\lambda(1)\cong Q_X \oplus  (0)^{\oplus 2g}$;
\item there exists a unique~$($up to equivalence$)$ genus~$g$~$\Z$-surface~$\Sigma \subset X$ whose exterior has equivariant intersection form~$\lambda$; i.e.~$|\operatorname{Surf(g)}_\lambda(X)|=1$.
 \end{enumerate}
 \end{theorem}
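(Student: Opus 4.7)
The plan is to deduce Theorem \ref{thm:SurfacesClosed} from Theorem \ref{thm:SurfacesWithBoundary} by puncturing $X$, as already indicated in the introduction to this section. The paragraph preceding Definition \ref{def:Surface(g)Closed} already shows that $(2)\Rightarrow(1)$, and uniqueness within $(2)$ follows from \cite[Theorem 1.4]{ConwayPowell} once $\operatorname{Surf(g)}_\lambda(X)$ is shown to be nonempty. So the only substantive task is to prove the existence of a closed genus $g$ $\Z$-surface $\Sigma \subset X$ with $\lambda_{X_\Sigma} \cong \lambda$ under the algebraic hypothesis in $(1)$.

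First, I would set $N := X \smallsetminus \mathring{D}^4$ for a standard ball $D^4 \subset X$, so that $N$ is a simply-connected $4$-manifold with $\partial N \cong S^3$ and $Q_N \cong Q_X$. For the unknot $U \subset S^3$, a direct computation using $E_U \cong D^2 \times S^1$ shows that $M_{U,g} \cong \Sigma_g \times S^1$: the gluing $\lambda_U \mapsto \partial \Sigma_{g,1} \times \{\mathrm{pt}\}$ caps off $\partial \Sigma_{g,1}$ with the Seifert disc of $U$, while the $S^1$ factors match up. Under this identification the canonical $\varphi \colon \pi_1(M_{U,g}) \twoheadrightarrow \Z$ (sending $\mu_U \mapsto 1$) agrees with the projection $\pi_1(\Sigma_g \times S^1) \to \pi_1(S^1) \xrightarrow{\cong} \Z$ used in Theorem~\ref{thm:SurfacesClosed}. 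Moreover $\Bl_U = 0$ since the unknot has vanishing Alexander module, so the hypothesis of Theorem~\ref{thm:SurfacesWithBoundary} that every isometry of $\Bl_U$ be realised by a homeomorphism of $E_U$ is vacuous. The remaining algebraic hypotheses transfer directly from $(1)$: $\lambda$ presents $M_{U,g} \cong \Sigma_g \times S^1$ and $\lambda(1) \cong Q_N \oplus (0)^{\oplus 2g}$.

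Applying Theorem~\ref{thm:SurfacesWithBoundary} to $(N,U,\lambda,g)$ produces a genus $g$ $\Z$-surface $\Sigma_{g,1} \subset N$ with $\partial \Sigma_{g,1} = U$ and $\lambda_{N_{\Sigma_{g,1}}} \cong \lambda$. I would then cap $\Sigma_{g,1}$ off by gluing the standard unknotted disc $D \subset D^4$ with $\partial D = U$, producing a closed genus $g$ surface $\Sigma := \Sigma_{g,1} \cup_U D \subset X$. To verify that $\Sigma$ is a $\Z$-surface with $\lambda_{X_\Sigma} \cong \lambda$, I would use the decomposition $X_\Sigma = N_{\Sigma_{g,1}} \cup_{E_U} (D^4 \smallsetminus \nu(D))$ with $D^4 \smallsetminus \nu(D) \cong S^1 \times D^3$. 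A van Kampen argument yields $\pi_1(X_\Sigma) \cong \Z *_{\pi_1(E_U)} \Z \cong \Z$, since both inclusion-induced maps from $\pi_1(E_U) = \langle \mu_U \rangle \cong \Z$ are isomorphisms onto the generator. A Mayer-Vietoris argument with $\Z[t^{\pm 1}]$-coefficients, in which both $E_U$ and $D^4 \smallsetminus \nu(D)$ have contractible infinite cyclic covers, then identifies $H_2(X_\Sigma;\Z[t^{\pm 1}])$ with $H_2(N_{\Sigma_{g,1}};\Z[t^{\pm 1}])$, and a transversality argument (intersections between classes represented by surfaces in $N_{\Sigma_{g,1}}$ can be computed in $N_{\Sigma_{g,1}}$) shows this identification is an isometry.

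The main delicate point is less any single computation than the bookkeeping of coefficient systems: one must ensure that the epimorphism $\varphi$ on $M_{U,g}$ used in Theorem~\ref{thm:SurfacesWithBoundary} matches the $\varphi$ on $\Sigma_g \times S^1$ built into the hypothesis of Theorem~\ref{thm:SurfacesClosed}, and that the pieces of the Mayer-Vietoris all carry the correct $\Z$-covers so that the isometry $H_2(N_{\Sigma_{g,1}};\Z[t^{\pm 1}]) \cong H_2(X_\Sigma;\Z[t^{\pm 1}])$ genuinely intertwines the equivariant intersection forms. Once these matchings are in place, existence is established, and uniqueness is immediate from \cite[Theorem~1.4]{ConwayPowell}.
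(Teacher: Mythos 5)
Your proof is correct and follows the same core strategy as the paper: puncture $X$ to get $N$ with $\partial N \cong S^3$, observe $M_{U,g} \cong \Sigma_g \times S^1$ and that the realisation hypothesis on $\Bl_U$ is vacuous, and apply Theorem~\ref{thm:SurfacesWithBoundary} to the unknot. The one place you diverge is the uniqueness step. The paper establishes a bijection $\operatorname{Surf(g)}_\lambda(X) \approx \operatorname{Surf(g)}_\lambda(N,U)$ and transfers \emph{both} existence and uniqueness across it; the injectivity of the puncturing map is the nontrivial part, handled either by an elementary Alexander-trick argument (extending $F|_{\partial}$ to a self-homeomorphism $G$ of $(D^4,D^2)$ using the uniqueness of the genus-$0$ Seifert surface for $U$) or by citing \cite[Theorem~1.3]{ConwayPowell}. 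You instead invoke the closed-surface uniqueness statement \cite[Theorem~1.4]{ConwayPowell} directly once nonemptiness is established, which the paper itself acknowledges as known in the sentence preceding the theorem, so this is legitimate and slightly shorter. In exchange you spend more effort verifying that the capped-off surface really lies in $\operatorname{Surf(g)}_\lambda(X)$ (van Kampen and Mayer--Vietoris over $\Z[t^{\pm1}]$), a point the paper's surjectivity claim passes over quickly; your bookkeeping of the coefficient systems there is accurate.
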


 \begin{proof}
We have already argued that $(2) \Rightarrow (1)$ and so we focus on the converse.
Use~$U \subset S^3$ to denote the unknot and use~$N$ to denote the simply-connected~$4$-manifold with boundary~$S^3$ obtained from~$X$ by removing a small open $4$-ball.
Note that~$M_{U,g}=\Sigma_g \times S^1$ and that~$Q_N=Q_X$.
Since the Blanchfield form of~$U$ is trivial, Theorem~\ref{thm:SurfacesWithBoundary} applies; this shows us that item~$(1)$ in Theorem \ref{thm:SurfacesClosed} is equivalent to the existence of a unique (up to equivalence) genus~$g$ surface~$\Sigma \subset N$ with boundary~$U$ and equivariant intersection form~$\lambda$, in other words:
$$|\operatorname{Surf(g)}_\lambda(N,U)|=1.$$
Pick an embedded $4$-ball $B \subset X$ so that $N=X\setminus \mathring{B}$ and~$U \subset \partial N \cong S^3.$
Capping off a representative of the unique element in~$\operatorname{Surf(g)}_\lambda(N,U)$ is now readily seen to give an element of $\operatorname{Surf(g)}_\lambda(X)$.
Since~\cite[Theorem 1.4]{ConwayPowell} shows that~$|\operatorname{Surf(g)}_\lambda(X)| \in \{0,1\}$, we deduce that~$|\Surf(g)_\lambda(X)|=1$, as required.
%
%
%
%
\end{proof}

\subsection{Problems and open questions}
\label{sub:OpenQuestions}

We conclude with some problems in the theory of~$\Z$-surfaces, both in the closed case and in the case with boundary.
In what follows, we set
$$\mathcal{H}_2:=\begin{pmatrix}
0&t-1 \\ t^{-1}-1&0
\end{pmatrix}.$$
We start with closed surfaces in closed manifolds where the statements are a little cleaner.
\begin{problem}
\label{prob:Closed}
Fix a closed, simply-connected 4-manifold~$X$.
 Characterise the nondegenerate Hermitian forms~$(H,\lambda)$ over~$\Z[t^{\pm 1}]$ that arise as~$\lambda_{X_\Sigma}$ where~$\Sigma \subset X$ is a closed~$\Z$-surface of genus~$g$.
\end{problem}

It is known that if~$\lambda$ is as in Problem~\ref{prob:Closed}, then it must present~$\Sigma_g \times S^1$, that~$\lambda(1) \cong Q_X  \oplus (0)^{\oplus 2g}$ and that~$\lambda \oplus \mathcal{H}_2^{\oplus n} \cong Q_X \oplus \mathcal{H}_2^{\oplus (g+n)}$ for some~$n\geq  0$.
The necessity of the first two conditions was mentioned in Subsection~\ref{sub:Closed} while the necessity of third was proved in~\cite[Proposition 1.6]{ConwayPowell}.

Here is what is known about Problem~\ref{prob:Closed}:
\begin{itemize}
\item  if~$X=S^4$ and~$g\neq 1,2$,  then~$\lambda \cong  \mathcal{H}_2^{\oplus g}$~\cite[Section 7]{ConwayPowell};
\item for $X=\C P^2$ and $g=0$,  the equivariant intersection form is necessarily the form $(x,y)\mapsto x\overline{y}$ and it follows that $\Z$-spheres in $X$ are unique up to isotopy~\cite[Proposition A.1]{ConwayOrson};
\item if~$b_2(X) \geq |\sigma(X)| +6$, then~\cite[Theorem 7.2]{Sunukjian} implies that~$\lambda \cong Q_X \oplus \mathcal{H}_2^{\oplus g}$.
\end{itemize}

This leads to the following question, a positive answer  to which would solve Problem~\ref{prob:Closed}.
\begin{question}
\label{question:Closed}
Let~$X$ be a closed simply-connected~$4$-manifold and let~$(H,\lambda)$ be a nondegenerate Hermitian form  over~$\Z[t^{\pm 1}]$.
Is it the case that if~$\lambda$ presents~$\Sigma_g \times S^1$,~$\lambda(1) \cong Q_X  \oplus (0)^{\oplus 2g}$ and~$\lambda \oplus \mathcal{H}_2^{\oplus n} \cong Q_X \oplus \mathcal{H}_2^{\oplus (g+n)}$  for some~$n\geq  0$, then~$\lambda \cong Q_X \oplus \mathcal{H}_2^{\oplus g}$?
\end{question}
If the answer to Question~\ref{question:Closed} were positive, then using Theorem~\ref{thm:SurfacesClosed} one could completely classify closed~$\Z$-surfaces in closed simply-connected~$4$-manifolds: for every~$g \geq 0$, in a closed simply-connected~$4$-manifold~$X$, there would exist a unique~$\Z$-surface of genus~$g$ in~$X$ up to equivalence.

Next, we discuss the analogous (but more challenging) problem for surfaces with boundary.
\begin{problem}
\label{prob:Boundary}
Fix a simply-connected 4-manifold~$N$ with boundary $S^3$.
Characterise the nondegenerate Hermitian forms~$(H,\lambda)$ over~$\Z[t^{\pm 1}]$ that arise as~$\lambda_{N_\Sigma}$ where~$\Sigma \subset N$ is a~$\Z$-surface of genus~$g$ with boundary a fixed knot~$K$.
For brevity, we call such forms~$(N,K,g)$-\emph{realisable}.
\end{problem}

It is known that if~$\lambda$ is~$(N,K,g)$-realisable, then it must present~$M_{K,g}$,  satisfy~$\lambda(1) \cong Q_N  \oplus (0)^{\oplus 2g}$ as well as~$\lambda \oplus \mathcal{H}_2^{\oplus n} \cong Q_N \oplus \mathcal{H}_2^{\oplus (g+n)}$ for some~$n\geq  0$.
The necessity of the first two conditions was mentioned in Subsection~\ref{sub:Boundary} while the necessity of third was proved in~\cite[Proposition~1.6]{ConwayPowell}.

Here is what is known about Problem~\ref{prob:Boundary}:
\begin{itemize}
  \item if~$N=D^4, g\neq 1,2$ and~$K$ has Alexander polynomial one, then~$\lambda \cong  \mathcal{H}_2^{\oplus g}$~\cite[Section~7]{ConwayPowell};
  \item  for~$N=\C P^2 \setminus \mathring{D}^4$ and~$g=0$, the equivariant intersection form $\lambda$ is necessarily the form~$(x,y) \mapsto x\Delta_K\overline{y}$.
  After this article appeared, the classification~$\Z$-discs in~$\C P^2 \setminus \mathring{D}^4$ was studied in~\cite{ConwayDaiMiller}.
\end{itemize}

We conclude by listing consequences of further solutions to Problem~\ref{prob:Boundary}.
\begin{enumerate}
\item Using Theorem~\ref{thm:SurfacesRelBoundary},  a solution to Problem~\ref{prob:Boundary} would make it possible to fully determine the classification of properly embedded~$\Z$-surfaces in a simply-connected~$4$-manifold~$N$ with  boundary~$S^3$ up to equivalence rel.\ boundary: for every~$g \geq 0$, there would be precisely one~$\Z$-surface of genus~$g$ in~$N$ with boundary~$K$ for every element of~$\Aut(\Bl_K)/\Aut (\lambda)$, where~$\lambda$ ranges across all~$(N,K,g)$-realisable forms.
\item  If one dropped the rel.\ boundary condition, then one might conjecture that for every~$g \geq 0$, in a simply-connected~$4$-manifold~$N$ with boundary~$S^3$, there is precisely one~$\Z$-surface of genus~$g$ with boundary~$K$ for every element of~$\Aut(\partial \lambda)/\left( \Aut (\lambda) \times \Homeo^+(E_K,\partial)\right)$, where~$\lambda$ ranges across~$(N,K,g)$-realisable forms.
If the conjecture were true, then a solution to Problem~\ref{prob:Boundary} would provide a complete description of the set of properly embedded~$\Z$-surfaces in a simply-connected~$4$-manifold~$N$ with  boundary~$S^3$, up to equivalence.
\end{enumerate}

\section{Ubiquitous exotica}\label{sec:ubiq}
In this section we demonstrate the failure of our topological classification to hold in the smooth setting.
In Subsection \ref{sub:exoticbackground} we set up some preliminaries we will require about Stein 4-manifolds and corks.
In Subsection \ref{sub:exoticproofs} we give the proofs of Theorems \ref{thm:exoticmanifolds} and \ref{thm:exoticdiscs} from the introduction.
 In this section,  all manifolds and embeddings are understood to be smooth.

\subsection{Background on Stein structures and corks}\label{sub:exoticbackground}
We will be concerned with arranging that certain  compact 4-manifolds with boundary admit a Stein structure.
The unfamiliar reader can think of this as a particularly nice symplectic structure. Abusively, we will say that any smooth 4-manifold which admits a Stein structure is Stein.
The reason for this sudden foray into geometry is to take advantage of restrictions on the genera of smoothly embedded surfaces representing certain homology classes in Stein manifolds.
These restrictions will aid us in demonstrating that two 4-manifolds are not diffeomorphic.
In this section, we will recall both a combinatorial condition for ensuring that a 4-manifold is Stein and the  restrictions on smooth representatives of certain homology classes in Stein manifolds.
We use the conventions and setup of \cite{GompfStein} throughout.

\begin{figure}\center
\def\svgwidth{.4\linewidth}
\begingroup%
  \makeatletter%
  \providecommand\color[2][]{%
    \errmessage{(Inkscape) Color is used for the text in Inkscape, but the package 'color.sty' is not loaded}%
    \renewcommand\color[2][]{}%
  }%
  \providecommand\transparent[1]{%
    \errmessage{(Inkscape) Transparency is used (non-zero) for the text in Inkscape, but the package 'transparent.sty' is not loaded}%
    \renewcommand\transparent[1]{}%
  }%
  \providecommand\rotatebox[2]{#2}%
  \newcommand*\fsize{\dimexpr\f@size pt\relax}%
  \newcommand*\lineheight[1]{\fontsize{\fsize}{#1\fsize}\selectfont}%
  \ifx\svgwidth\undefined%
    \setlength{\unitlength}{294.66419851bp}%
    \ifx\svgscale\undefined%
      \relax%
    \else%
      \setlength{\unitlength}{\unitlength * \real{\svgscale}}%
    \fi%
  \else%
    \setlength{\unitlength}{\svgwidth}%
  \fi%
  \global\let\svgwidth\undefined%
  \global\let\svgscale\undefined%
  \makeatother%
  \begin{picture}(1,0.43214134)%
    \lineheight{1}%
    \setlength\tabcolsep{0pt}%
    \put(0,0){\includegraphics[width=\unitlength,page=1]{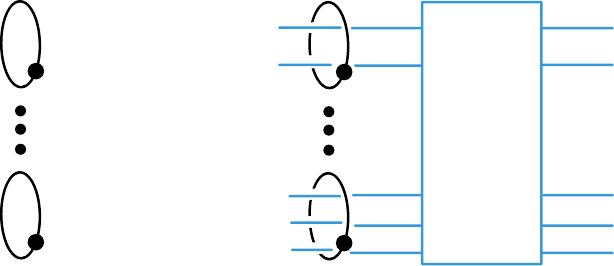}}%
    \put(0.75791972,0.19572261){\color[rgb]{0.20392157,0.59607843,0.85882353}\makebox(0,0)[lt]{\lineheight{1.25}\smash{\begin{tabular}[t]{l}$T$\end{tabular}}}}%
  \end{picture}%
\endgroup%

\caption{The left hand side shows a handle diagram for a boundary connected sum of~$S^1 \times D^3$.
On the right hand side, the tangle diagram~$T$ satisfies the conventions of a front diagram. }
\label{fig:gompfform}
\end{figure}

We begin by recalling a criterion to ensure that a handle diagram with an unique 0-handle and no 3 or 4-handles describes a Stein 4-manifold.
Recall that we can describe~$\natural_{i=1}^r S^1\times B^3$ using the dotted circle notation for 1-handles as in the left frame of Figure~\ref{fig:gompfform}. It is not hard to show that any link in~$\#_{i=1}^r S^1\times S^2$ can be isotoped into the position shown in the right frame of Figure \ref{fig:gompfform}, where inside the tangle marked~$T$ we require that the diagram meet the conventions of a front diagram for the standard contact structure on~$S^3$.
For details on front diagrams, see \cite{EtnyreLec}; stated briefly this amounts to isotoping the diagram so that all vertical tangencies are replaced by cusps and so that at each crossing the more negatively sloped strand goes over.
 We note that front diagrams require oriented links; we can choose orientations on our 2-handle attaching spheres arbitrarily,  since orienting the link does not affect the 4-manifold.
 Thus any handle diagram with a unique 0-handle and no 3 or 4-handles can be isotoped into the form of the right frame of Figure~\ref{fig:gompfform}; we say that such a diagram is in \emph{Gompf standard form}.

For a diagram in Gompf standard form, let~$L_i^T$ denote the tangle diagram obtained by restricting the~$i^{th}$ component~$L_i$ of the diagram of~$L$ to~$T$.
For a diagram in Gompf standard form, the \textit{Thurston-Bennequin number}~$TB(L_i)$ of~$L_i$  is defined as
$$TB(L^D_i)=w(L^T_i)-c(L_i^T)$$
where~$w(L_i^T)$ denotes the writhe of the tangle and~$c(L_i^T)$ denotes the number of left cusps.

In this setup,  the following criterion is helpful to prove that handlebodies are Stein.

\begin{theorem}[\cite{Eliash,GompfStein}, see also Theorem 11.2.2 of \cite{GSbook}]\label{thm:Stein}
A smooth 4-manifold~$X$ with boundary is Stein if and only if it admits a handle diagram in Gompf standard form such that the framing~$f_i$ on each 2-handle attaching curve~$L_i$ has~$f_i=TB(L_i)-1$.
\end{theorem}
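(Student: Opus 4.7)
The plan is to prove the two directions of this biconditional separately, both of which go back to foundational work of Eliashberg with the handlebody-theoretic reformulation carried out by Gompf. The overall strategy on each side is to pass between the smooth handle-theoretic picture (handles attached along framed links in Gompf standard form) and the complex-geometric picture (sublevel sets of strictly plurisubharmonic Morse functions).

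For the ``if'' direction, I would argue by induction on the number of $2$-handles. The base case is that $\natural^r S^1 \times D^3$ admits a Stein structure: the standard $4$-ball $B^4 \subset \mathbb{C}^2$ is Stein as a sublevel set of the plurisubharmonic function $|z|^2$, and $1$-handles can be attached in the Stein category (as in Weinstein's construction, which in complex dimension $2$ amounts to attaching a $1$-handle along a pair of standard balls in the boundary of the existing Stein domain). Now the inductive step rests on the crucial theorem of Eliashberg: if $W$ is a Stein domain and $L \subset \partial W$ is a Legendrian knot in the induced contact structure on $\partial W$, then one can attach a $4$-dimensional $2$-handle along $L$ with framing $\TB(L) - 1$ and extend the Stein structure over the resulting $4$-manifold. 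Applying this inductively to the $2$-handles described by the links $L_i$ (which are Legendrian in $\partial(\natural^r S^1 \times D^3)$ by virtue of the front-diagrammatic description) with framing $f_i = \TB(L_i) - 1$, one produces a Stein structure on $X$.

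For the ``only if'' direction, start with a Stein structure $J$ on $X$. By definition (or equivalent characterisation), this gives a strictly $J$-plurisubharmonic exhausting function $\phi \colon X \to \mathbb{R}$ that may be perturbed to be Morse with boundary as a regular level set. The key observation is that at every critical point of $\phi$, the Hessian is positive on complex lines in the tangent space, so every Morse index is at most $\tfrac{1}{2}\dim_\mathbb{R} X = 2$. This yields a handle decomposition of $X$ with only $0$-, $1$-, and $2$-handles. Moreover, the intermediate sublevel sets $\{\phi \le c\}$ inherit contact structures on their boundaries from $J$, and the attaching circle of a $2$-handle is (after a small perturbation within its isotopy class) a Legendrian knot whose contact framing determined by $\TB$ differs from the smooth framing with which the handle is attached by exactly $-1$. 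Rearranging the resulting handle diagram into Gompf standard form is then a purely combinatorial manipulation: any link in $\#^r S^1 \times S^2$ can be isotoped to meet the front-diagram conventions inside a tangle box $T$, and this isotopy preserves both the smooth framing and the quantity $\TB(L_i)$.

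The main obstacle --- and the real content --- is Eliashberg's theorem on Stein $2$-handle attachment with framing $\TB - 1$, which is a delicate piece of contact/symplectic geometry. Everything else is either the base case, smooth handle calculus, or a Morse-theoretic consequence of the plurisubharmonicity constraint on Hessians. Once Eliashberg's attachment theorem is granted, both directions come together cleanly, and the combinatorial step of putting diagrams into Gompf standard form is elementary but essential for making the statement algorithmically checkable.
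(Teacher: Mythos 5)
This is a background result that the paper cites from Eliashberg and Gompf (see also Gompf--Stipsicz, Theorem 11.2.2) and does not prove, so there is no in-paper argument to compare against. Your outline correctly reproduces the standard proof from the literature: the ``if'' direction is induction on $2$-handles using Eliashberg's theorem that a Weinstein/Stein $2$-handle can be attached along a Legendrian knot with framing $TB(L_i)-1$, starting from the Stein structure on $\natural^r S^1\times D^3$; the ``only if'' direction uses that a strictly plurisubharmonic Morse function has critical points of index at most $2$ and that the attaching circles of the $2$-handles are Legendrian with smooth framing equal to the contact framing minus $1$. Two points in your sketch deserve more care if this were written out in full. First, in the ``only if'' direction you need to identify the contact structure induced on the boundary $\#^r S^1\times S^2$ of the subhandlebody of index $\le 1$ with the \emph{standard} tight contact structure before Gompf standard form and its combinatorial formula $TB = w - c$ apply; this uses Eliashberg's classification of tight contact structures on $\#^r S^1\times S^2$ (or Gompf's direct identification). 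Second, the isotopy taking the attaching link into Gompf standard form must be a \emph{Legendrian} isotopy in order to preserve the contact framing and hence $TB(L_i)$ --- an arbitrary smooth isotopy into standard form would not do --- and the fact that every Legendrian link can be so normalised is precisely Gompf's contribution. With those caveats your argument is the standard one, and the paper in any case only uses the ``if'' direction (together with the stabilisation trick of Remark~\ref{rem:largeTB}) to certify that explicit handlebodies are Stein.
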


\begin{figure}[!htbp]
\center
\def\svgwidth{.25\linewidth}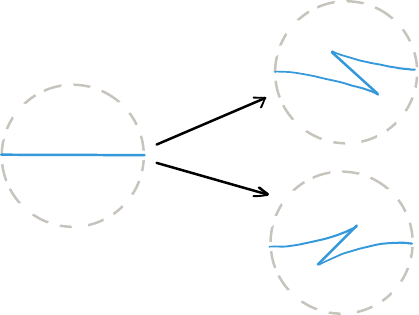
\caption{Stabilising a front diagram.}
\label{fig:stab}
\end{figure}

\begin{remark}\label{rem:largeTB}
The `if' direction of the Theorem \ref{thm:Stein} holds under the weaker hypothesis that each 2-handle attaching curve~$L_i$ has~$f_i\le TB(L_i)-1$.
To see this, observe that any 2-handle~$L_i$ can be locally isotoped via the \textit{stabilisations} demonstrated in Figure~$\ref{fig:stab}$ and observe that stabilisation preserves the condition on~$T$ and lowers the Thurston-Bennequin number of~$L_i$ by one.
The claim now follows since we can stabilise any 2-handle in a diagram in Gompf standard form to lower its Thurston-Bennequin number without changing the smooth 4-manifold described.
\end{remark}

We will also make use of the following special case of the adjunction inequality for Stein manifolds.

\begin{theorem}[\cite{LiscaMatic}]\label{thm:adjunct}
In a Stein manifold~$X$,  any homology class~$\alpha\in H_2(X)$ with~$\alpha\cdot\alpha =-1$ cannot be represented by a smoothly embedded sphere.
\end{theorem}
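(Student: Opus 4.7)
My plan is to deduce the result from Seiberg-Witten theory applied to a closed symplectic $4$-manifold obtained by capping off $X$. First I would use the symplectic cap construction from \cite{LiscaMatic} (building on Eliashberg's characterisation of Stein manifolds) to embed $X$ into a closed symplectic $4$-manifold $X^+$ with $b^+(X^+) > 1$, so that its Seiberg-Witten invariants are well-defined. The Stein structure $J$ on $X$ extends across the cap to a symplectic structure on $X^+$; I denote by $\mathfrak{s}_\omega$ the canonical $\mathrm{Spin}^c$ structure associated with this symplectic form.

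Next, by Taubes' theorem identifying Seiberg-Witten and Gromov invariants on symplectic $4$-manifolds with $b^+ > 1$, one has $\mathrm{SW}_{X^+}(\mathfrak{s}_\omega) \neq 0$, so $K := c_1(\mathfrak{s}_\omega)$ is a Seiberg-Witten basic class of $X^+$. I would then argue by contradiction: supposing that $\alpha \in H_2(X) \subset H_2(X^+)$ with $\alpha \cdot \alpha = -1$ is represented by a smoothly embedded $2$-sphere $\Sigma$, I would invoke the generalised adjunction inequality for Seiberg-Witten basic classes in the form valid for embedded spheres of negative self-intersection (a standard consequence of the blow-up formula): for any basic class $K'$ and any smoothly embedded $2$-sphere $\Sigma$ with $[\Sigma]\cdot[\Sigma] < 0$,
\[ |K' \cdot [\Sigma]| + [\Sigma] \cdot [\Sigma] \leq -2.\]
Applied to our $K$ with $[\Sigma] \cdot [\Sigma] = -1$, this would force $|K \cdot [\Sigma]| \leq -1$, which is impossible, yielding the desired contradiction.

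The hard part will be the symplectic cap construction: producing a closed symplectic $X^+$ containing $X$ with $b^+(X^+) > 1$. This is the central input of \cite{LiscaMatic}, and is typically achieved by first capping $\partial X$ with a concave symplectic filling (supplied by an open book compatible with the induced contact structure on $\partial X$) and then performing a symplectic sum with a sufficiently large closed symplectic $4$-manifold to raise $b^+$ above $1$. Once this cap is in hand, Taubes' non-vanishing theorem and the generalised adjunction inequality make the remainder essentially mechanical, so the heart of the argument is symplectic-topological rather than purely gauge-theoretic. An alternative route, avoiding closed Seiberg-Witten theory, would proceed via the Ozsv\'ath-Szab\'o contact invariant in Heegaard Floer homology together with relative adjunction, but given the cited reference the symplectic cap approach is the natural one.
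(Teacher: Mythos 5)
Your overall strategy (embed $X$ into a closed manifold and run gauge theory there) is the right one, and it is essentially what the cited references do, but the final step contains a genuine error: the ``generalised adjunction inequality'' you invoke, namely $|K'\cdot[\Sigma]|+[\Sigma]\cdot[\Sigma]\le -2$ for smoothly embedded spheres of negative self-intersection, is false. The exceptional sphere $E$ in any blow-up $Z\#\overline{\C P}^2$ of a symplectic manifold $Z$ with $b^+>1$ has $E\cdot E=-1$, the blow-up is symplectic with nontrivial Seiberg--Witten invariants, and its basic classes $K\pm E$ satisfy $|(K\pm E)\cdot E|+E\cdot E=1-1=0>-2$. In fact your inequality can never hold for a $(-1)$-sphere: since $c_1$ of any $\mathrm{Spin}^c$ structure is characteristic, $K'\cdot[\Sigma]\equiv[\Sigma]^2\equiv 1\pmod 2$, so $|K'\cdot[\Sigma]|\ge 1$ automatically. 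Thus ``symplectic cap with $b^+>1$ plus Taubes nonvanishing'' cannot suffice, since blow-ups satisfy those hypotheses and do contain essential $(-1)$-spheres. The adjunction inequality for negatively embedded surfaces (Ozsv\'ath--Szab\'o) genuinely requires $g\ge 1$.

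What the actual argument needs, and what the paper's citation supplies, is \emph{minimality} of the cap. Lisca--Mati\'c's Theorem~3.2 embeds the Stein domain $X$ into a \emph{minimal} K\"ahler surface $S$ of general type with $b^+(S)>1$. For such $S$ the only basic classes are $\pm K_S$ with $K_S^2>0$ (Friedman--Morgan, Brussee), and one then rules out an essential $(-1)$-sphere class $E$ as follows: the reflection $r_E$ is realised by a self-diffeomorphism of $S$, so it permutes $\{K_S,-K_S\}$; the case $r_E(K_S)=K_S$ forces $K_S\cdot E=0$, contradicting the characteristic condition, while $r_E(K_S)=-K_S$ forces $K_S$ to be proportional to $E$, contradicting $K_S^2>0$. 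If you want to salvage your write-up, replace the incorrect inequality by this minimality argument (or simply quote \cite[Theorems~1.2 and 1.3]{AkbulutMatveyev}, where exactly this is carried out).
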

\begin{proof}
The proof can be deduced by combining~\cite[Theorem 3.2]{LiscaMatic} with~\cite{Brussee,FriedmanMorgan}; further exposition can be found in~\cite[Theorems 1.2 and 1.3]{AkbulutMatveyev}.
\end{proof}

\begin{figure}[!htbp]
\center
\def\svgwidth{.75\linewidth}
\begingroup%
  \makeatletter%
  \providecommand\color[2][]{%
    \errmessage{(Inkscape) Color is used for the text in Inkscape, but the package 'color.sty' is not loaded}%
    \renewcommand\color[2][]{}%
  }%
  \providecommand\transparent[1]{%
    \errmessage{(Inkscape) Transparency is used (non-zero) for the text in Inkscape, but the package 'transparent.sty' is not loaded}%
    \renewcommand\transparent[1]{}%
  }%
  \providecommand\rotatebox[2]{#2}%
  \newcommand*\fsize{\dimexpr\f@size pt\relax}%
  \newcommand*\lineheight[1]{\fontsize{\fsize}{#1\fsize}\selectfont}%
  \ifx\svgwidth\undefined%
    \setlength{\unitlength}{425.64898682bp}%
    \ifx\svgscale\undefined%
      \relax%
    \else%
      \setlength{\unitlength}{\unitlength * \real{\svgscale}}%
    \fi%
  \else%
    \setlength{\unitlength}{\svgwidth}%
  \fi%
  \global\let\svgwidth\undefined%
  \global\let\svgscale\undefined%
  \makeatother%
  \begin{picture}(1,0.1459456)%
    \lineheight{1}%
    \setlength\tabcolsep{0pt}%
    \put(0,0){\includegraphics[width=\unitlength,page=1]{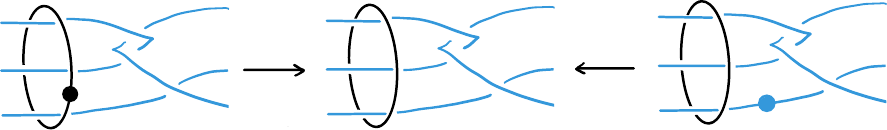}}%
    \put(0.28986421,0.07720293){\color[rgb]{0,0,0}\makebox(0,0)[lt]{\lineheight{1.25}\smash{\begin{tabular}[t]{l}$\delta$\end{tabular}}}}%
    \put(0.67165116,0.07992998){\color[rgb]{0,0,0}\makebox(0,0)[lt]{\lineheight{1.25}\smash{\begin{tabular}[t]{l}$\delta'$\end{tabular}}}}%
    \put(0.36349452,0.02675249){\color[rgb]{0,0,0}\makebox(0,0)[lt]{\lineheight{1.25}\smash{\begin{tabular}[t]{l}$0$\end{tabular}}}}%
    \put(0.55984206,0.0062996){\color[rgb]{0.20392157,0.59607843,0.85882353}\makebox(0,0)[lt]{\lineheight{1.25}\smash{\begin{tabular}[t]{l}$0$\end{tabular}}}}%
    \put(0.19441742,0.0062996){\color[rgb]{0.20392157,0.59607843,0.85882353}\makebox(0,0)[lt]{\lineheight{1.25}\smash{\begin{tabular}[t]{l}$0$\end{tabular}}}}%
    \put(0.74142132,0.03164004){\color[rgb]{0,0,0}\makebox(0,0)[lt]{\lineheight{1.25}\smash{\begin{tabular}[t]{l}$0$\end{tabular}}}}%
  \end{picture}%
\endgroup%

\caption{Two fillings of the boundary of the Akbulut cork, with boundary homeomorphism~$\delta'^{-1}\circ \delta$.
Here and throughout the rest of the paper, all handle diagrams drawn in this horizontal format should be braid closed.
}
\label{fig:cork}
\end{figure}

In order to handily construct pairs of homeomorphic 4-manifolds, we will make use of \textit{cork twisting}. Define~$C$ to be the contractible 4-manifold in the left frame of Figure~\ref{fig:cork}, which is commonly refered to as the \textit{Akbulut cork}. Observe that~$\partial C$ admits another contractible filing~$C'$ given by the right frame of Figure~\ref{fig:cork}, and that there is a natural homeomorphism~$\tau:=(\delta')^{-1} \circ \delta \colon \partial C \to \partial C'$ demonstrated in the figure.
Using the work of Freedman \cite{Freedman}, the homeomorphism~$\tau$ extends to a homeomorphism~$T \colon C\to C'$.
 As a result, for any 4-manifold~$W$ with~$\iota \colon C\hookrightarrow W$, one can construct a new~$4$-manifold~$W':= W\smallsetminus \iota(C)\cup _{(\iota|_\partial) \circ \tau^{-1}} C'$ and,  combining the identity homeomorphism~$\id_{W\smallsetminus \iota(C)}$ with~$T$,  one sees that~$W$ and~$W'$ are homeomorphic.

Historically, the literature has been concerned with two types of exotic phenomena. If smooth 4-manifolds~$X,X'$ with boundary admit a homeomorphism~$F \colon X\to X'$ but no diffeomorphism $G \colon X\to X'$ such that~$G|_\partial$ is isotopic to~$F|_\partial$, we call~$X$ and~$X'$ \textit{relatively exotic}.  If smooth 4-manifolds~$X,X'$ admit a homeomorphism $F \colon X\to X'$ but no diffeomorphism $G \colon X\to X'$ we  call~$X$ and~$X'$ \textit{absolutely exotic}.
It is easier to build relatively exotic pairs in practice. Fortunately, work of Akbulut and Ruberman shows that all relative exotica contains absolute exotica.

\begin{theorem}[Theorem A of \cite{AR16}]\label{thm:absolute}
Let $M$ and $M'$ be smooth 4-manifolds and let $F \colon M\to M'$ be a homeomorphism whose restriction to the boundary is a diffeomorphism that does not extend to a diffeomorphism $M\to M'$.
Then $M$ $($resp.\ $M')$ contains a smooth
codimension $0$ submanifold~$V$ $($resp.\ $V')$ which is orientation-preserving homotopy equivalent to $M$ $($resp.\ $M')$ such that $V$ is homeomorphic but not diffeomorphic to $V'$.
\end{theorem}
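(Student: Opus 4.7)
The plan is to start from the relatively exotic pair $(M,M',F)$ and construct codimension $0$ submanifolds $V \subset M$ and $V' \subset M'$ by carefully modifying $M$ and $M'$ in their interiors. The basic strategy is to ensure two things: first, that $V$ (resp.\ $V'$) is orientation-preserving homotopy equivalent to $M$ (resp.\ $M'$), so that the exotic phenomenon descends; second, that the absolute diffeomorphism type of $V$ and $V'$ controls the relative diffeomorphism type of $M$ and $M'$ rel.\ $F|_\partial$, so that an absolute diffeomorphism $V \cong V'$ would contradict the hypothesis.

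First, I would appeal to a relative version of the cork theorem of Curtis-Freedman-Hsiang-Stong and Matveyev: since $F$ is a homeomorphism that is a diffeomorphism on $\partial M$, the manifolds $M$ and $M'$ differ by a cork twist in the interior. Concretely, there exists a compact contractible submanifold $C \subset \mathring{M}$ and an involution $\tau\colon \partial C \to \partial C$ such that $M' \cong (M \setminus \mathring{C}) \cup_\tau C$ via a diffeomorphism extending $F|_\partial$. This localises the exotic phenomenon to a contractible piece in the interior.

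Next, I would build $V \subset M$ by removing from $M$ a suitable open tubular neighbourhood of a carefully chosen codimension $\geq 1$ submanifold (for instance, an arc or a small $4$-ball) situated so that removing it does not change the homotopy type, but does eliminate the ``extra'' self-diffeomorphisms of the boundary collar $\partial M$ that let one modify $F|_\partial$. Define $V' \subset M'$ symmetrically using the identification from the cork decomposition. By construction, $V$ and $V'$ will be homotopy equivalent to $M$ and $M'$ respectively, and the homeomorphism $F$, restricted to $V$ and used in conjunction with the Freedman homeomorphism $T\colon C \to C'$, produces a homeomorphism $V \cong V'$.

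The main obstacle, and the crux of the argument, is the rigidity step: one must show that any hypothetical diffeomorphism $G\colon V \to V'$ is, after a controlled isotopy, forced to respect the decomposition into $M \setminus \mathring{C}$ and the cork piece, and hence to extend to a diffeomorphism $M \to M'$ whose boundary restriction is isotopic to $F|_\partial$, contradicting the relative exoticness hypothesis. This requires an argument along the lines of uniqueness of embeddings of the removed neighbourhood, combined with the Edwards-Kirby isotopy extension theorem to promote a diffeomorphism of pairs to a global diffeomorphism; some work is needed to ensure that the boundary diffeomorphism of the cork region, together with the identification on $\partial M$, assembles to $F|_\partial$ up to isotopy. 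Once this rigidity is in place, the conclusion that $V$ and $V'$ are not diffeomorphic follows immediately from the hypothesis that $F|_\partial$ does not extend to a diffeomorphism $M \to M'$.
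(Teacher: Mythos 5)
This statement is Theorem~A of Akbulut--Ruberman, which the paper quotes and does not reprove, so the comparison here is with the actual argument of \cite{AR16}. Your proposal has a genuine gap, and in fact the overall strategy points in the wrong direction. The obstruction to upgrading relative exotica to absolute exotica lives entirely on the boundary: a diffeomorphism $G\colon M\to M'$ could exist whose restriction to $\partial M$ is simply not isotopic to $F|_\partial$, i.e.\ the problem is the potentially large mapping class group of the $3$-manifold $\partial M$. Removing tubular neighbourhoods of arcs or balls from the \emph{interior} of $M$ leaves $\partial M$ completely untouched, so it cannot ``eliminate the extra self-diffeomorphisms of the boundary'' as you claim; moreover, removing an open $4$-ball does change the homotopy type (e.g.\ $D^4\setminus \mathring{B}^4\simeq S^3$), so the submanifolds you produce would not even be homotopy equivalent to $M$. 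Your final ``rigidity step'' — that any diffeomorphism $V\to V'$ must respect the decomposition and hence extend to a diffeomorphism $M\to M'$ isotopic to $F|_\partial$ on the boundary — is precisely the entire content of the theorem, and no mechanism is offered for it; diffeomorphisms of $4$-manifolds do not in general preserve cork decompositions, and the relative cork theorem you invoke at the start localises the difference between $M$ and $M'$ but says nothing about absolute diffeomorphisms.

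The actual argument goes the other way: one \emph{enlarges} $M$ rather than excavating it. Akbulut--Ruberman construct an invertible homology cobordism $W$ from $\partial M$ to a hyperbolic $3$-manifold $Y$ with trivial mapping class group, built so that the inclusion $M\hookrightarrow M\cup_{\partial M}W$ is an (orientation-preserving) homotopy equivalence. Setting $V:=M\cup W$ and $V':=M'\cup W$, invertibility gives a cobordism $W'$ with $W\cup_Y W'\cong \partial M\times[0,1]$, so $V\cup_Y W'\cong M$ exhibits $V$ as a codimension-$0$ submanifold of $M$ (and likewise for $V'$). The homeomorphism $F\cup\id_W$ shows $V\cong V'$ topologically. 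If $G\colon V\to V'$ were a diffeomorphism, its restriction to $\partial V=Y$ would be isotopic to the identity by the choice of $Y$, and gluing $G$ to $\id_{W'}$ would produce a diffeomorphism $M\to M'$ restricting on $\partial M$ to something isotopic to $F|_\partial$, contradicting the hypothesis. Note that this boundary-rigidification idea is exactly the one echoed in Section~\ref{sec:NonTrivialbAut} of the paper, where hyperbolic $3$-manifolds with trivial mapping class group are produced by Dehn filling. If you want to salvage your write-up, the step you must replace is the interior excision: you need to modify $\partial M$ itself, and the invertible cobordism is the device that does this without changing the homotopy type or losing the codimension-$0$ embedding into $M$.
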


If $\partial M$ and $\partial M'$ are nonempty, then $V$ and $V'$ necessarily also have nonempty boundaries since they are codimension zero submanifolds of manifolds with boundary.
We remark that Akbulut-Ruberman's theorem is only stated when $M$ is diffeomorphic to $M'$ (hence by applying a reference identification, they can in fact just call both manifolds $M$). However their proof works verbatim when $M$ and $M'$ are just homeomorphic smooth manifolds, which is the hypothesis we take above.

\subsection{Proof of Theorems \ref{thm:exoticmanifolds} and \ref{thm:exoticdiscs}}\label{sub:exoticproofs}

\black We prove Theorem~\ref{thm:exoticmanifolds} from the introduction, which for convenience
we state again here in more detail:

\begin{theorem}
\label{thm:ExoticmanifoldsNotIntro}
For every Hermitian form~$(H,\lambda)$ over~$\Z[t^{\pm 1}]$ there exists a pair of smooth $\Z$-manifolds~$M$ and~$M'$ with boundary and fundamental group~$\Z$, such that:
\begin{enumerate}
\item  there is a homeomorphism~$F \colon  M\to M'$;
\item   $F$ induces an isometry $\lambda_M \cong \lambda_{M'}$, and both forms are isometric to~$\lambda$;
\item  there is no diffeomorphism from~$M$ to~$M'$.
\end{enumerate}
In other words, every Hermitian form~$(H,\lambda)$ over~$\Z[t^{\pm 1}]$ is exotically realisable.
\end{theorem}

%
%

\begin{proof}
Let~$A(t)$ be a matrix representing the given form~$\lambda$, so that~$A(1)$ is an integer valued matrix. Choose any framed link~$L=\cup L_i\subset S^3$ with linking matrix~$A(1)$ and let~$M_1$ be the 4-manifold obtained from~$D^4$ by attaching~$A(1)_{ii}$-framed 2-handles to~$D^4$ along~$L_i$. Let~$M_2$ be the 4-manifold obtained from~$M_1$ by attaching a 1-handle (which we will think of as removing the tubular neighborhood of a trivial disc for an unknot split from~$L$).
Thus~$\pi_1(M_2)\cong \Z$ and both the integer valued intersection form~$Q_{M_2}$ and the equivariant intersection form~$\lambda_{M_2}$ are represented by a matrix for~$\lambda(1)$.

\begin{figure}[!htbp]
\center
\def\svgwidth{.5\linewidth}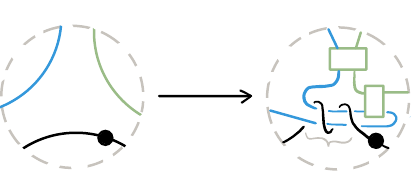
\caption{Arbitrary Hermitian forms can be realised as equivariant intersection forms by repeatedly performing the following local move, which we illustrate for~$k=2$.}
\label{fig:J'}
\end{figure}

Now we will modify the handle diagram of~$M_2$ in a way which will preserve the fundamental group and integer valued intersection form, but will result in an~$M_3$ with equivariant intersection form~$\lambda_{M_3}\cong\lambda$. For pairs~$i,j$ with~$i < j$,  for each monomial~$\ell t^k$ in the polynomial~$A(t)_{ij}$, perform the local modification exhibited in Figure \ref{fig:J'}.
Observe (for later use) that this move does not change the
framed link type of the link of attaching spheres of 2-handles.
Furthermore, the modification does not change the fundamental group or the integer valued intersection form of~$M_2$. We exhibit in Figure \ref{fig:coverex} what the cover looks like locally after the modification.

\begin{figure}[!htbp]
\center
\def\svgwidth{.99\linewidth}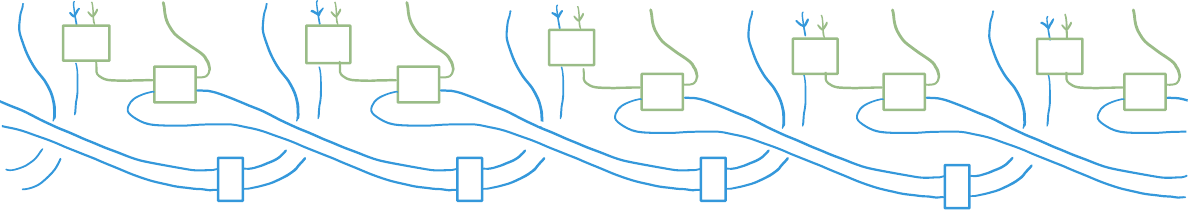
\caption{A local picture of the cover after our local modification with $k=2$. When $k>0$ the twist parameter $\epsilon$ is $1-k$, when $k<0$ it is $-k-1$.}\label{fig:coverex}
\end{figure}

Recall from Remark \ref{rem:EquivariantIntersections} that for elements~$[\widetilde{a}],[\widetilde{b}]\in H_2(M_2,\Z[t^{\pm1}])$ the equivariant intersection form satisfies
$$ \lambda_{M_2}([\widetilde{b}],[\widetilde{a}])=\sum_k (\widetilde{a} \cdot_{M_3^\infty} t^k \widetilde{b}  ) t^{-k}.$$
Thus we see that after each iteration of the local move we have that~$\lambda_{M_2'}(t)_{ij}=\lambda_{M_2}(t)_{ij}-\ell +\ell t^k$ and~$\lambda_{M_2'}(t)_{ji}=\lambda_{M_2}(t)_{ji}-\ell+\ell t^{-k}$.

For pairs~$i=j$, for each monomial~$\ell t^k$ with~$k>0$ in the polynomial~$A(t)_{ii}$, again perform the local modification in Figure \ref{fig:J'}. In this case, one finds that
\begin{equation}\label{eq:diagonalterms}
\lambda_{M_2'}(t)_{ii}=\lambda_{M_2}(t)_{ii}-2\ell +\ell t^k+\ell t^{-k}.
\end{equation}
The non-constant terms of~\eqref{eq:diagonalterms} are straightforward to deduce. The constant term is computed by considering a parallel of $\Hi$ downstairs which is 0-framed in the modification region, lifting the framing curve into the cover, and then computing the linking of the lift of the framing with $\widetilde{\Hi}$.

Once these modifications are complete, we obtain a 4-manifold~$M_3$ with~$\lambda_{M_3}$ agreeing with~$\lambda$ everywhere except \emph{a priori} on the constant terms of each~$A(t)_{ij}$.
Observe however that since these local modifications do not change the integer valued intersection form~$\lambda(1)$, we have that~$\lambda_{M_3}$ must also agree with~$\lambda$ on the constant terms of each~$A(t)_{ij}$. Thus, when we are finished, we have a smooth 4-manifold~$M_3$ with no $3$-handles,~$\pi_1(M_3) \cong \Z$ and~$\lambda_{M_3}\cong \lambda$.

\begin{figure}[!htbp]
\center
\def\svgwidth{.35\linewidth}
\begingroup%
  \makeatletter%
  \providecommand\color[2][]{%
    \errmessage{(Inkscape) Color is used for the text in Inkscape, but the package 'color.sty' is not loaded}%
    \renewcommand\color[2][]{}%
  }%
  \providecommand\transparent[1]{%
    \errmessage{(Inkscape) Transparency is used (non-zero) for the text in Inkscape, but the package 'transparent.sty' is not loaded}%
    \renewcommand\transparent[1]{}%
  }%
  \providecommand\rotatebox[2]{#2}%
  \newcommand*\fsize{\dimexpr\f@size pt\relax}%
  \newcommand*\lineheight[1]{\fontsize{\fsize}{#1\fsize}\selectfont}%
  \ifx\svgwidth\undefined%
    \setlength{\unitlength}{144.29059982bp}%
    \ifx\svgscale\undefined%
      \relax%
    \else%
      \setlength{\unitlength}{\unitlength * \real{\svgscale}}%
    \fi%
  \else%
    \setlength{\unitlength}{\svgwidth}%
  \fi%
  \global\let\svgwidth\undefined%
  \global\let\svgscale\undefined%
  \makeatother%
  \begin{picture}(1,0.40862685)%
    \lineheight{1}%
    \setlength\tabcolsep{0pt}%
    \put(0,0){\includegraphics[width=\unitlength,page=1]{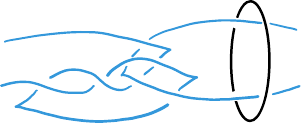}}%
    \put(0.0355963,0.30691709){\color[rgb]{0.20392157,0.59607843,0.85882353}\makebox(0,0)[lt]{\lineheight{1.25}\smash{\begin{tabular}[t]{l}$K$\end{tabular}}}}%
    \put(0.92289981,0.13484337){\color[rgb]{0,0,0}\makebox(0,0)[lt]{\lineheight{1.25}\smash{\begin{tabular}[t]{l}$0$\end{tabular}}}}%
  \end{picture}%
\endgroup%

\caption{The knot $K$ in $S^1 \times S^2$.
A handle diagram for the $4$-manifold $X$ is obtained from this diagram by dotting the black unknot and attaching a $0$-framed $2$-handle to $K$.
}
\label{fig:equivdouble}
\end{figure}

Next we will modify the 2-handles of our handle diagram ~$\sH$ of~$M_3$ to get a Stein 4-manifold~$M_4$ with the same fundamental group and equivariant intersection form as~$M_3$.
We will do this by getting the handle diagram into a form where we can apply Eliashberg's Theorem~\ref{thm:Stein}, which requires arranging that each 2-handle has a suitably large Thurston-Bennequin number.
To begin, isotope~$\sH$ into Gompf standard form, so that we think of the~$2$-handles of~$\sH$ as a Legendrian link in the standard tight contact structure on~$S^1\times S^2$. If any of the 2-handle attaching curves do not have any cusps, stabilise once so that they do.
Let~$A_3(t)$ be the equivariant linking matrix of~$\sH$; note that $A_3(t)=A(t)$ is a matrix representing the equivariant intersection form $\lambda$.
Let~$K$ be the knot in~$S^1\times S^2$ exhibited in Figure \ref{fig:equivdouble}.
Observe that if we use~$K$ to describe a 4-manifold~$X$ via attaching a~$0$-framed 2-handle to~$S^1\times B^3$ along~$K$, then~$\pi_1(X)\cong \Z$ and the equivariant intersection form~$\lambda_X$ is represented by the size one matrix~$(0)$.
Observe further that~$K$ has a Legendrian representative~$\mathcal{K}$ (illustrated in Figure~\ref{fig:equivdouble}) in the standard tight contact structure on~$S^1\times S^2$ with~$\TB(\mathcal{K})=1$.
In our handle diagram~$\sH$ of~$M_3$, let~$\mathring{K}$ be a copy of~$K$ in~$S^1\times S^2$ which is split from all of the 2-handles of~$\sH$,  as depicted in the left frame of Figure \ref{fig:connectsum}.

\begin{figure}[!htbp]
\center
\def\svgwidth{.85\linewidth}
\begingroup%
  \makeatletter%
  \providecommand\color[2][]{%
    \errmessage{(Inkscape) Color is used for the text in Inkscape, but the package 'color.sty' is not loaded}%
    \renewcommand\color[2][]{}%
  }%
  \providecommand\transparent[1]{%
    \errmessage{(Inkscape) Transparency is used (non-zero) for the text in Inkscape, but the package 'transparent.sty' is not loaded}%
    \renewcommand\transparent[1]{}%
  }%
  \providecommand\rotatebox[2]{#2}%
  \newcommand*\fsize{\dimexpr\f@size pt\relax}%
  \newcommand*\lineheight[1]{\fontsize{\fsize}{#1\fsize}\selectfont}%
  \ifx\svgwidth\undefined%
    \setlength{\unitlength}{512.02931213bp}%
    \ifx\svgscale\undefined%
      \relax%
    \else%
      \setlength{\unitlength}{\unitlength * \real{\svgscale}}%
    \fi%
  \else%
    \setlength{\unitlength}{\svgwidth}%
  \fi%
  \global\let\svgwidth\undefined%
  \global\let\svgscale\undefined%
  \makeatother%
  \begin{picture}(1,0.27704334)%
    \lineheight{1}%
    \setlength\tabcolsep{0pt}%
    \put(0,0){\includegraphics[width=\unitlength,page=1]{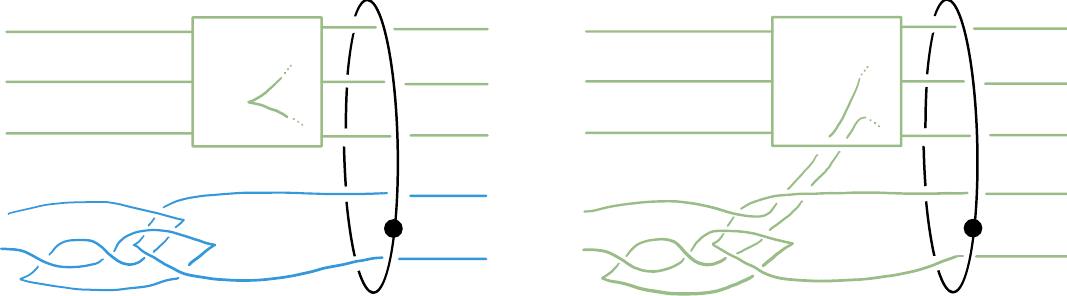}}%
    \put(0.02445311,0.12138328){\color[rgb]{0.60392157,0.7372549,0.52941176}\makebox(0,0)[lt]{\lineheight{1.25}\smash{\begin{tabular}[t]{l}$\mathcal{H_1}$\end{tabular}}}}%
    \put(0.56304764,0.11837071){\color[rgb]{0.60392157,0.7372549,0.52941176}\makebox(0,0)[lt]{\lineheight{1.25}\smash{\begin{tabular}[t]{l}$\mathcal{H_1}'$\end{tabular}}}}%
    \put(0.39788387,0.00278475){\color[rgb]{0.20392157,0.59607843,0.85882353}\makebox(0,0)[lt]{\lineheight{1.25}\smash{\begin{tabular}[t]{l}$\mathring{K}$\end{tabular}}}}%
  \end{picture}%
\endgroup%

\caption{The connect sum band can be taken with a sufficiently positive slope that choosing it to pass under any strands in the tangle~$T$ causes the diagram to remain in Gompf standard form.}\label{fig:connectsum}
\end{figure}

Now for any handle~$\Hi$ of~$\sH$ with $A_3(1)_{ii}>\TB(\Hi)-2$ form~$\Hi'$ by taking the connected sum of~$\Hi$ with~a split copy of~$\mathring{K}$ in the manner depicted in Figure \ref{fig:connectsum}.  Frame~$\Hi'$ using the same diagrammatic framing instruction that was used to frame~$\sH_i$.
One computes readily from the right frame of Figure \ref{fig:connectsum} that~$\TB(\Hi')=\TB(\Hi)+1$. Repeat this process until~$A_3(1)_{ii}\le \TB(\Hi)-2$ for all 2-handles.
Let~$M_4$ be the resulting 4-manifold.
Then~$M_4$ is Stein by Theorem~\ref{thm:Stein} and Remark~\ref{rem:largeTB}.
Further, since~$X$ contributes neither to the equivariant intersection form nor to~$\pi_1$, we have that~$M_4$ has the same equivariant intersection form and~$\pi_1$ as~$M_3$.
We record (for later use) the observation that the link in~$S^3$ consisting of the attaching spheres of the 2-handles is unchanged by these modifications; one can see this by ignoring the 1-handle in Figure~\ref{fig:connectsum} and doing a bit of isotopy.

\begin{figure}[!htbp]
\centering
\def\svgwidth{.6\linewidth}
\begingroup%
  \makeatletter%
  \providecommand\color[2][]{%
    \errmessage{(Inkscape) Color is used for the text in Inkscape, but the package 'color.sty' is not loaded}%
    \renewcommand\color[2][]{}%
  }%
  \providecommand\transparent[1]{%
    \errmessage{(Inkscape) Transparency is used (non-zero) for the text in Inkscape, but the package 'transparent.sty' is not loaded}%
    \renewcommand\transparent[1]{}%
  }%
  \providecommand\rotatebox[2]{#2}%
  \newcommand*\fsize{\dimexpr\f@size pt\relax}%
  \newcommand*\lineheight[1]{\fontsize{\fsize}{#1\fsize}\selectfont}%
  \ifx\svgwidth\undefined%
    \setlength{\unitlength}{293.95761108bp}%
    \ifx\svgscale\undefined%
      \relax%
    \else%
      \setlength{\unitlength}{\unitlength * \real{\svgscale}}%
    \fi%
  \else%
    \setlength{\unitlength}{\svgwidth}%
  \fi%
  \global\let\svgwidth\undefined%
  \global\let\svgscale\undefined%
  \makeatother%
  \begin{picture}(1,0.39367377)%
    \lineheight{1}%
    \setlength\tabcolsep{0pt}%
    \put(0,0){\includegraphics[width=\unitlength,page=1]{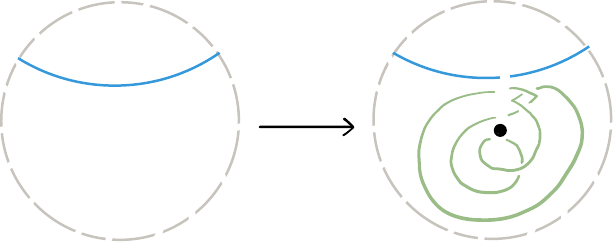}}%
    \put(0.37087899,0.31065946){\color[rgb]{0.20392157,0.59607843,0.85882353}\makebox(0,0)[lt]{\lineheight{1.25}\smash{\begin{tabular}[t]{l}$\mathcal{H}_1$\end{tabular}}}}%
    \put(0.97016522,0.323539){\color[rgb]{0.20392157,0.59607843,0.85882353}\makebox(0,0)[lt]{\lineheight{1.25}\smash{\begin{tabular}[t]{l}$\mathcal{H}_1$\end{tabular}}}}%
    \put(0.63548558,0.11980408){\color[rgb]{0.60392157,0.7372549,0.52941176}\makebox(0,0)[lt]{\lineheight{1.25}\smash{\begin{tabular}[t]{l}$0$\end{tabular}}}}%
    \put(0,0){\includegraphics[width=\unitlength,page=2]{Insertcork.pdf}}%
    \put(0.83609593,0.32829823){\color[rgb]{0.78039216,0.76470588,0.74117647}\makebox(0,0)[lt]{\lineheight{1.25}\smash{\begin{tabular}[t]{l}$\gamma$\end{tabular}}}}%
  \end{picture}%
\endgroup%

\caption{The local modification performed on the handle $\mathcal{H}_1$ of the manifold $M_3$. }
\label{fig:insertcork}
\end{figure}

Now we will make a final modification to~$M_4$ to get a 4-manifold~$M_5 =: M$ which we can cork twist to get~$M'$. Choose any 2-handle, without loss of generality we choose $\sH_1$, and perform the local modification described in Figure \ref{fig:insertcork}; the resulting 4-manifold is our~$M$.

One can readily check that this local modification does not impact~$\pi_1$ or the equivariant intersection form. Further, this local diagram can be readily converted to Gompf standard form, (see the blue and green handles of Figure \ref{fig:stillstein})  where we have~$A_3(1)_{ii}\le \TB(\Hi) -1$ for all 2-handles, hence~$M$ is Stein. By construction,~$M$ contains a copy of the Akbulut cork~$C$.
Because $M$ has no 3-handles, $\pi_1(\partial M)$ surjects~$\pi_1(M)$.

\begin{figure}[!htbp]
\center
\def\svgwidth{.25\linewidth}
\begingroup%
  \makeatletter%
  \providecommand\color[2][]{%
    \errmessage{(Inkscape) Color is used for the text in Inkscape, but the package 'color.sty' is not loaded}%
    \renewcommand\color[2][]{}%
  }%
  \providecommand\transparent[1]{%
    \errmessage{(Inkscape) Transparency is used (non-zero) for the text in Inkscape, but the package 'transparent.sty' is not loaded}%
    \renewcommand\transparent[1]{}%
  }%
  \providecommand\rotatebox[2]{#2}%
  \newcommand*\fsize{\dimexpr\f@size pt\relax}%
  \newcommand*\lineheight[1]{\fontsize{\fsize}{#1\fsize}\selectfont}%
  \ifx\svgwidth\undefined%
    \setlength{\unitlength}{612bp}%
    \ifx\svgscale\undefined%
      \relax%
    \else%
      \setlength{\unitlength}{\unitlength * \real{\svgscale}}%
    \fi%
  \else%
    \setlength{\unitlength}{\svgwidth}%
  \fi%
  \global\let\svgwidth\undefined%
  \global\let\svgscale\undefined%
  \makeatother%
  \begin{picture}(1,0.99997038)%
    \lineheight{1}%
    \setlength\tabcolsep{0pt}%
    \put(0,0){\includegraphics[width=\unitlength,page=1]{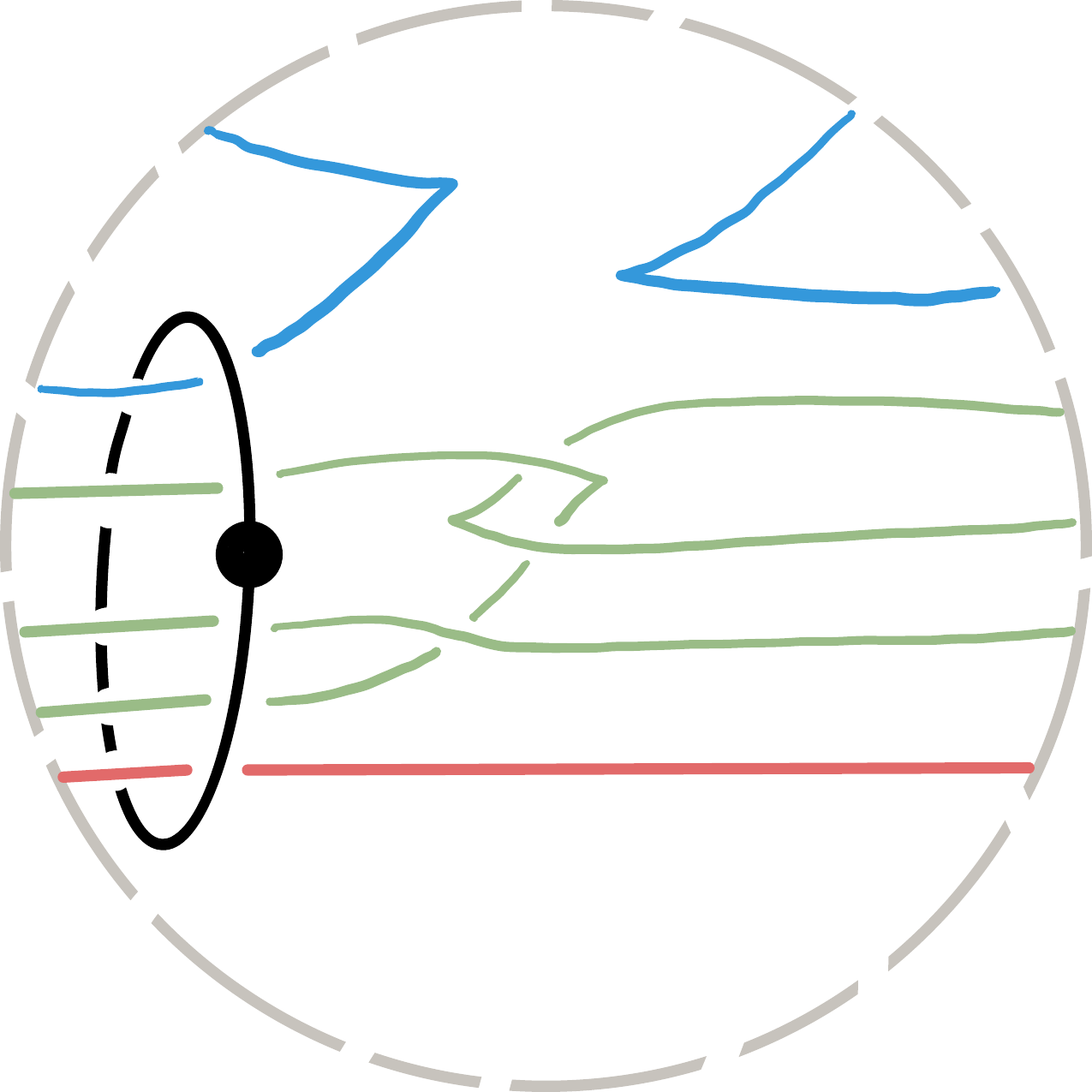}}%
    \put(0.82568693,0.90333897){\color[rgb]{0.20392157,0.59607843,0.85882353}\makebox(0,0)[lt]{\lineheight{1.25}\smash{\begin{tabular}[t]{l}$\mathcal{H}_1$\end{tabular}}}}%
    \put(0.62732386,0.21868841){\color[rgb]{0.88627451,0.41568627,0.41568627}\makebox(0,0)[lt]{\lineheight{1.25}\smash{\begin{tabular}[t]{l}$-1$\end{tabular}}}}%
    \put(0.3640463,0.61213089){\color[rgb]{0.60392157,0.7372549,0.52941176}\makebox(0,0)[lt]{\lineheight{1.25}\smash{\begin{tabular}[t]{l}$-1$\end{tabular}}}}%
    \put(0.95763988,0.24562361){\color[rgb]{0.88627451,0.41568627,0.41568627}\makebox(0,0)[lt]{\lineheight{1.25}\smash{\begin{tabular}[t]{l}$\gamma$\end{tabular}}}}%
  \end{picture}%
\endgroup%

\caption{A handle diagram for the manifold $W$ in Gompf standard form.}\label{fig:stillstein}
\end{figure}

Now define~$M'$ to be the 4-manifold obtained from~$M$ by twisting~$C$. Since there is a homeomorphism~$T\colon C\to C$ extending the twist homeomorphism~$\tau \colon \partial C\to\partial C$, there is a natural homeomorphism~$F \colon M\to M'$; let~$f$ denote the restriction~$f\colon \partial M\to \partial M'$.

It remains to show that $M$ and $M'$ are not diffeomorphic. We will begin by showing the relative statement, i.e.  there is no diffeomorphism~$G\colon M\to M'$ such that~$G|_\partial = f$.
It would be convenient if at this point we could distinguish $M$ and $M'$ directly by showing that one is Stein and one is not. Unfortunately, both are Stein. So instead we will consider auxiliary manifolds $W$ and $W'$ constructed as follows.
Suppose for a contradiction that there were such a diffeomorphism~$G$.
Construct a 4-manifold~$W$ by attaching a~$(-1)$-framed 2-handle to~$M$ along~$\gamma$ (where~$\gamma$ is the curve in~$\partial M$ marked in Figure \ref{fig:insertcork}) and a second 4-manifold~$W'$ from~$M'$ by attaching a 2-handle to~$M'$ with attaching sphere and framing given by~$(f(\gamma),-1)$. \footnote{The $(-1)$-framing instruction for $f(\gamma)$ requires a diagram of $f(\gamma)$ in $\partial M'$.
Because $f$ is a dot-zero homeomorphism, we can use the exact same diagram as we used for $\gamma$ in $\partial M$.} Notice that the image under $f$ of a~$(-1)$-framing curve for $\gamma$ is in fact a~$(-1)$-framing curve for $f(\gamma)$.
The diffeomorphism $G$ extends to give a diffeomorphism~$\widehat{G}\colon W\to W'$.
In Figure~$\ref{fig:stillstein}$, we have exhibited the natural handle diagram for~$W$ in Gompf standard form, from which Theorem \ref{thm:Stein} implies that~$W$ admits a Stein structure.

We will finish showing that $f$ does not extend by demonstrating that~$W'$ does not admit any Stein structure, thus~$W$ cannot be diffeomorphic to~$W'$. Since~$W'$ is obtained from~$W$ by reversing the dot and the zero on the handles of~$C$,~$f(\gamma)$ is just a meridian of a 2-handle of~$M'$. Thus the final 2-handle of~$W'$ is attached along a curve which bounds a disc in~$M'$, implying that there is a~$(-1)$-framed sphere embedded in~$W'$.
But the adjunction inequality for Stein manifolds (recall Theorem~\ref{thm:adjunct}) indicates that no 4-manifold which admits a Stein structure can contain an embedded sphere with self-intersection~$-1$.
Hence,~$W$ is not diffeomorphic to~$W'$, thus there cannot be a diffeomorphism~$ G \colon M\to M'$ extending~$f$.

Now we would like to extend this to a statement about absolute exotica.
To do so, we apply Theorem \ref{thm:absolute} to our $M, M'$,  and $f$ to produce a pair of smooth 4-manifolds $V$ and $V'$ (both of which have nonempty boundary) which are homeomorphic but not diffeomorphic.
Since $V$ and~$V'$ are orientation-preserving homotopy equivalent to $M$ and $M'$ respectively, the equivariant intersection forms $\lambda_V$ and $\lambda_{V'}$ are also isometric to $\lambda$, and both $V$ and $V'$ have fundamental group $\Z$.
Since~$V$ and $V'$ are homeomorphic, so are $\partial V$ and $\partial V'$.
\end{proof}

Next, we prove Theorem~\ref{thm:exoticdiscs} from the introduction, again stated here in more detail.
If one wants to show that any $2$-handlebody~$N$ with boundary~$S^3$ contains a pair of exotic $\Z$-discs one can run the same proof, where in the first line~$\mathcal{H}'$ is chosen to be a handle diagram for~$N$; this was mentioned in Remark~\ref{rem:Smooth}.


\begin{theorem}
\label{thm:ExoticDiscsMain}
 For every Hermitian form~$(H,\lambda)$ over~$\Z[t^{\pm 1}]$ such that~$\lambda(1)$ is realised as the intersection form of a smooth simply-connected 4-dimensional 2-handlebody~$N$ with~$\partial N\cong S^3$, there exists a pair of smooth~$\Z$-discs~$D$ and~$D'$ in~$N$ with the same boundary and the following properties:
 \begin{enumerate}
 \item the equivariant intersection forms~$\lambda_{N_D}$ and~$\lambda_{N_{D'}}$ are isometric to~$\lambda$;
 \item $D$ is topologically isotopic to~$D'$ rel.\ boundary;
 \item $D$ is not smoothly equivalent to~$D'$ rel.\ boundary.
 \end{enumerate}
\end{theorem}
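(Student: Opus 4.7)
The plan is to follow the proof of Theorem~\ref{thm:ExoticmanifoldsNotIntro} closely, upgrading the construction so that the Stein $4$-manifold it produces is realised as the exterior of a smooth $\Z$-disc sitting inside the specified $2$-handlebody $N$, rather than as an abstract filling of some auxiliary $3$-manifold. The hypothesis that $N$ is a $2$-handlebody (rather than an arbitrary smooth $4$-manifold with $\partial N \cong S^3$ realising $\lambda(1)$) is exactly what makes this upgrade possible, because it lets us view the disc as the cocore of an extra $1$-handle in a global handle diagram.

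First I would fix a handle decomposition of $N$ with a single $0$-handle and $2$-handles attached along a framed link $L$ realising $\lambda(1)$, and attach a $1$-handle in a collar of $\partial N$ to produce $N \natural (S^1 \times D^3)$; this is the exterior of a trivially embedded smooth $\Z$-disc $D_0 \subset N$ bounded by the unknot $U \subset S^3 = \partial N$. Following the recipe used to build $M$ in the proof of Theorem~\ref{thm:ExoticmanifoldsNotIntro}, I would then successively apply the local modifications of Figure~\ref{fig:J'} to realise the off-diagonal and diagonal entries of a matrix $A(t)$ representing $\lambda$, carry out the connect-sum-with-$\mathring{K}$ trick of Figure~\ref{fig:connectsum} to enlarge Thurston--Bennequin numbers, and finally insert an Akbulut cork $C$ exactly as in Figure~\ref{fig:insertcork}. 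The crucial point is that, as observed in the proof of Theorem~\ref{thm:ExoticmanifoldsNotIntro}, each of these operations preserves the framed link of $2$-handle attaching spheres inherited from $L$; consequently the resulting Stein manifold $X$ admits a canonical identification $X = N \setminus \nu(D)$ for a smooth $\Z$-disc $D \subset N$ with $\partial D = U$. By construction $\pi_1(X) \cong \Z$ and $\lambda_X \cong \lambda$.

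Twisting the cork $C \subset X$ then produces a second smooth $\Z$-disc $D' \subset N$ with $\partial D' = U$, together with a homeomorphism $F\colon (N,D) \to (N,D')$ that is the identity on $\partial N$ (the cork is placed in the interior of $X$, away from the tubular neighbourhood of $\partial D$). Since $N_D$ and $N_{D'}$ are both $\Z$-manifolds with equivariant intersection form $\lambda$, the topological isotopy $D \simeq D'$ rel.\ boundary follows from~\cite{ConwayPowellDiscs} applied to this pair of exteriors, generalising Hayden's argument. To rule out a smooth rel.\ boundary equivalence, I would mimic the endgame of Theorem~\ref{thm:ExoticmanifoldsNotIntro}: form $W$ and $W'$ by attaching a $(-1)$-framed $2$-handle along the curve $\gamma \subset \partial X$ of Figure~\ref{fig:insertcork} and along its image $F(\gamma) \subset \partial X'$ respectively; a rel.\ boundary diffeomorphism of pairs would extend to a diffeomorphism $W \cong W'$. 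The Legendrian handle diagram of Figure~\ref{fig:stillstein} shows that $W$ is Stein, whereas the cork twist turns $F(\gamma)$ into a meridian of a $2$-handle of $X'$, so $W'$ contains a smoothly embedded $(-1)$-sphere, contradicting the Lisca--Mati\'c adjunction inequality (Theorem~\ref{thm:adjunct}).

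The main obstacle will be the bookkeeping in Step 1: verifying that each of the handle-theoretic modifications used to enforce $\lambda_X \cong \lambda$, to raise Thurston--Bennequin numbers, and to insert the cork can be arranged to act trivially on the framed link of $2$-handle attaching spheres of $N$, so that filling in the distinguished $1$-handle by a $D^2 \times D^2$ continues to recover $N$ (and hence $X$ remains the exterior of a disc in $N$, not merely a $4$-manifold with the correct equivariant intersection form). Once this is in place, the remaining ingredients---the topological rigidity from~\cite{ConwayPowellDiscs} and the Stein obstruction from Theorem~\ref{thm:adjunct}---transplant directly from the proof of Theorem~\ref{thm:ExoticmanifoldsNotIntro} and from Hayden's original cork-twist argument for exotic ribbon discs in $D^4$.
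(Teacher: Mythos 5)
Your first two steps (building the Stein disc exterior inside $N$ by modifying only the linking of the $2$-handles of $N$ with the new $1$-handle) match the paper's argument, but the mechanism you use to produce the \emph{second} disc is where the proposal breaks down. If you cork-twist the exterior $X=N_{D}$ along $C\subset \mathring{X}$, the resulting smooth manifold $X'$ is the exterior of a disc in the manifold $X'\cup h^{2}$ obtained by re-attaching the $2$-handle along the meridian of the distinguished $1$-handle --- and that manifold is the \emph{cork twist of $N$}, not $N$. Since the cork of Figure~\ref{fig:insertcork} is inserted so that its handles link a $2$-handle $\mathcal{H}_1$ of $N$ itself, swapping the dot and the zero genuinely changes the handle diagram of the ambient $2$-handlebody, and there is no argument that the twisted manifold is diffeomorphic to $N$ (indeed, if it always were, cork twists could never produce exotica). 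So $D'$ does not live in $N$, and the theorem requires both discs to be embedded in the \emph{same} smooth $N$. This is precisely why the paper stops its ``these moves preserve the disc-exterior property'' observation at the stage $M_2\rightsquigarrow M_4$ and never imports the cork-insertion step $M_4\rightsquigarrow M_5$ into the disc proof.

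The paper's route is different: it builds a \emph{single} Stein disc exterior $N_{D'}\subset N$ (no cork), takes Hayden's pair of exotic ribbon $\Z$-discs $\Sigma,\Sigma'\subset D^4$ from Figure~\ref{fig:kylesdiscs}, and forms $(N,R):=(N,D')\natural(D^4,\Sigma)$ and $(N,R'):=(N,D')\natural(D^4,\Sigma')$. Both $R$ and $R'$ manifestly lie in $N$, the topological isotopy rel.\ boundary is inherited from the isotopy of $\Sigma$ and $\Sigma'$ in $D^4$ (via \cite{ConwayPowellDiscs}, extended by the identity over $(N,D')$), and the Stein/non-Stein obstruction is run by attaching $(-1)$-framed $2$-handles along the curves $\gamma$ and $\delta$ of Figure~\ref{fig:RandRprime}, with the $(-1)$-sphere in $W'$ coming from the $\Sigma\leftrightarrow\Sigma'$ swap rather than from a cork. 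A secondary point: even if your cork twist did land back in $N$, it would only give a homeomorphism of pairs rel.\ $\partial N$, not a topological \emph{isotopy}; the Alexander-trick upgrade is only available for $N=D^4$, whereas the boundary-connect-sum construction delivers the isotopy directly. To repair your approach you would either have to show that your specific cork embedding twists trivially on $N$ (fixing $\partial N$), or fall back on the connect-sum-with-Hayden's-discs device.
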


\begin{proof}
Let~$\sH'$ be a handle diagram for a 2-handlebody with~$S^3$ boundary and such that~$Q_N$ isometric to~$\lambda(1)$.
Let~$D$ be the standard disc for a local unknot in~$\partial N$, and as usual let~$N_D$ be its exterior, which has handle diagram~$\sH:=\sH'\cup 1-$handle.

Akin to the proof of Theorem \ref{thm:exoticmanifolds}, we will now modify the linking of the handles of~$\sH$ to get a Stein manifold with equivariant intersection form $\lambda$.
However, we also want to do so in such a way that the manifold presented by~$\sH$ is still~$N_{D'}$ for some smooth disc~$D'$ properly embedded in~$N$.

We claim that if we modify only the linking of the 2-handles with the 1-handle, and not the linking of the 2-handles with each other nor the knot type or framing of the 2-handles, we will have that~$\sH$ presents such an~$N_{D'}$.
To prove the claim, first observe that $X$ is the exterior of  a disc in~$N$ if and only if~$N$ can be obtained from~$X$ by adding on a single 2-handle.
Observe that adding a 0-framed 2-handle to the meridian of a 1-handle in dotted circle notation allows us to erase both the new 2-handle and the 1-handle.
Thus, if our modifications only change the way the 2-handles of $N$ link the new one-handle, we will still have the property that after a single 2-handle addition we obtain $N$, thus our manifold is the exterior of a disc embedded in $N$.
This concludes the proof of the claim.


\begin{figure}[!htbp]
\center
\def\svgwidth{.6\linewidth}
\begingroup%
  \makeatletter%
  \providecommand\color[2][]{%
    \errmessage{(Inkscape) Color is used for the text in Inkscape, but the package 'color.sty' is not loaded}%
    \renewcommand\color[2][]{}%
  }%
  \providecommand\transparent[1]{%
    \errmessage{(Inkscape) Transparency is used (non-zero) for the text in Inkscape, but the package 'transparent.sty' is not loaded}%
    \renewcommand\transparent[1]{}%
  }%
  \providecommand\rotatebox[2]{#2}%
  \newcommand*\fsize{\dimexpr\f@size pt\relax}%
  \newcommand*\lineheight[1]{\fontsize{\fsize}{#1\fsize}\selectfont}%
  \ifx\svgwidth\undefined%
    \setlength{\unitlength}{352.89690399bp}%
    \ifx\svgscale\undefined%
      \relax%
    \else%
      \setlength{\unitlength}{\unitlength * \real{\svgscale}}%
    \fi%
  \else%
    \setlength{\unitlength}{\svgwidth}%
  \fi%
  \global\let\svgwidth\undefined%
  \global\let\svgscale\undefined%
  \makeatother%
  \begin{picture}(1,0.23260694)%
    \lineheight{1}%
    \setlength\tabcolsep{0pt}%
    \put(0,0){\includegraphics[width=\unitlength,page=1]{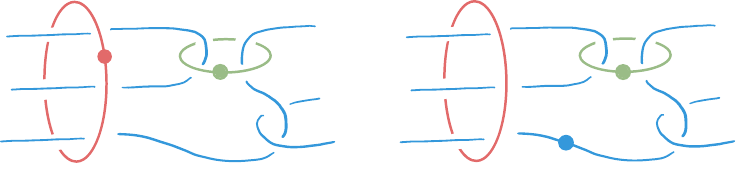}}%
    \put(0.25985072,0.04524716){\color[rgb]{0.20392157,0.59607843,0.85882353}\makebox(0,0)[lt]{\lineheight{1.25}\smash{\begin{tabular}[t]{l}$0$\end{tabular}}}}%
    \put(0.56617936,0.05511492){\color[rgb]{0.88627451,0.41568627,0.41568627}\makebox(0,0)[lt]{\lineheight{1.25}\smash{\begin{tabular}[t]{l}$0$\end{tabular}}}}%
  \end{picture}%
\endgroup%

\caption{In both frames the red and blue handles give a nonstandard handle diagram for~$D^4$, and in both frames the green knot~$K\subset S^3$ bounds a disc disjoint from the 1-handle; these are our two discs~$\Sigma$ and~$\Sigma'$ for~$K$ in~$D^4$. The handle diagrams here present~$D^4_\Sigma$ and~$D^4_{\Sigma'}$.}\label{fig:kylesdiscs}
\end{figure}

Now observe that all of the modifications we performed in the proof of Theorem~\ref{thm:exoticmanifolds} to get from~$M_2$ to~$M_4$ modified only the linking of the 2-handles with the 1-handle, and not the linking of the 2-handles with each other nor the knot type or framing of the 2-handles.
Thus we can again perform those same modifications to our~$\sH$ to obtain a smooth~$\Z$-disc~$D'$ properly embedded in~$N$ such that the resulting~$\sH$ is a handle diagram for~$N_{D'}$ in Gompf standard form satisfying Eliashberg's criteria and such that the equivariant intersection form of the exterior is~$\lambda_{N_{D'}}\cong\lambda$. Notice in particular that~$N_{D'}$ is Stein.

Now let~$\Sigma$ and~$\Sigma'$ be the pair of slice discs for~$K$ in~$D^4$ exhibited in Figure \ref{fig:kylesdiscs}. These discs were constructed following the techniques of \cite{Hayden}. It is elementary to check from the exhibited handle diagrams that both discs have~$\pi_1(D^4_\Sigma)=\pi_1(D^4_{\Sigma'})=\Z$ and are ribbon.
It is then a consequence of \cite[Theorem 1.2]{ConwayPowellDiscs} that~$\Sigma$ is topologically isotopic to~$\Sigma'$ rel.\ boundary.

\begin{figure}[!htbp]
\center
\def\svgwidth{.8\linewidth}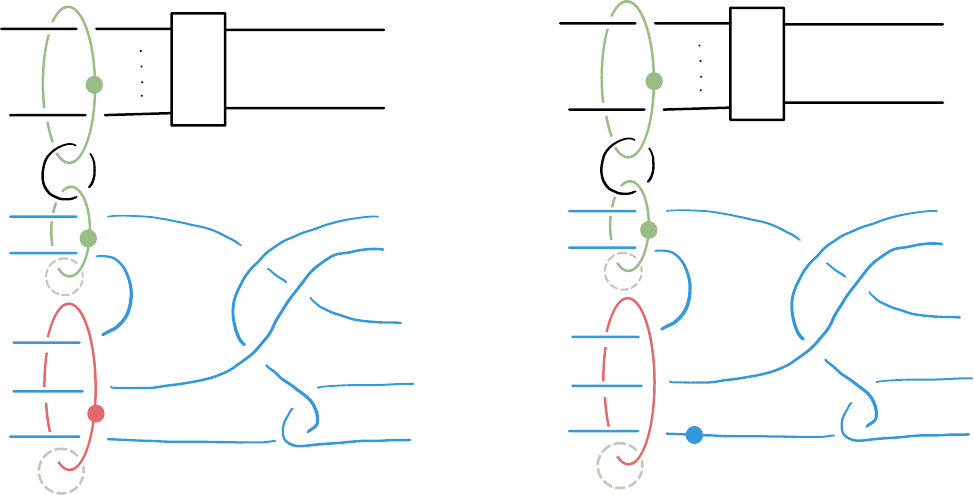
\caption{The left frame gives a handle diagram for~$N_R$, and the right for~$N_{R'}$. The top black 2-handles and tangle~$T$ represent the handle diagram of~$N_{D'}$ in Gompf standard form which we already constructed.}\label{fig:RandRprime}
\end{figure}

We will construct discs~$R$ and~$R'$ in~$N$ by taking the boundary connect sum of pairs~$(N,R):=(N,D')\natural(D^4,\Sigma)$ and~$(N,R'):=(N,D')\natural(D^4,\Sigma')$.
We demonstrate natural handle decompositions for~$N_R$ and~$N_{R'}$ in Figure \ref{fig:RandRprime}. It is straightforward to confirm that~$\pi_1(N_R)\cong\pi_1(N_{R'})\cong\Z$.
Further, since~$\Sigma$ is topologically isotopic to~$\Sigma'$ in~$D^4$ rel.\ boundary,~$R$ is topologically isotopic in~$N$ to~$R'$ rel.\ boundary.
Since~$\Sigma$ and~$\Sigma'$ are~$\Z$-discs in~$D^4$, their exteriors are aspherical~\cite[Lemma~2.1]{ConwayPowell} and so both~$\lambda_{N_\Sigma}$ and~$\lambda_{N_{\Sigma'}}$ are trivial.
It is then not hard to show that band summing~$D'$ with~$\Sigma$ or~$\Sigma'$ does not change the equivariant intersection form, so~$\lambda_{N_R}\cong\lambda_{N_{R'}}\cong\lambda_{N_{D'}}$.

It remains to show that~$R$ is not smoothly equivalent to~$R'$ rel.\ boundary.
If~$R$ were equivalent to~$R'$ rel.\ boundary then there would be a diffeomorphism~$F \colon N_R \to N_{R'}$ which is the identity on the boundary.
Let~$\gamma$ and~$\delta$ be the curves in~$\partial N_R = \partial N_{R'}$ demonstrated in Figure~\ref{fig:RandRprime}, and let~$W$ (similarly~$W'$) be formed from~$N_R$ by attaching~$(-1)$-framed 2-handles along~$\gamma$ and~$\delta$.

\begin{figure}[!htbp]
\center
\def\svgwidth{.4\linewidth}
\begingroup%
  \makeatletter%
  \providecommand\color[2][]{%
    \errmessage{(Inkscape) Color is used for the text in Inkscape, but the package 'color.sty' is not loaded}%
    \renewcommand\color[2][]{}%
  }%
  \providecommand\transparent[1]{%
    \errmessage{(Inkscape) Transparency is used (non-zero) for the text in Inkscape, but the package 'transparent.sty' is not loaded}%
    \renewcommand\transparent[1]{}%
  }%
  \providecommand\rotatebox[2]{#2}%
  \newcommand*\fsize{\dimexpr\f@size pt\relax}%
  \newcommand*\lineheight[1]{\fontsize{\fsize}{#1\fsize}\selectfont}%
  \ifx\svgwidth\undefined%
    \setlength{\unitlength}{208.08597565bp}%
    \ifx\svgscale\undefined%
      \relax%
    \else%
      \setlength{\unitlength}{\unitlength * \real{\svgscale}}%
    \fi%
  \else%
    \setlength{\unitlength}{\svgwidth}%
  \fi%
  \global\let\svgwidth\undefined%
  \global\let\svgscale\undefined%
  \makeatother%
  \begin{picture}(1,0.64824431)%
    \lineheight{1}%
    \setlength\tabcolsep{0pt}%
    \put(0,0){\includegraphics[width=\unitlength,page=1]{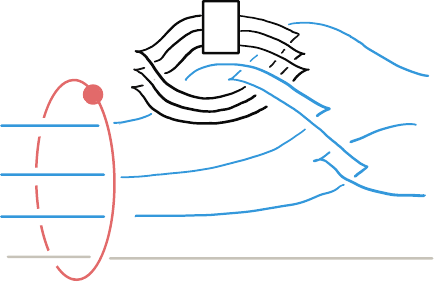}}%
    \put(0.28171775,0.06628105){\color[rgb]{0.78039216,0.76470588,0.7254902}\makebox(0,0)[lt]{\lineheight{1.25}\smash{\begin{tabular}[t]{l}$-1$\end{tabular}}}}%
    \put(0.8991181,-0.00172428){\color[rgb]{0.78039216,0.76470588,0.7254902}\makebox(0,0)[lt]{\lineheight{1.25}\smash{\begin{tabular}[t]{l}$\delta$\end{tabular}}}}%
    \put(0.91588256,0.50973274){\color[rgb]{0.20392157,0.59607843,0.85882353}\makebox(0,0)[lt]{\lineheight{1.25}\smash{\begin{tabular}[t]{l}$0$\end{tabular}}}}%
    \put(0.48653646,0.56676512){\color[rgb]{0.03921569,0.03921569,0.03921569}\makebox(0,0)[lt]{\lineheight{1.25}\smash{\begin{tabular}[t]{l}$T$\end{tabular}}}}%
  \end{picture}%
\endgroup%

\caption{The black 2-handles here have both framing and~$\TB$ one less than they had in Figure \ref{fig:RandRprime}; since we had already arranged that the tangle~$T$ in Figure \ref{fig:RandRprime} satisfied the framing criteria of Theorem \ref{thm:Stein}, this handle diagram also satisfies the criteria.}\label{fig:NRstein}
\end{figure}

If a diffeomorphism~$F \colon N_R \to N_{R'}$ extending the identity exists, then~$W$ is diffeomorphic to~$W'$. Observe that~$W'$ does not admit a Stein structure, because the 2-handle along~$\delta$ naturally introduces a~$(-1)$-framed 2-sphere embedded in~$W'$, which violates the Stein adjunction inequality in Theorem~\ref{thm:adjunct}. However,~$W$ admits the handle decomposition given in Figure \ref{fig:NRstein}, which is in Gompf standard form, so Theorem~\ref{thm:Stein} ensures that~$W$ admits a Stein structure.
Therefore~$W$ is not diffeomorphic to~$W'$, so there can be no such~$F$, so~$R$ is not smoothly equivalent to~$R'$ rel.\ boundary.
\end{proof}

\begin{remark}
In the above proof,~$R$ is smoothly isotopic to~$R'$ \emph{not} rel.\ boundary, because~$\Sigma$ is smoothly isotopic to~$\Sigma'$ not rel.\ boundary. If we wanted to produce~$R$ and~$R'$ which are not smoothly isotopic (without a boundary condition), we could have instead used a~$\Sigma$ and~$\Sigma'$ which are not isotopic rel.\ boundary and run a similar argument. Such~$\Sigma$ and~$\Sigma'$ are produced in \cite{Hayden}; we have not pursued this here because the diagrams are somewhat more complicated.
\end{remark}

\section{Nontrivial boundary automorphism set}
\label{sec:NonTrivialbAut}

We prove that there are examples of pairs $(Y,\varphi)$ for which the set of 4-manifolds with fixed boundary $Y$ and equivariant intersection form, up to homeomorphism, can have arbitrarily large cardinality.
This was alluded to in Example~\ref{ex:LargeStableClassIntro}.
The main step in this process is to find a sequence of Hermitian forms~$(H_i,\lambda_i)$ for which~$\big\{\big|\Aut(\partial \lambda_i)/\Aut(\lambda_i)\big|\big\}$ is unbounded.
The most direct way to achieve this is when~$H$ has rank~$1$.
Indeed, in this case,~$\Aut(\partial \lambda)/\Aut(\lambda)$ can be described in terms of certain units of~$\Z[t^{\pm 1}]/\lambda$, as we now make precise.
\medbreak

Given a ring~$R$ with involution~$x \mapsto \overline{x}$,  the group of \emph{unitary units}~$U(R)$ refers to those~$u \in R$ such that~$u \overline{u}=1$.
For example, when~$R=\Z[t^{\pm 1}]$, all units are unitary and are of the form~$\pm t^{k}$ with~$k \in \Z$.

In what follows, we make no distinction between rank one
Hermitian forms and
symmetric Laurent polynomials.
The next lemma follows by unwinding the definition of~$\Aut(\partial \lambda)$; see also~\cite[Remark 1.16]{ConwayPowell}.

\begin{lemma}
\label{lem:UnitaryUnits}
If~$\lambda \in \Z[t^{\pm 1}]$ is a symmetric Laurent polynomial, then
$$\Aut(\partial \lambda)/\Aut(\lambda)=U(\Z[t^{\pm 1}]/\lambda)/U(\Z[t^{\pm 1}]).$$
\end{lemma}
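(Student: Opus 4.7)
The plan is to unwind everything via the rank one assumption, so that $\Aut(\lambda)$ and $\Aut(\partial\lambda)$ become groups of unitary units of rings, and then check that the natural $\Aut(\lambda)$-action on $\Aut(\partial\lambda)$ translates to multiplication in those rings.

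First I would fix the identification $H = \Z[t^{\pm 1}]$ together with the dual basis identification $H^* \cong \Z[t^{\pm 1}]$. With the convention that $\lambda(x,y)=\lambda\, x\bar{y}$, the adjoint $\widehat{\lambda}\colon H\to H^*$ becomes the map $z\mapsto \lambda\bar{z}$, so in any case $\coker(\widehat{\lambda}) = \Z[t^{\pm 1}]/\lambda$. Any $\Z[t^{\pm 1}]$-module endomorphism of the cyclic module $\Z[t^{\pm 1}]/\lambda$ is multiplication by some class $[u]$, and it is an isomorphism precisely when $[u]\in(\Z[t^{\pm 1}]/\lambda)^\times$. Thus as a set $\Aut_{\Z[t^{\pm 1}]}(\coker\widehat{\lambda})=(\Z[t^{\pm 1}]/\lambda)^\times$.

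Next I would determine which such $[u]$ are isometries of $\partial\lambda$. Using the recipe of Definition preceding Definition~\ref{def:presentation} with the choice $p=\lambda$ and $z=\bar{x}$, one computes $\partial\lambda([x],[y]) = [y\bar{x}/\lambda]\in\Q(t)/\Z[t^{\pm 1}]$. For the multiplication-by-$[u]$ map one then has
\[
\partial\lambda([ux],[uy]) = \big[uy\,\overline{ux}/\lambda\big]=[u\bar u]\cdot\partial\lambda([x],[y]).
\]
Since $[1]$ has nonzero annihilator in $\coker\widehat{\lambda}$, isometry is equivalent to $[u\bar u]=[1]$, i.e.\ $[u]\in U(\Z[t^{\pm 1}]/\lambda)$. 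This identifies $\Aut(\partial\lambda)=U(\Z[t^{\pm 1}]/\lambda)$. The same argument on the level of $H$ itself shows that $\Aut(\lambda)$ consists of multiplications by $v\in\Z[t^{\pm 1}]$ with $v\bar v=1$, i.e.\ $\Aut(\lambda)=U(\Z[t^{\pm 1}])$.

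Finally I would check that the $\Aut(\lambda)$-action from~\eqref{eq:autaction} translates, under these two identifications, to multiplication of a class in $U(\Z[t^{\pm 1}]/\lambda)$ by the reduction of an element of $U(\Z[t^{\pm 1}])$: given $v\in U(\Z[t^{\pm 1}])$, the induced map $\partial v$ on $\coker\widehat{\lambda}$ is again multiplication by (a unit equivalent to) $\bar v^{-1}$ modulo $\lambda$, and composing with a multiplication-by-$[u]$ automorphism produces a multiplication-by-unit automorphism whose class in $U(\Z[t^{\pm 1}]/\lambda)/U(\Z[t^{\pm 1}])$ is unchanged. Once the bookkeeping with bars and inverses is settled, passing to orbits yields exactly $U(\Z[t^{\pm 1}]/\lambda)/U(\Z[t^{\pm 1}])$, as claimed.

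The only genuinely fiddly step is the last one, where the correct convention (linear in the first slot, antilinear in the second) and the definition $\partial F=(F^*)^{-1}$ must be tracked carefully so that the action comes out as plain multiplication by the image of $v$ rather than $\bar v$ or $v^{-1}$; everything else is a direct unwinding of definitions.
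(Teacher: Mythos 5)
Your proposal is correct and follows exactly the route the paper intends: the paper offers no written proof, stating only that the lemma ``follows by unwinding the definition of $\Aut(\partial\lambda)$'' (with a pointer to \cite[Remark 1.16]{ConwayPowell}), and your argument is precisely that unwinding --- identifying $\coker(\widehat{\lambda})$ with $\Z[t^{\pm 1}]/\lambda$, computing $\partial\lambda([x],[y])=[y\bar{x}/\lambda]$, characterising isometries as unitary units, and checking that the $\Aut(\lambda)$-action becomes multiplication by (the reduction of) a unit of $\Z[t^{\pm 1}]$. The one convention point you flag at the end is handled correctly in substance, since $U(\Z[t^{\pm 1}])$ is closed under $v\mapsto\bar{v}^{-1}$, so the image of $\Aut(\lambda)$ in $U(\Z[t^{\pm1}]/\lambda)$ is the reduction of $U(\Z[t^{\pm1}])$ regardless of which of $v$, $\bar v$, or $v^{-1}$ the bookkeeping produces.
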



Given a symmetric Laurent polynomial~$P \in \Z[t^{\pm 1}]$,  use~$n_P$ to denote the number of ways~$P$ can be written as an unordered product~$ab$ of symmetric polynomials~$a,b\in \Z[t^{\pm 1}]$ such that there exists~$x,y\in \Z[t^{\pm 1}]$ with~$ax+by=1$, where the factorisations~$ab$ and~$(-a)(-b)$ are deemed equal.


\begin{lemma}
\label{lem:NonTrivialbAut}
If~$P \in \Z[t^{\pm 1}]$ is a symmetric Laurent polynomial, then~$U(\Z[t^{\pm 1}]/2P)/U(\Z[t^{\pm 1}])$ contains at least~$n_P$ elements.
\end{lemma}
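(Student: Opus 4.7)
The strategy is to construct an injection from the set of $n_P$ admissible unordered factorisations of $P$ into $U(\Z[t^{\pm 1}]/2P)/U(\Z[t^{\pm 1}])$, which by Lemma~\ref{lem:UnitaryUnits} is precisely what is needed. To each factorisation $P = ab$ with $a, b$ symmetric and coprime, fix Bezout coefficients $ax + by = 1$ and attach the element
\[ u := ax - by = 2ax - 1 = 1 - 2by \;\in\; \Z[t^{\pm 1}]. \]
The expected claim is that the class of $u$ modulo $2P$ is a unitary unit, depends only on the unordered factorisation, and takes distinct values on distinct factorisations.

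Unitarity and well-definedness reduce to short computations. First, $u^2 = (ax+by)^2 - 4abxy = 1 - 4Pxy$, so $u^2 \equiv 1 \pmod{2P}$. Next, symmetry of $a, b$ together with conjugation of $ax + by = 1$ gives $a\bar x + b\bar y = 1$; multiplying the two Bezout identities produces $a^2 x\bar x + b^2 y \bar y = 1 - P(x\bar y + \bar x y)$, and substituting into $u \bar u = a^2 x \bar x + b^2 y \bar y - P(x\bar y + \bar x y)$ yields $u\bar u = 1 - 2P(x\bar y + \bar x y) \equiv 1 \pmod{2P}$. Combined with $u^2 \equiv 1$ this forces $\bar u \equiv u \pmod{2P}$, confirming unitarity. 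Independence from the choice of Bezout coefficients is immediate from the Chinese remainder characterisation $u \equiv -1 \pmod{2a}$, $u \equiv 1 \pmod{2b}$, which is consistent because $\gcd(2a,2b) = 2$ divides $-2$ and so pins $u$ down uniquely modulo $2P$. Replacing $(a,b)$ by $(-a,-b)$ fixes $u$, and swapping $(a,b)$ with $(b,a)$ negates it; since $-1 \in U(\Z[t^{\pm 1}])$, this descends to a well-defined map $\Phi$ from the set of $n_P$ unordered factorisations to the target quotient.

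The main obstacle is the injectivity of $\Phi$, and the crucial input is the recovery of the ideals $(2a)$ and $(2b)$ from the class of $u$. Concretely, the plan is to establish
\[ (u+1,\ 2P) \;=\; (2a) \qquad \text{and} \qquad (u-1,\ 2P) \;=\; (2b) \]
in $\Z[t^{\pm 1}]$. The forward containments are immediate from $u + 1 = 2ax$ and $u - 1 = -2by$; the reverse ones use Bezout: multiplying $ax + by = 1$ by $2a$ gives $2a = a(u+1) + 2Py$, and similarly $2b = 2Px - b(u-1)$. This is where the factor of $2$ in the modulus $2P$ is essential, and is what makes the lemma about $2P$ rather than $P$. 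Assuming this, suppose two factorisations produce units with $u_1 \equiv \varepsilon u_2 \pmod{2P}$ for some $\varepsilon = \pm t^k \in U(\Z[t^{\pm 1}])$. Symmetry of each $u_i$ mod $2P$ gives $(\varepsilon - \bar\varepsilon) u_2 \equiv 0 \pmod{2P}$, and since $u_2$ is a unit mod $2P$ we conclude $2P \mid t^k - t^{-k}$; but the coefficients of $t^k - t^{-k}$ are $\pm 1$ whereas every multiple of $2$ in $\Z[t^{\pm 1}]$ has even coefficients, forcing $k = 0$ and $\varepsilon = \pm 1$. Applying the ideal recovery to $u_1 \equiv \pm u_2 \pmod{2P}$ then identifies $\{a_1,b_1\}$ with $\{a_2,b_2\}$ up to a simultaneous sign, which is precisely the equivalence used in the definition of $n_P$; this establishes injectivity of $\Phi$ and hence the lemma.
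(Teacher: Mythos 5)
Your construction is the paper's, up to sign: your $u = ax - by$ is $-\Phi(a,b)$ where $\Phi(a,b) = -ax+by$ is the unit the paper attaches to a factorisation, and your unitarity computation and the well-definedness checks (independence of the Bezout pair, behaviour under $(a,b)\mapsto(-a,-b)$ and $(a,b)\mapsto(b,a)$) match the paper's. Where you genuinely diverge is the injectivity step. The paper first pins down the ambient unit $v=\pm t^k$ by adding $2ax+2a'x'v$ (resp.\ $-2by+2a'x'v$) to both sides of the congruence to conclude that $v\pm 1$ are both divisible by $2$, hence $v=\pm 1$, and then runs a mutual-divisibility argument ($a\mid a'$ and $a'\mid a$, etc.) separately in the cases $v=1$ and $v=-1$. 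You instead (i) rule out $\varepsilon=\pm t^k$ with $k\neq 0$ via the self-conjugacy $\bar u\equiv u \pmod{2P}$, which you correctly extract from $u^2\equiv 1$ and $u\bar u\equiv 1$, and (ii) package the recovery of the factorisation as the ideal identities $(u+1,2P)=(2a)$ and $(u-1,2P)=(2b)$, both containments justified by explicit Bezout manipulations. This is a cleaner formulation of the same underlying mechanism: your ideal identities are exactly what the paper's case-by-case divisibility chase establishes, and they make transparent both why the factor of $2$ in the modulus is needed and why $u_1\equiv -u_2$ forces the swap $(a_1,b_1)=\pm(b_2,a_2)$. The final step, passing from equality of principal ideals $(2a_1)=(2a_2)$ to $a_1=\pm a_2$ with a sign consistent across $a$ and $b$, is compressed in your write-up but is the same observation the paper makes (an associate of a symmetric polynomial by a symmetric unit differs only by $\pm 1$, and $a_1b_1=P=a_2b_2$ locks the signs together). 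The argument is correct.
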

\begin{proof}
A first verification shows that if~$P$ factorises as~$P=ab$ where~$a,b\in \Z[t^{\pm 1}]$ are symmetric polynomials and satisfy~$ax+by=1$, then~\[\Phi(a,b):=-ax+by\] is a unitary unit in~$\Z[t^{\pm 1}]/2P$, i.e.\ belongs to~$U(\Z[t^{\pm 1}]/2P)$:
\begin{align*}
(-ax+by)\overline{(-ax+by)}
&=a\overline{a}x \overline{x}+b\overline{b}y\overline{y}-ax\overline{b}\overline{y}-\overline{a}\overline{x}by=a\overline{a}x \overline{x}+b\overline{b}y\overline{y}-ab(x\overline{y}+\overline{x}y)\\
&\equiv a\overline{a}x \overline{x}+b\overline{b}y\overline{y}+ab(x\overline{y}+\overline{x}y)
=(ax+by)\overline{(ax+by)} =1.
\end{align*}
It can also be verified that~$\Phi(a,b)$ depends neither on the ordering of~$a,b$ nor on the choice of~$x,y$. The former check is immediate from the definition of~$\Phi$ because~$-1 \in U(\Z[t^{\pm 1}])$.
We verify that the assignment does not depend on the choice of~$x,y$.
Assume that~$ax+by=1=ax'+by'$ for~$x,x',y,y'\in \Z[t^{\pm 1}]$.
We deduce that~$ax'=1=ax$ mod~$b$ and~$by'=1=by$ mod~$a$.
But now~$x' \equiv  (ax)x'=x(ax')=x$ mod~$b$ and similarly~$y'=y$ mod~$a$ so that~$x'=x+k b$ and~$y'=y+\ell a$ for~$k,l \in \Z[t^{\pm 1}]$.
Expanding~$ax'+by'=1$, it follows that~$k=-l$.
Therefore
$$ -ax'+by'=-a(x+kb)+b(y-k a) \equiv -ax+by.$$

We will prove that if~$\Phi(a,b)=v \cdot \Phi(a',b')$ for some unit~$v \in U(\Z[t^{\pm 1}])$, then~$(a,b)=\pm (a',b')$ or~$(a,b)=\pm (b',a')$.  It then follows that for any two ways~$(a,b)$ and~$(a',b')$  of factorising~$P$, distinct up to sign and up to reordering, the resulting elements~$\Phi(a,b)$ and~$\Phi(a',b')$ are distinct in~$U(\Z[t^{\pm 1}]/2P)/U(\Z[t^{\pm 1}])$, from which the proposition follows.

Assume that~$x,x',y,y' \in \Z[t^{\pm 1}]$ are such that~$ax+by=1=a'x'+b'y'$ and~$-ax+by=-a'x'+b'y'$ mod~$2P$.
Add~$2ax+2a'x'v$ to both sides of the congruence~$-ax+by=v(-a'x'+b'y')$ mod~$2P$.
Using that~$ax+by=1$ and~$a'x'+b'y'=1$,  we obtain the congruence
\begin{equation}\label{eq:aaaaaaah}
2ax+v=2a'x'v+1 \text{ mod } 2P.
\end{equation}
Similarly, add~$-2by+2a'x'v$ to both sides of~$-ax+by=v(-a'x'+b'y')$ mod~$2P$.
Using that~$ax+by=1$ and~$a'x'+b'y'=1$,  we obtain the equation
\begin{equation}
\label{eq:bbbbbbh}
-2by+v=2a'x'v-1 \text{ mod } 2P.
\end{equation}
We deduce from the previous two equations that~$v+1$ and~$v-1$ are divisible by~$2$.
Since~$v=\pm t^k$, we deduce that~$\pm t^k \pm 1$ is divisible by~$2$ and so~$v=\pm 1$.

First, we treat the case where the unit is~$v=1$.
\begin{claim}
We have~$(i)$~$a$ divides~$a'$, and~$(ii)$~$a'$ divides~$a$.
\end{claim}
\begin{proof}
As~$v=1$, ~\eqref{eq:aaaaaaah} implies that~$2ax=2a'x'$ mod~$2P$.
Writing~$2P=2ab$,  and simplifying the~$2$s, we deduce that~$a$ divides~$a'x'$.
Similarly, writing~$2P=2a'b'$, and simplifying the~$2$s, we deduce that~$a'$ divides~$ax$.
Next,  multiply the equations~$1=ax+by$ (resp.\ $1=a'x'+b'y'$) by~$a$ (resp.\ $a'$) to obtain
\begin{align*}
a&=a^2x+aby \\
a'&={a'}^2x'+a'b'y'.
\end{align*}
Since~$a'$ divides~$ax$ and~$ab=P=a'b'$,  it follows that~$a'$ divides~$a$.
The same reasoning with the second equation shows that~$a$ divides~$a'$.
This concludes the proof of the claim.
\end{proof}
Using the claim we have~$a=ua'$ for some unit~$u$; this unit is necessarily symmetric since both~$a$ and~$a'$ are symmetric.
It follows that~$a'b'=ab=ua'b$ with $u=\pm 1$.
We deduce~$b'=ub$ and therefore~$b=b'/u$.
Thus~$(a,b)=u \cdot (a',b')$ as required, in the case~$v=1$.

 Next, we treat the case where the unit is~$v=-1$.
\begin{claim}
We have~$(i)$~$b$ divides~$a'$, and~$(ii)$~$a'$ divides~$b$.
\end{claim}
\begin{proof}
As~$v=-1$, ~\eqref{eq:bbbbbbh} implies that~$-2by=2a'x'$ mod~$2P$.
Writing~$2P=2ab$,  and simplifying the~$2$s, we deduce that~$b$ divides~$a'x'$.
Similarly, writing~$2P=2a'b'$, and simplifying the~$2$s, we deduce that~$a'$ divides~$by$.
Next,  multiply the equations~$1=ax+by$ (resp.\ $1=a'x'+b'y'$) by~$b$ (resp.\ $a'$) to obtain
\begin{align*}
b&=abx+b^2y \\
a'&={a'}^2x'+a'b'y'.
\end{align*}
Since~$a'$ divides~$by$ and~$ab=P=a'b'$,  it follows that~$a'$ divides~$b$.
The same reasoning with the second equation shows that~$b$ divides~$a'$.
This concludes the proof of the claim.
\end{proof}
Using the claim we have~$b=ua'$ for some unit~$u$; this unit is necessarily symmetric since both~$b$ and~$a'$ are symmetric.
It follows that~$a'b'=ab=uaa'$ with $u=\pm 1$.
We deduce~$b'=ua$ and therefore~$a=b'/u$.
Thus~$(a,b)=u \cdot (b',a')$ as required, in the case that~$v=-1$.
This completes the proof that~$\Phi(a,b)=v \cdot \Phi(a',b')$ implies~$(a,b)=\pm (a',b')$ or~$(a,b)=\pm (b',a')$, which completes the proof of the proposition.
\end{proof}

Over~$\Z$,  it is not difficult to show that if~$N$ is an integer that can be factored as a product of~$n$ distinct primes, then~$U(\Z/N)/U(\Z)$ contains precisely~$2^{n-1}$ elements.
Using Lemma~\ref{lem:NonTrivialbAut},  the next example shows that
a similar
lower bound (which is not in general sharp) holds over~$\Z[t^{\pm 1}]$.

\begin{example}
\label{ex:Integer}
The reader can check that if~$P$ is an integer than can be factored as a product~$p_1\cdots p_n$ of~$n$ distinct primes, then~$n_P=2^{n-1}$.
Lemma~\ref{lem:NonTrivialbAut} implies that~$U(\Z[t^{\pm 1}]/2P)/U(\Z[t^{\pm 1}])$ contains at least~$2^{n-1}$ elements.
\end{example}

\begin{remark}
 In order to produce examples, there is no need to restrict~$P$ an integer. Take~$P = q_1 \cdots q_n$, where the~$q_i$ are symmetric Laurent polynomials such that for every~$i,j$, there exists~$x,y\in \Z[t^{\pm 1}]$ with~$q_ix+q_jy=1$. The latter condition implies, via a straightforward induction on $n$,  that there exists such~$x,y$ for any pair of polynomials~$q_{i_1} \cdots q_{i_k}$ and~$q_{i_{k+1}} \cdots q_{i_n}$ with~$\{i_1,\dots,i_n\} = \{1,\dots,n\}$ obtained from factoring~$P$. Then by applying~$\Phi$ we can obtain examples of~$P$ such that~$U(\Z[t^{\pm 1}]/2P)/U(\Z[t^{\pm 1}])$ has cardinality at least~$2^{n-1}$.
However, this level of generality is not strictly necessary, as Example~\ref{ex:Integer}, in which~$P$ is an integer, suffices to prove Proposition~\ref{prop:LargeStableClass} below.
 \black
\end{remark}

We now prove the main result of this section that was mentioned in Example~\ref{ex:LargeStableClassIntro} from the introduction: there are examples of pairs~$(Y,\varphi)$ for which the set of 4-manifolds with fixed boundary~$Y$ and equivariant intersection form, up to homeomorphism, can have arbitrarily large cardinality.  Recall that $\mathcal{V}_\lambda^0(Y)$ and~$\mathcal{V}_\lambda(Y)$ were defined in Definitions~\ref{def:V0lambdaY} and~\ref{def:VlambdaY} respectively.

\begin{proposition}
\label{prop:LargeStableClass}
For every~$m\ge 0$, there is a pair~$(Y,\varphi)$ and a Hermitian form~$(H,\lambda)$ so that~$\mathcal{V}_\lambda^0(Y)$ and~$\mathcal{V}_\lambda(Y)$ have at least~$m$ elements.
\end{proposition}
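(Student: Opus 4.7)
The plan is to realise the algebraic lower bound of Lemma~\ref{lem:NonTrivialbAut} geometrically by combining Theorem~\ref{thm:Classification} with an explicit rank one example. Given $m \geq 0$, choose $n$ with $2^{n-1} \geq m$ and let $P = p_1 \cdots p_n$ be a product of $n$ distinct primes, and consider the rank one Hermitian form $\lambda = (2P)$ over $\Z[t^{\pm 1}]$. By Example~\ref{ex:Integer} combined with Lemma~\ref{lem:NonTrivialbAut}, the quotient $U(\Z[t^{\pm 1}]/2P)/U(\Z[t^{\pm 1}])$ contains at least $2^{n-1}$ elements, and by Lemma~\ref{lem:UnitaryUnits} this coincides with $\Aut(\partial \lambda)/\Aut(\lambda)$.

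Next, I would construct a smooth $4$-manifold $W$ with $\pi_1(W) \cong \Z$, ribbon boundary, and $\lambda_W \cong (2P)$, by following the recipe in the proof of Theorem~\ref{thm:ExoticmanifoldsNotIntro}: start with $S^1 \times D^3$ and attach a single $2$-handle along a knot in $S^1 \times S^2$ that is homotopic to a generator of $\pi_1$ and whose equivariant self-linking, computed in the infinite cyclic cover, equals $2P$. The local move of Figure~\ref{fig:J'} (iterated for each monomial of $2P$, with $i=j$) achieves precisely this while keeping the integer framing such that the resulting $\pi_1$ is still $\Z$. Setting $Y := \partial W$ and letting $\varphi \colon \pi_1(Y) \twoheadrightarrow \pi_1(W) \cong \Z$, we obtain a pair $(Y,\varphi)$ with torsion Alexander module, and $(H,\lambda)$ presents $Y$ because $W$ itself realises a presentation (recall the commutative diagram~\eqref{eq:SES}).

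The main obstacle is arranging that the $\Homeo^+_\varphi(Y)$-action on $\Iso(\partial \lambda,\unaryminus\Bl_Y)$ is trivial, so that the orbit set does not collapse when passing from Theorem~\ref{thm:ClassificationRelBoundary} to Theorem~\ref{thm:Classification}. I would address this by modifying the attaching curve of the $2$-handle with additional local knotting in the $S^1 \times S^2$ diagram, so that $Y$ is hyperbolic (or at least has trivial mapping class group); this can be arranged without changing $\lambda_W$ because the local move in Figure~\ref{fig:J'} does not alter the equivariant linking. Concretely, one may arrange that $Y$ admits no nontrivial self-homeomorphism commuting with $\varphi$ up to isotopy, so that $\Homeo^+_\varphi(Y)$ acts as the identity on the Blanchfield form and hence trivially on $\Iso(\partial \lambda,\unaryminus\Bl_Y)$. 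Granted this, Theorem~\ref{thm:Classification} together with the bijection
\[\Iso(\partial \lambda,\unaryminus\Bl_Y)/\Aut(\lambda) \approx \Aut(\partial \lambda)/\Aut(\lambda)\]
noted in Remark~\ref{rem:MainTheorem} yields
\[|\mathcal{V}_\lambda(Y)| \geq |\mathcal{V}_\lambda^0(Y)| \geq |\Aut(\partial \lambda)/\Aut(\lambda)| \geq 2^{n-1} \geq m,\]
as required. The only nontrivial step is the geometric input that the mapping class group can be arranged trivial; once this is in place, the rest is a direct application of the algebraic count from Lemma~\ref{lem:NonTrivialbAut} and the classification theorems of Section~\ref{sub:MainThm}.
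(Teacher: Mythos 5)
Your overall strategy coincides with the paper's: take $\lambda=(2P)$ with $P$ a product of many distinct primes, use Lemma~\ref{lem:NonTrivialbAut}, Example~\ref{ex:Integer} and Lemma~\ref{lem:UnitaryUnits} to bound $\Aut(\partial\lambda)/\Aut(\lambda)$ from below, realise $\lambda$ by a $2$-handle attachment to $S^1\times D^3$, and kill the $\Homeo^+_\varphi(Y)$-action by arranging that $\partial W$ has trivial mapping class group. The algebraic half of your argument is fine and is exactly what the paper does.

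The gap is in the step you yourself flag as "the only nontrivial step": you assert that one can add local knotting to the attaching curve so that $Y$ is hyperbolic with trivial mapping class group, "without changing $\lambda_W$", but you give no construction and no justification. This is precisely where the paper has to do genuine work: it exhibits an explicit $3$-component link, verifies by a SnapPy/Sage computation that the relevant cusped $3$-manifold $Z$ is hyperbolic with trivial mapping class group, and then invokes Thurston's hyperbolic Dehn surgery theorem to conclude that the closed fillings $Z_n$ (for all large $n$) are hyperbolic with trivial symmetry group. Triviality of the mapping class group is not something that follows from "generic knotting" without an argument --- one needs either a computation or a structural reason, and one also needs it simultaneously with $\lambda_W\cong(2P)$ for the specific $P$ chosen (the paper handles this by making the statement uniform in the framing $n$ for $n>N$ and then choosing $P$ with $2P>N$). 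As written, your proof does not establish the existence of the required pair $(Y,\varphi)$. Two smaller issues: the attaching curve of the $2$-handle must be \emph{null-homotopic} in $S^1\times D^3$ (otherwise attaching it would kill the generator and $\pi_1(W)$ would not be $\Z$, and its equivariant self-linking in the infinite cyclic cover would not even be defined), not "homotopic to a generator of $\pi_1$"; and the inequality $|\mathcal{V}_\lambda(Y)|\geq|\mathcal{V}^0_\lambda(Y)|$ is false in general since $\mathcal{V}_\lambda(Y)$ is a quotient of $\mathcal{V}^0_\lambda(Y)$ --- it becomes an equality only after the mapping class group has been shown to be trivial, so the logical order matters.
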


\begin{figure}[!htbp]
\center
\def\svgwidth{.6\linewidth}
\begingroup%
  \makeatletter%
  \providecommand\color[2][]{%
    \errmessage{(Inkscape) Color is used for the text in Inkscape, but the package 'color.sty' is not loaded}%
    \renewcommand\color[2][]{}%
  }%
  \providecommand\transparent[1]{%
    \errmessage{(Inkscape) Transparency is used (non-zero) for the text in Inkscape, but the package 'transparent.sty' is not loaded}%
    \renewcommand\transparent[1]{}%
  }%
  \providecommand\rotatebox[2]{#2}%
  \newcommand*\fsize{\dimexpr\f@size pt\relax}%
  \newcommand*\lineheight[1]{\fontsize{\fsize}{#1\fsize}\selectfont}%
  \ifx\svgwidth\undefined%
    \setlength{\unitlength}{465.82251842bp}%
    \ifx\svgscale\undefined%
      \relax%
    \else%
      \setlength{\unitlength}{\unitlength * \real{\svgscale}}%
    \fi%
  \else%
    \setlength{\unitlength}{\svgwidth}%
  \fi%
  \global\let\svgwidth\undefined%
  \global\let\svgscale\undefined%
  \makeatother%
  \begin{picture}(1,0.34537686)%
    \lineheight{1}%
    \setlength\tabcolsep{0pt}%
    \put(0,0){\includegraphics[width=\unitlength,page=1]{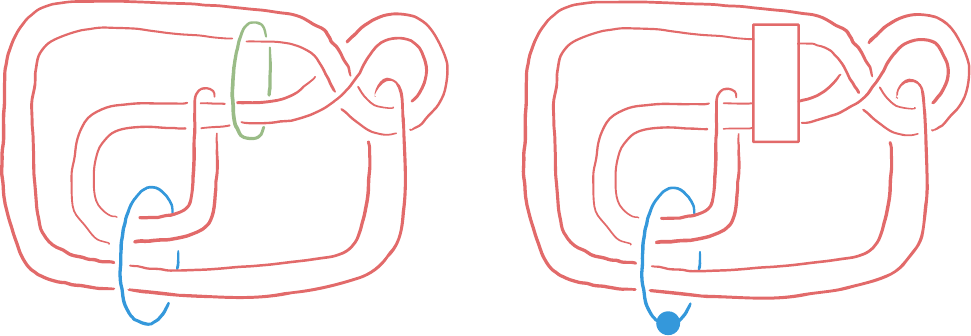}}%
    \put(0.180437,0.00178887){\color[rgb]{0.20392157,0.59607843,0.85882353}\makebox(0,0)[lt]{\lineheight{1.25}\smash{\begin{tabular}[t]{l}$0$\end{tabular}}}}%
    \put(0.23195895,0.10805209){\color[rgb]{0.88627451,0.41568627,0.41568627}\makebox(0,0)[lt]{\lineheight{1.25}\smash{\begin{tabular}[t]{l}$0$\end{tabular}}}}%
    \put(0.7708674,0.11058216){\color[rgb]{0.88627451,0.41568627,0.41568627}\makebox(0,0)[lt]{\lineheight{1.25}\smash{\begin{tabular}[t]{l}$n$\end{tabular}}}}%
    \put(0.78926802,0.25341707){\color[rgb]{0.88627451,0.41568627,0.41568627}\makebox(0,0)[lt]{\lineheight{1.25}\smash{\begin{tabular}[t]{l}$n$\end{tabular}}}}%
    \put(0.20306267,0.27393418){\color[rgb]{0.60392157,0.7372549,0.52941176}\makebox(0,0)[lt]{\lineheight{1.25}\smash{\begin{tabular}[t]{l}$\gamma$\end{tabular}}}}%
  \end{picture}%
\endgroup%

\caption{Left frame: the complement of~$\gamma$ is a hyperbolic 3-manifold~$Z$ with trivial mapping class group.
Right frame: This $\Z$-manifold~$W_n$ has equivariant intersection form~$(n)$ and, for~$n$ sufficiently large, boundary~$\partial  W_n$ with trivial mapping class group.}\label{fig:trivialsym}
\end{figure}

\begin{proof}
Since the cardinality of~$\mathcal{V}_\lambda^0(Y)$ is greater than that of~$\mathcal{V}_\lambda(Y)$, it suffices to prove that the latter set can be made arbitrarily large.
However since proof involving~$\mathcal{V}_\lambda^0(Y)$ is substantially less demanding, we include it as a quick warm up.

Set~$\lambda:=2P$ where~$P$ is an integer than can be factored as a product~$p_1\cdots p_k$ of~$k$ distinct  primes with~$2^{k-1} \geq m$.
Example~\ref{ex:Integer} and Proposition~\ref{lem:NonTrivialbAut} imply that~$U(\Z[t^{\pm 1}]/\lambda)/U(\Z[t^{\pm 1}])$ has at least~$2^{k-1}$ elements.
By Proposition~\ref{lem:UnitaryUnits}, this means that~$\Aut(\partial \lambda)/\Aut(\lambda)$ has at least~$2^{k-1}$ elements.
As in the proof of Theorem~\ref{thm:ExoticmanifoldsNotIntro},  construct a smooth~$\Z$-manifold~$W$ with equivariant intersection form~$\lambda$.
In our setting, where~$\lambda:=2P$, the manifold produced will be~$X_{\lambda}(U)\natural (S^1\times D^3)$, where~$X_{\lambda}(U)$ is the manifold obtained by attaching a~$\lambda$-framed 2-handle to~$D^4$ along the unknot~$U$.
Let~$Y'$ be the boundary of this~$4$-manifold and let~$\varphi \colon \pi_1(Y') \to \pi_1(W) \cong \Z$ be the inclusion induced map.
Since~$\lambda$ presents~$Y'$, Theorem~\ref{thm:ClassificationRelBoundary} implies that~$\mathcal{V}_\lambda^0(Y')$ has at least~$2^{k-1}\geq m$ elements, as required.

We now turn to the statement involving~$\mathcal{V}_\lambda(Y)$.
\begin{claim*}
There is an integer~$N>0$ so that for any~$n >N$, there exists a smooth~$\Z$-manifold~$W_n$ with equivariant intersection form~$(n)$ and such that~$\partial W_n$ has trivial mapping class group.
\end{claim*}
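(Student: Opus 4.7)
My plan is to produce $W_n$ via Kirby calculus following Figure \ref{fig:trivialsym}: start with $S^1\times D^3$ (a single $1$-handle in dotted-circle notation) and attach a single $2$-handle along a carefully chosen knot $\gamma\subset S^1\times S^2$, with a framing adjusted so that $W_n$ has equivariant intersection form $(n)$. I will require $\gamma$ to satisfy three properties: (i) $\gamma$ is nullhomotopic in $S^1\times D^3$ (equivalently, $\gamma$ has linking number zero with the dotted circle); (ii) a preferred lift $\widetilde{\gamma}$ in the infinite cyclic cover has equivariant self-linking equal to the integer $n$, with respect to the chosen framing; and (iii) the complement $Z:=(S^1\times S^2)\setminus \nu(\gamma)$ is a hyperbolic $3$-manifold with trivial isometry group.

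The easy verifications run as follows. Because $\gamma$ is nullhomotopic, attaching a $2$-handle along it preserves $\pi_1$, giving $\pi_1(W_n)\cong \Z$. Because $W_n$ has no $3$-handles, the inclusion $\pi_1(\partial W_n)\to \pi_1(W_n)$ is surjective by van Kampen, yielding the ribbon condition. Because $\gamma$ lifts to the cover, the preferred lift of the core of the $2$-handle (capped by a surface bounding $\widetilde{\gamma}$) generates $H_2(W_n;\Z[t^{\pm 1}])$; the argument of Subsection~\ref{sub:Step4} applies verbatim and shows that its equivariant self-intersection equals the equivariant self-linking of $\widetilde{\gamma}$. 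Using the local adjustments of Figure \ref{fig:J'}, applied to $\gamma$ and the meridian of the $1$-handle, this self-linking can be tuned to any symmetric Laurent polynomial, in particular to the integer $n$.

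The real obstacle is arranging (iii). To produce a suitable $\gamma$, I would start from any hyperbolic knot $K\subset S^3$ with trivial isometry group (the generic hyperbolic knot has this property by a result of Kojima, and explicit examples are easily verified with SnapPy) and incorporate it into a pattern inside $S^1\times D^3$ so as to make $\gamma$ nullhomologous with respect to the dotted circle while preserving hyperbolicity and asymmetry of the complement. This is a single, $n$-independent construction, confirmed computationally.

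Finally, $\partial W_n$ is obtained from $Z$ by Dehn filling along the slope determined by the framing $n$. By Thurston's hyperbolic Dehn surgery theorem, all but finitely many such fillings are hyperbolic, and for every $n$ past some threshold $N$ the core of the filling solid torus becomes the unique shortest closed geodesic in $\partial W_n$. Mostow rigidity identifies the mapping class group of $\partial W_n$ with its isometry group, and every isometry of $\partial W_n$ must preserve the shortest geodesic and therefore restrict to an isometry of $Z$. Since the isometry group of $Z$ is trivial, so is the mapping class group of $\partial W_n$ for every $n>N$. This last step, linking Mostow rigidity and hyperbolic Dehn surgery to the combinatorial construction of $\gamma$, is the main obstacle; the rest of the argument is bookkeeping with handle moves.
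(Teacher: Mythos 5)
Your overall strategy is the same as the paper's: fix a single cusped hyperbolic $3$-manifold $Z$ with trivial isometry group, realise $\partial W_n$ as a family of Dehn fillings of $Z$ whose slopes go to infinity, and invoke Thurston's hyperbolic Dehn surgery theorem plus the shortest-geodesic argument (which the paper outsources to \cite[Lemma~2.2]{DHL}) to kill the mapping class group for all large $n$. The only structural difference is which slopes get filled: you vary the integer framing of the $2$-handle itself, whereas the paper keeps the handle structure fixed and performs $-1/n$ fillings on an auxiliary curve $\gamma$ (equivalently, twists along $\gamma$, which is what produces the $n$-framings in the right frame of Figure~\ref{fig:trivialsym}); both slope families have length tending to infinity, so either works.

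The one step that would fail as written is your construction of $\gamma$: ``incorporating a hyperbolic knot $K$ into a pattern inside $S^1\times D^3$'' produces a satellite, and the exterior of a satellite contains an essential torus (the boundary of the pattern solid torus), so it is never hyperbolic -- Kojima-style genericity of asymmetric hyperbolic knots in $S^3$ does not transfer through a satellite operation. Likewise, if you first fix such a $\gamma$ and then apply the moves of Figure~\ref{fig:J'} to kill the non-constant terms of the equivariant self-linking, you have changed the knot and must re-establish hyperbolicity and asymmetry from scratch. There is no soft argument here; the paper resolves this by exhibiting one explicit link and certifying hyperbolicity and triviality of the mapping class group of $Z$ with a verified SnapPy computation. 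Your proposal ultimately defers to the same computational verification, so the gap is fixable, but the pattern construction should be replaced by an explicit example (or some other genuinely atoroidal construction) rather than presented as preserving hyperbolicity.
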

\begin{proof}
Let~$L$ be the~$3$-component link in the left frame of Figure \ref{fig:trivialsym} and let~$Z$ be the~$3$-manifold obtained from~$L$ by~$0$-surgering both the red and blue components, and removing a tubular neighborhood of the green component~$\gamma$.
Using verified computations in Snappy inside of Sage, we find that~$Z$ is hyperbolic and has trivial mapping class group.\footnote{Transcripts of the computation are available at \cite{data}.}
By Thurston's hyperbolic Dehn surgery theorem \cite[Theorem 5.8.2]{Thurston},
there exists~$N > 0$ such that for~$n > N$, the manifold~$Z_n$ obtained by~$-1/n$ filling~$\gamma$ is hyperbolic and has trivial symmetry group; for the mapping class group part of this statement, see for example~\cite[Lemma~2.2]{DHL}.

Let~$W_n$ be the 4-manifold described in the right frame of Figure \ref{fig:trivialsym} and observe that~$\partial W_n\cong Z_n$.
It is not difficult to verify that~$W_n$ is a $\Z$-manifold with equivariant intersection form~$(n)$.  This concludes the proof of the claim.
\end{proof}

We conclude the proof of the proposition.
Fix~$m \geq 0$ and choose an integer~$P$ such that
\begin{itemize}
\item~$P$ can be factored as a product~$p_1\cdots p_k$ of~$k$ distinct primes with~$2^{k-1} \geq m$. 
\item~$2P>N$ where~$N$ is as in the claim.
\end{itemize}
Since~$2P>N$,  the claim implies that~$Y:=\partial W_{2P}$ has trivial mapping class group.
The proof is now concluded as in the warm up, but we spell out the details.
As we already mentioned,~$W_{2P}$ has equivariant intersection form~$\lambda:=2P$.
Example~\ref{ex:Integer} and Proposition~\ref{lem:NonTrivialbAut} imply that~$U(\Z[t^{\pm 1}]/\lambda)/U(\Z[t^{\pm 1}])$ has at least~$2^{k-1}$ elements.
By Proposition~\ref{lem:UnitaryUnits}, this means that~$\Aut(\partial \lambda)/\Aut(\lambda)$ has at least~$2^{k-1}$ elements.
Since~$Y$ has trivial mapping class group, either of Theorem~\ref{thm:ClassificationRelBoundary} or Theorem~\ref{thm:Classification} implies that~$\mathcal{V}_\lambda(Y)=\mathcal{V}_\lambda^0(Y)$ has at least~$2^{k-1} \geq m$ elements.
\end{proof}

\bibliography{BiblioRealisation}
\bibliographystyle{alpha}
\end{document}